\definecolor{green}{rgb}{0,0.8,0} 
\newtheorem{theorem}{Theorem}[section]
\newtheorem{corollary}[theorem]{Corollary}
\newtheorem{lemma}[theorem]{Lemma}
\newtheorem{proposition}[theorem]{Proposition}
\theoremstyle{definition}
\newtheorem{definition}[theorem]{Definition}
\newtheorem{example}[theorem]{Example}
\theoremstyle{remark}
\newtheorem{remark}[theorem]{Remark}
\numberwithin{equation}{section}
\newcommand{\relphantom}[1]{\mathrel{\phantom{#1}}}
\newcommand{\nrm}{\@ifstar{\nrmb}{\nrmi}}
\newcommand{\nrmi}[1]{\Vert{#1}\Vert}
\newcommand{\nrmb}[1]{\left\Vert{#1}\right\Vert}
\newcommand{\abs}{\@ifstar{\absb}{\absi}}
\newcommand{\absi}[1]{\vert{#1}\vert}
\newcommand{\absb}[1]{\left\vert{#1}\right\vert}
\newcommand{\brk}{\@ifstar{\brkb}{\brki}}
\newcommand{\brki}[1]{\langle{#1}\rangle}
\newcommand{\brkb}[1]{\left\langle{#1}\right\rangle}
\newcommand{\set}{\@ifstar{\setb}{\seti}}
\newcommand{\seti}[1]{\{#1\}}
\newcommand{\setb}[1]{\left\{ #1\right\}}
\newcommand{\td}[1]{\widetilde{#1}}
\newcommand{\br}[1]{\overline{#1}}
\newcommand{\ul}[1]{\underline{#1}}
\newcommand{\wh}[1]{\widehat{#1}}
\newcommand{\VERT}[1]{{\left\vert\kern-0.25ex\left\vert\kern-0.25ex\left\vert #1
    \right\vert\kern-0.25ex\right\vert\kern-0.25ex\right\vert}}
\DeclareMathOperator{\dist}{dist}
\DeclareMathOperator{\supp}{supp}
\DeclareMathOperator{\tr}{tr}
\DeclareMathOperator{\diam}{diam}
\let\Re\relax
\DeclareMathOperator{\Re}{Re}
\newcommand{\aeq}{\simeq}
\newcommand{\aleq}{\lesssim}
\newcommand{\ceq}{\coloneqq}
\newcommand{\lap}{\Delta}
\newcommand{\ud}{\mathrm{d}}
\newcommand{\rd}{\partial}
\newcommand{\nb}{\nabla}
\newcommand{\imp}{\Rightarrow}
\newcommand{\bb}{\Big}
\newcommand{\0}{\emptyset}
\newcommand{\peq}{\relphantom{=}}			
\newcommand{\alp}{\alpha}
\newcommand{\bt}{\beta}
\newcommand{\gmm}{\gamma}
\newcommand{\Gmm}{\Gamma}
\newcommand{\dlt}{\delta}
\newcommand{\eps}{\epsilon}
\newcommand{\kpp}{\kappa}
\newcommand{\lmb}{\lambda}
\newcommand{\vphi}{\varphi}
\newcommand{\omg}{\omega}
\newcommand{\Omg}{\Omega}
\newcommand{\zt}{\zeta}
\newcommand{\bfb}{{\bf b}}
\newcommand{\bfe}{{\bf e}}
\newcommand{\bfg}{{\bf g}}
\newcommand{\bfh}{{\bf h}}
\newcommand{\bfk}{{\bf k}}
\newcommand{\bfu}{{\bf u}}
\newcommand{\bfx}{{\bf x}}
\newcommand{\bfy}{{\bf y}}
\newcommand{\bfz}{{\bf z}}
\newcommand{\bfA}{{\bf A}}
\newcommand{\bfB}{{\bf B}}
\newcommand{\bfC}{{\bf C}}
\newcommand{\bfG}{{\bf G}}
\newcommand{\bfR}{{\bf R}}
\newcommand{\bfZ}{{\bf Z}}
\newcommand{\bfalp}{\boldsymbol{\alpha}}
\newcommand{\bfGmm}{\boldsymbol{\Gamma}}
\newcommand{\bfdlt}{\boldsymbol{\delta}}
\newcommand{\bfzt}{\boldsymbol{\zeta}}
\newcommand{\bfeta}{\boldsymbol{\eta}}
\newcommand{\bfnu}{\boldsymbol{\nu}}
\newcommand{\bfpi}{\boldsymbol{\pi}}
\newcommand{\bfPi}{\boldsymbol{\Pi}}
\newcommand{\bfomg}{\boldsymbol{\omega}}
\newcommand{\bbC}{\mathbb C}
\newcommand{\bbH}{\mathbb H}
\newcommand{\bbR}{\mathbb R}
\newcommand{\bbS}{\mathbb S}
\newcommand{\bbZ}{\mathbb Z}
\newcommand{\calA}{\mathcal A}
\newcommand{\calB}{\mathcal B}
\newcommand{\calC}{\mathcal C}
\newcommand{\calD}{\mathcal D}
\newcommand{\calE}{\mathcal E}
\newcommand{\calF}{\mathcal F}
\newcommand{\calG}{\mathcal G}
\newcommand{\calL}{\mathcal L}
\newcommand{\calM}{\mathcal M}
\newcommand{\calP}{\mathcal P}
\newcommand{\calQ}{\mathcal Q}
\newcommand{\calS}{\mathcal S}
\newcommand{\calT}{\mathcal T}
\newcommand{\weakto}{\rightharpoonup}
\newcommand{\chf}{\mathbf{1}}
\newcommand{\Ric}{\mathrm{Ric}}
\newcommand{\ueta}{\upeta}
\newcommand{\uI}{\mathrm{I}}
\newcommand{\uK}{\mathrm{K}}
\newcommand{\uS}{\mathrm{S}}
\newcommand{\ua}{\mathrm{a}}
\newcommand{\uk}{\mathrm{k}}
\newcommand{\uV}{\mathrm{V}}
\newcommand{\uW}{\mathrm{W}}
\DeclareMathOperator{\ran}{ran}
\newcommand{\nnrm}[1]{{\left\vert\kern-0.25ex\left\vert\kern-0.25ex\left\vert #1
    \right\vert\kern-0.25ex\right\vert\kern-0.25ex\right\vert}}
 \let\div\relax
\DeclareMathOperator{\div}{div}
\newcommand{\prin}{\mathrm{prin}}
\begin{document}

\title[]{Integral formulas for under/overdetermined \\ differential operators via recovery on curves and the finite-dimensional cokernel condition I: General theory}
\author{Philip Isett}%
\address{Department of Mathematics, California Institute of Technology, Pasadena, CA 91125, USA}%
\email{isett@caltech.edu}%

\author{Yuchen Mao}%
\address{Department of Mathematics, UC Berkeley, Berkeley, CA 94720, USA}%
\email{yuchen\_mao@berkeley.edu}%

\author{Sung-Jin Oh}%
\address{Department of Mathematics, UC Berkeley, Berkeley, CA 94720, USA and School of Mathematics, Korea Institute for Advanced Study, Seoul, Korea}%
\email{sjoh@math.berkeley.edu}%

\author{Zhongkai Tao}%
\address{Institut des Hautes \'Etudes Scientifiques, 35 route de Ghartres, 91440 Bures-sur-Yvette, France}%
\email{ztao@ihes.fr}%


\begin{abstract}
We introduce a new versatile method for constructing solution operators (i.e., right-inverses up to a finite rank operator) for a wide class of underdetermined PDEs $\mathcal{P} u = f$, which are regularizing of optimal order and, more interestingly, whose integral kernels have certain prescribed support properties. By duality, we simultaneously obtain integral representation formulas (i.e., left-inverses up to a finite rank operator) for overdetermined PDEs $\mathcal{P}^{\ast} v = g$ with analogous properties, which lead to Poincar\'e- or Korn-type inequalities. Our method applies to operators such as the divergence, linearized scalar curvature, and linearized Einstein constraint operators (which are underdetermined), as well as the gradient, Hessian, trace-free part of the Hessian, Killing, and conformal Killing operators (which are overdetermined).

The starting point for our construction is a condition -- dubbed the \emph{recovery on curves condition} (RC) -- that leads to Green's functions for $\mathcal{P}$ supported on prescribed curves. Then the desired integral solution operators (and, by duality, integral representation formulas) are obtained by taking smooth averages over a suitable family of curves. Our method generalizes, on the one hand, the previous formulas of Bogovskii and Oh--Tataru for the divergence operator, and on the other hand, integral representation formulas for overdetermined operators by Reshetnyak, which lead to classical inequalities of Poincar\'e and Korn.

We furthermore identify a simple algebraic sufficient condition for (RC), namely, that the principal symbol $p(x, \xi)$ of $\mathcal{P}$ is full-rank for all non-zero \emph{complex} vectors $\xi$ (as opposed to real, as in ellipticity). When the principal symbol has constant coefficients, this is equivalent to (RC) and also to the condition that the formal cokernel of $\mathcal{P}$ (without any boundary conditions) is finite dimensional; for this reason, we call it the \emph{finite-dimensional cokernel condition} (FC). We give a short proof that all the examples above satisfy (FC), and thus (RC).

Our method provides a new approach to solving a wide range of linear and nonlinear problems with operators that satisfy (FC): we may now \emph{design} integral operators tailored to each problem. Various applications will be considered in subsequent papers.

\end{abstract}

\maketitle

\setcounter{tocdepth}{2}
\tableofcontents


\section{Introduction}
Underdetermined partial differential operators $\calP$ resembling the divergence operator appear naturally in various fields of physics and geometry. Take, for instance, the Gauss law in electromagnetism, the divergence-free condition for incompressible fluids, the linearized scalar curvature operator in Riemannian geometry, the constraint equations in general relativity and gauge theories, and so on. The duals $\calP^{\ast}$ of such underdetermined operators, which are overdetermined, also play a significant role. Examples include the gradient operator (dual to divergence), the Hessian operator (dual to linearized scalar curvature, up to lower order terms), the Killing operator (symmetric part of the covariant gradient of a vector field, dual to divergence of symmetric $2$-tensor), the conformal Killing operator (trace-free symmetric part of the covariant gradient of a vector field, dual to divergence of trace-free symmetric $2$-tensor), and many others.

In this paper, we describe a new versatile method for obtaining solution operators (i.e., right-inverses up to a finite rank operator) for such underdetermined operators $\calP$ and, by duality, representation formulas (i.e., left-inverses up to a finite rank operator) for such overdetermined operators $\calP^{\ast}$. The method is based on a direct derivation of integral formulas (i.e., Green's functions) for these operators based on a property we dub \emph{recovery on curves} (see \ref{hyp:crc} below). The operators we construct are \emph{regularizing of optimal order} (i.e., they gain $m$ derivatives, where $m$ is the order of $\calP$) and, more interestingly, their integral kernels have \emph{prescribed support properties}.  This latter feature means that the support of the solutions can be prescribed, provided the data satisfy appropriate assumptions.

Furthermore, we demonstrate that our method is applicable (even in variable coefficient situations) as soon as a simple algebraic condition on the principal symbol $p(x,\xi)$ of the operator $\calP$ is satisfied:
\begin{enumerate}[label=(FC)]
\item \label{hyp:fc-0} $p(x, \xi)$ is full rank for all $x \in U$ and $\xi \in \bbC^{d} \setminus \set{0}$.
\end{enumerate}
 At a glance, \ref{hyp:fc-0} is a (strict) strengthening of the familiar notion of ellipticity, which is the same condition but only for real covectors $\xi \in \bbR^{d} \setminus \set{0}$. At a deeper level, it is a suitable variable-coefficient generalization of the condition that the \emph{formal cokernel} of $\calP$ on $U$,
\begin{equation} \label{eq:formal-coker}
	\ker \calP^{\ast}(U) \ceq \set{\bfZ \in C^{\infty}(\br{U}) : \calP^{\ast} \bfZ = 0 \hbox{ in } \calD'(U)},
\end{equation}
is finite dimensional (which is implied by \ref{hyp:crc}; see Theorem~\ref{thm:main-summary-bogovskii}). In fact, the three conditions \ref{hyp:fc-0}, \ref{hyp:crc}, and $\dim \ker \calP^{\ast} < + \infty$ are \emph{equivalent} if each row of $p^{\ast}$ is a homogeneous vector-valued polynomial (see Theorem~\ref{thm:alg-cond-intro}). For this reason, our new condition is dubbed the \emph{finite-dimensional cokernel condition} (FC). All operators mentioned above (and more) satisfy \ref{hyp:fc-0}, as the short proof of Theorem~\ref{thm:appl} below shows (see also Appendix~\ref{sec:appl}).

Our results provide a fresh approach to solving a wide range of linear and nonlinear problems with operators that satisfy \ref{hyp:fc} (and hence \ref{hyp:crc}): we may now \emph{design} integral operators tailored to each unique problem. In companion papers \cite{IMOT2, MOT2}, we give the following applications of this strategy:
\begin{itemize}
\item {\it Linear problems:} general solvability results for operators $\calP$ satisfying \ref{hyp:fc-0} under optimal (up to endpoints) assumptions on the regularity and decay properties of the coefficients; and
\item {\it Nonlinear problems:} new sharp results concerning the flexibility of general relativistic initial data sets (on a compact or asymptotically flat background), such as localization, gluing, extension, and parametrization, with or without the constant mean curvature (gauge) condition.
\end{itemize}
As an example, see Section~\ref{subsec:appl-lin} below for the precise statement of our sharp solvability result on bounded Lipschitz domains for $\calP$ with rough coefficients from \cite{IMOT2}. For applications to general relativistic initial data sets, we refer to \cite{MOT2}. We expect this approach to have numerous additional applications.

This paper was primarily motivated by the investigation of the \emph{flexibility} of solutions to \emph{underdetermined} linear and nonlinear PDEs arising in physics and geometry. Our approach generalizes classical work on the divergence operator by Bogovskii \cite{Bog} (which may be more familiar in fluid dynamics than general relativity) and clarifies explicit integral solution formulas found in recent studies of Yang-Mills initial data sets \cite{OhTat}, convex integration in fluid dynamics \cite{IseOh}, and asymptotically flat general relativistic initial data sets \cite{MaoTao, MOT1}.

By duality, the flexibility of underdetermined PDEs corresponds to the \emph{rigidity} of solutions to \emph{overdetermined} PDEs.  In this way, our work also connects to classical investigations of rigidity, notably Reshetnyak's integral representations for solutions of certain overdetermined linear differential operators (e.g., the Killing and conformal Killing operators) arising in geometric rigidity problems \cite{Resh}; see   Remark~\ref{rem:resh} for more discussion. Our method also provides a unified proof of Poincar\'e- or Friedrich-type (or rigidity) inequalities, including Korn's inequality \cite{Kor, KonOle}, for various overdetermined operators on a broad class of backgrounds, thereby bringing together previously disparate proofs (see Remark~\ref{rem:appl-rigidity}).

\subsection{Summary of the main results} \label{subsec:results}

\subsubsection{Explicit integral formulas for the divergence operator}
Before we describe our results, we first exhibit known explicit integral formulas for solving the prescribed divergence equation $\div u = f$ on $\bbR^{d}$, which are the main inspiration for our work.

In \cite{Bog}, Bogovskii wrote down a remarkable explicit integral formula for a compactly supported solution to $\div u = f$, where $f$ is a given scalar function with compact support and integral zero. Concretely, it takes the form
\begin{equation} \label{eq:bog-0}
u(x) \ceq \int_{\bbR^{d}} K_{\eta_{1}}(x, y) f(y) \, \ud y, \quad 
K_{\eta_{1}}(x, y) \ceq \frac{(x-y)^{j}}{\abs{x-y}^{d}} \left(\int_{\abs{x-y}}^{\infty} \eta_{1}(r \tfrac{x-y}{\abs{x-y}} + y)  r^{d-1} \, \ud r \right) \hbox{ for } x \neq y,
\end{equation}
where $\eta_{1} \in C^{\infty}_{c}(\bbR^{d})$ with $\int_{\bbR^{d}} \eta_{1}(y) \, \ud y = 1$. This integral formula turns out to satisfy the following properties:
\begin{enumerate}
\item (Green's function) We have $\div u (x) = f(x) - \left( \int_{\bbR^{d}} f(y) \, \ud y \right) \eta_{1}(x)$ for all $f \in C^{\infty}_{c}(\bbR^{d})$;
\item (Prescribed support) $\supp u \subseteq \cup_{y \in \supp f, \, y_{1} \in \supp \eta_{1}} (\hbox{line segment from $y$ to $y_{1}$})$;
\item (Optimal regularization) $K_{\eta_{1}} (x, y)$ is a locally integrable function such that $\rd_{x^{j}} K_{\eta_{1}}(x, y)$ is a Calder\'on--Zygmund integral kernel (and hence $f \mapsto \rd_{j} u$ is bounded on $L^{p}$ for any $1 < p < +\infty$).
\end{enumerate}
In view of (2), $u$ is indeed compactly supported if $f$ is, and we may manipulate its support property by varying $\eta_{1}$. The presence of an extra term involving $\int f \, \ud y$ in (1) is natural in view of the following necessary condition for the existence of a compactly supported solution $u$ (via the divergence theorem): $\int f \, \ud y = \int \div u \, \ud y = \lim_{R \to \infty} \int_{\rd B_{R}} u \cdot \bfnu \, \ud S = 0$. More abstractly, it is a manifestation of the fact that the formal cokernel of $\div$ (which is the pre-annihilator of the image of compactly supported distributions under $\div$), or simply the space of $C^{\infty}(\bbR^{d})$ functions with zero gradient, consists of constant functions.

In \cite{OhTat}, another explicit integral formula for a solution to $\div u = f$ was written down, where $f$ is a given scalar function with compact support (but not necessarily integral zero):
\begin{equation} \label{eq:conic-0}
	u (x) := \int_{\bbR^{d}} K_{\slashed{\eta}}(x, y) f(y) \, \ud y, \quad
	(K_{\slashed{\eta}})^{j}(x, y) = \frac{(x-y)^{j}}{\abs{x-y}^{d}}  \slashed{\eta}(\tfrac{x-y}{\abs{x-y}}) \quad \hbox{ for } x \neq y,
\end{equation}
where $\slashed{\eta} \in C^{\infty}(\bbS^{d-1})$ with $\int_{\bbS^{d-1}} \slashed{\eta} (\omg) \, \ud S(\omg) = 1$. The following properties hold:
\begin{enumerate}
\item (Green's function) We have $\div u (x) = f(x)$ for all $f \in C^{\infty}_{c}(\bbR^{d})$;
\item (Prescribed support) $\supp u \subseteq \cup_{y \in \supp f, \, \omg \in \supp \slashed{\eta}} (\hbox{ray from $y$ in the direction $\omg$})$;
\item (Optimal regularization) $K_{\slashed{\eta}} (x, y)$ is a locally integrable function such that $\rd_{x^{j}} K_{\slashed{\eta}}(x, y)$ is a Calder\'on--Zygmund integral kernel.
\end{enumerate}
By (2), $u$ is supported in the union of cones over $\supp \slashed{\eta} \subseteq \bbS^{d-1}$; for this reason, we call the operator $f \mapsto u$ in \eqref{eq:conic-0} a \emph{conic solution operator}. In this case, $u$ directly solves $\div u = f$ since it is allowed to have a non-compact support. Indeed, by $\int f \, \ud y = \int \div u \, \ud y = \liminf_{R \to \infty} \int_{\rd B_{R}} u \cdot \bfnu \, \ud S$, it necessarily has a non-compact support if $\int f \, \ud y \neq 0$. Abstractly, this is a manifestation of the fact that the cokernel of $\div$ in $C^{\infty}_{c}(\bbR^{d})$ (which is the pre-annihilator of the image of distributions under $\div$), or simply the space of $C_{c}^{\infty}(\bbR^{d})$ functions with zero gradient, is trivial.

Thanks to their simplicity and flexibility, these explicit integral formulas have proven useful in many applications: Bogovskii's operator has been extensively used in fluid dynamics (see \S\ref{subsubsec:discussion:div} for further discussions), and \cite{OhTat} used the conic operator to manipulate initial data sets for the Yang--Mills equation. See also \cite{MaoTao, MOT1}, in which these ideas were applied to the study of initial data sets in general relativity.

The results of this paper \emph{generalize such integral formulas to a large class of underdetermined differential operators} (and simultaneously, \emph{their adjoints to overdetermined differential operators}) \emph{arising from geometry and physics}. In the remainder of this subsection, we explain each component of our approach in more detail. For a systematic derivation of \eqref{eq:bog-0} and \eqref{eq:conic-0} from our viewpoint, as well as a justification of the properties stated above, we refer the reader already to Section~\ref{subsec:ideas-div} below.

\subsubsection{Integral solution and representation formulas from \ref{hyp:crc}}
Let $\calP$ be an $r_{0} \times s_{0}$-matrix-valued differential operator on an open subset $U \subseteq \bbR^{d}$ where $r_{0} \leq s_{0}$. For simplicity, assume for now that $\calP$ has $C^{\infty}(\br{U})$ coefficients (for the case of rough coefficients, see Theorem~\ref{thm:low-reg-domain}). We first introduce (in a simplified form) the \emph{recovery on curves condition} for $\calP$, which plays a basic role in this paper. For a curve $\bfx : [0, 1] \to \bbR^{d}$, it says (roughly speaking):
\begin{enumerate}[label=(RC)]
\item \label{hyp:crc} Given any $\varphi \in C^{\infty}_{c}(U)$, there exists a linear way to continuously recover $\varphi(\bfx(0))$ from (the jet of) $\calP^{\ast} \varphi$ on $\bfx$ and (the jet of) $\varphi$ at $\bfx(1)$.
\end{enumerate}
Note that \ref{hyp:crc} obviously holds on any curve for the divergence operator $\calP u = \rd_{j} u^{j}$, in the sense that $\calP^{\ast} \varphi = - \ud \varphi$ (gradient operator) and thus $\varphi(\bfx(0)) = \int_{0}^{1} (- \ud \varphi)(\dot{\bfx}(t)) \, \ud t + \varphi(\bfx(1))$. For the precise version of this condition, see Sections~\ref{subsec:crc-1} and \ref{subsec:crc-q} below.

Simply speaking, our first set of results says that \ref{hyp:crc} is all we need to construct integral formulas analogous to \eqref{eq:bog-0} and \eqref{eq:conic-0} solving $\calP u = f$. More specifically, but still informally, \ref{hyp:crc} implies the existence of a solution operator (i.e., right-inverse) for $\calP$ that is \emph{regularizing of optimal order} such that, for every $y$, its integral kernel $K(x, y)$ is supported (in the $x$-variable) in a union of curves emanating from $y$ that can be \emph{prescribed} in the sense we will explain below. By duality, \ref{hyp:crc} also implies the existence of integral representation formulas (i.e., left-inverse) for $\calP^{\ast}$ with analogous properties, which in turn imply Poincar\'e-type inequalities that control $u$ in terms of $\calP^{\ast} u$ under suitable additional conditions.

We now formulate the results more precisely. We employ the fractional Sobolev spaces $W^{s,p}(U)$ and $\td{W}^{s,p}(U)$ on domains, which are precisely defined in Section~\ref{subsec:ftn-sp}. Here, we simply point out that when $s$ is a nonnegative integer and $1 < p < +\infty$, $W^{s, p}(U)$ agrees with the usual definition (see, e.g., \cite{Evans}) and $\td{W}^{s,p}(U) = W^{s, p}_{0}(U)$, the closure of $C^{\infty}_{c}(U)$ in $W^{s, p}(U)$. For any open subset $U \subseteq \bbR^{d}$, $s \in \bbR$, and $p \in (1, \infty)$, the following duality relations hold (for details, see Lemma~\ref{lem:sob-duality}):
\begin{equation*}
	W^{-s, p'}(U) \equiv (\td{W}^{s, p}(U))^{\ast}, \quad \td{W}^{s, p}(U) \equiv (W^{-s, p'}(U))^{\ast},
\end{equation*}
where the isomorphisms (denoted by $\equiv$) are induced by the unique pairing $W^{-s, p'}(U) \times \td{W}^{s, p}(U) \to \bbR$, $(f, g) \mapsto \brk{f, g}$ that coincides with $\int_{U} \Re(\br{f} g) \, \ud x$ for $(f, g) \in C^{\infty}(\br{U}) \times C^{\infty}_{c}(U)$.

For each $K \in \set{1, \ldots, s_{0}}$, we write $m_{K}$ for the order of the operator $\varphi \mapsto (\calP^{\ast} \varphi)_{K}$. Furthermore, consider a family of curves $\bfx(y, y_{1}, s)$ ($s \in [0, 1]$), where $y$ and $y_{1}$ denote the two endpoints at $s = 0$ and $s = 1$, respectively. We assume that $\bfx(y, y_{1}, s)$ is \emph{admissible} in the sense that it behaves like (or coincides with) straight line segments $\ul{\bfx}(y, y_{1}, s) = y + s (y_{1} - y)$ when $y$ and $y_{1}$ are close (in fact, the precise conditions for admissibility consist of \ref{hyp:x-1}--\ref{hyp:x-2} in Section~\ref{subsubsec:x-q}, and \ref{hyp:x-3} in Section~\ref{subsec:1st-order-pde}).

\begin{theorem} [Conic-type solution operators, representation formulas, and Poincar\'e-type inequalities]\label{thm:main-summary-conic}
Let $U$ be an open subset of $\bbR^{d}$. Let $\calP$ satisfy \ref{hyp:crc} for an admissible family of curves $\bfx = \bfx(y, y_{1}, s)$ for all $y$ in a neighborhood of $\br{U}$ and $y_{1} \in U_{1}$ for some open subset $U_{1}$ of $\bbR^{d}$ (see Section~\ref{subsec:crc-q} for the precise formulation). Assume, moreover, that the curves are \emph{nontrapped in $U$} (i.e., the curve eventually exits $U$) in the sense that
\begin{equation*}
	\br{U} \cap \br{U}_{1} = \0.
\end{equation*}
Then the following holds.
\begin{enumerate}
\item {\bf Cokernel in $\td{W}^{-s, p'}(U)$.} For any $1 < p < +\infty$ and $s \in \bbR$, define the cokernel in $\td{W}^{-s, p'}(U)$ to be:
\begin{equation} \label{eq:sob-td-coker}
	\ker_{\td{W}^{-s, p'}(U)} \calP^{\ast} := \set{\bfZ \in \td{W}^{-s, p'}(U) : \calP^{\ast} \bfZ = 0 \hbox{ in } \calD'(\td{U})},
\end{equation}
where $\td{U}$ is an open subset of $\bbR^{d}$ such that $\br{U} \subseteq \td{U}$, and the coefficients of $\calP^{\ast}$ are extended in a smooth way to $\td{U}$ (since $\bfZ \in \td{W}^{-s, p'}(U)$, this definition is independent of these choices).
Under the assumptions of this theorem, we have
\begin{equation*}
\ker_{\td{W}^{-s, p'}(U)} \calP^{\ast} = \set{0}.
\end{equation*} 
\item {\bf Integral solution formula.} There exists a locally integrable integral kernel $K : U \times U \to \bbC^{s_{0} \times r_{0}}$ with the support property
\begin{equation*}
\supp K(\cdot, y) \subseteq \br{\bigcup_{y_{1} \in U_{1}} \bfx(y, y_{1}, [0, 1])} \quad \hbox{ for every } y \in U,
\end{equation*}
such that the integral operator $\calS f (x) \ceq \int_{U} K(x, y) f(y) \, \ud y$ for $f \in C^{\infty}_{c}(U; \bbC^{s_{0}})$ satisfies
\begin{equation*}
	\calP \calS f = f \quad \hbox{ for all } f \in C^{\infty}_{c}(U; \bbC^{s_{0}}), \quad
\end{equation*}
Moreover, for any $1 < p < +\infty$ and $s \in \bbR$, $\calS$ extends to a bounded operator from $\td{W}^{s, p}(U; \bbC^{r_{0}})$ to $W^{s+m_{1}, p}(U) \times \cdots \times W^{s+m_{s_{0}}, p}(U)$.
\item {\bf Integral representation formula \& Friedrich-type inequality.} Dually, we have
\begin{equation*}
\varphi = \calS^{\ast} \calP^{\ast} \varphi \quad \hbox{ for all } \varphi \in C^{\infty}_{c}(U; \bbC^{r_{0}}).
\end{equation*}
For any $1 < p < +\infty$ and $s \in \bbR$, $\calS^{\ast}$ extends to a bounded operator from $\td{W}^{-s-m_{1}, p'}(U) \times \cdots \times \td{W}^{-s-m_{s_{0}}, p'}(U)$ to $W^{-s, p'}(U; \bbC^{r_{0}})$. Moreover, the following \emph{Friedrich-type inequality} holds:
\begin{equation*}
	\nrm{\varphi}_{\td{W}^{-s, p'}(U; \bbC^{r_{0}})} \aleq \nrm{\calP^{\ast} \varphi}_{\td{W}^{-s-m_{1}, p'}(U) \times \cdots \times \td{W}^{-s-m_{s_{0}}, p'}(U)} \quad \hbox{ for all } \varphi \in \td{W}^{-s, p'}(U; \bbC^{r_{0}}).
\end{equation*}
\end{enumerate}
\end{theorem}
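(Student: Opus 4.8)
The plan is to reduce every assertion to the construction of a single locally integrable kernel, obtained by averaging over the far endpoint $y_{1}$ the curve-recovery functional supplied by \ref{hyp:crc}, and then to extract the solution formula, the boundedness, the vanishing of the cokernel, and the Friedrich-type inequality by duality and a Calder\'on--Zygmund estimate. To build $\calS$, fix a density $\eta_{1} \in C^{\infty}_{c}(U_{1})$ with $\int_{U_{1}} \eta_{1} \, \ud y_{1} = 1$. By the precise form of \ref{hyp:crc} (Section~\ref{subsec:crc-q}), for every $y \in U$, $y_{1} \in U_{1}$, and $\varphi \in C^{\infty}_{c}(U; \bbC^{r_{0}})$ the value $\varphi(y) = \varphi(\bfx(y, y_{1}, 0))$ is an iterated integral along $s \mapsto \bfx(y, y_{1}, s)$ of (the jet of) $\calP^{\ast}\varphi$ against smooth coefficient functions built from the coefficients of $\calP^{\ast}$ and the derivatives of $\bfx$, plus a term depending only on the jet of $\varphi$ at $\bfx(y, y_{1}, 1) = y_{1}$. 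Since $\br{U} \cap \br{U}_{1} = \0$ forces $y_{1} \notin \supp \varphi$, every derivative of $\varphi$ vanishes at $y_{1}$ and the endpoint term disappears --- this is the sole use of the nontrapping hypothesis. After reordering the iterated integrals into a single $\int_{0}^{1}$ (and an integration by parts in $s$ to reduce to the $0$-jet of $\calP^{\ast}\varphi$ along the curve, whose boundary contributions at $s = 1$ again vanish), I average against $\eta_{1}$ and change variables from $(y_{1}, s)$ to $(z, \tau)$, where $z = \bfx(y, y_{1}, s)$ and $\tau$ parametrizes the remaining one-dimensional fiber; by admissibility of $\bfx$ this is a diffeomorphism away from $s = 0$, with Jacobian modeled on that of the dilation $y_{1} \mapsto y + s(y_{1} - y)$, namely $\sim s^{d}$. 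This yields
\[
\varphi(y) = \int_{U} \tilde{K}(y, z) \, \calP^{\ast}\varphi(z) \, \ud z \eqc \calS^{\ast}(\calP^{\ast}\varphi)(y),
\]
where the $K$-th block of $\tilde{K}(y, z)$ has size $\aleq \abs{z - y}^{m_{K} - d}$ near the diagonal --- coming from a fiber integral $\int \tau^{m_{K} - 1 - d} \, \ud \tau$ over an interval of length $\sim \abs{z - y}$ --- hence is locally integrable since $m_{K} \geq 1$. I then \emph{define} $\calS$ to be the operator whose kernel $K(x, y)$ is the conjugate transpose of $\tilde{K}(y, x)$; the construction gives at once the support property $\supp K(\cdot, y) \sbeq \br{\bigcup_{y_{1} \in \supp \eta_{1}} \bfx(y, y_{1}, [0, 1])} \sbeq \br{\bigcup_{y_{1} \in U_{1}} \bfx(y, y_{1}, [0, 1])}$ and the representation formula $\varphi = \calS^{\ast} \calP^{\ast} \varphi$ on $C^{\infty}_{c}(U; \bbC^{r_{0}})$.

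The remaining assertions then follow by soft arguments. For $f, \varphi \in C^{\infty}_{c}(U; \bbC^{r_{0}})$, using the boundedness below to know $\calS f$ is an honest function, $\brk{\calP \calS f, \varphi} = \brk{\calS f, \calP^{\ast} \varphi} = \brk{f, \calS^{\ast} \calP^{\ast} \varphi} = \brk{f, \varphi}$, so $\calP \calS f = f$ in $\calD'(U)$ and hence pointwise. For the boundedness it suffices to treat $s = 0$: for the $K$-th output component, $\rd_{x}^{\alpha} K_{K}(x, y)$ is a weakly singular kernel of order $d - (m_{K} - \abs{\alpha}) < d$ when $\abs{\alpha} < m_{K}$, hence bounded on $L^{p}$ by Schur's test together with the support property, and a Calder\'on--Zygmund kernel when $\abs{\alpha} = m_{K}$, the size, H\"ormander, and cancellation conditions following from admissibility of $\bfx$, which forces $K$ to agree near the diagonal with the explicit Bogovskii/conic-type kernels \eqref{eq:bog-0}--\eqref{eq:conic-0} up to lower-order errors. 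General $s \in \bbR$, $p \in (1, \infty)$ then follow by complex interpolation and duality within the scale $W^{s,p}/\td{W}^{s,p}$ of Section~\ref{subsec:ftn-sp}; the dual mapping property of $\calS^{\ast}$, and the Friedrich-type inequality $\nrm{\varphi}_{\td{W}^{-s,p'}(U;\bbC^{r_{0}})} = \nrm{\calS^{\ast}\calP^{\ast}\varphi}_{\td{W}^{-s,p'}(U;\bbC^{r_{0}})} \aleq \nrm{\calP^{\ast}\varphi}_{\td{W}^{-s-m_{1},p'}(U) \times \cdots \times \td{W}^{-s-m_{s_{0}},p'}(U)}$, then follow from the identity $\calS^{\ast}\calP^{\ast} = \mathrm{id}$, which persists on $\td{W}^{-s,p'}(U;\bbC^{r_{0}})$ by density of $C^{\infty}_{c}$ and continuity. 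Finally, if $\bfZ \in \td{W}^{-s,p'}(U;\bbC^{r_{0}})$ with $\calP^{\ast}\bfZ = 0$ in $\calD'(\td{U})$, then $\calP^{\ast}$ maps $\td{W}^{-s,p'}(U;\bbC^{r_{0}})$ boundedly into $\td{W}^{-s-m_{1},p'}(U) \times \cdots$, so $\calP^{\ast}\bfZ = 0$ there, and the extended identity gives $\bfZ = \calS^{\ast}\calP^{\ast}\bfZ = 0$.

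The main obstacle, consuming the bulk of the work, is converting the abstract curve-recovery functional of \ref{hyp:crc} into a bona fide kernel on $U \times U$ with the correct order of singularity and the Calder\'on--Zygmund cancellation property: this is exactly where the admissibility conditions \ref{hyp:x-1}--\ref{hyp:x-3} on the family $\bfx$ enter, both to control the change of variables near $s = 0$ and to license the comparison with the model kernels \eqref{eq:bog-0}--\eqref{eq:conic-0}. Everything else is either soft (duality, density) or standard (singular-integral bounds, interpolation).
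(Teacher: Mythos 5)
Your construction of $\calS$ follows the paper's architecture: the rough curve-supported kernel from \ref{hyp:crc} by duality, smooth averaging against $\eta_{1}\in C^{\infty}_{c}(U_{1})$, the nontrapping hypothesis to kill the endpoint term, and the change of variables $(y_{1},s)\mapsto z=\bfx(y,y_{1},s)$ with Jacobian $\sim s^{-d}$ to exhibit the $|z-y|^{m_{K}-d}$ singularity. The one genuine divergence in route is the boundedness step: the paper does not verify Calder\'on--Zygmund conditions but instead proves full symbol bounds $|\rd_{y}^{\alpha}\rd_{\xi}^{\beta}a_{\eta}(\xi,y)|\aleq(L_{\eta}^{-1}+|\xi|)^{-m-|\beta|}$ (Propositions~\ref{prop:sio} and \ref{prop:sio-symb}) and invokes pseudodifferential calculus. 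Your CZ-plus-interpolation route is viable but under-justified for the full range $s\in\bbR$: duality and interpolation from the $s=0$ case only cover $-m_{K}\leq s\leq 0$; to reach arbitrary $s$ you need estimates on $\rd_{y}^{\alpha}\rd_{x}^{\beta}K(x,y)$ in \emph{both} variables (equivalently, symbol estimates), which is exactly what the paper's Lemma~\ref{lem:sio-2} and Lemma~\ref{lem:sio2symb} supply.

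There is, however, a genuine gap in your proofs of Parts~(1) and (3). The identity $\varphi=\calS^{\ast}\calP^{\ast}\varphi$, constructed and extended by density as you describe, holds in $W^{-s,p'}(U)$ --- that is, as distributions \emph{on} $U$ --- because $\calS^{\ast}$ maps into $W^{-s,p'}(U)$, not into $\td{W}^{-s,p'}(U)$. For $\varphi\in\td{W}^{-s,p'}(U)$ (a distribution on $\bbR^{d}$ supported in $\br{U}$) this only controls $\nrm{\varphi}_{W^{-s,p'}(U)}$, which can be strictly smaller than $\nrm{\varphi}_{\td{W}^{-s,p'}(U)}$; in particular, for $s$ large a nonzero $\bfZ$ supported in $\rd U$ satisfies $\bfZ=0$ in $W^{-s,p'}(U)$, so your final step "$\bfZ=\calS^{\ast}\calP^{\ast}\bfZ=0$" does not conclude, and your claimed equality $\nrm{\varphi}_{\td{W}^{-s,p'}(U)}=\nrm{\calS^{\ast}\calP^{\ast}\varphi}_{\td{W}^{-s,p'}(U)}$ is not meaningful since $\calS^{\ast}\calP^{\ast}\varphi$ does not naturally live in $\td{W}^{-s,p'}(U)$. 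This is precisely why the theorem assumes \ref{hyp:crc} for all $y$ in a \emph{neighborhood} of $\br{U}$ --- a hypothesis your argument never uses. The paper's fix is to run the entire construction on a larger open set $\td{U}\supseteq\br{U}$, obtaining the representation formula and the bound $\nrm{\varphi}_{\td{W}^{-s,p'}(U)}\aleq\nrm{\varphi}_{W^{-s,p'}(\td{U})}\aleq\nrm{\calP^{\ast}\varphi}_{\td{W}^{-s-m_{1},p'}(\td{U})\times\cdots}\leq\nrm{\calP^{\ast}\varphi}_{\td{W}^{-s-m_{1},p'}(U)\times\cdots}$, and likewise $\bfZ=0$ in $W^{-s,p'}(\td{U})$, which together with $\supp\bfZ\subseteq\br{U}\subseteq\td{U}$ does force $\bfZ=0$. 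You need to incorporate this enlargement; as written, Parts~(1) and (3) are only proved with $W^{-s,p'}(U)$ in place of $\td{W}^{-s,p'}(U)$ on the left-hand side.
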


\begin{theorem} [Bogovskii-type solution operators, representation formulas, and Poincar\'e-type inequalities]\label{thm:main-summary-bogovskii}
Let $U$ be a connected bounded open subset of $\bbR^{d}$. Let $\calP$ satisfy \ref{hyp:crc} for an admissible family of curves $\bfx = \bfx(y, y_{1}, s)$ (see Section~\ref{subsec:crc-q} for the precise formulation). Assume that $\br{U}$ is \emph{$\bfx$-star-shaped} with respect to $\br{U_{1}}$, where $U_{1}$ is an open subset of $U$ such that $\br{U_{1}} \subseteq U$, in the sense that
\begin{equation} \label{eq:main-summary-x-starshaped}
	\br{\bigcup_{y \in \br{U}, \, y_{1} \in \br{U_{1}}} \bfx(y, y_{1}, [0, 1])} \subseteq \br{U}.
\end{equation}
Then the following holds.
\begin{enumerate}
\item {\bf Cokernel in $W^{-s, p'}(U)$.} For any $1 < p < +\infty$ and $s \in \bbR$, define the cokernel in $W^{-s, p'}(U)$ to be:
\begin{equation} \label{eq:sob-coker}
	\ker_{W^{-s, p'}(U)} \calP^{\ast} := \set{\bfZ \in W^{-s, p'}(U) : \calP^{\ast} \bfZ = 0 \hbox{ in } \calD'(U)}.
\end{equation}
We have the \emph{invariance property}
\begin{equation*}
\ker_{W^{-s, p'}(U)} \calP^{\ast} = \ker \calP^{\ast},
\end{equation*}
and the \emph{finite-dimensional property}
\begin{equation} \label{eq:main-summary-bogovkii-fc}
	\dim \ker \calP^{\ast}(U) < + \infty.
\end{equation}
where $\ker \calP^{\ast}$ is the formal cokernel of $\calP$ defined in \eqref{eq:formal-coker}. Moreover, for any open subset $V \subseteq U$, the restriction of $\ker \calP^{\ast}$ to $V$, i.e.,
\begin{equation*}
	\ker \calP^{\ast} |_{V} \ceq \set{\bfZ |_{V} : \bfZ \in \ker \calP^{\ast}}
\end{equation*}
has the same dimension as $\ker \calP^{\ast}$.

\item {\bf Integral solution formula under orthogonality conditions.} 
Consider a family $w_{\bfA}(x) \in C^{\infty}_{c}(U_{1}; \bbC^{r_{0}})$ $(\bfA \in \set{1, \ldots, \dim \ker \calP^{\ast}})$ satisfying $\brk{w_{\bfA}, \bfZ^{\bfA'}} = \dlt_{\bfA}^{\bfA'}$ for some basis $\set{\bfZ^{\bfA'}}$ of $\ker \calP^{\ast}$. Then there exists a locally integrable integral kernel $\td{K} : U \times U \to \bbC^{s_{0} \times r_{0}}$ with the support property
\begin{equation*}
\supp \td{K}(\cdot, y) \subseteq \br{\bigcup_{y_{1} \in \br{U_{1}}} \bfx(y, y_{1}, [0, 1])} \subseteq \br{U} \quad \hbox{ for every } y \in U,
\end{equation*}
(where the last inclusion follows simply from \eqref{eq:main-summary-x-starshaped}) such that the integral operator $\td{\calS} f (x) \ceq \int_{U} \td{K}(x, y) f(y) \, \ud y$ for $f \in C^{\infty}_{c}(U; \bbC^{s_0})$ satisfies
\begin{equation*}
	\calP \td{\calS} f = f - \sum_{\bfA \in \set{1, \ldots, \dim \ker \calP^{\ast}}} w_{\bfA} \brk{\bfZ^{\bfA}, f}  \quad \hbox{ for all } f \in C^{\infty}_{c}(U; \bbC^{s_{0}}).
\end{equation*}
Moreover, for any $1 < p < +\infty$ and $s \in \bbR$, $\td{\calS}$ extends to a bounded operator from $\td{W}^{s, p}(U; \bbC^{r_{0}})$ to $\td{W}^{s+m_{1}}(U) \times \cdots \times \td{W}^{s+m_{s_{0}}}(U)$.
\item {\bf Integral representation formula \& Poincar\'e-type inequality under orthogonality conditions.} Dually, we have
\begin{equation*}
\varphi = \td{\calS}^{\ast} \calP^{\ast} \varphi + \sum_{\bfA \in \set{1, \ldots, \dim \ker \calP^{\ast}}} \bfZ^{\bfA} \brk{w_{\bfA}, \varphi} \quad \hbox{ for all } \varphi \in C^{\infty}(\br{U}; \bbC^{r_{0}}).
\end{equation*}
Furthermore, for any $1 < p < +\infty$ and $s \in \bbR$, $\td{\calS}^{\ast}$ defines a bounded operator from $W^{-s-m_{1},p}(U) \times \cdots \times W^{-s-m_{s_{0}},p'}(U)$ to $W^{-s, p'}(U; \bbC^{r_{0}})$, and the following \emph{Poincar\'e-type inequality} holds:
\begin{equation*}
	\nrm{\varphi - \sum_{\bfA} \bfZ^{\bfA} \brk{w_{\bfA}, \varphi}}_{W^{-s, p'}(U; \bbC^{r_{0}})} \aleq \nrm{\calP^{\ast} \varphi}_{W^{-s-m_{1}, p'}(U) \times \cdots \times W^{-s-m_{s_{0}}, p'}(U)} \quad \hbox{ for all } \varphi \in W^{-s, p'}(U; \bbC^{r_{0}}).
\end{equation*}
\end{enumerate}
\end{theorem}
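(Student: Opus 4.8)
The plan is to deduce Theorem~\ref{thm:main-summary-bogovskii} from the recovery-on-curves hypothesis \ref{hyp:crc} by the same ``average over a family of curves'' construction that underlies \eqref{eq:bog-0}, and then read off the cokernel statements as corollaries. First I would fix the family $\bfx(y,y_{1},s)$ and the weights $\{w_{\bfA}\}$, and build the kernel $\td K(x,y)$ by starting from the elementary one-curve formula that \ref{hyp:crc} provides: for each fixed $y_{1}$, \ref{hyp:crc} recovers $\vphi(\bfx(y,y_{1},0))=\vphi(y)$ as a continuous linear functional in $\calP^{\ast}\vphi$ along the curve and in the jet of $\vphi$ at $\bfx(y,y_{1},1)=y_{1}$. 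Writing this functional explicitly as an integral over $s\in[0,1]$ against (derivatives of) $\calP^{\ast}\vphi$, dualizing in $\vphi$, and then averaging in $y_{1}$ against a cutoff supported in $\ol{U_{1}}$ — after first \emph{correcting} that cutoff so that it annihilates $\ker\calP^{\ast}$, which is exactly where the weights $w_{\bfA}$ and the basis $\{\bfZ^{\bfA}\}$ enter — produces $\td K(x,y)$ and hence $\td\calS$. The $\bfx$-star-shapedness condition \eqref{eq:main-summary-x-starshaped} guarantees the claimed support property, and in particular that $\td\calS$ maps into spaces of functions supported in $\ol U$.

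Next I would establish the algebraic identity $\calP\td\calS f = f - \sum_{\bfA} w_{\bfA}\brk{\bfZ^{\bfA},f}$ for $f\in C^{\infty}_{c}(U;\bbC^{s_{0}})$. The cleanest route is by duality: test against $\vphi\in C^{\infty}(\ol U;\bbC^{r_{0}})$, use the definition of $\td\calS^{\ast}$, and invoke the one-curve recovery identity fiber-by-fiber in $y_{1}$; the averaging and the corrected cutoff then collapse the computation to $\vphi = \td\calS^{\ast}\calP^{\ast}\vphi + \sum_{\bfA}\bfZ^{\bfA}\brk{w_{\bfA},\vphi}$, which is precisely part (3), and pairing back with $f$ gives part (2). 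The boundedness claims — $\td\calS:\td W^{s,p}(U)\to\td W^{s+m_{1},p}(U)\times\cdots$ and its dual — should follow because the kernel built from \ref{hyp:crc}, after the $y_{1}$-average, has Calder\'on--Zygmund structure after $m_{K}$ derivatives in $x$ (as in the divergence case, properties (3) of \eqref{eq:bog-0} and \eqref{eq:conic-0}); I expect this to be handled by a general kernel estimate proved earlier in the paper, applied here with the admissibility conditions \ref{hyp:x-1}--\ref{hyp:x-3} ensuring the curves are close enough to line segments for the scaling to work. The Poincar\'e-type inequality is then immediate from $\vphi - \sum_{\bfA}\bfZ^{\bfA}\brk{w_{\bfA},\vphi} = \td\calS^{\ast}\calP^{\ast}\vphi$ together with the operator-norm bound on $\td\calS^{\ast}$, first for $\vphi\in C^{\infty}(\ol U)$ and then by density in $W^{-s,p'}(U)$.

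For part (1), I would argue as follows. The finite-dimensionality \eqref{eq:main-summary-bogovkii-fc}: if $\bfZ\in\ker\calP^{\ast}(U)$, then applying the representation identity of part (3) to (mollified truncations of) $\bfZ$ and using $\calP^{\ast}\bfZ=0$ forces $\bfZ = \sum_{\bfA}\bfZ^{\bfA}\brk{w_{\bfA},\bfZ}$, so $\ker\calP^{\ast}$ is spanned by $\{\bfZ^{\bfA}\}$ and is finite-dimensional — one does have to check that the identity, stated for $C^{\infty}(\ol U;\bbC^{r_{0}})$, extends to the relevant class of $\bfZ$, which is a routine density/regularity argument using that $\calP^{\ast}\bfZ=0$ and that $U$ is bounded. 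For the invariance $\ker_{W^{-s,p'}(U)}\calP^{\ast}=\ker\calP^{\ast}$: the inclusion $\supseteq$ is trivial, and for $\subseteq$ one shows a distributional solution of $\calP^{\ast}\bfZ=0$ in $W^{-s,p'}$ is automatically smooth up to the boundary — this is where ellipticity-type regularity (a consequence of \ref{hyp:fc-0}, which \ref{hyp:crc} encodes) is used; alternatively, dualize the solution operator $\td\calS$ to express $\bfZ$ via the right-inverse identity and read off smoothness from the mapping properties of $\td\calS^{\ast}$. Finally, the equality of dimensions under restriction to an open $V\subseteq U$ follows from unique continuation for $\calP^{\ast}$ — itself a consequence of the full-rank-on-complex-covectors condition \ref{hyp:fc-0} — which makes the restriction map $\ker\calP^{\ast}\to\ker\calP^{\ast}|_{V}$ injective, hence bijective.

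\textbf{Main obstacle.} The delicate point is not the formal identity — that is essentially \ref{hyp:crc} rewritten — but the mapping properties: verifying that the kernel obtained by averaging the one-curve recovery functional over $y_{1}$ genuinely has the Calder\'on--Zygmund structure (after the appropriate number of $x$-derivatives) on spaces of functions supported in $\ol U$, uniformly, and that the boundary geometry of $U$ and the admissibility of $\bfx$ do not degrade this. I would expect the real work to be packaged into a general kernel lemma (stated earlier in the paper) about smooth averages of such ``recovery kernels,'' and the proof here to consist of checking its hypotheses; the $\ker\calP^{\ast}$-correction of the cutoff and the density arguments in part (1) are comparatively routine once that lemma is in hand.
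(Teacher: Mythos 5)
The formal skeleton is right for parts (2)--(3) only in the \emph{completely integrable} case, and the central construction as you describe it does not work in general; the missing step is precisely where most of the work in the paper lies. After averaging the one-curve recovery identity in $y_{1}$ against a weight $\eta$, the error term is the operator $\calB_{\eta}$ with kernel $b_{\eta}(x,y)=\sum_{\bfA\in\calA}\int (g_{\bfA})(x,y_{1})\,\eta(y_{1})\,Z^{\bfA}(y,y_{1})\,\ud y_{1}$, where $Z^{\bfA}(y,y_{1})$ is the fundamental matrix of the augmented ODE system along the curve from $y$ to $y_{1}$ and $\#\calA$ may strictly exceed $\dim\ker\calP^{\ast}$. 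Its $y$-dependence factors through $\ker\calP^{\ast}$ only when the augmented system is completely integrable (Lemma~\ref{lem:b-y1-coker}.(2)); otherwise no choice of averaging weight --- ``corrected'' or not --- turns $\calB_{\eta}$ into $\sum_{\bfA}w_{\bfA}\brk{\bfZ^{\bfA},\cdot}$. What averaging gives for free is only that $\calB_{\eta}$ is smoothing with range supported in $\supp\eta$. The paper therefore constructs an additional correction operator $\calQ$ (Theorem~\ref{thm:full-sol}): approximate $\calB_{\eta}$ by finite-rank operators up to an error $\calE_{0}$ of small operator norm (Lemma~\ref{lem:B-eta}.(2)), prove the Poincar\'e-type inequality \eqref{eq:poincare-intro} by a Rellich--Kondrachov compactness/contradiction argument (Proposition~\ref{prop:poincare}), convert it via Hahn--Banach into existence of compactly supported special solutions of $\calP u = g$ for $g\perp\ker\calP^{\ast}$ (Corollary~\ref{cor:abstract-solvability}), mollify these, and invert $I-\calE$ by a Neumann series. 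None of this is in your proposal, and it cannot be absorbed into ``checking the hypotheses of a kernel lemma'': the bound on $\calQ$ is non-effective precisely because of the compactness step. (Even in the completely integrable case, matching an arbitrarily \emph{prescribed} family $w_{\bfA}$ requires this correction; cf.\ Remark~\ref{rem:full-sol-w}.)

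Two further problems in part (1). Your finite-dimensionality argument is circular: you deduce $\bfZ=\sum_{\bfA}\bfZ^{\bfA}\brk{w_{\bfA},\bfZ}$ from the representation formula, but the $\bfZ^{\bfA}$ are by definition a basis of $\ker\calP^{\ast}$, so $\dim\ker\calP^{\ast}<\infty$ must be known before parts (2)--(3) can even be formulated; the paper instead gets $\dim\ker\calP^{\ast}\le\#\calA$ directly by testing cokernel elements against the one-curve Green's function identity (Lemma~\ref{lem:b-y1-coker}.(1)). And the restriction statement does not follow from ``unique continuation as a consequence of \ref{hyp:fc-0}'': the theorem is stated under \ref{hyp:crc} alone, and no unique continuation result for such systems is proved or cited anywhere. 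The correct (and much softer) argument is again Lemma~\ref{lem:b-y1-coker}.(1): $\bfZ(y)=\brk{b_{y_{1}}(\cdot,y),\bfZ}$ recovers $\bfZ(y)$ for every $y\in U$ from the finite jet of $\bfZ$ at a single point $y_{1}\in V$, so the restriction map to $V$ is injective. Your invariance argument via the representation formula $\bfZ=\calB_{\eta}^{\ast}\bfZ$ is the one the paper uses, and is fine once $\calB_{\eta}$ is correctly identified as the (non-finite-rank) smoothing error above rather than the prescribed rank-$\dim\ker\calP^{\ast}$ projection.
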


In Section~\ref{subsec:method} below, we give a more detailed description of the structure of the integral kernels $K(x, y)$ and $\td{K}(x, y)$, and summarize the proofs of Theorems~\ref{thm:main-summary-conic} and \ref{thm:main-summary-bogovskii}.

\begin{remark}[On the regularity of $U$]
The reader may find it amusing that neither theorem requires any regularity assumptions on the boundary of $U$. For Theorem~\ref{thm:main-summary-conic}, the nontrapping assumption is crucial. For Theorem~\ref{thm:main-summary-bogovskii}, the $\bfx$-star-shaped assumption in fact embodies some notion of regularity of $\rd U$. For instance, if $\bfx$ is the straight line segment $\ul{\bfx}(y, y_{1}, s) = y + s (y_{1} - y)$, then this assumption implies the uniform cone condition for $U$, which in turn implies that $\rd U$ is Lipschitz \cite[Section~1.2]{Gri}. See also Theorem~\ref{thm:low-reg-domain} for a result that applies to Lipschitz domains rather than those with \eqref{eq:main-summary-x-starshaped}.
\end{remark}

\subsubsection{Tools for verifying \ref{hyp:crc}: Graded augmented system and \ref{hyp:fc}}
Our second set of results provides tools for verifying \ref{hyp:crc} for a variety of under/overdetermined partial differential operators.

Our basic device is the notion of a (\emph{graded}) \emph{augmented system}, which generalizes a basic procedure for verifying \ref{hyp:crc} for the divergence operator on $\bbR^{d}$; see Section~\ref{subsec:ideas-div} and Remark~\ref{rem:intro-div:aug} below. It is also a generalization of the procedure used by Retshenyak \cite{Resh} to construct integral representation formulas for some overdetermined linear operators (see Remark~\ref{rem:resh}). Its precise formulation requires us to introduce some conventions and definitions. In what follows, we adopt the following index notation (which is consistent with Section~\ref{subsec:prelim-P}):
\begin{itemize}
\item $J \in \set{1, \ldots, r_{0}}$ (and its variants such as $J'$, etc.): index for components of $\varphi = (\varphi_{J})_{J = 1, \ldots, r_{0}}$
\item $K \in \set{1, \ldots, s_{0}}$ (and its variants such as $K'$, etc.): index for components of $\calP^{\ast} \varphi = ((\calP^{\ast} \varphi)_{K})_{K = 1, \ldots, s_{0}}$
\item $\bfA \in \calA$ (and its variants such as $\bfA'$, etc.): index for components of the augmented variables $(\Phi_{\bfA})_{\bfA \in \calA}$ (to be defined below).
\end{itemize}
As usual, we adopt the convention of summing up repeated upper and lower indices, unless otherwise stated. We also make the provision that {\bf a multi-index $\gmm$ in $\rd^{\gmm}$ is considered a lower index} (hence, $c^{\gmm} \rd^{\gmm} = \sum_{\gmm} c^{\gmm} \rd^{\gmm}$).

\begin{definition} [Graded augmented system] \label{def:aug}
Let $\calP$ be an $r_{0} \times s_{0}$-matrix-valued differential operator on an open subset $U \subseteq \bbR^{d}$, with $m_{K}$ denoting the order of $(\calP^{\ast} \varphi)_{K}$ for each $K \in \set{1, \ldots, s_{0}}$. Given $m_{K}' \in \bbZ_{\geq 0}$ for each $K \in \set{1, \ldots, s_{0}}$ and an $\bbC^{r_{0}}$-valued function $(\varphi_{J})_{J \in \set{1, \ldots, r_{0}}}$ on $U$, consider $(\Phi_{\bfA} = \Phi_{\bfA}(y))_{\bfA \in \calA}$ (called \emph{augmented variables}) satisfying the following properties:
\begin{enumerate}[label=($\Phi$-\arabic*), series=augcondition]
\item \label{hyp:aug1} {\bf $(\Phi_{\bfA})$ is an augmentation of $(\varphi_{J})$.} The index set $\calA$ contains $\set{1, \ldots, r_{0}}$; moreover, $\Phi_{J} = \varphi_{J}$ for $J \in \set{1, \ldots, r_{0}} \subseteq \calA$.
\item \label{hyp:aug2} {\bf $(\varphi_{J}) \mapsto (\Phi_{\bfA})$ is a differential operator.} There exist functions $c[\Phi_{\bfA}]^{(\alp, J)}$ on $U$, where $\alp$ is a multi-index and $J \in \set{1, \ldots, r_{0}}$, such that
\begin{equation*}
\Phi_{\bfA}(y) = c[\Phi_{\bfA}]^{(\alp, J)}(y) \rd^{\alp} \varphi_{J}(y),
\end{equation*}
for all $\bfA \in \calA$ and $y \in U$.
\item \label{hyp:aug3} {\bf $\Phi$ and $\calP^{\ast} \varphi$ solve a first-order PDE (augmented PDE system).} There exist matrix-valued $1$-forms $\tensor{(\bfB_{i})}{_{\bfA}^{\bfA'}}$ and $\tensor{(\bfC_{i})}{_{\bfA}^{(\gmm, K)}}$ on $U$, where $\gmm$ is a multi-index and $K \in \set{1, \ldots, s_{0}}$, such that
\begin{equation} \label{eq:aug-pde}
	\rd_{i} \Phi_{\bfA}(y) = \tensor{(\bfB_{i})}{_{\bfA}^{\bfA'}}(y) \Phi_{\bfA'}(y) + \tensor{(\bfC_{i})}{_{\bfA}^{(\gmm, K)}}(y) \rd^{\gmm} (\calP^{\ast} \varphi)_{K}(y),
\end{equation}
for all $\bfA \in \calA$, $i \in \set{1, \ldots, d}$ and $y \in U$. Furthermore,
\begin{equation*}
\tensor{(\bfC_{i})}{_{\bfA}^{(\gmm, K)}} = 0 \quad \hbox{ if } \abs{\gmm} > m_{K}'.
\end{equation*}
\item \label{hyp:aug4} {\bf Graded structure.} Define the \emph{degree} $d_{\bfA}$ of $\Phi_{\bfA}$ by
\begin{equation*}
d_{\bfA} \ceq - \max \set{\abs{\alp} : c[\Phi_{\bfA}]^{(\alp, J)} \neq 0 \hbox{ for some } J}.
\end{equation*}
In particular, $\varphi_{J}$ has degree $0$, i.e., $d_{J} = 0$ for $J \in \set{1, \ldots, r_{0}}$. We assume that:
\begin{itemize}
\item {\bf Graded structure for the augmented system.}
\begin{align*}
\tensor{(\bfB_{i})}{_{\bfA}^{\bfA'}} &= 0 \qquad \hbox{ if } d_{\bfA} >  d_{\bfA'} + 1, \\
\tensor{(\bfC_{i})}{_{\bfA}^{(\gmm, K)}} &= 0 \qquad \hbox{ if } d_{\bfA} > -m_{K} - \abs{\gmm} +1.
\end{align*}
\end{itemize}
\end{enumerate}

We write $N_{0} \ceq \max\set{\abs{d_{\bfA}}}_{\bfA \in \calA} + 1$ for the maximal degree\footnote{Here, $+1$ accounts for the derivative $\rd_{i}$ on the LHS of \eqref{eq:aug-pde}.} that occurs in \eqref{eq:aug-pde}.

We call a collection $(\calA, (\Phi_{\bfA})_{\bfA \in \calA}, \tensor{(\bfB_{i})}{_{\bfA}^{\bfA'}}, \tensor{(\bfC_{i})}{_{\bfA}^{(\gmm, K)}})$ satisfying \ref{hyp:aug1}--\ref{hyp:aug4} a (\emph{graded}) \emph{augmented system} for $\calP$.
\end{definition}

\begin{remark} [Graded structure]
The degree $d_{\bfA}$ is (minus) the number of derivatives falling on $\varphi$ in $\Phi_{\bfA}$. The graded structure for $\bfB_{i}$ is the natural requirement that, in the equation for one derivative of $\Phi_{\bfA} = c[\Phi_{\bfA}]^{(\alp, J)} \rd^{\alp} \varphi_{J}$, we do not see derivatives of $\varphi$ that are two orders higher. The graded structure for $\bfC_{i}$ is an analogous requirement, where we assign degree $-m_{K}-\abs{\gmm}$ to $\rd^{\gmm} (\calP^{\ast} \varphi)_{K}$.
\end{remark}

Given a graded augmented system $(\Phi_{\bfA})_{\bfA \in \calA}$, \ref{hyp:aug3} implies that each $\Phi_{\bfA}$ satisfies an ODE on each curve $\bfx(y, y_{1}, \cdot)$ of the following form:
\begin{equation} \label{eq:aug-ode}
	\frac{\ud}{\ud s} \left( \Phi_{\bfA} \circ \bfx \right) = \dot{\bfx}^{i} \left( \tensor{(\bfB_{i})}{_{\bfA}^{\bfA'}} \circ \bfx \right) \left( \Phi_{\bfA'}  \circ \bfx \right) + \dot{\bfx}^{i} \left( \tensor{(\bfC_{i})}{_{\bfA}^{(\gmm, K)}}\circ \bfx\right) \left( \rd^{\gmm} (\calP^{\ast} \varphi)_{K} \circ \bfx \right),
\end{equation}
where $\dot{\bfx}(y, y_{1}, s) = \rd_{s} \bfx(y, y_{1}, s)$. In particular, by Duhamel's principle (or variation of constants), we may express $\varphi_{J}$ at $y = \bfx(y, y_{1}, 0)$ as follows:
\begin{equation} \label{eq:varphi-duhamel}
\begin{aligned}
\varphi_{J}(y) &= - \int_{0}^{1}  {}^{(\bfx_{y, y_{1}})} \tensor{\bfPi}{_{J}^{\bfA}}(0, s) \dot{\bfx}^{i} \left( \tensor{(\bfC_{i})}{_{\bfA}^{(\gmm, K)}}  \circ \bfx \right) \left( (\rd^{\gmm} \calP^{\ast} \varphi)_{K} \circ \bfx \right) (y, y_{1}, s) \, \ud s  \\
&\peq + {}^{(\bfx_{y, y_{1}})} \tensor{\bfPi}{_{J}^{\bfA}}(0, 1) \Phi_{\bfA}(y_{1}),
\end{aligned}
\end{equation}
where ${}^{(\bfx_{y, y_{1}})} \tensor{\bfPi}{_{\bfA}^{\bfA'}}(s, t)$ is the fundamental matrix for $\frac{\ud}{\ud s} - \dot{\bfx}^{i} (\bfB_{i} \circ \bfx)$. This formula immediately implies \ref{hyp:crc}; it also gives a representation of any element $\bfZ \in \ker \calP^{\ast}$  (i.e., $\calP^{\ast} \bfZ = 0$) in terms of the values of $\Phi_{\bfA}$ at a point. In fact, we have the following result.

\begin{proposition} \label{prop:aug2crc}
Assume that $\calP$ possesses a graded augmented system $(\Phi_{\bfA})_{\bfA \in \calA}$, and that $\bfB_{i}$ and $\bfC_{i}$ and their derivatives are uniformly bounded on $U$. Then $\calP$ satisfies \ref{hyp:crc} for any admissible family of curves $\bfx$; more precisely, Theorems~\ref{thm:main-summary-conic} and \ref{thm:main-summary-bogovskii} are applicable. Moreover, for every $\bfZ \in \ker \calP^{\ast}$ and $y_{1} \in U$, we have
\begin{equation} \label{eq:aug2crc-Z}
	\bfZ_{J}(x) = {}^{(\bfx_{y, y_{1}})} \tensor{\bfPi}{_{J}^{\bfA}}(0, 1) \Phi_{\bfA}(y_{1}),
\end{equation}
In particular, $\dim \ker \calP^{\ast} \leq \# \calA$.
\end{proposition}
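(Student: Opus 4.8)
The plan is to take formula \eqref{eq:varphi-duhamel} --- already obtained above by applying Duhamel's principle to the ODE \eqref{eq:aug-ode} --- as the central identity and to extract all three assertions from it. First I would make the passage from \eqref{eq:aug-pde} to \eqref{eq:aug-ode} and \eqref{eq:varphi-duhamel} rigorous under the stated hypotheses: fixing $y$ and $y_{1}$ and restricting \eqref{eq:aug-pde} to the admissible curve $s \mapsto \bfx(y, y_{1}, s)$, the chain rule gives the linear ODE system \eqref{eq:aug-ode} for the $\bbC^{\# \calA}$-valued function $s \mapsto (\Phi_{\bfA} \circ \bfx)(y, y_{1}, s)$, with source term $\dot{\bfx}^{i} (\tensor{(\bfC_{i})}{_{\bfA}^{(\gmm, K)}} \circ \bfx)(\rd^{\gmm}(\calP^{\ast} \varphi)_{K} \circ \bfx)$. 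Since $\bfB_{i}$ and $\bfC_{i}$ --- together with their derivatives --- are uniformly bounded on $U$ and the family $\bfx$ is admissible (so $\bfx$ and $\dot{\bfx}$ are controlled in terms of $y, y_{1}, s$), the fundamental matrix ${}^{(\bfx_{y, y_{1}})} \tensor{\bfPi}{_{\bfA}^{\bfA'}}(s, t)$ of $\frac{\ud}{\ud s} - \dot{\bfx}^{i}(\bfB_{i} \circ \bfx)$ exists, depends smoothly on $(s, t, y, y_{1})$, and obeys --- via Gronwall --- bounds on itself and its derivatives that are uniform in $(y, y_{1})$. Variation of constants, combined with $\Phi_{J} = \varphi_{J}$ for $J \in \set{1, \ldots, r_{0}}$ and $\bfx(y, y_{1}, 0) = y$, then produces \eqref{eq:varphi-duhamel}.

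Next I would deduce \ref{hyp:crc} for the family $\bfx$ directly from \eqref{eq:varphi-duhamel}: its right-hand side is a linear expression in the jet of $\calP^{\ast} \varphi$ along $\bfx(y, y_{1}, \cdot)$ and in the jet of $\varphi$ at $y_{1}$ (by \ref{hyp:aug2} each $\Phi_{\bfA}(y_{1}) = c[\Phi_{\bfA}]^{(\alp, J)}(y_{1}) \rd^{\alp} \varphi_{J}(y_{1})$ is a fixed differential operator in $\varphi$ evaluated at $y_{1}$), whose coefficients --- the entries of ${}^{(\bfx_{y, y_{1}})} \bfPi$, the functions $\dot{\bfx}^{i}(\bfC_{i} \circ \bfx)$, and $c[\Phi_{\bfA}]$ --- are continuous and satisfy the uniform bounds just obtained, so the recovery of $\varphi(\bfx(0))$ is continuous in the sense demanded by the precise formulation in Section~\ref{subsec:crc-q}. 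Here the graded structure \ref{hyp:aug4} enters: tracking the degrees $d_{\bfA}$ through both ${}^{(\bfx)}\bfPi$ and the source term shows that the derivatives of $\calP^{\ast} \varphi$ and of $\varphi$ appearing in \eqref{eq:varphi-duhamel} have exactly the orders required there (equivalently, those yielding regularization of optimal order). Once \ref{hyp:crc} holds for an admissible family, Theorems~\ref{thm:main-summary-conic} and \ref{thm:main-summary-bogovskii} apply directly.

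For the representation of the formal cokernel, let $\bfZ \in \ker \calP^{\ast}$, so $\bfZ \in C^{\infty}(\br{U})$ with $\calP^{\ast} \bfZ = 0$. A graded augmented system is defined for an arbitrary smooth $\bbC^{r_{0}}$-valued function, so it applies with $\varphi = \bfZ$; since $\calP^{\ast} \bfZ \equiv 0$, the integral term in \eqref{eq:varphi-duhamel} vanishes identically, leaving precisely \eqref{eq:aug2crc-Z} (with $x = \bfx(y, y_{1}, 0) = y$), valid for every $y$ for which the curve $\bfx(y, y_{1}, \cdot)$ lies in the region where $\bfZ$ and the coefficients are defined --- which, in the settings of Theorems~\ref{thm:main-summary-conic} and \ref{thm:main-summary-bogovskii}, is a neighborhood of $\br{U}$. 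Fixing one such $y_{1}$, \eqref{eq:aug2crc-Z} exhibits the linear evaluation map $\bfZ \mapsto (\Phi_{\bfA}(y_{1}))_{\bfA \in \calA} \in \bbC^{\# \calA}$ as injective on $\ker \calP^{\ast}$ (if all $\Phi_{\bfA}(y_{1})$ vanish, then $\bfZ_{J}(y) = 0$ for every such $y$, hence $\bfZ = 0$), and therefore $\dim \ker \calP^{\ast} \leq \# \calA$.

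The only step that is not mechanical is matching \eqref{eq:varphi-duhamel} to the precise technical statement of \ref{hyp:crc} in Section~\ref{subsec:crc-q}: organizing the data --- the jet of $\calP^{\ast} \varphi$ along the curve and the jet of $\varphi$ at the far endpoint --- into the exact normal form used there, with the derivative counts dictated by the grading $d_{\bfA}$ and with the continuity and uniform-boundedness estimates stated quantitatively. Everything else --- the chain rule, the existence and Gronwall bounds for the fundamental matrix, variation of constants, and the injectivity argument for the dimension bound --- is routine.
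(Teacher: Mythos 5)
Your overall strategy is the paper's: restrict \eqref{eq:aug-pde} to a curve, solve by Duhamel to get \eqref{eq:varphi-duhamel}, read off the kernel $\tensor{S}{_{J}^{(\gmm, K)}}$ and the endpoint distribution $b_{y_{1}}$ from the fundamental matrix (this is Proposition~\ref{prop:crc-ODE}), and obtain \eqref{eq:aug2crc-Z} and the dimension bound by setting $\varphi = \bfZ$ so the integral term drops. That last part is fine. The gap is in the sentence where you declare the "existence and Gronwall bounds for the fundamental matrix" to be routine. The assertion "Theorems~\ref{thm:main-summary-conic} and \ref{thm:main-summary-bogovskii} are applicable" requires the \emph{quantitative} condition \ref{hyp:crc-x-q}, namely $\abs{\tensor{S}{_{J}^{(\gmm,K)}}(y,y_{1},s)} \leq A_{S}\abs{y_{1}-y}^{m_{K}+\abs{\gmm}} s^{m_{K}+\abs{\gmm}-1}$ together with the matching weighted bounds on $(\rd_{y}+\rd_{y_{1}})^{\alp}\rd_{y_{1}}^{\bt}$-derivatives; without the vanishing factor $s^{m_{K}+\abs{\gmm}-1}$ the averaged kernel $K_{\eta}$ is not a singular integral kernel of order $-m_{K}$ (cf.\ the hypothesis on $\uS^{\gmm}$ in Section~\ref{subsec:sio-hyp}), and Theorem~\ref{thm:smth-ker} does not apply. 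In view of \eqref{eq:S-gmm-ODE} and the graded vanishing of $\bfC_{i}$, this factor must come from an estimate of the form $\abs{\tensor{\bfPi}{_{J}^{\bfA}}(0,t)} \aleq (\abs{y_{1}-y}\,t)^{-d_{\bfA}}$, i.e.\ a degree-weighted bound on the fundamental matrix that \emph{vanishes} as $t \to 0$ for $d_{\bfA} < 0$. Gronwall gives only uniform boundedness of $\bfPi$, not this vanishing.

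Moreover, one cannot repair this by running Gronwall on the weighted quantities $s^{-d_{\bfA}}\abs{\Phi_{\bfA}}$: the entries of $\dot{\bfx}^{i}(\bfB_{i}\circ\bfx)$ with $d_{\bfA} = d_{\bfA'}+1$ (e.g.\ the identity blocks \eqref{eq:fc2rc-const-hom-Bi} in a maximal system) are merely bounded, and in the weighted equation they contribute a factor $s^{-1}$ which is not integrable near $s=0$, so the Gronwall exponent diverges. The paper's Proposition~\ref{prop:ode} gets around this precisely by exploiting that this degree-raising part of $\bfB$ is nilpotent (strictly decreasing degree chains of length $\leq \#\calA$, so the bound is \emph{iterated} finitely many times rather than exponentiated), while the remaining, $L^{1}$-type part $\nnrm{B}_{(1)}$ is absorbed after splitting $[0,1]$ into finitely many intervals on which it is small; the higher $(y,y_{1})$-derivative bounds then follow by differentiating the integral equation \eqref{eq:ode2crc-fund-mat}, rescaling $\tilde t = \abs{y_{1}-y}t$, and inducting (Proposition~\ref{prop:ode2crc}). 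This weighted, nilpotency-based ODE analysis is the actual content of the proposition and is missing from your proposal; as written, your argument establishes only the qualitative \ref{hyp:crc-full}, not the applicability of Theorems~\ref{thm:main-summary-conic} and \ref{thm:main-summary-bogovskii}.
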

For the precise formulation and proof, see Section~\ref{sec:ode}, in particular, Proposition~\ref{prop:ode2crc} and Remark~\ref{rem:coeff-nrm-Ck}.

In view of the bound $\dim \calP^{\ast} \leq \# \calA$, it is of interest to ask when equality holds. The following result answers this question under reasonable assumptions:
\begin{proposition} \label{prop:zero-curv}
Let $U$ be a simply connected open subset of $\bbR^{d}$. Assume that $\calP$ possesses a graded augmented system $(\Phi_{\bfA})_{\bfA \in \calA}$, and that $\bfB_{i}$ and $\bfC_{i}$ and their derivatives are uniformly bounded on $U$. Then $\dim \ker \calP^{\ast} = \# \calA$ if and only if the following condition (called the \emph{zero curvature condition}) holds for all $x \in U$, $i, j = 1, \ldots, d$, and $\bfA, \bfA' \in \calA$:
\begin{equation} \label{eq:zero-curv}
\left(\rd_{i} \tensor{(\bfB_{j})}{_{\bfA}^{\bfA'}} - \rd_{j} \tensor{(\bfB_{i})}{_{\bfA}^{\bfA'}} 
+ \tensor{(\bfB_{i})}{_{\bfA}^{\bfA''}} \tensor{(\bfB_{j})}{_{\bfA''}^{\bfA'}}
- \tensor{(\bfB_{j})}{_{\bfA}^{\bfA''}} \tensor{(\bfB_{i})}{_{\bfA''}^{\bfA'}} \right)(x)= 0.
\end{equation}
\end{proposition}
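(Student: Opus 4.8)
The plan is to reduce the statement to a fact about the overdetermined linear first-order system obtained by dropping the $\bfC$-term in \eqref{eq:aug-pde}. Let $\calV$ be the space of $\Phi \in C^{\infty}(U; \bbC^{\#\calA})$ solving $\rd_{i} \Phi_{\bfA} = \tensor{(\bfB_{i})}{_{\bfA}^{\bfA'}} \Phi_{\bfA'}$ on $U$. Writing $\Phi_{\bfA}[\bfZ] \ceq c[\Phi_{\bfA}]^{(\alp, J)} \rd^{\alp} \bfZ_{J}$, Proposition~\ref{prop:aug2crc} (via \eqref{eq:aug2crc-Z}, and the fact that \eqref{eq:aug-pde} is an identity valid for every smooth function) gives a linear map $\bfZ \mapsto (\Phi_{\bfA}[\bfZ])_{\bfA \in \calA}$ from $\ker \calP^{\ast}$ into $\calV$: it lands in $\calV$ because $\calP^{\ast} \bfZ = 0$ annihilates the $\bfC$-term, and it is injective since $\Phi_{J}[\bfZ] = \bfZ_{J}$ for $J \in \set{1, \ldots, r_{0}}$. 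Since $U$ is connected, a member of $\calV$ is determined by its value at a single point (restrict the system to a path and use uniqueness for linear ODE), so $\dim \calV \leq \#\calA$; hence $\dim \ker \calP^{\ast} \leq \dim \calV \leq \#\calA$, recovering the a priori bound.

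For the ``only if'' direction, assume $\dim \ker \calP^{\ast} = \#\calA$; then the chain above forces $\dim \calV = \#\calA$. I would choose a basis of $\calV$ and assemble it into the columns of $\Phi \colon U \to \bbC^{\#\calA \times \#\calA}$; the point-value determination makes $\Phi(x)$ invertible for every $x$, and $\rd_{i} \Phi = \bfB_{i} \Phi$ gives $\bfB_{i} = (\rd_{i} \Phi) \Phi^{-1}$. Equating the mixed second derivatives $\rd_{i} \rd_{j} \Phi = \rd_{j} \rd_{i} \Phi$ then yields the zero curvature condition \eqref{eq:zero-curv} pointwise; simple connectedness plays no role here.

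For the ``if'' direction, assume \eqref{eq:zero-curv}. Since $U$ is simply connected, the system defining $\calV$ is Frobenius-integrable, so for each $y_{1} \in U$ and $v \in \bbC^{\#\calA}$ there is a unique global $\Phi^{v} \in \calV$ with $\Phi^{v}(y_{1}) = v$; in particular $\dim \calV = \#\calA$. It then suffices to prove that the injection $\ker \calP^{\ast} \hookrightarrow \calV$ is onto. Given $\Phi \in \calV$, I would put $\bfZ_{J} \ceq \Phi_{J}$ for $J \in \set{1, \ldots, r_{0}}$ and show that $\Phi_{\bfA}[\bfZ] = \Phi_{\bfA}$ for all $\bfA \in \calA$ and that $\calP^{\ast} \bfZ = 0$, so that $\bfZ \in \ker \calP^{\ast}$ is a preimage of $\Phi$. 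The verification proceeds by induction along the degree filtration, starting from $d_{\bfA} = 0$ (where $\Phi_{\bfA}[\bfZ] = \bfZ_{\bfA} = \Phi_{\bfA}$ holds by definition): at each step one feeds $\varphi = \bfZ$ into \eqref{eq:aug-pde}, uses $\rd_{i} \Phi = \bfB_{i} \Phi$ (valid on $\calV$) and the graded structure \ref{hyp:aug4} to express the new degree in terms of degrees already matched, and reads off both $\Phi_{\bfA}[\bfZ] = \Phi_{\bfA}$ and the vanishing of the corresponding part of $\calP^{\ast} \bfZ$.

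The main obstacle is precisely this last reconstruction, i.e., the surjectivity of $\ker \calP^{\ast} \hookrightarrow \calV$: one must show that every solution of the homogeneous augmented system genuinely comes from a potential in $\ker \calP^{\ast}$, and this is where the graded structure enters essentially — one has to track carefully, degree by degree, how each $\rd_{i} \Phi_{\bfA}$ decomposes along the filtration, exploiting that the relations \eqref{eq:aug-pde} hold identically in $\varphi$, to rule out any residual inconsistency that would force $\calP^{\ast} \bfZ \neq 0$. By comparison, the bound $\dim \ker \calP^{\ast} \leq \#\calA$ and the ``only if'' implication are routine, and simple connectedness is used only to upgrade the infinitesimal condition \eqref{eq:zero-curv} to honest global solutions of $\rd_{i} \Phi = \bfB_{i} \Phi$.
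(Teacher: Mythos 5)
Your overall architecture coincides with the paper's (Section~\ref{subsec:conn}): view $\bfB_{i}$ as a connection on $U\times\bbC^{\#\calA}$, identify $\ker\calP^{\ast}$ with the space $\calV$ of parallel sections, and convert the dimension count into a flatness statement. Your ``only if'' direction is complete and in fact more elementary than the paper's, which appeals to triviality of the holonomy group and Ambrose--Singer; assembling a basis of $\calV$ into an everywhere-invertible fundamental matrix $\Phi$ and equating mixed partials of $\rd_{i}\Phi=\bfB_{i}\Phi$ is cleaner. (A small caveat: that computation yields $\rd_{i}\bfB_{j}-\rd_{j}\bfB_{i}=\bfB_{i}\bfB_{j}-\bfB_{j}\bfB_{i}$, which differs from \eqref{eq:zero-curv} by the sign of the commutator term; this appears to be a sign convention issue in the displayed curvature formula rather than anything substantive, but you should not claim to ``read off'' \eqref{eq:zero-curv} literally without noting it.)

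The genuine gap is the one you flag yourself: the surjectivity of $\ker\calP^{\ast}\hookrightarrow\calV$. To be fair, the paper asserts the corresponding claim (``a parallel section of this connection gives an element in $\ker\calP^{\ast}$'') in the paragraph preceding its proof with no argument, so you have matched its level of rigor while being more candid about the difficulty --- but your proposed induction does not close under the bare axioms \ref{hyp:aug1}--\ref{hyp:aug4}. Subtracting the homogeneous equation from \eqref{eq:aug-pde} evaluated at $\varphi=\bfZ$ gives $\rd_{i}\Dlt_{\bfA}=\tensor{(\bfB_{i})}{_{\bfA}^{\bfA'}}\Dlt_{\bfA'}+\tensor{(\bfC_{i})}{_{\bfA}^{(\gmm,K)}}\rd^{\gmm}(\calP^{\ast}\bfZ)_{K}$ with $\Dlt_{J}=0$; even granting $\Dlt\equiv 0$, one only learns that the \emph{contractions} $\tensor{(\bfC_{i})}{_{\bfA}^{(\gmm,K)}}\rd^{\gmm}(\calP^{\ast}\bfZ)_{K}$ vanish, not that $\calP^{\ast}\bfZ$ itself does; components of $\calP^{\ast}\varphi$ that the augmented system never ``sees'' cannot be controlled. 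Concretely, take $d=2$ and $\calP u=\rd_{1}u^{1}+\rd_{2}u^{2}+u^{3}$, so $\calP^{\ast}\varphi=(-\rd_{1}\varphi,-\rd_{2}\varphi,\varphi)$: the system with $\calA=\set{\varphi}$, $\bfB=0$, and $\bfC$ picking out the first two components satisfies \ref{hyp:aug1}--\ref{hyp:aug4} and is flat, yet $\ker\calP^{\ast}=\set{0}$ while $\#\calA=1$. So the ``if'' direction requires an additional non-degeneracy hypothesis on the augmented system (namely that the vanishing of all the $\bfC$-contractions of the jet of $\calP^{\ast}\varphi$, together with $\Dlt\equiv 0$, forces $\calP^{\ast}\varphi=0$ --- which one can check by hand for every system in Appendix~\ref{sec:appl}), and under such a hypothesis your degree-by-degree reconstruction still needs to be written out carefully, since $\Dlt_{\bfA'}$ at degree $d_{\bfA}-1$ and the $\bfC$-contractions enter the same equation and must be disentangled.
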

For a proof, see Section~\ref{subsec:conn}, where we give a geometric interpretation of \eqref{eq:zero-curv} in terms the curvature of a connection on a vector bundle, a viewpoint that is of interest on its own. This motivates the following:
\begin{definition}[Completely integrable augmented systems] \label{def:aug-int}
We say that an augmented system is \emph{complete integrable} if the zero curvature condition \eqref{eq:zero-curv} is satisfied  for all $x \in U$, $i, j = 1, \ldots, d$, and $\bfA, \bfA' \in \calA$.
\end{definition}

For examples of completely integrable augmented systems, we refer to Appendix~\ref{sec:appl}, as well as Reshetnyak \cite{Resh, ReshBook} (see also Remark~\ref{rem:resh}).

Complete integrability, or more precisely $\# \calA = \dim \ker \calP^{\ast}$, leads to a simplification of the derivation of integral solution and representation formulas; see Remark~\ref{rem:intro:bogovskii}, Remark~\ref{rem:intro:full-sol-eff}, and Theorem~\ref{thm:full-sol-max} below. However, it is not necessary for the validity of Theorems~\ref{thm:main-summary-conic} and \ref{thm:main-summary-bogovskii}. In fact, to handle a larger class of operators $\calP$, it turns out to be useful to consider the other extreme case, namely, graded augmented systems with the \emph{maximal} number of augmented variables with a given maximal degree $N_{0}$. 

\begin{definition}[Maximal graded augmented system] \label{def:aug-max}
Given an integer $N_{0} \geq \max_{K} m_{K}$, a graded augmented system with augmented variables $(\Phi_{\bfA})_{\bfA \in \calA} = (\rd^{\alp} \varphi_{J})_{(\alp, J) : 1 \leq J \leq r_{0}, \, \abs{\alp} \leq N_{0} - 1}$ consisting of all partial derivatives of $\varphi$ up to order $N_{0} -1$ (i.e., $\calA = \set{(\alp, J) : 1\leq J \leq r_{0}, \, \abs{\alp} \leq N_{0} - 1}$) is called a \emph{maximal graded augmented system}.
\end{definition}

A useful property of a maximal graded augmented system is that it is \emph{stable under lower order perturbations}, i.e., the same variables constitute a graded augmented system for $\td{\calP}$ as long as $((\td{\calP}-\calP)^{\ast} \varphi)_{K}$ is of order less than $m_{K}$. Observe that such a stability property is not evident for \ref{hyp:crc}, nor for completely integrable augmented systems.

We are now ready to formulate an algebraic sufficient condition for \ref{hyp:crc} in terms of the principal symbol $p^{\ast}(x, \xi)$ of $\calP^{\ast}$, which greatly facilitates the applicability of our theory (see, for instance, Theorem~\ref{thm:appl} below). To formulate this result, we begin with a suitable definition of the principal symbol of the matrix-valued operator $\calP^{\ast}$:

\begin{definition} \label{def:prin-symb}
Let $U$ be an open subset of $\bbR^{d}$, and let $\calP$ be an $r_{0} \times s_{0}$-matrix-valued differential operator on $U$. Suppose that $\calP$ and its adjoint $\calP^{\ast}$ can be written out in the form\footnote{Note that $\tensor{c[\calP^{\ast}]}{_{K}^{(\alp, J)}}(x) = (-1)^{\abs{\alp}} \tensor{\br{c[\calP]}}{^{(\alp, J)}_{K}}(x)$.}
\begin{equation*}
(\calP u)^{J}(x) = \sum_{\alp} \tensor{c[\calP]}{^{(\alp, J)}_{K}}(x) \rd_{x}^{\alp} u^{K}(x), \qquad
	(\calP^{\ast} \varphi)_{K}(x) = \sum_{\alp} \tensor{c[\calP^{\ast}]}{_{K}^{(\alp, J)}}(x) \rd_{x}^{\alp} \varphi_{J}(x).
\end{equation*}
We define the \emph{principal parts} of $\calP$ and $\calP^{\ast}$, respectively, to be
\begin{equation*}
(\calP_{\prin} u)^{J}(x) \ceq \sum_{\alp : \abs{\alp} = m_{K}} \tensor{c[\calP]}{^{(\alp, J)}_{K}}(x) \rd_{x}^{\alp} u^{K}(x), \qquad
	(\calP_{\prin}^{\ast} \varphi)_{K}(x) = \sum_{\abs{\alp} = m_{K}} \tensor{c[\calP^{\ast}]}{_{K}^{(\alp, J)}}(x) \rd_{x}^{\alp} \varphi_{J}(x),
\end{equation*}
where we recall that $m_{K}$ is the order of the operator $\varphi \mapsto (\calP^{\ast} \varphi)_{K}$. Accordingly, we define the \emph{principal symbols} $p(x, \xi)$ and $p^{\ast}(x, \xi)$ of $\calP$ and $\calP^{\ast}$, respectively, to be
\begin{equation*}
    \tensor{p}{^{J}_{K}}(x, \xi) \ceq \sum_{\alp : \abs{\alp} = m_{K}} \tensor{c[\calP]}{^{(\alp, J)}_{K}}(x) i^{\abs{\alp}} \xi^{\alp}, \qquad
	\tensor{(p^{\ast})}{_{K}^{J}}(x, \xi) \ceq \sum_{\alp : \abs{\alp} = m_{K}} \tensor{c[\calP^{\ast}]}{_{K}^{(\alp, J)}}(x) i^{\abs{\alp}} \xi^{\alp}.
\end{equation*}
\end{definition}
In terms of this definition, we formulate the following algebraic condition:
\begin{enumerate}[label=(FC)]
\item \label{hyp:fc} For all $x \in U$ and $\xi \in \bbC^{d} \setminus \set{0}$, $p^{\ast}(x, \xi)$ is injective (or equivalently, $p^{\ast}(x, \xi)$ is full rank, or $p(x, \xi)$ is surjective, or $p(x, \xi)$ is full rank).
\end{enumerate}
Observe that \ref{hyp:fc} is stronger than over/underdetermined \emph{ellipticity}, which would be the same condition but only for $\xi \in \bbR^{d} \setminus \set{0}$. Our key result is:
\begin{theorem} \label{thm:alg-cond-intro}
Let $U$ be a connected open subset of $\bbR^{d}$, and let $\calP$ be an $r_{0} \times s_{0}$-matrix-valued differential operator on $U$ with smooth coefficients.
\begin{enumerate}
    \item Condition \ref{hyp:fc} implies the existence of a maximal graded augmented system $(\Phi_{\bfA})_{\bfA \in \calA}$. Hence, \ref{hyp:fc} implies \ref{hyp:crc} for any admissible family of curves, and Theorems~\ref{thm:main-summary-conic}--\ref{thm:main-summary-bogovskii} apply.
    \item If $p^{\ast}$ is independent of $x$ (i.e., $\calP_{\prin}$ has constant coefficients), then the following are equivalent:
    \begin{enumerate}
\item $p^{\ast}$ satisfies \ref{hyp:fc},
\item any $r_{0} \times s_{0}$-matrix-valued differential operator $\calP'$ on $U$ with principal symbol $p^{\ast}$ satisfies \ref{hyp:crc} for any admissible family of curves, and
\item the formal cokernel of $\calP_{\prin}$ (i.e., $\ker \calP_{\prin}^{\ast} \ceq \{ \bfZ \in C^{\infty}(\br{U}) : \calP^{\ast}_{\prin} \bfZ = 0 \hbox{ in } \calD'(U) \}$) is finite dimensional.
\end{enumerate}
\end{enumerate}
\end{theorem}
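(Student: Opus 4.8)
The plan is to prove part (1) first, then derive part (2) from it together with Proposition~\ref{prop:aug2crc} and the earlier analysis. For part (1), the heart of the matter is: given \ref{hyp:fc}, construct a maximal graded augmented system, i.e., show that for a sufficiently large integer $N_0$, the variables $\Phi_{(\alp, J)} = \rd^{\alp} \varphi_J$ with $\abs{\alp} \le N_0 - 1$ close up into a first-order system of the form \eqref{eq:aug-pde} with the graded structure \ref{hyp:aug4}. The only non-trivial closure issue is the top layer: for $\abs{\alp} = N_0 - 1$, we must express $\rd_i \rd^{\alp} \varphi_J$ (a derivative of order $N_0$) as a combination of the $\Phi_{\bfA'}$ (derivatives of $\varphi$ of order $\le N_0 - 1$) plus derivatives $\rd^{\gmm}(\calP^{\ast}\varphi)_K$ with $\abs{\gmm} \le m_K' := N_0 - m_K$. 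Equivalently, modulo lower-order terms (which are harmless by \ref{hyp:aug4} and may be absorbed into $\bfB_i$), every partial derivative of $\varphi$ of order exactly $N_0$ must lie in the span of $\{\rd^{\gmm}(\calP_{\prin}^{\ast}\varphi)_K : \abs{\gmm} = N_0 - m_K\}$, with coefficients that are (smooth functions times) constants. This is a purely algebraic statement about the symbol: letting $\xi \mapsto \xi^{\alp}$ identify homogeneous polynomials of degree $n$ with the symmetric power $\mathrm{Sym}^n(\bbC^d)^*$, the claim is that for $n = N_0$ large enough, the map
\[
\bigoplus_{K=1}^{s_0} \mathrm{Sym}^{n - m_K}(\bbC^d)^* \To \mathrm{Sym}^{n}(\bbC^d)^* \otimes \bbC^{r_0}, \qquad (q_K) \mapsto \sum_K q_K \cdot (p_{\prin}^{\ast})_K{}^{\bullet},
\]
is surjective. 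I would prove this surjectivity by a Nullstellensatz-type argument: \ref{hyp:fc} says exactly that the columns of $p^{\ast}(x, \cdot)$, viewed as a tuple of homogeneous polynomial vectors, have no common zero in $\bbC^d \setminus \{0\}$, so by the projective Nullstellensatz (applied to each of the $r_0$ scalar components, using that full rank of $p^{\ast}(x,\xi)$ forces some $s_0\times s_0$ minor to be nonzero, hence the ideal generated by the entries of any nonzero vector in the column span — more carefully, by the maximal minors — contains a power of the irrelevant ideal), every monomial $\xi^{\alp}$ of sufficiently high degree lies in the appropriate module generated by the symbol entries. The uniformity of $N_0$ in $x$ over a compact exhaustion, and the smoothness of the resulting coefficients $c[\Phi_{\bfA}]$, $\bfB_i$, $\bfC_i$ in $x$, follows because one can solve the relevant linear algebra problem by Cramer's rule once $N_0$ is fixed, and on a connected $U$ a single $N_0$ works (taking the max over a locally finite cover, or invoking that the minors are real-analytic… — here I would instead fix $N_0$ via the generic rank and handle the nowhere-vanishing via compactness of $\bbS^{d-1}$ in the constant-coefficient reduction, then perturb).

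Once the top-layer closure is in hand, verifying \ref{hyp:aug1}--\ref{hyp:aug3} is immediate and \ref{hyp:aug4} holds by construction since $\Phi_{(\alp,J)}$ has degree $-\abs{\alp}$ and differentiating raises $\abs{\alp}$ by exactly one. Then Proposition~\ref{prop:aug2crc} gives \ref{hyp:crc} and the applicability of Theorems~\ref{thm:main-summary-conic}--\ref{thm:main-summary-bogovskii}, completing part (1). I should be careful to confirm the hypothesis of Proposition~\ref{prop:aug2crc} that $\bfB_i, \bfC_i$ and their derivatives are bounded — this requires the coefficients of $\calP$ and all their derivatives to be bounded on $U$; for the clean statement I would either add this as a hypothesis or localize (as Theorem~\ref{thm:main-summary-conic}/\ref{thm:main-summary-bogovskii} and Remark~\ref{rem:coeff-nrm-Ck} permit).

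For part (2), assume $p^{\ast}$ is independent of $x$. The implication (a)$\Rightarrow$(b) is exactly part (1) applied to $\calP' $, since $\calP'$ has the same principal symbol $p^{\ast}$ and the maximal graded augmented system is stable under lower-order perturbations (as noted right after Definition~\ref{def:aug-max}). The implication (b)$\Rightarrow$(c): \ref{hyp:crc} for $\calP_{\prin}$ itself, via the recovery formula \eqref{eq:aug2crc-Z} of Proposition~\ref{prop:aug2crc} (or Proposition~\ref{prop:ode2crc}), expresses any $\bfZ \in \ker \calP_{\prin}^{\ast}$ in terms of the finitely many values $\Phi_{\bfA}(y_1)$, $\bfA \in \calA$, at a single point $y_1$; hence $\dim \ker \calP_{\prin}^{\ast} \le \#\calA < \infty$. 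For (c)$\Rightarrow$(a), I would argue contrapositively: if \ref{hyp:fc} fails, there is a nonzero $\xi_0 \in \bbC^d$ with $p^{\ast}(\xi_0)$ not full rank, so there is $0 \ne v \in \bbC^{r_0}$ with $v^{\top} p^{\ast}(\xi_0) = 0$ (a nonzero vector in the left kernel — equivalently $\xi_0$ is a common zero of the maximal minors). Then I claim the functions $x \mapsto v\, e^{i t\, \xi_0 \cdot x}$ for $t \in \bbC$ — or, to stay with $C^\infty(\br U)$-valued solutions and exploit that $\xi_0$ may be complex, a suitable infinite family obtained by differentiating in $t$ — all lie in $\ker \calP_{\prin}^{\ast}$: indeed $\calP_{\prin}^{\ast}(v e^{i t \xi_0 \cdot x}) = t^{m} (v^{\top}(\ldots)) = 0$ component-wise because each component of order $m_K$ produces the factor $(p^{\ast}(\xi_0))_K{}^{J} v_J$, up to scaling, which vanishes. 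Varying $t$ over $\bbC$ (or over $\bbR$, then noting $\xi_0 \notin \bbR^d$ in general) gives an infinite-dimensional space of solutions, unless $\xi_0$ can be taken real with $e^{it\xi_0\cdot x}$ bounded — but even then the one-parameter family $\{v e^{it\xi_0\cdot x}\}_{t\in\bbR}$ spans an infinite-dimensional subspace of $C^\infty(\br U)$ (exponentials with distinct frequencies are linearly independent). Hence $\dim \ker \calP_{\prin}^{\ast} = \infty$, contradicting (c). This closes the cycle (a)$\Rightarrow$(b)$\Rightarrow$(c)$\Rightarrow$(a).

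The main obstacle is the symbol-level surjectivity claim underlying part (1) — producing a single degree $N_0$, valid uniformly in $x \in U$, such that all degree-$N_0$ monomials lie in the module generated by the symbol columns, and doing so with coefficients depending smoothly (or at least boundedly, with bounded derivatives) on $x$. In the constant-coefficient case this is a clean finite computation (projective Nullstellensatz / Hilbert's Nullstellensatz on the Koszul-type complex of $p^{\ast}$, or a direct dimension count using that $\ker p^{\ast}(\xi) = 0$ for all $\xi \ne 0$ forces the Hilbert function of the cokernel module to vanish in high degrees); the variable-coefficient version requires care to patch the degree $N_0$ over $U$ and to keep track of regularity, which is where I would spend the bulk of the effort — most likely by fixing $N_0$ from the generic behavior, expressing the closure coefficients via Cramer's rule on a nonvanishing minor, and localizing the final statement as the hypotheses of Proposition~\ref{prop:aug2crc} already invite.
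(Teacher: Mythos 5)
Your proposal is correct and follows essentially the same route as the paper: part (1) is proved by applying Hilbert's Nullstellensatz to the maximal $r_{0}\times r_{0}$ minors of $p^{\ast}$ (via the adjugate) to write every $\xi^{\alp} I_{r_{0}}$ with $\abs{\alp}=N_{0}$ in the module generated by the rows of $p^{\ast}$, which closes the top layer of the maximal system $(\rd^{\alp}\varphi_{J})_{\abs{\alp}\le N_{0}-1}$, and part (2) is the same cycle $(a)\Rightarrow(b)\Rightarrow(c)\Rightarrow(a)$, with plane waves $e^{i\lmb\,\xi_{0}\cdot x}v$ for $v\in\ker p^{\ast}(\xi_{0})$ witnessing infinite-dimensionality of the cokernel when \ref{hyp:fc} fails. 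The one step where you are less explicit than the paper is the variable-coefficient uniformity: the paper fixes $N_{0}$ and $g_{\alp}(x_{0},\xi)$ at a point, writes $\xi^{\alp}I_{r_{0}}=g_{\alp}(x_{0},\xi)p^{\ast}(x,\xi)+g_{\alp}(x_{0},\xi)\bigl(p^{\ast}(x_{0},\xi)-p^{\ast}(x,\xi)\bigr)$, iterates this into a convergent Neumann series for $x$ near $x_{0}$, and then uses a partition of unity --- which is the concrete realization of your ``then perturb.''
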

In light of Theorem~\ref{thm:alg-cond-intro}.(2), we refer to \ref{hyp:fc} as the \emph{finite-dimensional cokernel} condition. In order for \ref{hyp:fc} to hold, we necessarily have $r_{0} \leq s_{0}$.

A key ingredient in our proof of Theorem~\ref{thm:alg-cond-intro} is a basic result in algebraic geometry -- namely, Hilbert's Nullstellensatz (Proposition~\ref{prop:nullstellensatz}) -- which we use to construct a maximal graded augmented system with a sufficiently high maximal degree $N_{0}$ for an operator satisfying the algebraic condition \ref{hyp:fc}. We refer to Section~\ref{sec:fc2crc} for a proof of Theorem~\ref{thm:alg-cond-intro}.

\begin{remark} [Further generalization] \label{rem:gen-P}
Our setup assumes that each component of $(\varphi_{J})_{J \in \set{1, \ldots, r_{0}}}$ (or equivalently, $(f^{J})_{J \in \set{1, \ldots, r_{0}}}$) has the same degree (see, in particular, \ref{hyp:aug4}); accordingly, we look at $(\calP^{\ast} \varphi)_{K}$ for each $K \in \set{1, \ldots, r_{0}}$ to define the order $m_{K}$. This setup is sufficient for our applications in Theorem~\ref{thm:appl} and Appendix~\ref{sec:appl}. Nevertheless, we note that it is possible to develop the theory under the assumption that $\varphi_{J}$ have different degrees (i.e., $d_{J}$ are not all equal). In this case, one needs to introduce $\tensor{m}{_{K}^{J}} \in \bbZ_{\geq 0}$ in place of $m_{K}$ and alter Definition~\ref{def:prin-symb} and \ref{hyp:aug4}. We will not pursue this more generalized setup in more detail.
\end{remark}

\begin{remark} [Comparison with Reshetnyak's approach] \label{rem:resh}
The ideas presented in this part owes much to the work of Reshetnyak \cite{Resh, ReshBook}.
In our terminology, Reshetnyak introduced completely integrable graded augmented systems for certain overdetermined operators $\calP^{\ast}$ (including the Killing and conformal Killing operators on Euclidean space), and utilized them to derive integral representation formulas involving $\calP^{\ast}$ based on line segments $\ul{\bfx}(y, y_{1}, s) = y + s (y_{1} - y)$. Of this procedure, Retshenyak \cite[p.~27]{ReshBook} remarks: {\it ``The general scheme of constructing such representations is apparently beyond formalisation.''}

Among others, the most important departure of our approach from that of Reshetnyak is the relaxation of Reshetnyak's completely integrable system to Definition~\ref{def:aug}, and then furthermore considering \emph{maximal} systems with a possibly large $N_{0}$. This idea plays a crucial role in our proof of Theorem~\ref{thm:alg-cond-intro}, which in turn is key to the wide applicability of our method. Indeed, while the question of which $\calP^{\ast}$ admits completely integrable graded augmented systems seems difficult to answer in general, our result implies that, at least for homogeneous constant-coefficient $\calP^{\ast}$ (which includes all examples considered in \cite{Resh}), the existence of a maximal graded augmented system is \emph{equivalent} to the algebraic condition \ref{hyp:fc}.
\end{remark}

\subsection{Applications I: examples of $\calP$} \label{subsec:appl-ex}
With the help of \ref{hyp:fc}, one may easily check that our method applies to a variety of operators that arise naturally from physics and geometry, as well as their lower-order variable-coefficient perturbations (which include their covariant versions on Riemannian manifolds; see Appendix~\ref{sec:appl}). For the next theorem, we adopt the following conventions: on an open subset $U$ of $\bbR^{d}$, we use $\bfg_{jk}$ to refer to a metric (i.e., positive definite symmetric $2$-tensor); $(\bfg^{-1})^{jk}$ its inverse; $\varphi$ a real-valued function; $\bfu^{j}$ a vector field; $\bfomg_{j}$ a one-form, $\bfh^{jk}$ and $\bfpi^{jk}$ symmetric $2$-tensors; and $\wh{\bfh}^{jk}$ and $\wh{\bfpi}^{jk}$ symmetric $2$-tensors that are trace-free (with respect to $\bfg$).
\begin{theorem} \label{thm:appl}
Matrix-valued differential operators $\calP$ on $U$ with smooth coefficients that have the following principal parts satisfy \ref{hyp:fc}:
\begin{enumerate}
\item {\bf Divergence \& (adjoint) gradient operator, $d \geq 1$.}
\begin{equation*}
\calP_{\prin} \bfu = \rd_{j} \bfu^{j} \quad \hbox{ or equivalently } \quad \calP_{\prin}^{\ast} \varphi = - \rd_{j} \varphi,
\end{equation*}
in which case $\tensor{(p^{\ast})}{_{j}} = - i \xi_{j}$.
\item {\bf Double divergence (or linearized scalar curvature) \& (adjoint) Hessian operator, $d \geq 1$.}
\begin{equation*}
\calP_{\prin} u = \rd_{j} \rd_{k} \bfh^{j k} \quad \hbox{ or equivalently } \quad \calP_{\prin}^{\ast} \varphi = \rd_{j} \rd_{k} \varphi,
\end{equation*}
in which case $\tensor{(p^{\ast})}{_{jk}} = - \xi_{j} \xi_{k}$.
\item {\bf Trace-free double divergence \& (adjoint) trace-free Hessian operator, $d \geq 2$.}
\begin{equation*}
\calP_{\prin} \wh{\bfh} = \rd_{j} \rd_{k} \wh{\bfh}^{j k} \quad \hbox{ or equivalently } \quad  \calP_{\prin}^{\ast} \varphi = \rd_{j} \rd_{k} \varphi - \frac{1}{d} \bfg_{jk}(x) (\bfg^{-1})^{\ell m}(x) \rd_{\ell} \rd_{m} \varphi,
\end{equation*}
in which case $\tensor{(p^{\ast})}{_{j k}} = - \xi_{j} \xi_{k} + \frac{1}{d} \bfg_{jk}(x) (\bfg^{-1})^{\ell m}(x) \xi_{\ell} \xi_{m}$.
\item {\bf Symmetric divergence \& (adjoint) Killing operator, $d \geq 1$.}
\begin{equation*}
(\calP_{\prin} \bfh)_{j} = \rd_{k} \bfh^{j k} \quad \hbox{ or equivalently } \quad (\calP_{\prin}^{\ast} \bfomg)_{jk} = - \frac{1}{2} (\rd_{j} \bfomg_{k} + \rd_{k} \bfomg_{j}),
\end{equation*}
in which case $\tensor{(p^{\ast})}{_{jk}^{\ell}} = - \frac{i}{2} (\xi_{j} \dlt_{k}^{\ell} + \xi_{k} \dlt_{j}^{\ell})$.
\item {\bf Symmetric trace-free divergence \& (adjoint) conformal Killing operator, $d \geq 3$.}
\begin{equation*}
(\calP_{\prin} \wh{\bfh})_{j} = \rd_{k} \wh{\bfh}^{j k} \quad \hbox{ or equivalently } \quad (\calP_{\prin}^{\ast} \bfomg)_{jk} = - \frac{1}{2} (\rd_{j} \bfomg_{k} + \rd_{k} \bfomg_{j}) + \frac{1}{d} \bfg_{jk} (\bfg^{-1})^{\ell m} \partial_\ell \omega_m,
\end{equation*}
in which case $\tensor{(p^{\ast})}{_{j k}^{\ell}} = - \frac{i}{2} (\xi_{j} \dlt_{k}^{\ell} + \xi_{k} \dlt_{j}^{\ell}) + \frac{i}{d} \bfg_{jk}(x) (\bfg^{-1})^{\ell m}(x) \xi_{m}$.

\item {\bf Linearized Einstein vacuum constraint \& (adjoint) Killing Initial Data operator, $d \geq 1$.}
\begin{equation*}
\calP_{\prin} (\bfh, \bfpi) = (\rd_{j} \rd_{k} \bfh^{j k}, \rd_{k'} \bfpi^{j' k'}) \quad \hbox{ or equivalently } \quad \calP_{\prin}^{\ast} (\varphi, \bfomg) = \left( - \rd_{j} \rd_{k} \varphi, - \frac{1}{2} (\rd_{j'} \bfomg_{k'} + \rd_{k'} \bfomg_{j'}) \right)
\end{equation*}
in which case $p^{\ast}$ is $2 \times 2$-block-diagonal with the principal symbols of the Hessian and Killing operators as blocks.

\item {\bf Linearized Einstein vacuum constraint operator with constant mean curvature, $d \geq 3$.}
\begin{equation*}
\calP_{\prin} (\bfh, \wh{\bfpi}) = (\rd_{j} \rd_{k} \bfh^{j k}, \rd_{k'} \wh{\bfpi}^{j' k'}),
\end{equation*}
in which case $p^{\ast}$ is $2 \times 2$-block-diagonal with the principal symbols of the Hessian and conformal Killing operators as blocks.
\end{enumerate}
\end{theorem}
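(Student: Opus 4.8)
The plan is to exploit that \ref{hyp:fc} depends only on the principal symbol: for an operator whose principal part is the stated $\calP_{\prin}$, the symbol $p^{\ast}(x,\xi)$ is exactly the displayed expression regardless of the lower-order terms, so for each of the seven items it suffices to check that this $p^{\ast}(x,\xi)$ is injective for every $x\in U$ and every $\xi\in\bbC^{d}\setminus\set{0}$. I would fix such $x,\xi$ once and for all and abbreviate $\bfg$ for the matrix $\bfg_{jk}(x)$ (real, positive-definite, hence in particular an invertible matrix of rank $d$) and $q\ceq (\bfg^{-1})^{jk}\xi_{j}\xi_{k}\in\bbC$. The key conceptual point to flag is that, because $\xi$ is allowed to be complex, $q$ may vanish for $\xi\neq 0$; this is precisely the gap between \ref{hyp:fc} and ellipticity and is the only place any thought is required. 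Once \ref{hyp:fc} is verified, everything else (the augmented system, \ref{hyp:crc}, Theorems~\ref{thm:main-summary-conic}--\ref{thm:main-summary-bogovskii}) follows from Theorem~\ref{thm:alg-cond-intro}.(1).

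Next I would isolate two elementary facts over $\bbC$, each with a one-line proof. First: if $\xi\neq 0$ and $\omega\in\bbC^{d}$ satisfies $\xi_{j}\omega_{k}+\xi_{k}\omega_{j}=0$ for all $j,k$, then $\omega=0$; indeed, choosing $k$ with $\xi_{k}\neq 0$ gives $\omega_{j}=-(\omega_{k}/\xi_{k})\xi_{j}$, i.e. $\omega=c\xi$, and substituting back into the diagonal equations $2c\xi_{j}^{2}=0$ forces $c=0$. Second: a symmetric matrix with entries $\xi_{j}\omega_{k}+\xi_{k}\omega_{j}$ has rank $\le 2$, and one with entries $\xi_{j}\xi_{k}$ has rank $\le 1$; hence neither can be a nonzero scalar multiple of $\bfg$ as soon as $d\ge 3$, respectively $d\ge 2$.

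I would then run through the cases. Items (1) (gradient, $p^{\ast}\varphi=-i\xi\varphi$) and (2) (Hessian, $p^{\ast}\varphi=-\xi_{j}\xi_{k}\varphi$) are immediate since $\xi\neq 0$ makes the respective tensors nonzero. Item (4) (Killing, $\omega\mapsto-\tfrac{i}{2}(\xi_{j}\omega_{k}+\xi_{k}\omega_{j})$) is exactly the first algebraic fact. For item (3) (trace-free Hessian), supposing $p^{\ast}\varphi=0$ with $\varphi\neq 0$ gives $\xi_{j}\xi_{k}=\tfrac{q}{d}\bfg_{jk}$; if $q=0$ this forces $\xi=0$, and if $q\neq 0$ it equates a rank $\le 1$ matrix with one of rank $d\ge 2$, a contradiction, so $p^{\ast}$ is injective. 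For item (5) (conformal Killing), supposing $p^{\ast}\omega=0$ gives $\xi_{j}\omega_{k}+\xi_{k}\omega_{j}=2\lambda\bfg_{jk}$ with $\lambda=\tfrac{1}{d}(\bfg^{-1})^{jk}\xi_{j}\omega_{k}$; if $\lambda=0$ the first algebraic fact yields $\omega=0$, while if $\lambda\neq 0$ then $\bfg=\tfrac{1}{2\lambda}(\xi\otimes\omega+\omega\otimes\xi)$ has rank $\le 2$, contradicting $\operatorname{rank}\bfg=d\ge 3$. Finally items (6) and (7) are purely formal: a block-diagonal linear map is injective iff each diagonal block is, so (6) follows from (2) and (4), and (7) from (2) and (5).

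I do not anticipate a genuine obstacle — the proof is short by design. The only subtlety, as noted, is the appearance of complex isotropic covectors ($q=0$, or $\lambda=0$ with $\omega$ parallel to $\xi$) in items (3) and (5); this is handled cleanly by the rank dichotomy of the second algebraic fact, and it is exactly why these symbols are \emph{not} merely elliptic but satisfy the stronger \ref{hyp:fc}. Organizationally, it is worth establishing injectivity of the Killing symbol (item (4)) before the conformal Killing symbol (item (5)), since the latter reduces to the former on the "trace part", and likewise stating items (6)--(7) last so they may cite the earlier ones; I would also record the sharp dimension thresholds ($d\ge 2$ in (3), $d\ge 3$ in (5) and (7)) as exactly the points where the rank comparison $d>1$, resp. $d>2$, is used.
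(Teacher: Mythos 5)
Your proof is correct and follows the same overall architecture as the paper's (treat (1), (2), (4) directly; reduce (3) to (2) and (5) to (4) by killing the trace part; obtain (6)--(7) from block-diagonality), but the engine you use for the two nontrivial cases (3) and (5) is genuinely different. The paper fixes normal coordinates at $x$ and runs an explicit index computation: in (3) it shows $\xi_{j}\sum_{\ell}\xi_{\ell}^{2}\varphi = \tfrac{1}{d}\xi_{j}\sum_{\ell}\xi_{\ell}^{2}\varphi$, forcing $\sum_{\ell}\xi_{\ell}^{2}\varphi=0$ for $d>1$, and in (5) it derives $\xi_{j}^{2}w=\tfrac{1}{d-1}\xi_{j}^{2}w$, forcing $w=0$ for $d>2$. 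You instead observe that vanishing of the symbol would exhibit $\bfg_{jk}(x)$ as a nonzero scalar multiple of $\xi_{j}\xi_{k}$ (rank $\le 1$) or of $\xi_{j}\omg_{k}+\xi_{k}\omg_{j}$ (rank $\le 2$), contradicting $\operatorname{rank}\bfg = d$. Your rank dichotomy is coordinate-free, handles the complex isotropic case $q=0$ (resp.\ $\lambda=0$) cleanly as a separate branch, and makes the dimension thresholds $d\ge 2$ and $d\ge 3$ transparent as rank comparisons; the paper's computation is equally short but pins down the same thresholds through the factors $\tfrac{1}{d}$ and $\tfrac{1}{d-1}$. Your argument for (4) ($\omg=c\xi$, then $c=0$ from the diagonal equations) is an equivalent rearrangement of the paper's. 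Both proofs are complete; yours would be a valid drop-in replacement.
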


The proof of this theorem, which we immediately provide, consists of short algebraic computations.
\begin{proof}
For (1), it is clear that, for each $\xi \in \bbC^{d} \setminus \set{0}$, $(p^{\ast})_{j}(\xi) \varphi = \xi_{j} \varphi = 0$ implies $\varphi = 0$; similarly for (2).

To prove (3), fix $x \in U$ and assume (by passing to the normal coordinates) that $\bfg^{\ell m}(x) \bfg_{jk}(x) = \dlt^{\ell m} \dlt_{jk}$. For each $\xi \in \bbC^{d} \setminus \set{0}$, we need to show that if $\varphi \in \bbR$ satisfies
\begin{equation*}
	(p^{\ast})_{jk}(x, \xi) \varphi = \left( - \xi_{j} \xi_{k} + \frac{1}{d} \xi_{\ell} \xi_{m} \dlt^{\ell m} \dlt_{jk} \right) \varphi = 0 \quad \hbox{ for all } j, k \in \set{1, \ldots, d},
\end{equation*}
then $\varphi = 0$. In view of (2), it suffices to show that $\xi_{\ell} \xi_{m} \dlt^{\ell m} \varphi = \sum_{\ell} \xi_{\ell}^{2} \varphi = 0$. For each $j$, we have
\begin{align*}
	\xi_{j} \sum_{\ell} \xi_{\ell}^{2} \varphi
	&= \xi_{j}^{3} \varphi + \sum_{\ell : \ell \neq j} \xi_{\ell} \xi_{\ell} \xi_{j} \varphi
	= \xi_{j} \left(- (p^{\ast})_{jj}(x, \xi) \varphi + \frac{1}{d} \sum_{\ell} \xi_{\ell}^{2} \varphi \right) + \sum_{\ell : \ell \neq j} \xi_{\ell} \left( - (p^{\ast})_{\ell j}(x, \xi) \varphi\right)  \\
	&= \frac{1}{d} \xi_{j} \sum_{\ell} \xi_{\ell}^{2} \varphi,
\end{align*}
which implies $\sum_{\ell} \xi_{\ell}^{2} \varphi = 0$ as long as $d > 1$.
.

For (4), we need to show that if $\omg \in T^{\ast}_{x} \calM$ (identified with $\bbR^{d}$) and $\xi \in \bbC^{d} \setminus \set{0}$ satisfies
\begin{equation*}
	\tensor{(p^{\ast})}{_{jk}^{\ell}}(\xi) \omg_{\ell} = - \frac{i}{2} (\xi_{j} \omg_{k} + \xi_{k} \omg_{j}) = 0 \quad \hbox{ for all } j, k \in \set{1, \ldots, d},
\end{equation*}
then $\omg_{\ell} = 0$ for all $\ell \in \set{1, \ldots, d}$. Note that the previous condition implies $\xi_{j} \omg_{k} = - \xi_{k} \omg_{j}$, and in particular, $\xi_{j} \omg_{j} = 0$ for any $j, k \in \set{1, \ldots, d}$. Thus, for any $j, k \in \set{1, \ldots, d}$, we have
\begin{equation*}
	\xi_{j}^{2} \omg_{k} = - \xi_{j} \xi_{k} \omg_{j} = \xi_{k} \xi_{j} \omg_{j} = 0,
\end{equation*}
which implies $\omg_{k} = 0$ as desired.

The proof of (5) requires a bit more computation compared to the previous cases. Fix $x \in U$ and assume (by passing to the normal coordinates) that $\bfg^{\ell m}(x) \bfg_{jk}(x) = \dlt^{\ell m} \dlt_{jk}$. Fix $\xi \in \bbC^{d} \setminus \set{0}$, and assume that $\omg \in T^{\ast}_{x} \calM$ (identified with an element in $\bbR^{d}$) satisfies
\begin{equation*}
\tensor{(p^{\ast})}{_{jk}^{\ell}}(x, \xi) \omg_{\ell} = - \frac{i}{2} (\xi_{j} \omg_{k} + \xi_{k} \omg_{j}) + \frac{i}{d} \dlt^{\ell m} \xi_{m} \omg_{\ell} \dlt_{j k} = 0 \quad \hbox{ for all } j, k \in \set{1, \ldots, d}.
\end{equation*}
We need to show that $\omg_{\ell} = 0$ for all $\ell$. In view of (4), it suffices to show that $w \ceq \frac{1}{d} \dlt^{\ell m} \xi_{m} \omg_{\ell} = \frac{1}{d} \sum_{\ell} \xi_{\ell} \omg_{\ell} = 0$. The above condition implies
\begin{equation*}
	\xi_{j} \omg_{k} = - \xi_{k} \omg_{j} + 2 w \dlt_{j k} \quad \hbox{ for any } j, k \in \set{1, \ldots, d}.
\end{equation*}
and in particular, $\xi_{j} \omg_{j} = w$ for any $j \in \set{1, \ldots, d}$. We first compute $\xi_{j} w$ (for any $J \in \set{1, \ldots, d}$):
\begin{align*}
	\xi_{j} w
	=  \frac{1}{d} \xi_{j} \xi_{j} \omg_{j}
	+ \frac{1}{d} \sum_{\ell : \ell \neq j} \xi_{\ell} (\xi_{j} \omg_{\ell})
	=  \frac{1}{d} \xi_{j} w - \frac{1}{d} \sum_{\ell : \ell \neq j} \xi_{\ell}^{2} \omg_{j},
\end{align*}
which shows that $\xi_{j} w = - \frac{1}{d-1} \sum_{\ell : \ell \neq j} \xi_{\ell}^{2} \omg_{j}$. Multiplying by $\xi_{j}$, we arrive at
\begin{equation*}
	\xi_{j}^{2} w = - \frac{1}{d-1} \sum_{\ell : \ell \neq j} \xi_{\ell}^{2} w.
\end{equation*}
On the one hand, by summing up in $j$, we conclude that $\sum_{\ell} \xi_{\ell}^{2} w = 0$. On the other hand, using $\sum_{\ell : \ell \neq j} \xi_{\ell}^{2} w = - \xi_{j}^{2} w$, we arrive at $\xi_{j}^{2} w = \frac{1}{d-1} \xi_{j}^{2} w$, which implies $\xi_{j}^{2} w = 0$ for any $j \in \set{1, \ldots, d}$, since $d > 2$. Thus $w = 0$.

Finally, (6) follows from combining (2) and (4); similarly, (7) follows from (2) and (5).
\end{proof}

By Theorem~\ref{thm:alg-cond-intro}, a maximal graded augmented system exists for each example, \ref{hyp:crc} thus holds, and Theorems~\ref{thm:main-summary-conic} and \ref{thm:main-summary-bogovskii} are applicable. Alternatively, in Appendix~\ref{sec:appl}, each example is revisited with a more geometric viewpoint, and we provide a special graded augmented system that is (i)~stable under the addition of lower order terms, and (ii)~completely integrable on backgrounds with constant sectional curvature.  Using this augmented system, we also compute explicitly the Bogovskii and conic integral kernels on the flat space, which also recovers the known results from \cite{IseOh, MaoTao, MOT1, Resh}. 

While the results in Appendix~\ref{sec:appl} are of independent interest -- which is why we have worked them out -- we point out the contrast between the simplicity of the proof of Theorem~\ref{thm:appl} versus the case-by-case ingenuity required in the direct derivation of the special graded augmented systems in Appendix~\ref{sec:appl}.

\subsection{Applications II: sharp solvability results (from \cite{IMOT2})} \label{subsec:appl-lin}
Next, we discuss an application of our method to the study of differential operators satisfying \ref{hyp:fc} under optimal (up to endpoints) assumptions on the coefficients. Our overall approach is to:
\begin{enumerate}
\item first use our method to \emph{design} appropriate integral formulas for $\calP$ that have constant coefficients, are homogeneous (i.e., $\calP = \calP_{\prin}$ in the sense of Definition~\ref{def:prin-symb}); and
\item handle the general case via \emph{local perturbation} (or freezing-coefficients) techniques.
\end{enumerate}
We restrict our attention to $L^{2}$-based Sobolev spaces, but extensions to other function spaces that behave well under singular integral operators (e.g., H\"older or $L^{p}$-based Sobolev spaces with $1 < p < + \infty$) should be possible. The main result in the bounded domain case in \cite{IMOT2} is as follows (we refer again to Section~\ref{subsec:ftn-sp} for our notation and conventions concerning function spaces, and to \eqref{eq:sob-td-coker} and \eqref{eq:sob-coker} for the precise definitions of $\ker_{\td{H}^{-s}(U)} \calP^{\ast}$ and $\ker_{H^{-s}(U)} \calP^{\ast}$, respectively):

\begin{theorem} [Solvability on a bounded Lipschitz domain]\label{thm:low-reg-domain}
Fix an exponent $s_{\calP, 0}$ such that $s_{\calP, 0} > \frac{d}{2}$ and $s_{\calP, 0} \geq \frac{1}{2} \max_{K} m_{K}$. Let $U$ be a bounded open subset of $\bbR^{d}$ with a Lipschitz boundary, i.e., $\rd U$ is compact and can be covered by finite balls, in each of which $\rd U$ can be written as the graph of a Lipschitz function after suitably relabeling and rotating the coordinate axes. Let $\calP$ be a differential operator on $U$ satisfying \ref{hyp:fc} and, for some $s_{\calP} \geq s_{\calP, 0}$, assume that
\begin{align*}
\tensor{c[\calP]}{^{(\alp, J)}_{K}}(x) &\in H^{s_{\calP}-(m_{K} - \abs{\alp})}(U) & & \hbox{ for all } \abs{\alp} \leq m_{K}.
\end{align*}
Then the following statements hold:
\begin{enumerate}
\item (Cokernel in $H^{-s}(U)$) For every $s$ satisfying $-s_{\calP} \leq s \leq s_{\calP} - \max_{K} m_{K}$, we
have the \emph{finite-dimensional property}
\begin{equation*}
\dim \ker_{H^{-s}(U)} \calP^{\ast} < + \infty,
\end{equation*}
and the \emph{invariance property}
\begin{equation*}
\ker_{H^{-s}(U)} \calP^{\ast} = \ker_{H^{s_{\calP}}(U)} \calP^{\ast},
\end{equation*}
Moreover, for any open subset $V \subseteq U$, the restriction of $\ker_{H^{s_{\calP}}(U)} \calP^{\ast}$ to $V$, i.e.,
\begin{equation*}
	\ker_{H^{s_{\calP}}(U)} \calP^{\ast} |_{V} \ceq \set{\bfZ |_{V} : \bfZ \in \ker_{H^{s_{\calP}}(U)} \calP^{\ast}}
\end{equation*}
has the same dimension as $\ker_{H^{s_{\calP}}(U)} \calP^{\ast}$.

\item (Solution operators \& representation formulas associated with cokernel in $H^{-s}(U)$) Given $s_{1} \geq - s_{\calP}$, consider a family $w_{\bfA}(x) \in \td{H}^{s_{1}}(U)$ $(\bfA \in \set{1, \ldots, \dim \ker_{H^{s_{\calP}}(U)} \calP^{\ast}})$ satisfying $\brk{w_{\bfA}, \bfZ^{\bfA'}} = \dlt_{\bfA}^{\bfA'}$ for some basis $\set{\bfZ^{\bfA'}}$ of $\ker_{H^{s_{\calP}}(U)} \calP^{\ast}$. Then there exists an operator $\td{\calS} : C^{\infty}_{c}(U; \bbC^{r_{0}}) \to \calD'(U; \bbC^{s_{0}})$ independent of $s_{\calP}$ such that, for $s$ satisfying
\begin{equation} \label{eq:low-reg-bog-s}
	-s_{\calP} \leq s \leq s_{\calP} - \max_{K} m_{K}, \quad s \leq s_{1},
\end{equation}
we have $\td{\calS} : \td{H}^{s}(U) \to \td{H}^{s+m_{1}} (U)\times \cdots \times \td{H}^{s+m_{s_{0}}}(U)$ and
\begin{equation*}
	\calP \td{\calS} f = f - \sum_{\bfA \in \set{1, \ldots, \dim \ker \calP^{\ast}}} w_{\bfA} \brk{\bfZ^{\bfA}, f} \quad \hbox{ for all } f \in \td{H}^{s}(U),
\end{equation*}
and by duality,
\begin{equation*}
	\varphi = \td{\calS}^{\ast} \calP^{\ast} \varphi + \sum_{\bfA \in \set{1, \ldots, \dim \ker \calP^{\ast}}} \bfZ^{\bfA} \brk{w_{\bfA}, \varphi} \quad \hbox{ for all } \varphi \in H^{-s}(U).
\end{equation*}
In particular, the following \emph{Poincar\'e-type inequality} holds:
\begin{equation*}
	\nrm{\varphi - \sum_{\bfA} \bfZ^{\bfA} \brk{w_{\bfA}, \varphi}}_{H^{-s}(U)} \aleq \nrm{\calP^{\ast} \varphi}_{H^{-s-m_{1}}(U) \times \cdots \times H^{-s-m_{s_{0}}}(U)} \quad \hbox{ for all } \varphi \in H^{-s}(U).
\end{equation*}

\item (Cokernel in $\td{H}^{-s}(U)$) For every $s$ satisfying $-s_{\calP} \leq s \leq s_{\calP} - \max_{K} m_{K}$, we have
\begin{equation*}
\ker_{\td{H}^{-s}(U)} \calP^{\ast} = \set{0}.
\end{equation*}

\item (Solution operators \& representation formulas associated with cokernel in $\td{H}^{-s}(U)$) There exists an operator\footnote{Unlike in Theorem~\ref{thm:main-summary-conic}, we are able to define $\calS$ for $f \in H^{s}(U)$ (not $\td{H}^{s}(U)$) via the existence of a Sobolev extension operator under the Lipschitz boundary regularity assumption. This feature is useful for, say, setting up a Picard iteration scheme to solve nonlinear problems; see \cite{IMOT2, MOT2}.} $\calS : C^{\infty}(\br{U}; \bbC^{r_{0}}) \to \calD'(U; \bbC^{s_{0}})$ independent of $s_{\calP}$ such that, for $s$ satisfying
\begin{equation} \label{eq:low-reg-conic-s}
	-s_{\calP} \leq s \leq s_{\calP} - \max_{K} m_{K},
\end{equation}
we have $\calS : H^{s}(U) \to H^{s+m_{1}} (U)\times \cdots \times H^{s+m_{s_{0}}}(U)$ and
\begin{equation*}
	\calP \calS f = f \quad \hbox{ for all } f \in \td{H}^{s}(U),
\end{equation*}
and by duality,
\begin{equation*}
	\varphi = \calS^{\ast} \calP^{\ast} \varphi \quad \hbox{ for all } \varphi \in \td{H}^{-s}(U).
\end{equation*}
In particular, the following \emph{Friedrich-type inequality} holds:
\begin{equation*}
	\nrm{\varphi}_{\td{H}^{-s}(U)} \aleq \nrm{\calP^{\ast} \varphi}_{\td{H}^{-s-m_{1}}(U) \times \cdots \times \td{H}^{-s-m_{s_{0}}}(U)} \quad \hbox{ for all } \varphi \in \td{H}^{-s}(U).
\end{equation*}

\end{enumerate}
\end{theorem}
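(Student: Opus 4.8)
The plan is to follow the two-step scheme announced before the statement: construct parametrix (integral) operators for a constant-coefficient, homogeneous model of $\calP$ using the results already established, and then pass to the rough-coefficient operator on the Lipschitz domain $U$ by localization and a perturbation argument. A guiding constraint is that $\td\calS$ and $\calS$ must be \emph{built first}, from the model operators and a fixed localization of $U$, so that their definition manifestly does not involve $s_{\calP}$; the mapping properties for each admissible $s$ are verified afterwards. For the model: at any $x_{0}\in\br U$ let $\calP_{0}$ be the constant-coefficient homogeneous operator with symbol $p(x_{0},\cdot)$. Since the top-order coefficients of $\calP$ lie in $H^{s_{\calP}}(U)\hookrightarrow C^{0}(\br U)$ (this is where $s_{\calP}>\tfrac d2$ enters), $\calP_{0}$ satisfies \ref{hyp:fc}, hence \ref{hyp:crc} for line segments by Theorem~\ref{thm:alg-cond-intro}, and Theorems~\ref{thm:main-summary-conic}--\ref{thm:main-summary-bogovskii} apply to $\calP_{0}$. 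First I would cover $\br U$ by finitely many balls $B_{\iota}$ so small that each $U_{\iota}\ceq U\cap B_{\iota}$ is, after a rotation, either a ball or a piece of a Lipschitz epigraph small relative to the (uniform) Lipschitz constant; the latter are $\ul\bfx$-star-shaped with respect to a sub-ball (the uniform cone condition), so Theorem~\ref{thm:main-summary-bogovskii} applies on each $U_{\iota}$ to the frozen operator $\calP_{0,\iota}$, giving a Bogovskii-type operator $\td\calS_{0,\iota}$, regularizing of optimal order and bounded on the relevant $\td H^{s}$-scale, with finite-rank defect dictated by a basis of $\ker\calP_{0,\iota}^{\ast}$. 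I would also arrange that each leading coefficient of $\calP$ oscillates by at most a fixed small $\veps$ on $B_{\iota}$ and that the overlaps make the cover a connected chain.

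Next I would assemble a global parametrix via a partition of unity $\set{\chi_{\iota}}$ with cutoffs $\psi_{\iota}\equiv 1$ on $\supp\chi_{\iota}$ and $\supp\psi_{\iota}\subseteq B_{\iota}$, namely $Qf\ceq\sum_{\iota}\chi_{\iota}\,\td\calS_{0,\iota}(\psi_{\iota}f)$. Then $\calP Q=I-\sum_{\iota}\chi_{\iota}\sum_{\bfA}w_{\bfA,\iota}\brk{\bfZ^{\bfA}_{\iota},\psi_{\iota}f}+\calR$, where the finite-rank terms come from the defects of the $\td\calS_{0,\iota}$, and $\calR$ collects the commutators $[\calP,\chi_{\iota}]\td\calS_{0,\iota}$ together with the error $\chi_{\iota}(\calP-\calP_{0,\iota})\td\calS_{0,\iota}(\psi_{\iota}\,\cdot\,)$: the top-order part of the latter has operator norm $\lesssim\veps$ on the relevant scale, while all lower-order contributions (including every commutator term) gain a derivative and are compact by Rellich---here the reduced regularity $H^{s_{\calP}-(m_{K}-\abs{\alp})}$ of the lower coefficients is harmless, since these terms enter only through the bounded maps they induce and Rellich compactness. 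Choosing $\veps$ small, a Neumann series handles the small part and the Fredholm alternative the compact part; composing and then performing a final finite-rank adjustment---fixed by pairing against $\ker\calP^{\ast}$ and using the biorthogonality $\brk{w_{\bfA},\bfZ^{\bfA'}}=\dlt_{\bfA}^{\bfA'}$---normalizes the defect and yields $\td\calS:C^{\infty}_{c}(U)\to\calD'(U)$ with $\calP\td\calS f=f-\sum_{\bfA}w_{\bfA}\brk{\bfZ^{\bfA},f}$ and the asserted boundedness $\td H^{s}(U)\to\td H^{s+m_{1}}(U)\times\cdots\times\td H^{s+m_{s_{0}}}(U)$ for $s$ in the range \eqref{eq:low-reg-bog-s}. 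Independence of $s_{\calP}$ follows because $Q$ and the partition are fixed independently of $s_{\calP}$ and the Fredholm inverse agrees with a canonical map on smooth data. The Poincar\'e-type inequality of Part (2) and the dual representation formula are then formal consequences of the identity for $\calP\td\calS$ and the boundedness of $\td\calS^{\ast}$, exactly as in Theorem~\ref{thm:main-summary-bogovskii}.

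For Part (1): by \ref{hyp:fc} the system $\calP^{\ast}$ is (overdetermined) elliptic, so the invariance $\ker_{H^{-s}(U)}\calP^{\ast}=\ker_{H^{s_{\calP}}(U)}\calP^{\ast}$ follows from elliptic regularity at the present coefficient regularity or, more in the spirit of this paper, from the representation \eqref{eq:aug2crc-Z} of cokernel elements along the curves of the maximal graded augmented system furnished by Theorem~\ref{thm:alg-cond-intro}. The same representation exhibits each $\bfZ\in\ker\calP^{\ast}$ on a piece as a linear image of the finitely many numbers $(\Phi_{\bfA}(y_{1}))$, so gluing over the finite cover gives $\dim\ker\calP^{\ast}<\infty$, and it shows that $\bfZ$ is determined on the connected set $U$ by $\bfZ|_{V}$ for any open $V\subseteq U$ (the augmented variables solve a first-order system, hence their vanishing propagates), giving the restriction property. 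For Part (3), an element $\bfZ\in\ker_{\td H^{-s}(U)}\calP^{\ast}$ extends by zero across $\rd U$, and \eqref{eq:aug2crc-Z} evaluated along curves exiting $U$ through $\rd U$ (available since $\rd U$ is Lipschitz) forces $\bfZ\equiv0$. For Part (4), I would repeat the parametrix construction of the preceding paragraph with Theorem~\ref{thm:main-summary-conic} in place of Theorem~\ref{thm:main-summary-bogovskii} (near the boundary the Lipschitz graph structure supplies line segments nontrapped in $U$), precomposed with a Sobolev extension operator for $U$ (available on Lipschitz domains) so that data in $H^{s}(U)$, not merely $\td H^{s}(U)$, is admissible; the residual finite-rank defect is absorbed by a further finite-rank correction using the triviality of $\ker_{\td H^{-s}(U)}\calP^{\ast}$ from Part (3), yielding $\calP\calS f=f$ exactly on the range \eqref{eq:low-reg-conic-s}, with the Friedrich-type inequality dual to this.

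The hard part will be the bookkeeping in the rough-coefficient perturbation of the second paragraph: one must cleanly split the contributions of $\calP-\calP_{0,\iota}$ and of the commutators into a genuinely small piece (the top order, small by continuity of the leading coefficients) and a compact piece (everything of lower order, where the limited regularity of the coefficients enters only through bounded maps and Rellich's theorem), while simultaneously tracking the finite-rank defects so that, after the Fredholm step, the total defect can be renormalized to precisely $\sum_{\bfA}w_{\bfA}\brk{\bfZ^{\bfA},\cdot}$ and the constructed operators remain independent of $s_{\calP}$. The other delicate point is the elliptic-regularity input behind the invariance property at the endpoint $s=-s_{\calP}$, which must be matched to the hypotheses $s_{\calP}>\tfrac d2$ and $s_{\calP}\geq\tfrac12\max_{K}m_{K}$.
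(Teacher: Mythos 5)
You should first note that this paper does not actually prove Theorem~\ref{thm:low-reg-domain}: it is quoted from the companion paper \cite{IMOT2}, and only the two-step strategy (design integral formulas for constant-coefficient homogeneous models, then localize and perturb) is announced here. Your proposal follows that announced strategy faithfully — frozen-coefficient Bogovskii/conic operators on small star-shaped pieces of the Lipschitz domain, a partition-of-unity parametrix, a small-plus-compact splitting of the error, and a Fredholm/finite-rank renormalization — so at the level of architecture it is the intended route.

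There are, however, two genuine gaps. First, the entire perturbation step rests on product estimates in rough Sobolev spaces that you never establish: multiplication by a coefficient in $H^{s_{\calP}-(m_{K}-\abs{\alp})}(U)$ must map $H^{s+\abs{\alp}}(U)\to H^{s}(U)$ uniformly over the whole range $-s_{\calP}\leq s\leq s_{\calP}-\max_{K}m_{K}$; the hypotheses $s_{\calP,0}>\tfrac d2$ and $s_{\calP,0}\geq\tfrac12\max_{K}m_{K}$ are calibrated precisely to these borderline bilinear estimates, and without them neither the smallness of the top-order error nor the claimed compactness of the lower-order terms (which is Rellich only \emph{after} the product estimate supplies boundedness with a positive gain of derivatives) is justified. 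Second, the "final finite-rank adjustment fixed by pairing against $\ker\calP^{\ast}$" is not enough to normalize the defect to $\sum_{\bfA}w_{\bfA}\brk{\bfZ^{\bfA},\cdot}$: one must actually \emph{solve} $\calP u=g$ with $u$ in $\td{H}^{s+m_{1}}(U)\times\cdots\times\td{H}^{s+m_{s_{0}}}(U)$ for the finitely many residual right-hand sides $g\perp\ker\calP^{\ast}$. In the smooth setting the paper obtains this via the chain Proposition~\ref{prop:poincare} $\Rightarrow$ Corollary~\ref{cor:abstract-solvability} (an effective a priori inequality, then a compactness/contradiction argument, then Hahn--Banach), and your sketch risks circularity by deriving the Poincar\'e inequality \emph{from} the operator whose construction needs it; the correct order is to prove the effective inequality from the parametrix first, then the optimal one, then solvability, then the operator. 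Relatedly, the Fredholm/Hahn--Banach steps a priori produce $s$-dependent corrections, so the claimed independence of $\td\calS$ and $\calS$ from $s$ (and $s_{\calP}$) across the admissible range needs an argument, not just the observation that the parametrix $Q$ is fixed.
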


\begin{remark}[Comparison with elliptic operators] \label{rem:lipschitz-bdry}
Theorem~\ref{thm:low-reg-domain} -- especially Parts~(1) and (2) -- is analogous to the standard solvability result for the Dirichlet boundary value problem (BVP) for an elliptic equation $\td{\calP} \td{u} = f$ on a bounded domain (see, e.g., \cite[Chapter~6]{Evans}). However, some interesting differences stand out:
\begin{enumerate}
\item Given $f \in C^{\infty}_{c}(U)$, the solution $\td{\calS} f$ in Part~(2) vanishes to all possible orders at the boundary, while the solution to the elliptic BVP necessarily only vanishes to order one unless it is trivial.
\item As opposed to $\td{\calS} : \td{H}^{s}(U; \bbC^{r_{0}}) \to \td{H}^{s+m_{1}} (U)\times \cdots \times \td{H}^{s+m_{s_{0}}}(U)$ in Part~(2), it is well-known that boundary elliptic regularity, or more precisely an estimate of the form $\nrm{\td{u}}_{W^{s+m, p}(U)} \aleq \nrm{f}_{W^{s, p}(U)}$ (where $m$ is the order of $\td{\calP}$) fails in general on Lipschitz domains, the simplest case being $\td{\calP} = -\lap$ and $U \subseteq \bbR^{2}$ has a corner; see \cite[Chapters~4 and 5]{Gri}.
\end{enumerate}
Another basic but important distinction is that operators considered in Theorem~\ref{thm:low-reg-domain} are \emph{not}  Fredholm in most cases of interest, as the kernel of $\calP$ may be infinite dimensional (take, for instance, the divergence operator). Our approach therefore does \emph{not} rely on the Fredholm alternative theorem as in the usual proof of elliptic solvability results (see, e.g., \cite[Chapter~6]{Evans}).
\end{remark}

\begin{remark} [Applications to Poincar\'e- and Friedrich-type estimates] \label{rem:appl-rigidity}
Simply combining Theorem~\ref{thm:appl} with the Poincar\'e- and Friedrich-type inequalities in Theorem~\ref{thm:low-reg-domain}.(2) and (4), respectively, recovers and generalizes (to the rough-coefficients, Lipschitz domain setting) various standard inequalities in analysis, such as the standard Poincar\'e and Friedrich inequalities (for $\calP^{\ast}$ equal to the gradient), Korn's first and second inequalities (for $\calP^{\ast}$ equal to the symmetric gradient) \cite{Kor, KonOle}, Korn's inequalities for the trace-free symmetric gradient \cite{Dai}, etc.
\end{remark}

\begin{remark} \label{rem:optimal-bdry}
A natural question one might ask is what happens on domains with boundary less regular than Lipschitz. In the case of the divergence operator, Acosta--Duran--Muschietti \cite{AcoDurMus} showed that the existence of a Bogovskii-type operator that is bounded from $L^{p}$ to $W^{1, p}$ for all $1 < p < \infty$ is equivalent to $U$ being a John domain. For more discussion on this literature, see \S\ref{subsubsec:discussion:div}. It would be interesting to generalize this to general differential operators satisfying \ref{hyp:fc}.
\end{remark}

In fact, in \cite{IMOT2}, we also establish an analog of Theorem~\ref{thm:low-reg-domain} in the case of unbounded Lipschitz domains using weighted Sobolev spaces, under the additional assumptions that (i)~$\calP$ asymptotes (in a suitable way) to a homogeneous constant-coefficient operator at infinity, and (ii)~$U$ is non-degenerate (in a suitable way) towards infinity. In this case, the cokernel varies according to the weight in the function space. We refer the interested reader to that paper for more details.

\subsection{Related works} \label{subsec:discussion}

\subsubsection{Bogovskii-type operators in fluid dynamics}
We use the term Bogovskii-type operator to refer to the type of operator that yields compactly supported solutions to the underdetermined PDE in question. The original Bogovskii operator \eqref{eq:bog-0} has become a basic tool in fluid dynamics \cite{galdi2011introduction}, where the divergence free constraint plays a fundamental role in the study of incompressible fluids.  The way the operator is often used is to construct divergence free vector fields by starting with a vector field that $v$ that is approximately divergence-free and then correcting $v$ to obtain a truly divergence free vector field $V = v + u$ by solving $\div u = - \div v$.  Applying the Bogovskii operator to solve this equation does not disturb the compact support property.  A recent and profound example where the Bogovskii operator is used in this way is the recent breakthrough paper \cite{albritton2022non}, which constructs nonunique solutions of the Leray-Hopf class to the forced Navier Stokes equations on $\bbR^3$.  To give just a few other examples, see \cite{borchers1990algebraic,borchers1987semigroup,galdi1991existence,giga1988stokes,iwashita1989Lq,takahashi1992regularity,tani1991global}.

In \cite{IseOh}, the authors construct a Bogovskii-type operator for the equation $\div R = F$, where $R$ is a symmetric $2$ tensor field and $F$ is a compactly supported vector field that is orthogonal to the Killing vector fields of Euclidean space, which are spanned by translations and rotations.  This operator is key to the construction of nonunique and energy non-conserving continuous weak solutions to $3D$ incompressible Euler defined on Euclidean space.   The same operator turned out surprisingly to be crucial for the construction in \cite{isett2024endpoint} of weak (periodic) solutions to $3D$ Euler of class $\bigcap_{\epsilon > 0} C^{1/3-\epsilon}$ that fail to conserve energy.  This latter result is the best result towards the endpoint case of the famous Onsager conjecture.  We expect that the general class of Bogovskii-type operators obtained in this paper will continue to be useful for related and future applications.

\subsubsection{Underdetermined problems in general relativity}
Due to the divergence structure of the Einstein constraint equation (on a spacelike Cauchy hypersurface), divergence equations have applications in general relativity, especially in initial data construction. Corvino \cite{Cor} in his pioneering work proved rigidity estimates for the dual linearized scalar curvature operator on a compact region via variational methods and applied them to prove a gluing result for the prescribed scalar curvature problem, which corresponds to time-symmetric initial data sets. Corvino--Schoen \cite{CorSch} generalized this gluing technique to the full Einstein constraint equation, and constructed a large class of initial data sets which coincide with Kerr initial data outside a large ball.  Chru\'{s}ciel--Delay\cite{ChrDel} extended the aforementioned gluing results by establishing mapping properties of linearized constraint operator on a large class of weighted Sobolev spaces, and Delay \cite{Delay} used a similar method to study underdetermined elliptic operators and proved various gluing results for those operators. Carlotto--Schoen \cite{CarSch} were the first to construct initial data with conic support by developing a gluing scheme using variational techniques. Hintz \cite{hintz2023gluingI, hintz2024gluingII, hintz2024gluingIII}, with a geometric microlocal approach, generalized Corvino--Schoen-type gluing by considering generic initial data sets outside of a compact region.

In \cite{MaoTao} and \cite{MOT1}, the authors constructed conic- and Bogovskii-type solution operators, respectively, for the linearized Einstein vacuum constraint operator around flat initial data. Moreover, these operators were applied to simplify the proofs of existing gluing results and obtain new results, such as the existence of nontrivial initial data sets localized to degenerate cones \cite{MaoTao}, and obstruction-free gluing with a sharp positivity condition \cite{MOT1}, improving upon the seminal work of Czimek--Rodnianski \cite{CziRod}. The ideas introduced in \cite{MaoTao, MOT1} served as a precursor for the present paper.

We also mention a recent work of Chru\'{s}ciel--Cogo--N\"utzi \cite{CCN}, in which a Bogovskii-type solution operator for the linearized constant scalar curvature operator was constructed for the hyperbolic metric near infinity.

\subsubsection{Overdetermined problems in geometry} \label{subsubsec:discussion:dual}
The study of rigidity problems in geometry and elasticity has a long and rich history. By linearization, such problems often lead to overdetermined differential operators $\calP^{\ast}$, many of which turn out to satisfy \ref{hyp:fc}. We refer the interested reader to \cite{ReshBook, DeLMul, ConDolMul} (in geometry), \cite{FriJamMul} (in elasticity) and references therein. Among these works, Reshetnyak's approach to the derivation of integral representation formulas has been a major influence on this paper, as highlighted above (see, e.g., Remark~\ref{rem:resh}).

\subsubsection{Divergence operator on rough domains} \label{subsubsec:discussion:div}
There is a substantial body of work on divergence operators and Poincar\'e-type inequalities on rough domains. In the aforementioned work \cite{AcoDurMus}, Acosta--Duran--Muschietti constructed an explicit solution operator for the divergence equation on John domains using singular integral techniques. Duran--Garcia \cite{DurGar1}\cite{DurGar2} proved existence of bounded right inverse of the divergence operators on planar simply-connected H\"older-$\alp$ domains and domains with an external cusp, using singular integrals and $A_p$-weights. Duran--Muschietti--Russ--Tchamitchian \cite{DMRT} gave a general sufficient condition on invertibility of divergence operators on weighted $L^p$ spaces on an arbitrary domain via Calderon-Zygmund type arguments. An interesting question would be to generalize these investigations to the setting of differential operators satisfying \ref{hyp:fc}.

\subsubsection{Other related problems} \label{subsubsec:discussion:others}
There is a strong resemblance between our construction of the rough integral kernel (see Section~\ref{sec:crc-summary}) and a well-known proof of Poincar\'e's lemma on star-shaped domains via the construction of a chain homotopy to the De Rham chain complex over the base point (see, for instance, \cite[Theorem~4.11]{Spi}). Indeed, Bogovskii-type chain homotopy for the De Rham complexes has been found by Takahashi \cite{Tak}; since the last map in the De Rham complex is the divergence operator, this chain homotopy generalizes the original Bogovskii operator.  We also note the recent work of N\"utzi \cite{Nutzi} on the construction of a Bogovskii-type chain homotopy for an elliptic complex whose last homomorphism corresponds to the symmetric trace-free divergence operator. In this case, the chain homotopy specializes to a Bogovskii-type solution operator for the symmetric trace-free divergence operator.

An approach akin to ours for De Rham complex may be of utility in the study of (nonlinear) pullback equation for differential forms, which is the generalization of the well-known theorems of Darboux (see, for instance, \cite[Chapter~8]{daSilva}) and Dacorogna--Moser \cite{DacMos} on finding a diffeomorphism that pulls back a given differential form to another given differential form (symplectic in the case of Darboux, and volume in the case of Dacorogna--Moser). We refer to the monograph of Csato--Dacorogna--Kneuss \cite{CsaDacKne} for more on this topic.

\subsection{Structure of the paper}
The paper is structured as follows. 
\begin{itemize}
\item In Section~\ref{sec:prelim}, we collect the notation, conventions and preliminary facts from analysis and geometry used in this paper. 
\item In Section~\ref{sec:crc-summary}, we summarize the ideas behind our construction of solution operators and representation formulas via \ref{hyp:crc}, and demonstrate our approach in the simple special case of the divergence operator with a lower order term, i.e., $\calP u = \rd_{j} \bfu^{j} + B_{j} \bfu^{j}$ (which already leads to results that are, to the best of our knowledge, new). This discussion will serve as a motivation for the remainder of the paper concerning the general case. 
\item In Section~\ref{sec:sio}, we prove a proposition that will allow us to show that the integral kernel produced by our method defines a singular integral operator with good boundedness properties. 
\item In Section~\ref{sec:crc}, we give a precise formulation of \ref{hyp:crc} and describe how it leads to our construction of solution operators and representation formulas with prescribed support properties, thereby proving (precise versions of) Theorems~\ref{thm:main-summary-conic} and \ref{thm:main-summary-bogovskii}. \item In Section~\ref{sec:ode}, we study graded augmented systems and establish Propositions~\ref{prop:aug2crc} and \ref{prop:zero-curv}. 
\item In Section~\ref{sec:fc2crc}, we prove Theorem~\ref{thm:alg-cond-intro}. 
\item Finally, in Appendix~\ref{sec:appl}, we write down special graded augmented systems for operators (2)--(7) in Theorem~\ref{thm:appl} in the geometric context, which turn out to be completely integrable 
on constant sectional curvature backgrounds. On such backgrounds, we explicitly compute the fundamental matrix $\tensor{\bfPi}{_{\bfA}^{\bfA'}}(y, y_{1}, s)$ on geodesic segments. Specializing further to the flat background, we also explicitly compute the Bogovskii and conic integral kernels.
\end{itemize}

\medskip \noindent{\bf Acknowledgements.}
P. Isett was supported by a Sloan Research Fellowship and the NSF grant DMS 2346799.  S.-J.~Oh was partially supported by a Sloan Research Fellowship, an NSF CAREER Grant under NSF-DMS-1945615, and an NSF Grant under NSF-DMS-2452760. Z. Tao would like to thank Maciej Zworski and Semyon Dyatlov for helpful discussions on the finite-dimensional cokernel condition. Z. Tao was partially supported by the Simons Targeted Grant Award No. 896630.

\section{Preliminaries} \label{sec:prelim}
\subsection{Notation and conventions for the operator $\calP$} \label{subsec:prelim-P}
Some important objects used throughout this paper are as follows. Let $U$ be an open subset of $\bbR^{d}$, and let $W$ be an open subset of $\bbR^{d} \times \bbR^{d}$ whose $\bbR^{d}_{y}$-projection contains $U$, i.e.,
\begin{equation*}
	U \subseteq \set{y \in \bbR^{d} : \exists y_{1} \hbox{ such that } (y, y_{1}) \in W}.
\end{equation*}
We denote by $\bfx = \bfx(y, y_{1}, s)$ a family of curves defined for $(y, y_{1}) \in W$ and $s \in [0, 1]$ with $\rd_{s} \bfx(y, y_{1}, s) \neq 0$ for every $s \in [0, 1]$, such that $\bfx(y, y_{1}, 0) = y$ and $\bfx(y, y_{1}, 1) = y_{1}$.

Let $\calP$ be an $r_{0} \times s_{0}$ (possibly complex-)matrix-valued differential operator on $U$. Unless otherwise specified, we follow the following conventions:
\begin{itemize}
\item $u = (u^{K})_{K=1, \ldots, s_{0}}$ is an $\bbC^{s_{0}}$-valued function, $f = (f^{J})_{J=1, \ldots, r_{0}}$ is an $\bbC^{r_{0}}$-valued function, $\varphi = (\varphi_{J})_{J=1, \ldots, r_{0}}$ is an $\bbC^{r_{0}}$-valued function, and $\psi = (\psi_{K})_{K=1, \ldots, s_{0}}$ is an $\bbC^{s_{0}}$-valued function.
\item We employ the natural $L^{2}$-inner products on $(U, \ud x)$, which are
\begin{equation*}
    \brk{\psi, u} \ceq \Re \int_{U} \br{\psi}_{K} u^{K} \, \ud x, \quad
    \brk{\vphi, f} \ceq \Re \int_{U}  \br{\vphi}_{J} f^{J}  \, \ud x.
\end{equation*}
\item $\calP^{\ast}$ denotes the adjoint of $\calP$ with respect to the above $L^{2}$-inner products on $(U, \ud x)$.
\item Finally, we will often omit writing out the identity matrix in equations, e.g., $\dlt_{0}(x-y) = \dlt_{0}(x - y) I_{r_{0} \times r_{0}}$ in \eqref{eq:trun-ker-y1-gen}.
\end{itemize}

\subsection{Notation and conventions for geometry and analysis} \label{subsec:prelim-notation}
Throughout the paper, we adopt the following conventions.
\begin{itemize}
\item We write $A \aleq B$ if there exists a positive constant $C > 0$ (that may differ from expression to expression) such that $A \leq C B$, and $A \aeq B$ if $A \aleq B$ and $B \aleq A$. We specify the dependencies of $C$ by a subscript, e.g., $A \aleq_{d} B$.

\item We write $\brk{x} = (1 + \abs{x}^{2})^{\frac{1}{2}}$.
\item We adopt the Einstein summation convention, i.e., repeated upper and indices are summed.

\item $\alp, \bt, \gmm, \ldots$ usually denote multi-indices, i.e., elements of $\bbZ_{\geq 0}^{d}$ ($\bbZ_{\geq 0}$ is the set of nonnegative integers). As usual, $\rd^{\alp} = \rd_{1}^{\alp_{1}} \cdots \rd_{d}^{\alp_{d}}$, and $\abs{\alp} = \alp_{1} + \cdots + \alp_{d}$.

\item We write $X \Subset U$ for a compact subset $X$ of some topological space $U$.

\item We also use the following notation for geometric objects:
\begin{itemize}
\item $B_{R}(x)$: Ball of radius $R$ centered at $x$ in $\bbR^{d}$.
\item $C_{\Omg}$: Given $\Omg \subseteq \bbS^{d-1}$, the cone over $\Omg$ is defined as $C_{\Omg} = \set{x \in \bbR^{d} : \frac{x}{\abs{x}} \in \Omg}$.
\item $\ud S(\omg)$: the $(d-1)$-dimensional surface measure on $\bbS^{d-1} \subseteq \bbR^{d}$ (set of unit directions)
\end{itemize}


\item Fix $m_{< 1} \in C^{\infty}_{c}(\bbR^{d})$ that is nonnegative, equals $1$ on $B_{1}(0)$, and vanishes outside $B_{2}(0)$. Given $N \in 2^{\bbZ}$, define $m_{< N}(\xi) \ceq m_{<1}(N^{-1} \xi)$ and $m_{N}(\xi) \ceq m_{< 2 N}(\xi) - m_{< N}(\xi)$. These functions form a smooth partition of unity subordinate to dyadic annuli in $\bbR^{d}$, i.e., $m_{<1} + \sum_{N \in 2^{\bbZ_{\geq 0}}} m_{N} = 1$ on $\bbR^{d}$ and $\supp m_{N} \subseteq \set{N < \abs{\xi} < 4 N}$.

\item We use the following convention for the Fourier and inverse Fourier transforms on $\bbR^{d}$:
\begin{equation*}
	\calF[f](x) = \int f(y) e^{-i \xi \cdot y} \, \ud y, \quad
	\calF^{-1}[F](x) = \int F(\xi) e^{i \xi \cdot x} \, \frac{\ud \xi}{(2 \pi)^{d}}.
\end{equation*}
Given $m : \bbR^{d} \to \bbC$, the \emph{Fourier multiplier operator with symbol $m$} is defined as
\begin{equation*}
	m(D) f = \calF^{-1}[m(\xi) \calF[f](\xi)]
\end{equation*}
for every Schwartz function $f$ on $\bbR^{d}$.

\item To discuss the boundedness properties of the solution operators below, it will be convenient to also use the language of \emph{pseudodifferential operators}. Given $a : \bbR^{d} \times \bbR^{d} \to \bbC$, the \emph{right-quantized pseudodifferential operator associated with the symbol $a$} (or simply the \emph{right-quantization of $a$}) is defined as
\begin{equation*}
	a(D, x) f = \int \int a(\xi, y) f(y) e^{-i \xi \cdot (x-y)} \, \frac{\ud \xi}{(2 \pi)^{d}} \, \ud y
\end{equation*}
for every Schwartz function $f$ on $\bbR^{d}$. Note that a Fourier multiplier is a particular instance of a (right-quantized) pseudodifferential operator with $a(\xi, y) = m(\xi)$. The integral kernel $K(x, y)$ of $a(D, x)$ is related to the symbol $a(\xi, y)$ by
\begin{equation} \label{eq:psdo-symb-ker}
	\int K(z + y, y) e^{-i \xi \cdot z} \, \ud z = a(\xi, y).
\end{equation}
\end{itemize}

\subsection{Preliminaries on Sobolev spaces} \label{subsec:ftn-sp}
The classical references on this subject are Lions--Magenes \cite{LioMag1,LioMag2,LioMag3}, Grisvard \cite{Gri}, and Triebel \cite{Tri}. Our definitions and notation, however, follow closely those of McLean \cite{McLean}; see also \cite{ChaHewMoi}.

Given $s \in \bbZ_{\geq 0}$ (nonnegative integers) and $p \in (1, \infty)$, we define
\begin{equation*}
    \nrm{f}_{W^{s, p}} \ceq \bb( \sum_{\alp : \abs{\alp} \leq s} \nrm{\rd^{\alp} f}_{L^{p}}^{p} \bb)^{\frac{1}{p}}, \quad \hbox{ where } f \in \calD'(\bbR^{d}),
\end{equation*}
and write $W^{s, p}$ for the space of all distributions $f$ on $\bbR^{d}$ with $\nrm{f}_{W^{s, p}} < + \infty$. For $s \geq 0$, $W^{s, p}$ is defined by (complex) interpolation (see \cite[\S2.4.2]{Tri}\footnote{In \cite{Tri}, $W^{s, p}(U)$, $\td{W}^{s, p}(U)$, and $W^{s, p}_{0}(U)$ are denoted $H^{s}_{p}(U)$, $\td{H}^{s}_{p}(U)$, and $\mathring{H}^{s}_{p}(U)$, respectively. Note also that $H^{s}_{p} = F^{s}_{p, 2}$.})), and $W^{-s, p'} \ceq (W^{s, p})^{\ast}$, where $\frac{1}{p'} = 1 - \frac{1}{p}$. It turns out that $C^{\infty}_{c}(\bbR^{d})$ is a dense subspace of $W^{s, p}$ \cite[\S2.3.2]{Tri} (in fact, $C^{\infty}_{c}(\bbR^{d}) \subseteq W^{s, p}$ is dense even for $s < 0$ once $W^{s, p}$ is identified with a space of distributions as below). Hence, by the duality pairing
\begin{equation*}
	W^{-s, p'} \times W^{s, p} \to \bbR, (f, g) \mapsto \brk{f, g}
\end{equation*}
we may {\bf identify} $f \in W^{-s, p'}$ with a distribution on $\bbR^{d}$ (extended from $g \in C^{\infty}_{c}(\bbR^{d})$ to $g \in W^{s, p}$ uniquely via continuity). Moreover, $W^{s, p}$ is reflexive, which implies that the above pairing induces the isomorphism $W^{-s, p'} \equiv (W^{s, p})^{\ast}$ holds for all $s \in \bbR$ \cite[\S2.6.1]{Tri}.

Given an open subset $U$ of $\bbR^{d}$, $s \in \bbR$ and $p \in (1, \infty)$, we introduce the following spaces:
\begin{itemize}
\item $W^{s, p}(U)$, which consists of distributions on $U$ which arise by restricting elements of $W^{s, p}$ to $U$, i.e.,
\begin{equation} \label{eq:sob-U}
	W^{s, p}(U) \ceq \set{f \in \calD'(U) : f = \td{f} |_{U} \hbox{ for some } \td{f} \in W^{s, p}}
\end{equation}
equipped with the norm
\begin{equation*}
	\nrm{f}_{W^{s, p}(U)} \ceq \inf_{\td{f} \in W^{s, p} : \td{f} |_{U} = f} \nrm{\td{f}}_{W^{s, p}}.
\end{equation*}
\item $\td{W}^{s, p}(U)$, which is the closure of $C^{\infty}_{c}(U)$ viewed as a subspace of $W^{s, p}$, i.e.,
\begin{equation} \label{eq:sob-td}
	\td{W}^{s, p}(U) \ceq \br{C^{\infty}_{c}(U)}^{\nrm{\cdot}_{W^{s, p}}}.
\end{equation}
\end{itemize}
Note the fundamental distinction that $\td{W}^{s, p}(U) \subseteq \calD'(\bbR^{d})$ (i.e., distributions on $\bbR^{d}$), while $W^{s, p}(U) \subseteq \calD'(U)$ (i.e., distributions on $U$). Elements in $\td{W}^{s, p}(U)$ may be identified with elements in $W^{s, p}(U)$, by the natural maps $\td{W}^{s, p}(U) \to W^{s, p} \to W^{s, p}(U)$. In particular, for $\vphi \in \td{W}^{s, p}(U)$, we have $\nrm{\vphi}_{W^{s, p}(U)} \leq \nrm{\vphi}_{\td{W}^{s, p}(U)}$. Note, however, that the map $\td{W}^{s, p}(U) \to W^{s, p}(U)$ may \emph{not} be one-to-one; indeed, for $s < 0$, note that $\td{W}^{s, p}(U)$ may contain distributions (on $\bbR^{d}$) supported in $\rd U$, while such objects do not even correspond to non-trivial elements in $\calD'(U)$, and thus in $W^{s, p}(U)$.

Intuitively, for $s > 0$, $\td{W}^{s, p}(U)$ consists of elements that vanish to all possible orders on $\rd U$, while those in $W^{s, p}(U)$ are allowed to be nontrivial near $\rd U$. However, when $s < 0$, one must be careful of the distinctions discussed in the preceding paragraph. For more on these points, see Remarks~\ref{rem:sob-bdry} and \ref{rem:sob-0} below.

Given a closed subset $F \subseteq \bbR^{d}$, we introduce the closed subspace $W_{F}^{s, p}$ of $W^{s, p}$ that consists of elements that are supported in $F$, i.e.,
 \begin{equation} \label{eq:sob-F}
	W_{F}^{s, p} \ceq \set{f \in W^{s, p}(\bbR^{d}) : \supp f \subseteq F}.
\end{equation}
Clearly, we have $\td{W}^{s, p}(U) \subseteq W_{\br{U}}^{s, p}$; equality holds under additional assumptions on  $\rd U$ (see Remark~\ref{rem:sob-bdry}). Given an open subset $U$ of $\bbR^{d}$, note that we have the Banach space isomorphism
\begin{align}
	W^{s, p}(U) &\equiv W^{s, p} / W^{s, p}_{\bbR^{d} \setminus U}, \label{eq:sob-quotient}
\end{align}
where the isomorphism is given by the quotient of the restriction map $W^{s, p} \to W^{s, p}(U)$, $\td{f} \mapsto \td{f} |_{U}$ by its kernel (which is precisely $W^{s, p}_{\bbR^{d} \setminus U}$). Using $W^{-s, p'} = (W^{s, p})^{\ast}$, it is also clear that we have the identity
\begin{align}
	W^{-s, p'}_{\bbR^{d} \setminus U} &= (\td{W}^{s, p}(U))^{\perp},  \label{eq:sob-annihilator}
\end{align}
where for a subspace $Y$ of a Banach space $X$, $Y^{\perp} \ceq \set{\ell \in X^{\ast} : \brk{\ell, g} = 0 \hbox{ for all } g \in Y}$ (space of annihilators).
The following duality statement, which generalizes $(W^{s, p})^{\ast} = W^{-s, p'}$, is now obvious:
\begin{lemma} [Duality] \label{lem:sob-duality}
For any open subset $U \subseteq \bbR^{d}$, $s \in \bbR$ and $p \in (1, \infty)$, the bilinear map
\begin{equation*}
	W^{-s, p'}(U) \times \td{W}^{s, p}(U) \to \bbR, \, (f, g) \mapsto \brk{f, g},
\end{equation*}
which coincides with the usual pairing $\brk{f, g}$ for $f \in W^{-s, p'}(U) \subseteq \calD'(U)$ and $g \in C^{\infty}_{c}(U)$, is well-defined, continuous, and induces the Banach space isomorphisms
\begin{equation} \label{eq:sob-duality}
	(\td{W}^{s, p}(U))^{\ast} \equiv W^{-s, p'}(U), \quad (W^{-s, p'}(U))^{\ast} \equiv \td{W}^{s, p}(U).
\end{equation}
\end{lemma}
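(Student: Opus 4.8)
The plan is to prove the duality lemma by combining the reflexivity of $W^{s,p}$, the quotient representation \eqref{eq:sob-quotient}, the annihilator identity \eqref{eq:sob-annihilator}, and the standard duality theory for quotients and subspaces of reflexive Banach spaces. The two isomorphisms in \eqref{eq:sob-duality} are in fact dual to each other once reflexivity is available, so the real work is establishing one of them carefully and then checking that the pairing is the asserted one.

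First I would recall the abstract functional-analytic facts: if $X$ is a Banach space and $Y \subseteq X$ a closed subspace, then $(X/Y)^{\ast} \equiv Y^{\perp}$ (isometrically, via $\ell \mapsto \ell \circ \pi$ where $\pi : X \to X/Y$ is the quotient map), and $Y^{\ast} \equiv X^{\ast}/Y^{\perp}$. Apply the first with $X = W^{s,p}$, $Y = W^{s,p}_{\bbR^d \setminus U}$: by \eqref{eq:sob-quotient} we get $(W^{s,p}(U))^{\ast} \equiv (W^{s,p}_{\bbR^d \setminus U})^{\perp}$, computed inside $(W^{s,p})^{\ast} = W^{-s,p'}$. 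By \eqref{eq:sob-annihilator} applied with the roles of $(s,p)$ and $(-s,p')$ swapped --- i.e. $W^{s,p}_{\bbR^d \setminus U} = (\td{W}^{-s,p'}(U))^{\perp}$ inside $(W^{-s,p'})^{\ast} = W^{s,p}$, using reflexivity to identify $(W^{-s,p'})^{\ast}$ with $W^{s,p}$ --- and the bipolar theorem in the reflexive setting ($(E^{\perp})^{\perp} = E$ for $E$ a closed subspace of a reflexive space, under the canonical identification), we obtain $(W^{s,p}_{\bbR^d\setminus U})^{\perp} = \td{W}^{-s,p'}(U)$. Chaining these identifications gives $(W^{s,p}(U))^{\ast} \equiv \td{W}^{-s,p'}(U)$, which after relabeling $(s,p) \leftrightarrow (-s,p')$ is the second isomorphism in \eqref{eq:sob-duality}. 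For the first isomorphism, I would instead use $Y^{\ast} \equiv X^{\ast}/Y^{\perp}$ with $X = W^{s,p}$, $Y = \td{W}^{s,p}(U)$: then $(\td{W}^{s,p}(U))^{\ast} \equiv W^{-s,p'}/(\td{W}^{s,p}(U))^{\perp} = W^{-s,p'}/W^{-s,p'}_{\bbR^d\setminus U} \equiv W^{-s,p'}(U)$, where the middle equality is \eqref{eq:sob-annihilator} and the last is \eqref{eq:sob-quotient}.

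Next I would verify that the pairing realizing these isomorphisms is the one described in the statement, namely the continuous extension of $(f,g) \mapsto \brk{f,g}$ from $C^{\infty}(\br U) \times C^{\infty}_c(U)$ (equivalently from $W^{-s,p'}(U) \cap \calD'(U) \times C^\infty_c(U)$). This amounts to unwinding the quotient/subspace identifications: an element of $W^{-s,p'}(U)$ is a coset $\td f + W^{-s,p'}_{\bbR^d\setminus U}$, and it acts on $g \in \td{W}^{s,p}(U) \subseteq W^{s,p}$ by $\brk{\td f, g}_{W^{-s,p'} \times W^{s,p}}$; this is well-defined precisely because $g$, being a $W^{s,p}$-limit of $C^\infty_c(U)$ functions, annihilates $W^{-s,p'}_{\bbR^d\setminus U}$ by \eqref{eq:sob-annihilator}. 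Continuity and the fact that it agrees with $\int_U \Re(\br f g)\,\ud x$ on the smooth dense subclasses is then immediate. One should also note well-definedness of the pairing does not require any identification of $g \in \td{W}^{s,p}(U)$ with an element of $\calD'(U)$ --- it is genuinely the $W^{-s,p'} \times W^{s,p}$ pairing of a representative $\td f$ with $g$ as a distribution on $\bbR^d$.

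The main obstacle, such as it is, is bookkeeping rather than mathematics: one must be scrupulous about the canonical identifications induced by reflexivity of $W^{s,p}$ (which was quoted earlier from \cite[\S2.6.1]{Tri}), since the bipolar-theorem step implicitly passes through $(W^{-s,p'})^{\ast} \cong W^{s,p}$ and it is easy to get a spurious bidual in the way. I would therefore state once and for all the identification $(W^{s,p})^{\ast\ast} = W^{s,p}$ via the canonical embedding, verify that under it $W^{-s,p'}_{\bbR^d\setminus U} = (\td W^{s,p}(U))^{\perp}$ (which is \eqref{eq:sob-annihilator}) dualizes to $(W^{-s,p'}_{\bbR^d\setminus U})^{\perp} = \td W^{s,p}(U)$ --- here using that $\td W^{s,p}(U)$ is by definition a \emph{closed} subspace, so the bipolar theorem applies with no closure needed --- and then everything else is a mechanical chain of isometric isomorphisms. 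No step requires any regularity hypothesis on $\rd U$, consistent with the remark that the genuinely subtle boundary issues are postponed to Remarks~\ref{rem:sob-bdry} and \ref{rem:sob-0}.
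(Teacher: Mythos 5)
Your proof is correct and follows essentially the same route as the paper: the first isomorphism is obtained exactly as in the paper's proof, via the standard identification $Y^{\ast} \equiv X^{\ast}/Y^{\perp}$ for the closed subspace $Y = \td{W}^{s,p}(U) \subseteq W^{s,p}$ combined with \eqref{eq:sob-annihilator} and \eqref{eq:sob-quotient}. The only (immaterial) difference is in the second isomorphism: the paper gets it in one line from the reflexivity of $\td{W}^{s,p}(U)$ as a closed subspace of the reflexive space $W^{s,p}$, whereas you rederive it from the quotient duality $(X/Y)^{\ast} \equiv Y^{\perp}$ plus the bipolar theorem in a reflexive space --- equivalent facts, at the cost of the extra bookkeeping about canonical identifications that you correctly flag.
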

\begin{proof}
The first isomorphism follows from the natural identification $Y^{\ast} \equiv X^{\ast} / Y^{\perp}$ for any closed subspace $Y$ of a Banach space $X$. The second statement then follows from the fact that $\td{W}^{s, p}(U)$ is a reflexive Banach space, being a closed subspace of the Banach space $W^{s, p}$, which is reflexive.
\end{proof}

The Rellich--Kondrachov theorem holds for both scales of Sobolev spaces $W^{s, p}(U)$ and $\td{W}^{s, p}(U)$:
\begin{lemma} [Rellich--Kondrachov] \label{lem:sob-cpt}
For any bounded open subset $U \subseteq \bbR^{d}$, $s \in \bbR$, $p \in (1, \infty)$, and $\dlt > 0$, the natural embeddings
\begin{equation*}
	W^{s, p}(U) \to W^{s-\dlt, p}(U), \quad \td{W}^{s, p}(U) \to \td{W}^{s-\dlt, p}(U)
\end{equation*}
are compact.
\end{lemma}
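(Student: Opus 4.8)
The plan is to reduce both embeddings to a single \emph{core compactness fact}: for every ball $B \sbeq \bbR^{d}$, the inclusion $W^{s,p}_{\br{B}} \hookrightarrow W^{s-\dlt,p}(\bbR^{d})$ (recall \eqref{eq:sob-F}) is compact. Granting this, the two claims follow by bookkeeping with the definitions. Since $U$ is bounded, fix a ball $B$ with $\br{U} \sbeq B$. For the $\td{W}$-scale, $\td{W}^{s,p}(U) \sbeq W^{s,p}_{\br{U}} \sbeq W^{s,p}_{\br{B}}$, so a bounded sequence $(\vphi_{n})$ in $\td{W}^{s,p}(U)$ admits a subsequence converging in $W^{s-\dlt,p}(\bbR^{d})$; since the inclusion $W^{s,p} \to W^{s-\dlt,p}$ sends $C^{\infty}_{c}(U)$ into itself and hence $\td{W}^{s,p}(U)$ into the closed subspace $\td{W}^{s-\dlt,p}(U)$ (see \eqref{eq:sob-td}), the limit lies in $\td{W}^{s-\dlt,p}(U)$, as needed. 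For the $W(U)$-scale, given a bounded sequence $(f_{n})$ in $W^{s,p}(U)$, choose extensions $\td{f}_{n} \in W^{s,p}(\bbR^{d})$ with $\td{f}_{n}|_{U} = f_{n}$ and $\nrm{\td{f}_{n}}_{W^{s,p}} \leq 2\nrm{f_{n}}_{W^{s,p}(U)}$ (possible by \eqref{eq:sob-U}), and fix $\chi \in C^{\infty}_{c}(\bbR^{d})$ with $\chi \equiv 1$ near $\br{U}$; then $(\chi \td{f}_{n})$ is bounded in $W^{s,p}_{\supp \chi}$ (multiplication by a fixed $C^{\infty}_{c}$ function is bounded on $W^{s,p}$ for $1<p<\infty$ and every $s \in \bbR$), so by the core fact some subsequence converges in $W^{s-\dlt,p}(\bbR^{d})$, and restriction to $U$ -- norm-nonincreasing by \eqref{eq:sob-U} -- gives a subsequence of $f_{n} = (\chi \td{f}_{n})|_{U}$ convergent in $W^{s-\dlt,p}(U)$.

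To establish the core fact, I would decompose any $f \in W^{s,p}_{\br{B}}$ via a Littlewood--Paley partition $\mathrm{Id} = m_{<N_{0}}(D) + \sum_{N \geq N_{0}} m_{N}(D)$ and bound high and low frequencies separately. For the high frequencies, the Triebel--Lizorkin characterization $W^{\sgm,p} = F^{\sgm}_{p,2}$ (\cite{Tri}) yields
\begin{equation*}
\nrm{(\mathrm{Id} - m_{<N_{0}}(D))f}_{W^{s-\dlt,p}} \aeq \nrm*{\big(\textstyle\sum_{N \geq N_{0}} N^{2(s-\dlt)} \abs{m_{N}(D)f}^{2}\big)^{\frac{1}{2}}}_{L^{p}} \aleq N_{0}^{-\dlt}\nrm{f}_{W^{s,p}},
\end{equation*}
uniformly over bounded subsets of $W^{s,p}_{\br{B}}$; hence this contribution is negligible as $N_{0} \to \infty$. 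For the low frequencies, I claim that $\set{m_{<N_{0}}(D)f : f \in W^{s,p}_{\br{B}}, \ \nrm{f}_{W^{s,p}} \leq 1}$ is precompact in $L^{p}(\bbR^{d})$. It is bounded in $W^{M,p}(\bbR^{d})$ for every $M$ (because $m_{<N_{0}}(\xi)\brk{\xi}^{-s}$ and $\brk{\xi}^{M}m_{<N_{0}}(\xi)\brk{\xi}^{-s}$ lie in $C^{\infty}_{c}$, so the associated Fourier multipliers are bounded on $L^{p}$), and it is equi-decaying at infinity: writing $f = \eta f$ for a fixed $\eta \in C^{\infty}_{c}$ with $\eta \equiv 1$ near $\br{B}$, one has $m_{<N_{0}}(D)f(x) = \brk{f, \eta \, \calF^{-1}[m_{<N_{0}}](x - \cdot)}$, and since $\calF^{-1}[m_{<N_{0}}]$ is Schwartz this is $\aleq_{N_{0},k} \brk{x}^{-k}\nrm{f}_{W^{s,p}}$ for $\abs{x}$ large. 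Splitting $m_{<N_{0}}(D)f = \chi_{R}\,m_{<N_{0}}(D)f + (1-\chi_{R})\,m_{<N_{0}}(D)f$ with $\chi_{R}$ a cutoff to a large ball, the first term is bounded in $W^{1,p}$ of a fixed ball, hence precompact in $L^{p}$ by the classical Rellich--Kondrachov theorem (\cite{Evans}), while the second is $o(1)$ in $L^{p}$ uniformly as $R \to \infty$; so the set is precompact in $L^{p}$, and therefore in $W^{s-\dlt,p}$ since $\brk{D}^{s-\dlt}$ is bounded on distributions with spectrum in a fixed ball. A diagonal argument over $N_{0} = 2^{k}$, combined with the uniform high-frequency smallness, then produces from any bounded sequence in $W^{s,p}_{\br{B}}$ a subsequence that is Cauchy, hence convergent, in $W^{s-\dlt,p}(\bbR^{d})$, which is the core fact. (Alternatively, one could invoke the compactness of Triebel--Lizorkin embeddings on bounded domains directly; cf.\ \cite{Tri}.)

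The step I expect to be the main obstacle is the low-frequency precompactness. Although $f$ is compactly supported, $m_{<N_{0}}(D)f$ is not, and since $f$ may be a distribution of negative order, one must check carefully that the frequency truncation produces a family that is \emph{simultaneously} regular (bounded in a high Sobolev norm over bounded sets) and equi-decaying at infinity -- precisely the two ingredients needed to invoke the classical Rellich--Kondrachov theorem (equivalently, the Fr\'echet--Kolmogorov criterion) on the low-frequency piece. By contrast, the high-frequency estimate is a routine multiplier bound, and the reductions to the core fact are bookkeeping with the definitions \eqref{eq:sob-U}, \eqref{eq:sob-td}, and \eqref{eq:sob-F}.
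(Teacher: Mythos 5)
Your proof is correct, but it takes a more self-contained and partly different route from the paper's. The paper disposes of the $\td{W}^{s,p}(U)$ case exactly as you do -- by reducing to compactness of $W^{s,p}_{\br{U}} \hookrightarrow W^{s-\dlt,p}$ -- but it simply cites this core fact (McLean, Theorem~3.27, stated there for $p=2$), whereas you prove it from scratch via a Littlewood--Paley splitting: the high-frequency tail is $O(N_{0}^{-\dlt})$ uniformly, and the band-limited low-frequency piece is precompact in $L^{p}$ (hence in $W^{s-\dlt,p}$, by Bernstein applied to differences) because it is simultaneously bounded in every $W^{M,p}$ and equi-decaying at infinity thanks to the compact support of $f$. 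That is exactly the right pair of ingredients, and your worry about it being the delicate step is well placed but resolved correctly. The more substantive divergence is in the $W^{s,p}(U)$ case: the paper obtains it by duality, identifying $W^{s,p}(U) \equiv (\td{W}^{-s,p'}(U))^{\ast}$ via Lemma~\ref{lem:sob-duality} and invoking Schauder's theorem that the adjoint of a compact operator is compact; you instead argue directly with a near-optimal extension, a fixed cutoff $\chi$, the core fact, and the norm-nonincreasing restriction map. The duality route is shorter and leans on machinery already set up in Section~\ref{subsec:ftn-sp}; your route is longer but elementary, makes no appeal to reflexivity or Schauder, and yields the full range $p \in (1,\infty)$ without having to extend a $p=2$ citation. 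Both are valid proofs of the lemma.
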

\begin{proof}
For $\td{W}^{s, p}(U)$, this result is a quick consequence of Rellich--Kondrachov for $W^{s, p}_{\br{U}}$ (see, e.g., McLean \cite[Theorem~3.27]{McLean}). For $W^{s, p}(U)$, the result follows by duality (Lemma~\ref{lem:sob-duality}). \qedhere
\end{proof}
When $p = 2$ (and $s \in \bbR$), we write $H^{s}(U) = W^{s, 2}(U)$ and $\td{H}^{s}(U) = \td{W}^{s, 2}(U)$, etc.

\begin{remark}[Consequences of regularity of $\rd U$] \label{rem:sob-bdry}
So far, we have not used any regularity assumptions on $\rd U$. Under some regularity assumptions on $\rd U$, the spaces introduced above may be given alternative characterizations as follows.
\begin{enumerate}
\item If $\rd U$ is $C^{0}$, then $W^{s, p}_{\br{U}} = \td{W}^{s, p}(U)$.
\item If $\rd U$ is Lipschitz, $s = 1, 2, \ldots$, and $p \in (1, \infty)$, we have the equivalence
\begin{equation*}
\nrm{f}_{W^{s, p}(U)} \aeq_{s, p} \bb( \sum_{\alp : \abs{\alp} \leq s} \nrm{\rd^{\alp} f}_{L^{p}(U)}^{p} \bb)^{\frac{1}{p}}.
\end{equation*}
\end{enumerate}
For $p = 2$, the proofs of these assertions can be found in McLean \cite[Chapter~3]{McLean}; the case $p \neq 2$ can also be handled in a similar way. See also \cite[\S4.3.2, \S4.2.4]{Tri} for the case $\rd U$ is smooth.
\end{remark}

\begin{remark}[The space $W^{s, p}_{0}(U)$] \label{rem:sob-0}
Closely related to $\td{W}^{s, p}(U)$ is the space $W_{0}^{s, p}(U)$, which is the closure of $C^{\infty}_{c}(U)$ with respect to $\nrm{\cdot}_{W^{s, p}(U)}$ (as opposed to $\nrm{\cdot}_{W^{s, p}}$ as in the case of $\td{W}^{s, p}(U)$). An alternative characterization for $W^{s, p}_{0}(U)$ is in terms of vanishing boundary trace \cite[\S4.7.1]{Tri}: for $p \in (1, \infty)$, and $s = m + \alp + \frac{1}{p}$ with $m \in \bbZ_{\geq 0}$ and $\alp \in [0, 1)$, we have
\begin{equation*}
	W^{s, p}_{0}(U) = \set{f \in W^{s, p}(U) : f |_{\rd U} = \tfrac{\rd}{\rd \bfnu} f |_{\rd U} = \cdots = \tfrac{\rd^{m}}{\rd \bfnu^{m}} f |_{\rd U} = 0}.
\end{equation*}

There are subtle differences between the two spaces $\td{W}^{s, p}(U)$ and $W_{0}^{s, p}(U)$. For simplicity, assume that $\rd U$ is smooth and bounded. Then for any $p \in (1, \infty)$, we have (see, e.g., \cite[\S4.3.2]{Tri})
\begin{equation*}
	\td{W}^{s, p}(U) \subseteq W_{0}^{s, p}(U) \quad \hbox{ with equality if } \frac{1}{p} - 1 < s < \infty, \, s + \frac{1}{p} \not \in \bbZ,
\end{equation*}
where $\td{W}^{s, p}(U)$ is realized as a (non-closed) subspace of $W^{s, p}(U)$ under the map $\td{W}^{s, p}(U) \to W^{s, p} \to W^{s, p}(U)$.

The inclusion $\td{W}^{s, p}(U) \subseteq W_{0}^{s, p}(U)$ may not be strict. Indeed, for $U = (0, \infty) \subseteq \bbR$, for any $m \in \bbZ_{\geq 0}$ and $f \in C^{\infty}_{c}((-1, 1))$ with $f(0) = f'(0) = \cdots = f^{(m-1)}(0) = 0$ but $f^{(m)}(0) \neq 0$ (where the condition is $f(0) \neq 0$ if $m = 0$), we have $f |_{U} \in H_{0}^{m+\frac{1}{2}}(U) \setminus \td{H}^{m+\frac{1}{2}}(U)$. We also note that the inclusion fails in general if $s < \frac{1}{p} - 1$. Indeed, for $U = (0, \infty) \subseteq \bbR$ and $s < \frac{1}{2}$, $\dlt_{0} \in \td{H}^{s}(U)$, while $\dlt_{0}$ is not even a nontrivial element of $\calD'(U)$.

We note that $W^{s, p}_{0}(U)$ is heavily used in the classical treatments of Lions--Magenes \cite{LioMag1} and Grisvard \cite{Gri}.  In this paper, we prefer to use $\td{W}^{s, p}(U)$ as it behaves better under standard operations such as duality (see \eqref{eq:sob-duality}) and interpolation (see \cite[\S4.3.2, Theorem~2]{Tri}).
\end{remark}

\section{A preview of the recovery on curves method} \label{sec:crc-summary}
The goal of this expository section is to provide a summary and a basic example for our derivation of integral formulas (i.e., solution operators and representation formulas) from \ref{hyp:crc}, which is carried out in Section~\ref{sec:crc} in the general case. In Section~\ref{subsec:method}, we summarize the proof of Theorems~\ref{thm:main-summary-conic} and \ref{thm:main-summary-bogovskii}, and in Section~\ref{subsec:ideas-div}, we work out the basic example of the divergence operator $\calP u = \rd_{j} u^{j} + B_{j} u^{j}$ (for $u : U \to \bbR^{d}$) with possibly variable coefficients $B: U \to \bbR^{d}$ according to our general method developed in Sections~\ref{sec:sio} and \ref{sec:crc} below. As a byproduct, we provide a self-contained derivation of the Bogovskii and conic operators, \eqref{eq:bog-0} and \eqref{eq:conic-0}, respectively.

\subsection{Summary of the derivation of the integral formulas} \label{subsec:method}
Here, we summarize the proof of Theorems~\ref{thm:main-summary-conic} and \ref{thm:main-summary-bogovskii}, which concern the derivation of integral solution operators and representation formulas for $\calP$ (with $C^{\infty}(\br{U})$ coefficients) satisfying \ref{hyp:crc}. The precise results and arguments are in Section~\ref{sec:crc} below.

\smallskip
\noindent{\it Step~1: Constructing a rough integral kernel supported on curves.}
Consider curves $\bfx(y, y_{1}, s) \in \bbR^{d}$ with $s \in [0, 1]$, where $y$ and $y_{1}$ are the two end points (i.e., $\bfx(y, y_{1}, 0) = y$ and $\bfx(y, y_{1}, 1) = y_{1}$).  Recall that the~\ref{hyp:crc} condition posits that we are able to linearly recover the value of a function $\phi$ at $y$ in terms of its jet at $y_1$ and the jet of $\calP^* \phi$ along $\bfx(y,y_1,\cdot)$.  By duality, \ref{hyp:crc} on the curve $\bfx(y, y_{1}, \cdot)$ amounts to the existence of distributions $K_{y_{1}}(\cdot, y)$ and $b_{y_{1}}(\cdot, y)$ on $\bbR^{d}$ with the following properties:
\begin{enumerate}
\item (Green's function) We have
\begin{equation*}
	\calP K_{y_{1}}(x, y) = \dlt_{0}(x-y) - b_{y_{1}}(x, y) \quad \hbox{ in } U,
\end{equation*}
where $\calP$ acts in the $x$-variable; and
\item (Prescribed support) $K_{y_{1}}(\cdot, y)$ is supported on the image of the curve $\bfx(y, y_{1}, [0, 1])$, and $b_{y_{1}}(\cdot, y)$ is supported in $\set{y_{1}}$.
\end{enumerate}

See Step~1 in Section~\ref{subsec:ideas-div} for a concrete example, and Proposition~\ref{prop:crc-x-duality} for a precise formulation.
One case where this observation is immediately useful is when $y_{1}$ lies outside of the open set $U$ where we wish to solve $\calP u = f$. Then $\calP K_{y_{1}}(x, y) = \dlt_{0}(x-y)$ in $U$, so the integral operator $\calS_{y_{1}} f(x) \ceq \int K_{y_{1}}(x, y) f(y) \, \ud y$ (for $f \in C^{\infty}_{c}(U)$) is already a solution operator (i.e., right-inverse) for $\calP$!

\begin{remark} \label{rem:endpoint-x}
For the construction in the case $y_{1} \not \in U$, note that the following weaker version of the recovery on curves condition is sufficient:
\begin{enumerate}[label=(wRC)]
\item \label{hyp:w-crc} Given any $\varphi \in C^{\infty}_{c}(U)$ that vanishes in a neighborhood of $\bfx(1)$, there exists a linear way to continuously recover $\varphi(\bfx(0))$ from (the jet of) $\calP^{\ast} \varphi$ on $\bfx$.
\end{enumerate}
It is conceptually interesting to observe that the apparently weaker version \ref{hyp:w-crc}, in fact, essentially implies \ref{hyp:crc} by a truncation argument; see Remark~\ref{rem:endpoint-x-detail}. However, we also note that the stronger version \ref{hyp:crc} directly follows from an augmented system; see Section~\ref{subsec:ideas-div} below, as well as Sections~\ref{sec:ode} and \ref{sec:appl}.
\end{remark}

While we have already succeeded in finding a solution operator with an integral kernel $K_{y_{1}}(x, y)$ having prescribed support properties, it is unfortunately very singular (indeed, $K_{y_{1}}(\cdot, y)$ is merely a distribution). In particular, $\calS_{y_{1}}$ does not obey good boundedness properties in standard function spaces (e.g., Sobolev spaces), and is unsuitable for many applications (e.g., nonlinear analysis).

\smallskip
\noindent{\it Step~2: Smooth averaging.}
To remedy the issue of the singularity of $K_{y_{1}}(\cdot, y)$, we introduce a smooth function $\eta(y, y_{1})$ on $\bbR^{d} \times \bbR^{d}$ such that $\int \eta(y, y_{1}) \, \ud y_{1} = 1$ and define a new \emph{smoothly averaged integral kernel}
\begin{equation*}
	K_{\eta}(x, y) \ceq \int K_{y_{1}}(x, y) \eta(y, y_{1}) \, \ud y_{1}.
\end{equation*}
Under suitable assumptions, the smoothly averaged kernel $K_{\eta}(x, y)$ has the following properties:
\begin{enumerate}
\item (Green's function) We have
\begin{equation*}
	\calP K_{\eta}(x, y) = \dlt_{0}(x-y) - b_{\eta}(x, y) \quad \hbox{ in } U, \quad \hbox{ where } b_{\eta}(x, y) \ceq \int b_{y_{1}}(x, y) \eta(y, y_{1}) \, \ud y_{1};
\end{equation*}
\item (Prescribed support) $\supp K_{\eta}(\cdot, y) \subseteq \cup_{y_{1} \in \supp \eta(y, \cdot)} \ran \bfx(y, y_{1}, \cdot)$;
\item (Optimal regularization) $K_{\eta}$ is a singular integral kernel such that $\calS_{\eta} f \ceq \int K_{\eta}(x, y) f(y) \, \ud y$ defines a pseudodifferential operator of order $-m$ (where $m$ is the order of $\calP$).
\end{enumerate}
In particular, we may now summarize the proof of Theorem~\ref{thm:main-summary-conic}. Assuming that the $y_{1}$-support of $\eta(y, y_{1})$ lies outside of $U$, i.e.,
\begin{equation} \label{eq:intro-disjoint-supp}
\br{U} \cap \br{\cup_{y \in U} \supp \eta(y, \cdot)} = \0,
\end{equation}
the integral operator $\calS_{\eta}$ again defines a solution operator for $f \in C^{\infty}_{c}(U)$ in the sense that
\begin{equation*}
\calP \calS_{\eta} f = f \quad \hbox{ for all } f \in C^{\infty}_{c}(U),
\end{equation*}
but this time, has desirable boundedness properties (i.e., regularizing to optimal order) in standard function spaces. We emphasize that $\calS_{\eta} f$ is \emph{not} compactly supported in $U$ in general (cf.~Step~3 below). By duality, we also obtain the representation formula
\begin{equation*}
\calS_{\eta}^{\ast} \calP^{\ast} \varphi  = \varphi \quad \hbox{ for all } \varphi \in C^{\infty}_{c}(U)
\end{equation*}
where we emphasize that the compact support assumption on $\varphi$ is necessary (cf.~Step~3 below). We call $\calS_{\eta}$ a \emph{conic-type solution operator}.

\begin{remark} \label{rem:intro:conic}
As we will see in Section~\ref{subsec:ideas-div} (see also Example~\ref{ex:div-full-sol-conic}), this construction generalizes the conic solution operator introduced by Oh--Tataru \cite{OhTat} for the divergence operator $\calP u = \rd_{j} u^{j}$ on $\bbR^{d}$, which explains its name. The conic solution operator, in turn, has been generalized to the case of the linearized Einstein constraint equations around the flat space by Mao--Tao \cite{MaoTao}, who used it to simplify and improve the nonlinear construction of localized asymptotically flat initial data sets by Carlotto--Schoen \cite{CarSch} (see also Section~\ref{subsec:einstein}).
\end{remark}

\smallskip
\noindent{\it Step~3: Solutions with compact support I: completely integrable case.}
Without the simplifying assumption \eqref{eq:intro-disjoint-supp}, $\calS_{\eta}$ does not directly define a right-inverse of $\calP$. Nevertheless, under suitable assumptions, $b_{\eta}(x, y)$ turns out to be smooth and supported (in the $x$-variable) in $\supp \eta(y, \cdot)$. Therefore,
\begin{equation*}
	\calP \calS_{\eta} f = f - \calB_{\eta} f \quad \hbox{ where $\calB_{\eta} : C^{\infty}_{c}(U) \to C^{\infty}_{c}(U)$ is smoothing.}
\end{equation*}
Observe that if $f \in C^{\infty}_{c}(U)$ then $u = \calS_{\eta} f \in C^{\infty}_{c}(U)$ (under suitable assumptions on $\bfx$, $\eta$ and using the optimal regularization property).  The observation that $u$ does \emph{not} solve $\calP u = f$ is now not surprising: In order for $C^{\infty}_{c}(U)$ solution to exist, $f$ must be orthogonal to the formal cokernel of $\calP$ (i.e., $\ker \calP^{\ast}$, which consists of solutions to $\calP^{\ast} \bfZ = 0$ with $\bfZ \in C^{\infty}(\br{U})$), which is often nontrivial; see Section~\ref{subsec:ideas-div}, as well as Appendix~\ref{sec:appl}, for examples.  Specifically, if $\calP \tilde{u} = f$, and $\tilde{u}$ has compact support, then for any cokernel element $\bfZ \in \ker \calP^\ast$ one has $\int \langle f, \bfZ\rangle dx = \int \langle \calP \tilde{u}, \bfZ \rangle dx = \int \langle \tilde{u}, \calP^\ast Z \rangle dx = 0$.

Let us first discuss an important special case, namely, when the point distribution $b_{y_{1}}$ from Step~1 is already of the form
\begin{equation} \label{eq:intro-b-y1-bogovskii}
	b_{y_{1}}(x, y) = \sum_{\bfA \in \set{1, \ldots, \dim \ker \calP^{\ast}}} \bfZ^{A}(y) \zt_{\bfA}(x, y_{1})
\end{equation}
where $\set{\bfZ^{\bfA}(x)}_{\bfA \in \set{1, \ldots, \dim \ker \calP^{\ast}}} \subseteq C^{\infty}(\br{U})$ is a basis for $\ker \calP^{\ast}$ and $\zt_{\bfA}(\cdot, y_{1})$ is a distribution supported in $\set{y_{1}}$. As we will see in Section~\ref{subsec:ideas-div}, the divergence operator $\calP u = \rd_{j} u^{j}$ falls into this case \cite{Bog}, and in fact, so do any operators with a \emph{completely integrable} graded augmented system by Proposition~\ref{prop:zero-curv} (see also Appendix~\ref{sec:appl} for many concrete examples). In this case, choosing $\eta(y, y_{1}) = \eta(y_{1})$ in Step~2 immediately leads to
\begin{equation} \label{eq:full-sol-max-intro}
	\calP \calS_{\eta} f = f - \calB_{\eta} f, \quad \hbox{ with } \calB_{\eta} f(x) = \sum_{\bfA \in \set{1, \ldots, \dim \ker \calP^{\ast}}} \left( \int \bfZ^{\bfA}(y) f(y) \, \ud y \right) \brk{\zt_{\bfA}(x, \cdot), \eta(\cdot)}.
\end{equation}
In particular, if $f$ is orthogonal to $\ker \calP^{\ast}$, then $\calB_{\eta} f = 0$ and thus $\calP \calS_{\eta} f = f$, i.e., $S_{\eta}$ is a solution operator. By duality, we also have the representation formula $\varphi = \calS_{\eta}^{\ast} \calP^{\ast} \varphi + \calB_{\eta}^{\ast} \varphi$ that holds for all $\varphi \in C^{\infty}(\br{U})$ (i.e., without the compact support assumption).

\begin{remark} \label{rem:intro:bogovskii}
The construction described so far in the completely integrable case extends to a general setting the classical Bogovskii operator \cite{Bog} for $\calP u = \rd_{j} u^{j}$ on $\bbR^{d}$, as well as the integral representation formulas of Reshetnyak \cite{Resh, ReshBook} for $\calP^{\ast}$ the Killing and conformal Killing operators, etc. It has been carried out for the linearized Einstein constraint equations around the flat space in Mao--Oh--Tao \cite{MOT1}, and it was used to simplify and advance (nonlinear) initial data gluing results in the asymptotically flat setting.
\end{remark}

\smallskip
\noindent{\it Step~4: Solutions with compact support II: general case.}
We now consider Theorem~\ref{thm:main-summary-bogovskii} in the general case, when $b_{\eta}$ is not necessarily of the form \eqref{eq:intro-b-y1-bogovskii}. Our result is the existence of a smoothing operator $\calQ : C^{\infty}_{c}(U) \to C^{\infty}_{c}(U)$ (which preserves the compact support property) that deforms the integral solution operator $\calS_{\eta}$ to a correct solution operator $\td{\calS}$ (i.e., $\td{\calS} = \calS_{\eta} - \calQ$) that satisfies
\begin{equation} \label{eq:full-sol-intro}
	\calP \td{\calS} f = f - \sum_{\bfA \in \set{1, \ldots, \dim \ker \calP^{\ast}}}  \brk{f, \bfZ^{\bfA}} w_{\bfA}.
\end{equation}
Here, $\set{\bfZ^{\bfA}}_{\bfA \in \set{1, \ldots, \dim \ker \calP^{\ast}}}$ is a basis of $\ker \calP^{\ast}$ and $w_{\bfA} \in C^{\infty}_{c}(U)$ are \emph{prescribable} functions satisfying $\brk{\bfZ^{\bfA'}, w_{\bfA}} = \dlt^{\bfA'}_{\bfA}$. By duality, we also have the representation formula $\varphi = \td{\calS}^{\ast} \calP^{\ast} \varphi + \sum_{\bfA \in \set{1, \ldots, \dim \ker \calP^{\ast}}}   \bfZ^{\bfA} \brk{w_{\bfA}, \varphi}$ for any $\varphi \in C^{\infty}(\br{U})$ (i.e., without the compact support assumption). In particular, if $\ker \calP^{\ast} = \set{0}$, then the last term in \eqref{eq:full-sol-intro}is dropped and $\td{\calS}$ is a {\it bona fide} right-inverse of $\calP$; by duality, $\td{\calS}^{\ast}$ is a left-inverse of $\calP^{\ast}$. We call $\td{\calS}$ a \emph{Bogovskii-type operator}.

A key ingredient for this argument is a Poincar\'e-type (or rigidity) inequality
\begin{equation} \label{eq:poincare-intro}
\nrm{\varphi}_{H^{-s}(U)} \aleq \nrm{\calP^{\ast} \varphi}_{H^{-s-m}(U)} \quad \hbox{ for all } \varphi \in H^{-s}(U) \hbox{ with } \brk{w_{\bfA}, \varphi} = 0 \quad (\bfA \in \set{1, \ldots, \dim \ker \calP^{\ast}}).
\end{equation}
Using a standard contradiction argument, \eqref{eq:poincare-intro} follows from the weaker inequality
\begin{equation} \label{eq:poincare-eff-intro}
\nrm{\varphi}_{H^{-s}(U)} \aleq \nrm{\calP^{\ast} \varphi}_{H^{-s-m}(U)} + \nrm{\varphi}_{H^{-s-\dlt}(U)} \quad \hbox{ for all } \varphi \in H^{s}(U),
\end{equation}
where $\dlt > 0$ may be arbitrary. Inequality~\eqref{eq:poincare-eff-intro},
in turn, can be proved using the representation formula obtained via duality, and also using the optimal regularization property of $\calS_{\eta}$ and the smoothing property of $\calB_{\eta}^{}$. On the other hand, by another duality argument, \eqref{eq:poincare-intro} is equivalent to the existence of a (special) solution $u \in \td{H}^{s+m}(U)$ to $\calP u = f$ for any $f \in \td{H}^{s}(U)$ with $f \perp \ker \calP^{\ast}$. In fact, that the latter statement may then be upgraded to the existence of the desired linear operator $\calQ$, and thus of $\td{\calS}$; see Step~4 of Section~\ref{subsec:ideas-div} for a simple version of this argument, and \S\ref{subsubsec:full-sol} for our actual proof.

\begin{remark}\label{rem:intro:full-sol-eff}
In the non-completely integrable case, the bound for $\calQ$ is \emph{non-effective} in general (i.e., we know that $\calQ$ is a smoothing operator but have no quantitative relationship between its bound and other constants). But it is only because the argument relies on \eqref{eq:poincare-intro} whose implicit constant is  non-effective due to our use of a contradiction argument. In specific situations where adequate special solutions are already known, the bound for $\calQ$ may be made quantitative.
\end{remark}

\begin{remark} \label{rem:intro:poincare}
On the other hand, we note that the implicit constant in the second Poincar\'e-type inequality \eqref{eq:poincare-eff-intro} can be easily made effective. Hence, our method provides a way to establish effective Poincar\'e-type inequalities (akin to \eqref{eq:poincare-eff-intro}) for a large class of \emph{over}determined operators $\calP^{\ast}$, including the Killing operator (Section~\ref{subsec:symm-div}) and the conformal Killing operator (Section~\ref{subsec:symm-div-tf}) on curved domains. See also \S\ref{subsubsec:discussion:dual}.
\end{remark}

\subsection{A basic example: the divergence operator with variable coefficients} \label{subsec:ideas-div}
To illustrate our method with a simple concrete example, we consider the divergence operator with variable zeroth-order coefficients:
\begin{equation} \label{eq:intro:div-eq}
	\calP u = (\rd_{j} + B_{j}) u^{j} \quad \hbox{ in } U,
\end{equation}
where $U$ is an open subset of $\bbR^{d}$ ($d \geq 2$), $u^{j}$ is a vector field on $U$ and $B_{j}$ is a $1$-form on $U$ ($j=1, \ldots, d$). Its adjoint is given by
\begin{equation*}
	(\calP^{\ast} \varphi)_{j} = - \rd_{j} \varphi + B_{j} \varphi \quad \hbox{ in } U,
\end{equation*}
where $\varphi$ is a function on $U$.

\smallskip
\noindent{\it Step~1: Constructing integral kernel supported on a curve.}
We begin by verifying \ref{hyp:crc} on an arbitrary (smooth) curve $\bfx : [0, 1] \to \bbR^{d}$. In view of the formula for $\calP^{\ast}$, we immediately obtain
\begin{equation} \label{eq:intro-div:aug}
	\rd_{i} \varphi(x) = B_{i}(x)  \varphi (x) - (\calP^{\ast} \varphi)_{i} (x)
\end{equation}
Restricting to the curve $\bfx$ and contracting with $\dot{\bfx}$, we obtain the ODE
\begin{equation*}
	\frac{\ud}{\ud s} \left( \varphi(\bfx(s)) \right) = \rd_{s} \bfx^{i}(s) \rd_{i} \varphi(\bfx(s)) = \rd_{s} \bfx^{i} (B_{i} \circ \bfx)(s)  \varphi (\bfx(s)) - \rd_{s} \bfx^{i} ((\calP^{\ast} \varphi)_{i} \circ \bfx)(s),
\end{equation*}
whose integration leads to
\begin{equation} \label{eq:div-intro-rc}
\begin{aligned}
	\varphi(\bfx(0)) &= \int_{0}^{1} \exp\left(- \int_{0}^{s} \rd_{s} \bfx^{j} (B_{j} \circ \bfx) (s') \, \ud s' \right) \rd_{s} \bfx^{j}  \left( (\calP^{\ast} \varphi)_{j} \circ \bfx \right) (s) \, \ud s \\
	&\peq + \exp\left(- \int_{0}^{1} \rd_{s} \bfx^{j} (B_{j} \circ \bfx) (s') \, \ud s' \right) \varphi(\bfx(1)),
\end{aligned}
\end{equation}
which verifies \ref{hyp:crc} on $\bfx$.

\begin{remark} \label{rem:intro-div:aug}
In this example, \eqref{eq:intro-div:aug} is a (graded) \emph{augmented system} for $\calP$ in the sense of Definition~\ref{def:aug}. The immediate verification of \ref{hyp:crc} on every smooth curve segment $\bfx(s)$ (more precisely, \eqref{eq:div-intro-rc}) via \eqref{eq:intro-div:aug} is a special instance of Proposition~\ref{prop:aug2crc}. 
\end{remark}
Accordingly, given a smooth family of curves $\bfx(y, y_{1}, s) \in \bbR^{d}$ with endpoints $y$ and $y_{1}$, if we define the distributions $K_{y_{1}}(\cdot, y)$ and $b_{y_{1}}(\cdot, y)$ by (for $\psi \in C^{\infty}_{c}(U; \bbR^{d})$ and $\varphi \in C^{\infty}_{c}(U)$)
\begin{align}
	\brk{K_{y_{1}}(\cdot, y), \psi} &= \int_{0}^{1} \exp\left(- \int_{0}^{s} \rd_{s} \bfx^{j} (B_{j} \circ \bfx) (y, y_{1}, s') \, \ud s' \right) \rd_{s} \bfx^{j}  \left( \psi_{j} \circ \bfx \right) (y, y_{1}, s) \, \ud s, \label{eq:div-intro-Ky1} \\
	\brk{b_{y_{1}}(\cdot, y), \varphi} &= \exp\left(- \int_{0}^{1} \rd_{s} \bfx^{j} (B_{j} \circ \bfx) (y, y_{1}, s') \, \ud s' \right) \varphi(y_{1}), \label{eq:div-intro-by1}
\end{align}
then \eqref{eq:div-intro-rc} is equivalent to the identity $\brk{\calP K_{y_{1}}(\cdot, y), \varphi} = \brk{\dlt_{0}(x-y) - b_{y_{1}}(\cdot, y), \varphi}$ (for $\varphi \in C^{\infty}_{c}(U)$). Clearly, $\supp K_{y_{1}}(\cdot, y) \subseteq \bfx(y, y_{1}, [0, 1])$ and $\supp b_{y_{1}}(\cdot, y) \subseteq \set{y_{1}}$. In conclusion, $K_{y_{1}}(\cdot, y)$ and $b_{y_{1}}(\cdot, y)$ satisfy properties~(1) and (2) in Step~1 of Section~\ref{subsec:method}.

\smallskip
\noindent{\it Step~2: Smooth averaging.}
To illustrate the effect of smooth averaging, we consider the following special case (with a slightly modified construction\footnote{In Example~\ref{ex:div-full-sol-conic} below, the same operator is constructed following the method in Section~\ref{subsec:method} more faithfully.} for simplicity). Take $U = \bbR^{d}$, and for every $y \in \bbR^{d}$ and $\omg \in \bbS^{d-1}$, consider the family of curves
\begin{equation*}
	\bfx(y, \omg, s) \ceq y + s \omg.
\end{equation*}
Following (a slight modification of) Step~1, we define $K_{\omg}(\cdot, y)$ by (for $\psi \in C^{\infty}_{c}(U; \bbR^{d})$)
\begin{equation*}
	\brk{K_{\omg}(\cdot, y), \psi} = \int_{0}^{\infty} \exp\left(-\int_{0}^{s} \omg^{j} B_{j}(y + s' \omg) \, \ud s' \right) \omg^{j}  \psi_{j}(y + s \omg) \, \ud s,
\end{equation*}
which satisfies $\calP K_{\omg}(\cdot, y) = \dlt_{0}(x-y)$ and $\supp K_{\omg}(\cdot, y) \subseteq \bfx(y, \omg, [0, \infty))$. Next, given a smooth averaging kernel $\slashed{\eta} \in C^{\infty}_{c}(\bbS^{d-1})$ with $\int \slashed{\eta} \, \ud S(\omg) = 1$, we define the smoothly averaged kernel $K_{\slashed{\eta}}$ by (for $\psi \in C^{\infty}_{c}(U; \bbR^{d})$)
\begin{equation*}
	\brk{K_{\slashed{\eta}}(\cdot, y), \psi} = \int_{\bbS^{d-1}} \int_{0}^{\infty} \exp\left(-\int_{0}^{s} \omg^{j} B_{j}(y + s' \omg) \, \ud s' \right) \omg^{j}  \psi_{j}(y + s \omg) \slashed{\eta}(\omg) \, \ud s \ud S(\omg).
\end{equation*}
By construction, $\calP K_{\slashed{\eta}}(\cdot, y) = \dlt_{0}(x-y)$ and $\supp K_{\slashed{\eta}}(\cdot, y) \subseteq \cup_{\omg \in \supp \slashed{\eta}} \bfx(y, \omg, [0, \infty))$, which forms a cone over the angular set $\supp \slashed{\eta}$ with its tip at $y$. Hence, the operator $\calS_{\slashed{\eta}}$ with integral kernel $K_{\slashed{\eta}}$ satisfies $\calP \calS_{\slashed{\eta}} = I$ and has the property
\begin{equation*}
	\supp f \subseteq C_{\Omg} \quad \imp \quad \supp \calS_{\slashed{\eta}} f \subseteq C_{\Omg}
\end{equation*}
for any cone $C_{\Omg} \subseteq \bbR^{d}$ over an angular set $\Omg$ containing $\supp \slashed{\eta}$.

Moreover, using the polar integration formula, we can explicitly compute $K_{\slashed{\eta}}$. Indeed,
\begin{align*}
\brk{K_{\slashed{\eta}}(\cdot, y), \psi} &= \int_{0}^{\infty} \int_{\bbS^{d-1}} \exp\left(-\int_{0}^{s} \omg^{j} B_{j}(y + s' \omg) \, \ud s' \right) s^{-(d-1)}\omg^{j}  \psi_{j}(y + s \omg) \slashed{\eta}(\omg) s^{d-1}\, \ud s \ud S(\omg) \\
&= \int_{\bbR^{d}} \exp\left(-\int_{0}^{1} (x-y)^{j} B_{j}(y + s (x-y)) \, \ud s \right) \tfrac{(x-y)^{j}}{\abs{x-y}^{d}}  \slashed{\eta}(\tfrac{x-y}{\abs{x-y}}) \psi_{j}(x) \, \ud x.
\end{align*}
In conclusion, $(K_{\slashed{\eta}})^{j}(x, y)$ coincides with a locally integrable function on $\bbR^{d} \times \bbR^{d}$ with
\begin{equation*}
	(K_{\slashed{\eta}})^{j}(x, y) = \exp\left(-\int_{0}^{1} (x-y)^{j} B_{j}(y + s (x-y)) \, \ud s \right) \frac{(x-y)^{j}}{\abs{x-y}^{d}}  \slashed{\eta}(\tfrac{x-y}{\abs{x-y}}) \quad \hbox{ for } x \neq y.
\end{equation*}
When $B_{j} = 0$, this is precisely the \emph{conic operator} of Oh--Tataru \cite{OhTat} for the divergence operator. Moreover, when $B_{j} = 0$ and $\slashed{\eta}$ is constant (i.e., $\slashed{\eta} = \abs{\bbS^{d-1}}^{-1}$), we have $\calS_{\slashed{\eta}} f = - \nb (-\lap)^{-1} f$; in particular, $\calS_{\slashed{\eta}} f$ coincides with the \emph{gradient of a harmonic function} outside of $\supp f$. From this expression, it follows that $\calS_{\slashed{\eta}}$ is a singular integral operator of order $-1$ for a suitably regular $B_{j}$.

\smallskip
\noindent{\it Step~3: Solutions with compact support I: completely integrable case.}
For this step, consider an open subset $U$ of $\bbR^{d}$, which we assume to be connected. Then $\ker \calP^{\ast}$ consists of $\bfZ \in C^{\infty}(\br{U})$ satisfying $\rd_{j} \bfZ = B_{j} \bfZ$ in $U$. It is not difficult to see that any nontrivial $\bfZ \in \ker \calP^{\ast}$ (which is then nonzero everywhere on $U$ by the equation) must satisfy $\rd_{j} \log \bfZ = B_{j}$. Hence,
\begin{align*}
	\ker \calP^{\ast} = \begin{cases} \set{0} & \hbox{ if $B$ is not exact}, \\ (e^{z}) & \hbox{ if $B_{j} = \rd_{j} z$}. \end{cases}
\end{align*}

Let us first consider the case $B_{j} = \rd_{j} z$, which corresponds to the complete integrability of the graded augmented system \eqref{eq:intro-div:aug}. We introduce the shorthand $\bfZ \ceq e^{z}$, which generates $\ker \calP^{\ast}$. In this case, the integral inside the exponential in $b_{y_{1}}(\cdot, y)$ in Step~1 may be computed, and we obtain
\begin{align*}
	\brk{b_{y_{1}}(\cdot, y)} = \bfZ(y) (\bfZ^{-1}\varphi)(y_{1}).
\end{align*}
Let $K_{\eta_{1}}(x, y)$ be the smoothly averaged kernel defined with respect to a smooth function $\eta_{1} = \eta_{1}(y_{1})$ with $\int \eta_{1}(y_{1}) \, \ud y_{1} = 1$, and let $\calS_{\eta_{1}}$ be the operator with integral kernel $K_{\eta_{1}}$. A quick computation shows that
\begin{equation*}
	\calP \calS_{\eta_{1}} f = \calP \left(\int K_{\eta_{1}}(x, y) f(y) \, \ud y \right)= f(x) - (\bfZ^{-1} \eta_{1})(x) \left(\int \bfZ(y) f(y) \, \ud y\right).
\end{equation*}
Moreover, by construction, $\supp K_{\eta_{1}}(\cdot, y) \subseteq \cup_{y_{1} \in \supp \eta_{1}} \bfx(y, y_{1}, [0, 1])$. In particular, if $U$ is \emph{$\bfx$-star-shaped with respect to $\supp \eta_{1}$} in the sense that
\begin{equation*}
\bigcup_{y \in U, \, y_{1} \in \supp \eta_{1}} \bfx(y, y_{1}, [0, 1]) \subseteq U,
\end{equation*}
then $\calS_{\eta_{1}}$ has the support property
\begin{equation*}
	\supp f \subseteq U \quad \imp \quad \supp \calS_{\eta_{1}} f \subseteq U.
\end{equation*}

To illustrate the optimal regularization property, let us consider the special case
\begin{equation*}
	\bfx(y, y_{1}, s) = y + s(y_{1} - y),
\end{equation*}
i.e., $\bfx(y, y_{1}, \cdot)$ is the line segment from $y$ to $y_{1}$. Then, as in Step~2, we can explicitly compute $K_{\eta_{1}}$. Indeed,
\begin{align*}
\brk{K_{\eta_{1}}(\cdot, y), \psi} &= \int_{0}^{1} \int_{\bbR^{d}} \exp\left(- \int_{0}^{s} (y_{1} - y)^{j} \rd_{j} z (y + s' (y_{1} - y)) \, \ud s' \right) (y_{1}-y)^{j}  \psi_{j}(y + s (y_{1} - y)) \eta_{1}(y_{1}) \, \ud y_{1} \ud s  \\
&= \int_{0}^{1} \int_{\bbR^{d}} \frac{\bfZ(y)}{\bfZ(x)} (x - y)^{j}  \psi_{j}(x) \eta_{1}(y + s^{-1}(x-y)) s^{-d-1} \, \ud x \ud s  \\
&= \int_{\bbR^{d}} \frac{\bfZ(y)}{\bfZ(x)} \frac{(x - y)^{j}}{\abs{x-y}^{d}}  \psi_{j}(x) \int_{\abs{x-y}}^{\infty} \eta_{1}(y + r \tfrac{x-y}{\abs{x-y}}) r^{d-1} \ud r \, \ud x,
\end{align*}
where we made the change of variables $x = y + s (y_{1} - y)$ and $r = s^{-1} \abs{x-y}$.
In conclusion, $(K_{\eta_{1}})^{j}(x, y)$ coincides with a locally integrable function on $\bbR^{d} \times \bbR^{d}$ with
\begin{equation}  \label{eq:div-eq-bogovskii-ker}
	(K_{\eta_{1}})^{j}(x, y) = \frac{\bfZ(y)}{\bfZ(x)}  \frac{(x-y)^{j}}{\abs{x-y}^{d}} \int_{\abs{x-y}}^{\infty} \eta_{1}(r \tfrac{x-y}{\abs{x-y}} + y)  r^{d-1} \, \ud r \hbox{ for } x \neq y.
\end{equation}
When $B_{j} = 0$, we have $\bfZ \equiv 1$ and this is precisely the classical \emph{Bogovskii operator} \cite{Bog}; cf.~\eqref{eq:bog-0}. From this expression, it follows that $\calS_{\eta_{1}}$ is a singular integral operator of order $-1$ for a suitably regular $B_{j} = \rd_{j} z$ (see also the proof of Theorem~\ref{thm:low-reg-domain} in \cite{IMOT2} for an alternative construction, which works for a rough $B_{j}$).

\smallskip
\noindent{\it Step~4: Solutions with compact support II.}
Finally, consider the case when $B$ is not exact, or equivalently, $\ker \calP^{\ast} = \set{0}$; this corresponds to the non-completely integrable case. Given $s \in \bbR$, we now show\footnote{In fact, our argument in \S\ref{subsubsec:full-sol} is a slight variant of the present argument, where we construct $\td{\calS}$ that is independent of the order $s$.} the existence of a right-inverse $\td{\calS} : \td{H}^{s}(U) \to \td{H}^{s+1}(U)$ of $\calP$ that preserves the compact support property in $U$. To illustrate the ideas, we focus on the special case $\bfx(y, y_{1}, s) = y + s(y_{1} - y)$ and $\eta_{1} = \eta_{1}(y_{1})$ as before. In this case,
\begin{equation*}
	\calP \calS_{\eta_{1}} f (x)= f(x) - \int \eta_{1}(x) Z(y, x) f(y) \, \ud y, \quad \hbox{ where } Z(y, x) = \exp\left(-\int_{0}^{1} \frac{(x-y)^{j}}{\abs{x-y}} B_{j}(y + s'(x-y)) \, \ud s' \right).
\end{equation*}

We begin by approximating $\eta_{1}(x) Z(y, x)$ by a finite sum of tensor products (or possibly, a single tensor product). For instance, we may simply write
\begin{align*}
\calP \calS_{\eta_{1}} f (x)= f(x) - \calE_{0} f(x) - \eta_{1}(x) \left( \int Z(y, 0) f(y) \, \ud y \right),
\end{align*}
where $\calE_{0} f(x) \ceq \eta_{1}(x) \int (Z(y, x) - Z(y, 0)) f(y) \, \ud y$. From this expression, it is clear that we may arrange $\calE_{0}$ to have operator norm on, say, $L^{1}(U)$ (which is bounded by $\sup_{y} \int \abs{\eta_{1}(x) (Z(y, x) - Z(y, 0))} \, \ud x$) less than $1$ by making $\supp \eta_{1}$ sufficiently small depending on $\nrm{\rd Z}_{L^{\infty}(U)}$. Then $I-\calE_{0} : L^{1}(U) \to L^{1}(U)$ is invertible and we have
\begin{align*}
\calP \calS_{\eta_{1}} (I-\calE_{0})^{-1} f (x)= f(x) - \eta_{1}(x) \left( \int Z(y, 0) (I-\calE_{0})^{-1}f(y) \, \ud y \right).
\end{align*}

Next, we find a special solution $u \in \td{H}^{s+1}(U)$ with $\supp u \subseteq U$ to $\calP u = \eta_{1}$. A key ingredient is the following Poincar\'e-type inequality:
\begin{equation*}
\nrm{\varphi}_{H^{-s}(U)} \aleq \nrm{\calP^{\ast} \varphi}_{H^{-s-1}(U)} \quad \hbox{ for all } \varphi \in H^{-s}(U),
\end{equation*}
which follows from \eqref{eq:poincare-eff-intro} by a standard contradiction argument (see Proposition~\ref{prop:poincare}), the key point being that, in this case, there does not exist any nontrivial solutions $\vphi \in H^{-s}(U)$ to $\calP^{\ast} \vphi = 0$ in $\calD'(U)$. Then from the Poincar\'e-type inequality, by a duality argument involving the Hahn--Banach theorem (see Corollary~\ref{cor:abstract-solvability}), the existence of a special solution $u \in \td{H}^{s+1}(U)$ to $\calP u = \eta_{1}$ follows.

With the special solution $u \in \td{H}^{s+1}(U)$ at hand, we may conclude the construction as follows. Note that
\begin{equation*}
	\td{\calS} f(x) \ceq (\calS_{\eta_{1}} - \calQ)(I-\calE_{0})^{-1} f(x), \hbox{ where } \calQ f(x) = u(x) \left( \int Z(y, 0) f(y) \, \ud y \right),
\end{equation*}
defines a right-inverse of $\calP$. Moreover, observe that $(I - \calE_{0})^{-1} f = f + \calE_{0} (I - \calE_{0})^{-1} f$. Hence,
\begin{equation*}
\supp \calQ f \subseteq \supp u, \quad
\supp (I-\calE_{0})^{-1} f
\subseteq \supp \eta_{1} + \supp f,
\end{equation*}
and the support preserving property of $\calS_{\eta_{1}}$ (under the assumption that $U$ is $\bfx$-star-shaped with respect to $\supp \eta_{1}$), it follows that $\td{\calS}$ also preserves the compact support property in $U$. Finally, since $u \in \td{H}^{s+1}$, it follows that $\calQ$ maps into $\td{H}^{s+1}(U)$, and hence $\td{\calS} : \td{H}^{s}(U) \to \td{H}^{s+1}(U)$.

\section{Singular integral kernels} \label{sec:sio}
In this section, we perform a computation that will show that the general smoothly averaged integral kernel $K_{\eta}(x, y)$ as in Step~2 in Section~\ref{subsec:method} (see Section~\ref{sec:crc} below for the precise construction) define adequate singular integral operators.

\subsection{Assumptions} \label{subsec:sio-hyp}
Recall the setup in Steps~1 and 2 of Section~\ref{subsec:method}. Our aim here is to formulate the precise assumptions on the family of curves $\bfx(y, y_{1}, s)$, rough integral kernels $K_{y_{1}}(\cdot, y)$ and smooth averaging kernel $\eta(y, y_{1})$, which guarantees that the smoothly averaged integral kernel $K_{\eta}(x, y)$ defines a singular integral operator (or more precisely, a classical pseudodifferential operator) of suitable order.

For the purpose of this section, it is more convenient to work with the following spatial variables
\begin{equation*}
	z_{1} \ceq y_{1} - y, \quad
	z(y, z_{1}, s) \ceq \bfx(y, z_{1}+y, s) - y.
\end{equation*}

\smallskip \noindent{\it Assumptions on the family of curves.}
Let $R_{y} > 0$, $N_{0}, M_{0} \in \bbZ_{\geq 0}$, $A_{\bfz} \geq 0$ be parameters to be used below. Let $U$ and $\uV$ be open subsets of $\bbR^{d}$, and $\uW$ an open subset of $\bbR^{d} \times \bbR^{d}$, such that $U$ contains the projection of $\uW$ to the first $\bbR^{d}$, i.e.,
\begin{equation*}
	\set{y \in \bbR^{d} : (y, z_{1}) \in \uW \hbox{ for some } z_{1} \in \bbR^{d}} \subseteq U.
\end{equation*}
We assume that $\bfz : \uW \times [0, 1] \to \uV$, $\bfz = \bfz(y, z_{1}, s)$, which is a smooth family of curves in $\uV$ parametrized by $(y, z_{1}) \in \uW$, satisfies the following properties:
\begin{itemize}
\item We have $\bfz(y, z_{1}, 0) = 0$, $\bfz(y, z_{1}, 1) = z_{1}$. Moreover, $\bfz(y, z_{1}, s)$ obeys
\begin{equation*}
	(1+A_{\bfz})^{-1} \abs{z_{1}} \leq \abs{\rd_{s} \bfz(y, z_{1}, s)} \leq (1+A_{\bfz}) \abs{z_{1}}\quad \hbox{ for every } s \in (0, 1).
\end{equation*}
\item The map $z_{1} \mapsto \bfz$ is invertible for each fixed $y$ and $s \in (0, 1]$; we denote the inverse by $\bfz_{1}(y, z, s)$. We assume that $\frac{\rd \bfz_{1}(y, z, s)}{\rd z}$ obeys
\begin{equation*}
\abs*{\frac{\rd \bfz_{1}(y, z, s)}{\rd z}} \leq (1+A_{\bfz}) s^{-1},
\end{equation*}
where we used the operator norm in $\bbR^{d}$.
\item For higher derivatives, we have
\begin{align*}
	\abs*{R_{y}^{\abs{\alp}} \abs{z}^{\abs{\bt}} \rd_{y}^{\alp} \rd_{z}^{\bt} \bfz_{1}(y, z, s)} &\leq A_{\bfz}  s^{-1} \abs{z} & & \hbox{ for } \abs{\alp} \leq N_{0}, \, 1 \leq \abs{\bt} \leq 1+\abs{\gmm} + M_{0}, \, \abs{\alp} > 0 \hbox{ or } \abs{\bt} > 1.
\end{align*}
\end{itemize}

\begin{remark}[Straight line segments] \label{rem:sio-ul-z}
The simplest (yet useful) example of such a family of curves is the \emph{straight line segments},
\begin{equation*}
\ul{\bfz} (y, z_{1}, s) \ceq s z_{1},
\end{equation*}
which indeed obeys the assumptions with $A_{\bfz} = 0$ and any $R_{y} > 0$, $N_{0}, M_{0} \in \bbZ_{\geq 0}$.
\end{remark}

\smallskip \noindent {\it Assumptions on the smooth averaging kernel.}
Let $R_{z_{1}} > 0$ and $A_{\ueta} > 0$ be parameters to be used below. We assume that $\ueta : \bbR^{d} \times \bbR^{d} \to \bbR$ satisfies the following properties:
\begin{itemize}
\item $\supp \ueta \subseteq \uW$.
\item $\ueta(y, z_{1}) = 0$ if $\abs{z_{1}} \geq R_{z_{1}}$.
\item We have
\begin{equation*}
	\abs{R_{y}^{\abs{\alp}}\abs{z_{1}}^{\abs{\bt}} \rd_{y}^{\alp} \rd_{z_{1}}^{\bt}\ueta(y, z_{1})} \leq A_{\ueta} R_{z_{1}}^{-d} \quad \hbox{ for } \abs{\alp} \leq N_{0}, \, \abs{\bt} \leq \abs{\gmm} + M_{0}.
\end{equation*}
\end{itemize}
\begin{remark}
In practice, we will take $\ueta$ of the form $\ueta(y, z_{1}) = \chi_{1}(y) \chi_{2}(z_{1}) \eta(y, z_{1} + y)$, where $\eta$ is a smooth averaging kernel satisfying \ref{hyp:eta-1}--\ref{hyp:eta-smth} below and $\chi_{1}, \chi_{2}$ are additional smooth functions inserted to make $R_{y}$ and $R_{z_{1}}$ constant.
\end{remark}

\smallskip \noindent{\it Assumption on the rough integral kernel.}
Let $m > 0$ and $A_{\uS} > 0$ be parameters to be used below. Instead of $K_{y_{1}}(\cdot, y)$, we work with a rough integral kernel $\uK_{z_{1}}(\cdot, y)$ of the following form: for every $(y, z_{1}) \in \uW$, $\uK_{z_{1}}(\cdot, y) \in \calD'(\bbR^{d})$ with
\begin{equation*}
\brk{\uK_{z_{1}}(\cdot, y), \varphi} \ceq \int_{0}^{1} \uS^{\gmm}(y, z_{1}, s) \rd^{\gmm} \varphi(y + z(y, z_{1}, s)) \, \ud s \quad \hbox{ for every } \varphi \in C^{\infty}_{c}(\bbR^{d}),
\end{equation*}
for some multi-index $\gmm$ (which could be $0$) and $\uS^{\gmm} : \uW \times [0, 1] \to \bbC$. Each component of the rough integral kernel $K_{y_{1}}(\cdot, y)$ in Section~\ref{subsec:method} will be a linear combination of such distributions; see Section~\ref{sec:crc}.

We assume that the function $\uS^{\gmm}$ satisfies the following bound: for every $(y, z_{1}) \in \supp \ueta$ and $s \in [0, 1]$,
\begin{align*}
\abs*{R_{y}^{\abs{\alp}} \abs{z_{1}}^{\abs{\bt}} \rd_{y}^{\alp} \rd_{z_{1}}^{\bt} \uS^{\gmm}(y, z_{1}, s)} \leq A_{\uS} \abs{z_{1}}^{m+\abs{\gmm}} s^{m+\abs{\gmm}-1} \quad \hbox{ for } \abs{\alp} \leq N_{0}, \, \abs{\bt} \leq \abs{\gmm} + M_{0}.
\end{align*}

\begin{remark} [Discussion of the parameters $A_{\bfz}$, $A_{\ueta}$, $A_{\uS}$, $R_{y}$, and $R_{z_{1}}$] \label{rem:sio-para}
Note that $A_{\bfz}$, $A_{\ueta}$, and $A_{\uS}$ are dimensionless, whereas $R_{y}$ and $R_{z_{1}}$ have the dimension of length. The parameter $A_{\bfz}$ quantifies how for $\bfz$ is from being the straight line segments $\ul{\bfz}$; $A_{\ueta}$ and $A_{\uS}$ quantify the sizes of $\ueta$ and $\uS$. The length parameter $R_{y}$ is the $y$-characteristic scale of $\bfz$, $\ueta$, and $\uS^{\gmm}$, i.e., these objects vary slowly as $y$ varies within scale $R_{y}$. Finally, $\ueta(y, \cdot)$ is supported in $B_{R_{z_{1}}}(0)$ and is bounded by $O( R_{z_{1}}^{-d})$; this is consistent with the unit mean property \ref{hyp:eta-1} below. The power of $R_{z_{1}}$ in the assumption for $\uS^{\gmm}$ is consistent with Example~\ref{ex:conic-bog-sio} below.
\end{remark}

With the above objects, we define the \emph{averaged integral kernel} $\uK_{\ueta}(\cdot, y)$ by the following relation for every $\varphi \in C^{\infty}_{c}(\bbR^{d})$ and $y \in \bbR^{d}$:
\begin{equation} \label{eq:sio-kernel}
	\brk{\uK_{\ueta}(\cdot, y), \varphi} \ceq \int \int_{0}^{1} \uS^{\gmm}(y, z_{1}, s) \ueta(y, z_{1}) (\rd^{\gmm} \varphi)(y+\bfz(y, z_{1}, s))  \ud s \, \ud z_{1} \quad \hbox{ for every } \varphi \in C^{\infty}_{c}(U)
\end{equation}
Informally, $\uK_{\ueta}(\cdot, y) = \int \uK_{z_{1}}(\cdot, y) \ueta(y, z_{1}) \, \ud z_{1}$. Note that, thanks to $\supp \ueta \subseteq \uW$, the right-hand side is well-defined for any $y \in \bbR^{d}$, although it is trivial unless $y$ lies in $U$.

The above setup generalizes (modulo some technical modifications) the conic and Bogovskii integral kernels for $\calP u = \rd_{j} u^{j} + B_{j} u^{j}$ (see Section~\ref{subsec:ideas-div}) constructed using straight line segments, as the following example shows.
\begin{example}[Conic- and Bogovskii integral kernels] \label{ex:conic-bog-sio}
Let $U = \uV = \bbR^{d}$ and $\uW = \bbR^{d} \times \bbR^{d}$. Define, for $\gmm = 0$,
\begin{equation*}
\bfz(y, z_{1}, s) = \ul{\bfz}(y, z_{1}, s) \ceq s z_{1}, \quad \uS^{0}(y, z_{1}, s) = \exp\left(\int_{s}^{1} \rd_{s} \bfz(y, z_{1}, s') \cdot B(y + \bfz(y, z_{1}, s')) \, \ud s' \right) \rd_{s} \bfz(y, z_{1}, s).
\end{equation*}
As discussed in Remark~\ref{rem:sio-ul-z}, the assumptions for $\bfz$ are satisfied for any $N_{0}, M_{0} \in \bbZ_{\geq 0}$ with $A_{\bfz} = 0$ and an arbitrarily large $R_{y}$, $N_{0}$ and $M_{0}$. With any choice of $\ueta$ satisfying the above requirements, the assumption for $\uS^{0}$ is satisfied on $\uW$ for any $N_{0}, M_{0} \in \bbZ_{\geq 0}$ with $m = 1$, an arbitrarily large $R_{y}$, and \begin{equation*}
	A_{\uS} \aleq_{N_{0}, M_{0}, \nnrm{B}} 1,
\end{equation*}
where\footnote{The norm $\nnrm{B}$ coincides with $\nrm{B}_{\dot{G}^{N_{0} + M_{0}, 1}(\ul{\bfx}, \uW)}$ (with $\ul{\bfx} = y + \ul{\bfz}$), which will be properly introduced in Section~\ref{sec:ode} below.}
\begin{align*}
\nnrm{B}
= \sup_{(y, y_{1}-y) \in \uW} \sum_{\alp : \abs{\alp} \leq N_{0} + M_{0}} \int_{0}^{1} \abs{\rd^{\alp} B(y+s(y_{1} - y))} \abs{y_{1} - y}^{1 + \abs{\alp}} s^{\abs{\alp}} \, \ud s.
\end{align*}
The conic and Bogovskii kernels correspond to the following choices of $\ueta$ (and a parameter $R_{\ueta} > 0$):
\begin{enumerate}
\item {\it conic case.} $\ueta(y, z_{1}) = \psi(\abs{z_{1}}) \slashed{\eta}(\frac{z_{1}}{\abs{z_{1}}})$ with $\int_{\bbS^{d-1}} \slashed{\eta} \, \ud S = 1$, $\int \psi(r) r^{d-1} \, \ud r = 1$, $\supp \psi \subseteq (\frac{1}{2} R_{\ueta}, R_{\ueta})$ and $\abs{\rd^{\alp} \psi} \aleq_{\alp} R_{\ueta}^{-d-\abs{\alp}}$ for arbitrarily large $R_{\ueta}$. Then the above assumptions are satisfied for any $N_{0}, M_{0} \in \bbZ_{\geq 0}$ with $R_{z_{1}}(y) = R_{\ueta}$ (independent of $y$) and $A_{\ueta} \aleq_{N_{0}, M_{0}, \psi, \slashed{\eta}} 1$ (independent of $R_{\ueta}$).
\item {\it Bogovskii case.} $\ueta(y, z_{1}) = \chi(y) \eta_{1}(z_{1} + y)$ with $\int \eta_{1} = 1$, $\supp \eta_{1} \subseteq B_{R_{\ueta}}(0)$, and $\abs{\rd^{\alp} \eta_{1}} \aleq_{\abs{\alp}} R_{\ueta}^{-d-\abs{\alp}}$, and an auxiliary cutoff function $\chi \in C^{\infty}_{c}(\bbR^{d})$. Let $R_{\chi} = \sup_{y \in \supp \chi} \abs{y}$. Then the assumptions for $\ueta$ are satisfied for any $N_{0}, M_{0} \in \bbZ_{\geq 0}$ with $R_{z_{1}} = 1 + R_{\chi} + R_{\ueta}$ and $A_{\ueta} \aleq_{N_{0}, M_{0}, \eta} \left(\frac{R_{z_{1}}}{R_{\ueta}}\right)^{d}$; observe that such constants exist thanks to the presence of $\chi$.
\end{enumerate}
Indeed, in the first case, note that the conic kernel agrees with $\uK_{\ueta}(z + y, y)$ for $\abs{z} \leq \frac{1}{2} R_{\ueta}$, and hence globally in the limit $R_{\ueta} \to +\infty$. In the second case, the Bogovskii kernel agrees with $\uK_{\ueta}(z + y, y)$ for every $y \in \bbR^{d}$ such that $\chi(y) = 1$ (hence, in practice, we will choose $\chi$ to be equal to $1$ on the domain $U$ under consideration).
\end{example}

\subsection{Singular integral kernel and symbol bounds} \label{subsec:sio}
The main result of this section is as follows.
\begin{proposition}[Singular integral kernel bounds] \label{prop:sio}
Suppose that the assumptions for $\bfz$, $\ueta$, and $\uS^{\gmm}$ in Section~\ref{subsec:sio-hyp} hold, and let $\uK_{\ueta}$ be defined as in \eqref{eq:sio-kernel}. If $N_{0} \geq 0$ and $M_{0} \geq 0$, then $\uK_{\ueta}(x, y) \in L^{1}_{loc}(\bbR^{d} \times \bbR^{d})$. Moreover, for all $y \in \bbR^{d}$ and $z \in \bbR^{d} \setminus \set{0}$, we have the representation
\begin{equation} \label{eq:sio-ker-0}
	\uK_{\ueta}(z + y, y) = \rd_{z}^{\gmm} \int_{0}^{1} (-1)^{\abs{\gmm}} \uS^{\gmm}(y, \bfz_{1}(y, z, s), s) \ueta(y, \bfz_{1}(y, z, s)) \abs*{\det \frac{\rd \bfz_{1}}{\rd z}}  \, \ud s.
\end{equation}
In fact, we have
\begin{equation} \label{eq:sio-ker}
	\abs{R_{y}^{\abs{\alp}} \abs{z}^{\abs{\bt}} \rd_{y}^{\alp} \rd_{z}^{\bt}\uK_{\ueta}(z + y, y)} \leq A_{\alp, \bt+\gmm} \abs{z}^{-d+m}
\end{equation}
where
\begin{equation} \label{eq:sio-ker-A}
A_{\alp, \bt + \gmm} \leq C_{m, \alp, \bt+\gmm} A_{\uS} A_{\ueta} (1 + A_{\bfz})^{2(d+\abs{\alp} +\abs{\bt + \gmm}) + m + \abs{\gmm}}\quad \hbox{ for } \abs{\alp} \leq N_{0}, \, \abs{\alp} + \abs{\bt} \leq M_{0}.
\end{equation}
Moreover,
\begin{equation} \label{eq:sio-supp}
\supp \uK_{\ueta}(\cdot + y, y) \subseteq B_{(1+A_{\bfz}) R_{z_{1}}}(0).
\end{equation}
\end{proposition}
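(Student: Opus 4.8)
The plan is to establish the pointwise kernel representation \eqref{eq:sio-ker-0} first, then differentiate under the integral sign and bound each resulting term using the hypotheses on $\bfz$, $\ueta$, and $\uS^{\gmm}$. To obtain \eqref{eq:sio-ker-0}, I would start from the defining relation \eqref{eq:sio-kernel}, pair $\uK_{\ueta}(\cdot, y)$ against a test function $\varphi$, and for each fixed $s \in (0,1]$ change variables $z_{1} \mapsto z = \bfz(y, z_{1}, s)$ using the invertibility hypothesis (with inverse $\bfz_{1}(y,z,s)$ and Jacobian factor $\abs{\det \tfrac{\rd \bfz_{1}}{\rd z}}$). This turns the $\rd^{\gmm}$ falling on $\varphi$ into an integral against $(\rd^{\gmm}\varphi)(y + z)$; integrating by parts in $z$ produces the factor $(-1)^{\abs{\gmm}}$ and moves $\rd_{z}^{\gmm}$ outside, yielding \eqref{eq:sio-ker-0}. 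One must check that the $s$-integral converges: near $s = 0$, the bound $\abs{\uS^{\gmm}} \aleq A_{\uS} \abs{z_{1}}^{m+\abs{\gmm}} s^{m+\abs{\gmm}-1}$ combined with $\abs{\bfz_{1}(y,z,s)} \aeq s^{-1}\abs{z}$ (from the two-sided derivative bound on $\bfz$, hence on its inverse) and $\abs{\det \tfrac{\rd \bfz_{1}}{\rd z}} \aleq (1+A_{\bfz})^{d} s^{-d}$ gives an integrand of size $\aleq \abs{z}^{m+\abs{\gmm}-d} s^{m-1}$, which is integrable on $(0,1]$ since $m > 0$; this also already shows $\uK_{\ueta}(\cdot,y) \in L^{1}_{loc}$ and, after integrating out the $\gmm = 0$ case or noting $\rd_z^\gmm$ of an $L^1_{loc}$ function is a distribution, the full local integrability claim.

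For the derivative bounds \eqref{eq:sio-ker}–\eqref{eq:sio-ker-A}, I would apply $\rd_{y}^{\alp}\rd_{z}^{\bt}$ to \eqref{eq:sio-ker-0}. Writing $\rd_{z}^{\bt}\rd_{z}^{\gmm} = \rd_{z}^{\bt+\gmm}$, I distribute the $\alp+(\bt+\gmm)$ derivatives via the Leibniz and Faà di Bruno rules over the three factors $\uS^{\gmm}(y,\bfz_{1}(y,z,s),s)$, $\ueta(y,\bfz_{1}(y,z,s))$, and $\abs{\det\tfrac{\rd\bfz_1}{\rd z}}$, using the chain rule to convert $\rd_{z}$ derivatives of the composite functions into $\rd_{z_1}$-derivatives of $\uS^{\gmm}$, $\ueta$ composed with $\rd_{z}^{\bt'}\bfz_{1}$ factors. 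Each $\rd_{y}^{\alp'}\rd_{z}^{\bt'}\bfz_{1}$ is controlled by the hypothesis $\abs{R_y^{\abs{\alp'}}\abs{z}^{\abs{\bt'}}\rd_y^{\alp'}\rd_z^{\bt'}\bfz_1} \aleq (1+A_{\bfz}) s^{-1}\abs{z}$ for the "mixed/higher" cases and by the explicit $s^{-1}$ bound for $\bt' = \bt'_0$ a single index; derivatives of the determinant are handled by Jacobi's formula (derivative of $\det$ against $\tr$ of the inverse times the derivative of the matrix), again reducing to derivatives of $\bfz_1$. Collecting the worst case, every term carries a factor $A_{\uS}A_{\ueta}$, the homogeneity $\abs{z}^{-d+m}$ with the correct powers of $R_y$ and $\abs{z}$, a polynomial number of $(1+A_{\bfz})$ factors whose total degree is bounded by $2(d+\abs{\alp}+\abs{\bt+\gmm}) + m + \abs{\gmm}$, and an $s$-integrable profile $\aleq s^{m-1}$; integrating in $s$ gives \eqref{eq:sio-ker}–\eqref{eq:sio-ker-A}. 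The combinatorial constant $C_{m,\alp,\bt+\gmm}$ absorbs the number of terms from the Leibniz/Faà di Bruno expansions, which is why we require $\abs{\alp} \le N_0$ and $\abs{\alp}+\abs{\bt} \le M_0$ so that all hypotheses above are available.

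Finally, the support statement \eqref{eq:sio-supp} is immediate: $\uK_{\ueta}(z+y,y)$ is, by \eqref{eq:sio-ker-0}, (a derivative of) an $s$-integral supported where $z = \bfz(y,z_1,s)$ for some $z_1$ with $\ueta(y,z_1) \ne 0$, hence $\abs{z_1} < R_{z_1}$; the upper bound $\abs{\rd_s\bfz(y,z_1,s)} \le (1+A_{\bfz})\abs{z_1}$ together with $\bfz(y,z_1,0)=0$ gives $\abs{z} \le (1+A_{\bfz})\abs{z_1} < (1+A_{\bfz})R_{z_1}$, so $\supp\uK_{\ueta}(\cdot+y,y) \sbeq B_{(1+A_{\bfz})R_{z_1}}(0)$ (closure to account for the boundary). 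The main obstacle I anticipate is bookkeeping rather than conceptual: carefully tracking the powers of $(1+A_{\bfz})$, $s$, $R_y$, and $\abs{z}$ through the nested chain-rule expansions so that the final exponents match \eqref{eq:sio-ker-A} exactly — in particular verifying that the chain-rule substitution $\rd_z \leftrightarrow s^{-1}\rd_{z_1}$ interacts correctly with the $s^{m+\abs{\gmm}-1}$ weight in the $\uS^{\gmm}$ bound and the $s^{-d}$ from the Jacobian, so that the net power of $s$ stays $\ge m - 1 > -1$.
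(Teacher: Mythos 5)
Your overall architecture (change of variables $z_{1}\mapsto z=\bfz(y,z_{1},s)$ to get \eqref{eq:sio-ker-0}, then Leibniz/chain rule on the three factors, which is how the paper proceeds via its Lemmas~\ref{lem:sio-1} and \ref{lem:sio-2}) is right, and your argument for \eqref{eq:sio-supp} is fine. But there is a genuine gap at the heart of the kernel estimate: your claimed pointwise integrand bound $\aleq \abs{z}^{m+\abs{\gmm}-d}s^{m-1}$ does not follow from the hypotheses. Substituting $\abs{\bfz_{1}(y,z,s)}\aeq s^{-1}\abs{z}$ into the hypothesis $\abs{\uS^{\gmm}}\leq A_{\uS}\abs{z_{1}}^{m+\abs{\gmm}}s^{m+\abs{\gmm}-1}$ gives $\abs{\uS^{\gmm}}\aleq A_{\uS}\abs{z}^{m+\abs{\gmm}}s^{-1}$, and multiplying by the Jacobian bound $s^{-d}$ and the $\ueta$ bound $A_{\ueta}R_{z_{1}}^{-d}$ yields an integrand of size $A_{\uS}A_{\ueta}R_{z_{1}}^{-d}\abs{z}^{m+\abs{\gmm}}\,s^{-d-1}$, \emph{not} $\abs{z}^{m+\abs{\gmm}-d}s^{m-1}$. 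This is not integrable on $(0,1]$ by size alone, so your assertion that convergence follows "since $m>0$" fails, and there is no pointwise source for the factor $\abs{z}^{-d}$ that you need in \eqref{eq:sio-ker}.

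The missing mechanism is the support constraint of $\ueta$ acting \emph{inside} the $s$-integral: since $\ueta(y,\cdot)$ vanishes for $\abs{z_{1}}\geq R_{z_{1}}$ and $\abs{\bfz_{1}(y,z,s)}\geq (1+A_{\bfz})^{-1}s^{-1}\abs{z}$, the integrand is supported in $s\gtrsim \abs{z}/((1+A_{\bfz})R_{z_{1}})$. The paper exploits this by the change of variables $s=\abs{z}/r$, which converts
\begin{equation*}
\int_{0}^{1}s^{-d-1}\,\chf_{\supp_{z_{1}}\ueta(y,\cdot)}(\bfz_{1}(y,z,s))\,\ud s
=\abs{z}^{-d}\int_{\abs{z}}^{\infty}r^{d-1}\,\chf_{[0,(1+A_{\bfz})R_{z_{1}}]}(r)\,\ud r
\leq (1+A_{\bfz})^{d}R_{z_{1}}^{d}\,\abs{z}^{-d},
\end{equation*}
producing simultaneously the homogeneity $\abs{z}^{-d}$ and the factor $R_{z_{1}}^{d}$ that cancels the $R_{z_{1}}^{-d}$ from the $\ueta$ bound (your accounting would otherwise leave a spurious $R_{z_{1}}^{-d}$ incompatible with \eqref{eq:sio-ker-A}). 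The same issue recurs in your treatment of the higher derivatives, where you again assert that "the net power of $s$ stays $\geq m-1$"; in fact the net power is $-d-1$ for every $(\alp,\bt)$ and the support/rescaling argument must be run each time. Once this step is inserted, the rest of your bookkeeping goes through.
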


Let $\uS_{\ueta}$ be the linear operator with integral kernel $\uK_{\ueta}$:
\begin{equation} \label{eq:sio-op}
	\uS_{\ueta} f(x)  = \int \uK_{\ueta}(x, y) f(y) \, \ud y \quad \hbox{ for } f \in C^{\infty}_{c}(\bbR^{d}).
\end{equation}
A convenient way to establish the mapping properties of $\uS_{\ueta}$ is to show that it is a pseudodifferential operator of order $m$ with a classical (or Kohn--Nirenberg) symbol.
\begin{proposition}[Symbol bounds] \label{prop:sio-symb}
Suppose that the assumptions for $\bfz$, $\ueta$, and $\uS^{\gmm}$ in Section~\ref{subsec:sio-hyp} hold, and let $\uK_{\ueta}$ be defined as in \eqref{eq:sio-kernel}. If $M_{0} \geq \max\set{N_{0}, m+1}$, the symbol $\ua_{\ueta}(\xi, y) \ceq \int \uK_{\ueta}(y + z, y) e^{-i \xi \cdot z} \, \ud z$ obeys the bound
\begin{equation} \label{eq:sio-symb}
\begin{aligned}
	\abs{\rd_{y}^{\alp} \rd_{\xi}^{\bt} \ua_{\ueta}(\xi, y)} &\leq C_{m, \alp, \bt, \gmm} A_{\uS} A_{\ueta} (1+A_{\bfz})^{10(d+\abs{\alp}+\abs{\bt}+\abs{\gmm}+m)} R_{y}^{-\abs{\alp}} \left( ((1+A_{\bfz}) R_{z_{1}})^{-1} + \abs{\xi}\right)^{-m-\abs{\bt}}
 \end{aligned}
\end{equation}
for $\abs{\alp} \leq N_{0}$ and $\abs{\alp} + \abs{\bt} \leq M_{0} - m - 1$.
\end{proposition}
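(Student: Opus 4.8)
The statement to establish is the symbol estimate \eqref{eq:sio-symb}; once it holds, $\uS_{\ueta}$ is a pseudodifferential operator of order $m$ with a Kohn--Nirenberg symbol (of finite smoothness) by \eqref{eq:psdo-symb-ker}, so there is nothing more to prove. The plan is to run the classical argument that converts Calder\'on--Zygmund kernel bounds into symbol bounds, taking as the \emph{only} input the kernel estimates \eqref{eq:sio-ker}--\eqref{eq:sio-ker-A} and the support property \eqref{eq:sio-supp} from Proposition~\ref{prop:sio}, while tracking the dependence on all parameters. First I would observe that $\uK_{\ueta}(\cdot + y, y)$ is compactly supported (by \eqref{eq:sio-supp}) and, by the $\bt = 0$ case of \eqref{eq:sio-ker}, obeys $\abs{\rd_{y}^{\alp} \uK_{\ueta}(y+z, y)} \aleq A_{\alp, \gmm} R_{y}^{-\abs{\alp}} \abs{z}^{-d+m}$, which is locally integrable in $z$ since $m > 0$; hence one may differentiate under the integral sign to get
\[
	\rd_{y}^{\alp} \rd_{\xi}^{\bt} \ua_{\ueta}(\xi, y) = \int (-iz)^{\bt}\, \rd_{y}^{\alp}\!\big[ \uK_{\ueta}(y+z, y) \big]\, e^{-i\xi \cdot z}\, \ud z, \qquad \abs{\alp} \leq N_{0},\ \ \abs{\alp}+\abs{\bt} \leq M_{0}.
\]
Set $\rho \ceq \big( ((1+A_{\bfz}) R_{z_{1}})^{-1} + \abs{\xi} \big)^{-1}$, so that $\tfrac{1}{2} \min\set{(1+A_{\bfz}) R_{z_{1}},\, \abs{\xi}^{-1}} \leq \rho \leq \min\set{(1+A_{\bfz}) R_{z_{1}},\, \abs{\xi}^{-1}}$ and, in particular, $\uK_{\ueta}(\cdot+y, y)$ is supported in $\set{\abs{z} \leq (1+A_{\bfz}) R_{z_{1}}}$ with $\rho \leq (1+A_{\bfz}) R_{z_{1}}$. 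The target is to bound the displayed integral by $\aleq_{m, \alp, \bt, \gmm} A_{\uS} A_{\ueta} (1+A_{\bfz})^{10(d+\abs{\alp}+\abs{\bt}+\abs{\gmm}+m)} R_{y}^{-\abs{\alp}} \rho^{m+\abs{\bt}}$, which is exactly \eqref{eq:sio-symb}.

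Next I would split according to the size of $\abs{\xi}$. In the low-frequency regime $\abs{\xi} \leq ((1+A_{\bfz}) R_{z_{1}})^{-1}$ one has $\rho \geq \tfrac{1}{2}(1+A_{\bfz}) R_{z_{1}}$, so $\uK_{\ueta}(\cdot+y,y)$ is supported in $\set{\abs{z} \leq 2\rho}$ and, discarding the oscillatory factor and using the $\bt'=0$ case of \eqref{eq:sio-ker}, $\abs{\rd_{y}^{\alp} \rd_{\xi}^{\bt} \ua_{\ueta}} \aleq A_{\alp, \gmm} R_{y}^{-\abs{\alp}} \int_{\abs{z} \leq 2\rho} \abs{z}^{\abs{\bt}+m-d}\, \ud z \aleq A_{\alp, \gmm} R_{y}^{-\abs{\alp}} \rho^{\abs{\bt}+m}$, the $z$-integral converging at the origin because $\abs{\bt}+m > 0$. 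Hence I may assume the high-frequency regime $\abs{\xi} > ((1+A_{\bfz}) R_{z_{1}})^{-1}$, in which $\rho \aeq \abs{\xi}^{-1}$. Fixing a cutoff $\chi \in C^{\infty}_{c}(\bbR^{d})$ with $\chi = 1$ on $B_{1}(0)$, $\supp \chi \subseteq B_{2}(0)$, I would split the integrand's extra factor as $\chi(z/\rho) + (1 - \chi(z/\rho))$. The near piece ($\chi(z/\rho)$) is estimated exactly as above: $\int_{\abs{z} \leq 2\rho} \abs{z}^{\abs{\bt}} \abs{\rd_{y}^{\alp} \uK_{\ueta}(y+z, y)}\, \ud z \aleq A_{\alp, \gmm} R_{y}^{-\abs{\alp}} \rho^{\abs{\bt}+m}$.

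For the far piece ($1-\chi(z/\rho)$, supported where $\rho \leq \abs{z} \leq (1+A_{\bfz})R_{z_{1}}$) I would integrate by parts in $z$ to use the oscillation. Pick a coordinate $j$ with $\abs{\xi_{j}} \geq \abs{\xi}/\sqrt{d}$ and the integer $k \ceq \lfloor \abs{\bt}+m \rfloor + 1$, so that $\abs{\bt}+m-k \in (-1, 0)$ and $1 \leq k \leq \abs{\bt}+m+1$. The far-piece integrand is smooth on $\bbR^{d}$ (it vanishes near $z=0$) and compactly supported, so $k$-fold integration by parts in $z_{j}$ produces no boundary terms and a factor $(i\xi_{j})^{-k}$ times $\rd_{z_{j}}^{k}$ applied to $(1-\chi(z/\rho))(-iz)^{\bt} \rd_{y}^{\alp} \uK_{\ueta}(y+z,y)$. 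Expanding by Leibniz, terms where (after spending derivatives lowering $z^{\bt}$) at most $k$ derivatives land on $\uK_{\ueta}$ are estimated by \eqref{eq:sio-ker} --- legitimate precisely because $\abs{\alp} \leq N_{0}$ and $\abs{\alp}+k \leq \abs{\alp}+\abs{\bt}+m+1 \leq M_{0}$ by the hypothesis $\abs{\alp}+\abs{\bt} \leq M_{0}-m-1$ --- giving $\aleq \big( \max_{\abs{\bt'} \leq k} A_{\alp, \bt'+\gmm} \big) R_{y}^{-\abs{\alp}} \abs{z}^{\abs{\bt}+m-k-d}$ on $\set{\rho \leq \abs{z} \leq (1+A_{\bfz})R_{z_{1}}}$; terms where $l \geq 1$ derivatives hit $1-\chi(z/\rho)$ carry an extra $\rho^{-l}$ but are supported in $\set{\rho \leq \abs{z} \leq 2\rho}$, hence obey the same bound after integrating. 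Integrating over $\set{\abs{z} \geq \rho}$ (convergent since $\abs{\bt}+m-k < 0$) and using $\abs{\xi_{j}}^{-k} \aeq \abs{\xi}^{-k} \aeq \rho^{k}$, the far piece is $\aleq \big( \max_{\abs{\bt'} \leq k} A_{\alp, \bt'+\gmm} \big) R_{y}^{-\abs{\alp}} \abs{\xi}^{-k} \rho^{\abs{\bt}+m-k} \aeq \big( \max_{\abs{\bt'} \leq k} A_{\alp, \bt'+\gmm} \big) R_{y}^{-\abs{\alp}} \rho^{\abs{\bt}+m}$.

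Finally I would add the near and far contributions, bounding $\abs{\rd_{y}^{\alp} \rd_{\xi}^{\bt} \ua_{\ueta}(\xi, y)}$ by $\aleq \big( \max_{\abs{\bt'} \leq k} A_{\alp, \bt'+\gmm} \big) R_{y}^{-\abs{\alp}} \rho^{\abs{\bt}+m}$, and then insert \eqref{eq:sio-ker-A}: since $k \leq \abs{\bt}+m+1$ we have $\abs{\bt'+\gmm} \leq \abs{\bt}+m+1+\abs{\gmm}$, so $A_{\alp, \bt'+\gmm} \leq C_{m, \alp, \bt'+\gmm} A_{\uS} A_{\ueta} (1+A_{\bfz})^{2(d+\abs{\alp}+\abs{\bt'+\gmm})+m+\abs{\gmm}}$ with exponent at most $10(d+\abs{\alp}+\abs{\bt}+\abs{\gmm}+m)$ (crudely, using $d \geq 1$). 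Recalling $\rho = \big( ((1+A_{\bfz}) R_{z_{1}})^{-1} + \abs{\xi} \big)^{-1}$, this is \eqref{eq:sio-symb}. There is no conceptual difficulty here; the real (and only nontrivial) work is the bookkeeping --- choosing the splitting scale $\rho$ so that the near-field singularity $\abs{z}^{m-d}$ and the far-field gain $\abs{\xi}^{-k}$ balance to $\rho^{\abs{\bt}+m}$, inserting the cutoff at scale $\rho$ so the integration by parts is boundary-term-free, and keeping the number of $z$-derivatives falling on $\uK_{\ueta}$ under the threshold $M_{0}-\abs{\alp}$ permitted by Proposition~\ref{prop:sio}. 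This last point is exactly what forces the restriction $\abs{\alp}+\abs{\bt} \leq M_{0}-m-1$, and the hypothesis $M_{0} \geq \max\set{N_{0}, m+1}$ is what makes this range nonempty and compatible with $\abs{\alp} \leq N_{0}$.
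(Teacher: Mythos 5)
Your proof is correct and takes essentially the same route as the paper's: the paper factors the argument through Lemma~\ref{lem:sio2symb}, decomposing the kernel dyadically in $\abs{z}$ and optimizing between a trivial bound and an integration-by-parts bound on each annulus, which is precisely your single-cut splitting at scale $\rho \sim \abs{\xi}^{-1}$ carried out in one step rather than dyadically. Your bookkeeping --- the derivative count $\abs{\alp}+k \leq M_{0}$ forcing $\abs{\alp}+\abs{\bt} \leq M_{0}-m-1$, and the crude absorption of the powers of $(1+A_{\bfz})$ from \eqref{eq:sio-ker-A} into the exponent $10(d+\abs{\alp}+\abs{\bt}+\abs{\gmm}+m)$ --- matches what the paper does.
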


\begin{corollary} \label{cor:sio-bdd}
Suppose that the assumptions for $\bfz$, $\ueta$, and $\uS^{\gmm}$ in Section~\ref{subsec:sio-hyp} hold, and let $\uK_{\ueta}$ and $\uS_{\ueta}$ be defined as in \eqref{eq:sio-kernel} and \eqref{eq:sio-op}, respectively. Then there exists a constant $c_{m, d} > 0$ such that, for every $1 < p < \infty$ and $\abs{s} \leq \min\set{N_{0}, M_{0}} - c_{m, d}$, we have $\uS_{\ueta} : W^{s, p}(\bbR^{d}) \to W^{s+m, p}(\bbR^{d})$.
\end{corollary}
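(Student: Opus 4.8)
The plan is to recognize $\uS_\ueta$ as a pseudodifferential operator of order $-m$ with a symbol of finite (but arbitrarily large) smoothness, and then to invoke a quantitative $L^p$-boundedness theorem. Concretely, by the symbol--kernel relation \eqref{eq:psdo-symb-ker} and the definition of $\ua_\ueta$ in Proposition~\ref{prop:sio-symb}, the operator $\uS_\ueta$ of \eqref{eq:sio-op} is precisely the right-quantization $\ua_\ueta(D, x)$. Since the quantities $m$, $d$, $A_\bfz$, and $R_{z_1}$ are held fixed, \eqref{eq:sio-symb} gives (bounding $((1+A_\bfz) R_{z_1})^{-1} + \abs{\xi}$ from below by a constant multiple of $\brk{\xi}$ for $\abs{\xi}\ageq 1$ and by a positive constant for $\abs{\xi}\aleq 1$) the Kohn--Nirenberg estimates
\[
	\abs{\rd_y^\alp \rd_\xi^\bt \ua_\ueta(\xi, y)} \aleq \brk{\xi}^{-m-\abs{\bt}} \quad\text{for}\quad \abs{\alp}\leq N_0,\ \abs{\alp}+\abs{\bt}\leq M_0 - m - 1,
\]
so that $\ua_\ueta$ is a symbol of type $(1,0)$ and order $-m$ for which only a finite --- yet, by choosing $N_0, M_0$ large, as large a --- range of seminorms is controlled.

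Next I would apply the following standard fact on pseudodifferential operators with symbols of limited smoothness: there is a constant $c_d$, depending only on $d$ and uniform in $p\in(1,\infty)$, such that whenever a symbol $a(\xi, y)$ satisfies $\abs{\rd_y^\alp \rd_\xi^\bt a(\xi, y)}\aleq \brk{\xi}^{\mu-\abs{\bt}}$ for all $\abs{\alp}\leq n_1$ and $\abs{\bt}\leq n_2$, the right-quantization $a(D, x)$ is bounded $W^{s,p}(\bbR^d)\to W^{s-\mu,p}(\bbR^d)$ provided $\abs{s} + c_d \leq \min\set{n_1, n_2}$. Applying this with $\mu = -m$, $n_1 = N_0$, $n_2 = M_0 - m - 1$, and setting $c_{m,d}\ceq c_d + m + 1$, we conclude that $\uS_\ueta : W^{s,p}(\bbR^d)\to W^{s+m,p}(\bbR^d)$ for every $1<p<\infty$ and $\abs{s}\leq \min\set{N_0,M_0} - c_{m,d}$, which is the assertion.

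\emph{The main obstacle} is supplying the quantitative statement used in the second step with an explicit derivative count, since the usual formulations assume $a\in S^0_{1,0}$ with all seminorms finite. If a convenient reference for the rough-symbol calculus (as in Taylor's or Abels' treatments) is not invoked, the self-contained route is: decompose $\ua_\ueta(\xi,y) = \ua_\ueta(\xi,y) m_{<1}(\xi) + \sum_{j\geq 0}\ua_\ueta(\xi,y) m_{2^j}(\xi)$; for each $j$, rescale the $j$-th block to unit frequency and bound it on $L^2$ by Calder\'on--Vaillancourt with a constant uniform in $j$ (this costs about $\lfloor d/2\rfloor+1$ derivatives per block, the requisite bounds on the rescaled kernels following from \eqref{eq:sio-symb} by Fourier inversion, or alternatively from the kernel estimates \eqref{eq:sio-ker} and \eqref{eq:sio-supp} of Proposition~\ref{prop:sio}); then reassemble via the Littlewood--Paley square-function characterization of $W^{s,p}$ together with the Fefferman--Stein vector-valued maximal inequality, the passage from $L^2$ to $L^p$ and the shift by $s$ derivatives accounting for the remaining fixed loss absorbed into $c_{m,d}$. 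Since the corollary only claims the existence of some $c_{m,d}$, any crude bookkeeping suffices and no sharp constant need be tracked.
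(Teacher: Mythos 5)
Your proposal is correct and follows essentially the same route as the paper: the paper's proof of this corollary consists of observing that $\uS_{\ueta}$ is the quantization of the classical symbol $\ua_{\ueta}$ of order $-m$ supplied by Proposition~\ref{prop:sio-symb} and then citing standard $W^{s,p}$-boundedness results for pseudodifferential operators. Your additional care about the finite number of controlled symbol seminorms (and the Littlewood--Paley fallback) merely makes explicit what the paper delegates to the reference, and the bookkeeping leading to $c_{m,d}$ is consistent with the statement.
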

This immediately follows from Proposition~\ref{prop:sio-symb} and standard boundedness results for pseudodifferential operators (see, e.g., \cite{Stein}), since $\uS_{\ueta}$ is the left-quantization of the classical symbol $a_{\ueta}$ of order $-m$.
\begin{remark}
Alternatively, one may attempt to directly verify that $\rd_{x}^{\alp} \uK_{\ueta}(x, y)$ with $\abs{\alp} = m$ are Calder\'on--Zygmund kernels. In this case, it is an interesting question to ask what are the minimal regularity assumptions for $\bfz$ and $\uS^{\gmm}$ for this property to hold. In Acosta--Dur\'an--Muschietti \cite{AcoDurMus}, it was shown that for the divergence operator on a John domain, there exists a Bogovskii-type integral kernels $K_{\eta_{1}}$ such that $\rd_{x^{j}} K_{\eta_{1}}$ is a Calder\'on--Zygmund kernel for every $j$ (and in fact, it characterizes John domains).
\end{remark}

We now prove Propositions~\ref{prop:sio} and \ref{prop:sio-symb} in a sequence of lemmas. We begin with the formula \eqref{eq:sio-ker-0}.
\begin{lemma} \label{lem:sio-1}
Under the hypotheses of Proposition~\ref{prop:sio}, we have (for every $\varphi \in C^{\infty}_{c}(\bbR^{d})$)
\begin{align*}
	\brk{K_{\ueta}(\cdot, y), \varphi} = \int \int_{0}^{1} \uS^{\gmm}(y, \bfz_{1}(y, z, s), s) \ueta(y, \bfz_{1}(y, z, s)) \abs*{\det \frac{\rd \bfz_{1}}{\rd z}} (\rd^{\gmm} \varphi)(y+z) \ud s \, \ud z.
\end{align*}
\end{lemma}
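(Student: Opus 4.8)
The statement to prove is Lemma~\ref{lem:sio-1}, which asserts the change-of-variables formula expressing $\brk{K_{\ueta}(\cdot, y), \varphi}$ as an integral over $z$ rather than over $z_1$. The plan is to start from the definition \eqref{eq:sio-kernel},
\begin{equation*}
	\brk{\uK_{\ueta}(\cdot, y), \varphi} = \int \int_{0}^{1} \uS^{\gmm}(y, z_{1}, s) \ueta(y, z_{1}) (\rd^{\gmm} \varphi)(y+\bfz(y, z_{1}, s))  \ud s \, \ud z_{1},
\end{equation*}
and for each fixed $y$ and each fixed $s \in (0,1]$, perform the substitution $z = \bfz(y, z_1, s)$ in the inner $z_1$-integral (after interchanging the order of the $s$- and $z_1$-integrals, which is justified below). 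By the hypotheses on the family of curves in Section~\ref{subsec:sio-hyp}, for each fixed $y$ and $s \in (0,1]$ the map $z_1 \mapsto \bfz(y, z_1, s)$ is a diffeomorphism with inverse $\bfz_1(y, z, s)$, so the substitution is legitimate and produces the Jacobian factor $\abs{\det \frac{\rd \bfz_1}{\rd z}}$; substituting $z_1 = \bfz_1(y, z, s)$ into $\uS^{\gmm}$, $\ueta$, and the argument of $\rd^\gmm \varphi$ gives exactly the claimed expression.

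The one point that needs care is the legitimacy of the manipulations near $s = 0$, since $\bfz(y, z_1, 0) = 0$ collapses and the bound $\abs{\frac{\rd \bfz_1}{\rd z}} \leq (1+A_{\bfz}) s^{-1}$ blows up as $s \to 0$. To handle this I would first note that, since $\varphi \in C^\infty_c(\bbR^d)$ and $\ueta$ is compactly supported in $z_1$ (supported in $\abs{z_1} \leq R_{z_1}$) with $\uS^{\gmm}$ bounded by $A_{\uS} \abs{z_1}^{m+\abs{\gmm}} s^{m+\abs{\gmm}-1}$, the integrand of the original double integral is absolutely integrable over $(z_1, s) \in \bbR^d \times (0,1)$: the factor $s^{m+\abs{\gmm}-1}$ with $m > 0$ is integrable at $s=0$, and everything else is bounded on the compact support. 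Hence Fubini applies and we may freely interchange the $s$- and $z_1$-integrals and work with each slice $s$ fixed. For the change of variables on each slice, the Jacobian bound shows $\abs{\det \frac{\rd \bfz_1}{\rd z}} \lesssim s^{-d}$, but this is multiplied by $\uS^{\gmm}(y, \bfz_1(y,z,s), s)$, which is $O(s^{m+\abs{\gmm}-1} \abs{\bfz_1(y,z,s)}^{m+\abs{\gmm}})$, and $\abs{\bfz_1(y,z,s)} \lesssim s^{-1}\abs{z}$ — actually one uses the lower bound $\abs{\rd_s \bfz} \geq (1+A_{\bfz})^{-1}\abs{z_1}$, equivalently $\abs{z} = \abs{\bfz(y,z_1,s)} \gtrsim s\abs{z_1}$, i.e. $\abs{z_1} \lesssim s^{-1}\abs{z}$; combining, the transformed integrand is controlled and the resulting $z$-integral converges absolutely for each fixed $s$ on the compact set $\set{\abs{z} \leq (1+A_{\bfz})R_{z_1}}$ where the $\ueta$-factor is nonzero. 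I would also remark that these convergence facts are precisely what later yields \eqref{eq:sio-supp} and the local integrability claimed in Proposition~\ref{prop:sio}, so this lemma is the natural first step.

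The main (and essentially only) obstacle is the bookkeeping at $s=0$ just described — making sure the change of variables is valid uniformly enough in $s$ that the final $(z,s)$ double integral is again absolutely convergent, so that one may write the answer as a single iterated integral in the order $\int \ud z \int_0^1 \ud s$ as stated. Beyond that, the proof is a routine application of Fubini's theorem and the change-of-variables formula for diffeomorphisms of $\bbR^d$, using only the quantitative hypotheses on $\bfz$, $\bfz_1$, $\ueta$, and $\uS^{\gmm}$ collected in Section~\ref{subsec:sio-hyp}. No deeper input (no singular integral theory, no Nullstellensatz, no augmented systems) is needed here; those enter only in the subsequent lemmas that establish the kernel bound \eqref{eq:sio-ker} and the symbol bound \eqref{eq:sio-symb}.
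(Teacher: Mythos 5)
Your proof is correct and follows essentially the same route as the paper: the paper's proof of this lemma is precisely the one-line change of variables $z_1 \mapsto z = \bfz(y,z_1,s)$ in the inner integral of the definition \eqref{eq:sio-kernel}, producing the Jacobian factor $\abs{\det \tfrac{\rd \bfz_1}{\rd z}}$. Your additional care about Fubini and absolute convergence near $s=0$ (using $m>0$ and the comparison $s\abs{\bfz_1(y,z,s)} \aeq \abs{z}$) is sound and, if anything, more complete than the paper's terse justification; those same estimates are indeed what the paper deploys in the subsequent lemma to prove \eqref{eq:sio-ker} and \eqref{eq:sio-supp}.
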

\begin{proof}
This is a simple change-of-variables computation. By the definition of $\uK_{\ueta}$, we have
\begin{align*}
\brk{K_{\ueta}(\cdot, y), \varphi} &=\int \int_{0}^{1} \uS^{\gmm}(y, z_{1}, s) \ueta(y, z_{1}) (\rd^{\gmm} \varphi)(y+\bfz(y, z_{1}, s))  \ud s \, \ud z_{1} \\
&= \int \int_{0}^{1} \uS^{\gmm}(y, \bfz_{1}(y, z, s), s) \ueta(y, \bfz_{1}(y, z, s)) (\rd^{\gmm} \varphi)(y+z) \abs*{\det \frac{\rd \bfz_{1}}{\rd z}}  \ud s \, \ud z. \qedhere
\end{align*}
\end{proof}
Lemma~\ref{lem:sio-1} already shows (interpreted in the sense of distributions)
\begin{align*}
	\uK_{\ueta}(z + y, y) &= (-1)^{\abs{\gmm}} \rd_{z}^{\gmm} \int_{0}^{1} \uS^{\gmm}(y, \bfz_{1}(y, z, s), s) \ueta(y, \bfz_{1}(y, z, s)) \abs*{\det \frac{\rd \bfz_{1}}{\rd z}}  \, \ud s.
\end{align*}
The following lemma then completes the proof of Proposition~\ref{prop:sio}:
\begin{lemma} \label{lem:sio-2}
Under the hypotheses of Proposition~\ref{prop:sio}, define
\begin{equation*}
	\uI_{\alp, \bt'}(z+y, y) =  \int_{0}^{1} \rd_{y}^{\alp} \rd_{z}^{\bt'} \left[(-1)^{\abs{\gmm}} \uS^{\gmm}(y, \bfz_{1}(y, z, s), s) \ueta(y, \bfz_{1}(y, z, s)) \abs*{\det \frac{\rd \bfz_{1}}{\rd z}}\right] \, \ud s
\end{equation*}
The integral on the RHS is well-defined for every $y \in \bbR^{d}$ and $z \in \bbR^{d} \setminus \set{0}$, and we have
\begin{equation*}
	\abs{\uI_{\alp, \bt'}(z+y, y)} \leq A_{\alp, \bt'} R_{y}^{-\abs{\alp}} \abs{z}^{-d+m+\abs{\gmm}-\abs{\bt'}}\end{equation*}
where $A_{\alp, \bt'}$ satisfies \eqref{eq:sio-ker-A}. Moreover, $\supp I_{\alp, \bt'}(\cdot + y, y) \subseteq B_{(1+A_{\bfz}) R_{z_{1}}} (0)$.
\end{lemma}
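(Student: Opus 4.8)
The plan is to reduce the estimate to two elementary consequences of the hypotheses on $\bfz$ and then carry out a Fa\`a di Bruno bookkeeping for the composite functions in the integrand; the only genuinely delicate point is the interplay between the non-integrable weight $s^{-1-d}$ near $s=0$ and the support of $\ueta$. First I would record the two-sided bound
\[
	(1+A_{\bfz})^{-1} \frac{\abs{z}}{s} \leq \abs{\bfz_{1}(y, z, s)} \leq (1+A_{\bfz}) \frac{\abs{z}}{s}, \qquad s \in (0, 1].
\]
The lower bound comes from integrating $\abs{\rd_{s} \bfz} \leq (1+A_{\bfz}) \abs{z_{1}}$ from $s=0$, where $\bfz(y, z_{1}, 0) = 0$, so that $\abs{z} = \abs{\bfz(y, z_{1}, s)} \leq (1+A_{\bfz}) \abs{z_{1}} s$ and then one sets $z_{1} = \bfz_{1}(y,z,s)$; the upper bound follows from $\bfz_{1}(y, 0, s) = 0$ (since $\bfz(y, 0, s) \equiv 0$) together with $\abs*{\tfrac{\rd \bfz_{1}}{\rd z}} \leq (1+A_{\bfz}) s^{-1}$. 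Since $\ueta(y, \cdot)$ and all its $z_{1}$-derivatives vanish for $\abs{z_{1}} \geq R_{z_{1}}$, the bracketed expression defining $\uI_{\alp, \bt'}$ vanishes unless $\abs{\bfz_{1}(y, z, s)} < R_{z_{1}}$, hence unless $\abs{z}/((1+A_{\bfz}) R_{z_{1}}) < s \leq 1$. Taking $s = 1$ forces $\abs{z} < (1+A_{\bfz}) R_{z_{1}}$ (otherwise the integrand is $\equiv 0$ in $s$), which is the claimed support property; and for fixed $z \neq 0$ the $s$-integrand is smooth and supported in a compact subset of $(0,1]$, so the integral in the definition of $\uI_{\alp, \bt'}$ converges.

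Next I would estimate $\rd_{y}^{\alp} \rd_{z}^{\bt'}$ of the bracketed expression on the region $s > \abs{z}/((1+A_{\bfz}) R_{z_{1}})$. Applying the Fa\`a di Bruno formula to $\uS^{\gmm}(y, \bfz_{1}(y,z,s), s)$ and $\ueta(y, \bfz_{1}(y,z,s))$, estimating $(\rd_{z_{1}}^{\dlt} \uS^{\gmm})(y, \bfz_{1}, s)$ and $(\rd_{z_{1}}^{\dlt} \ueta)(y, \bfz_{1})$ by the hypotheses combined with $\abs{\bfz_{1}} \aeq_{A_{\bfz}} \abs{z}/s$ from the previous step, and bounding the inner derivatives $\rd_{y}^{\alp_{i}} \rd_{z}^{\bt_{i}} \bfz_{1}$ by the stated bounds, one finds that every $\rd_{z}$ costs a factor $(1+A_{\bfz}) \abs{z}^{-1}$ and every $\rd_{y}$ a factor $(1+A_{\bfz}) R_{y}^{-1}$, with the power of $s$ left unchanged; in particular
\[
	\abs*{\rd_{y}^{\alp'} \rd_{z}^{\bt''} \uS^{\gmm}(y, \bfz_{1}, s)} \aleq A_{\uS} (1+A_{\bfz})^{m + \abs{\gmm} + \abs{\alp'} + \abs{\bt''}} R_{y}^{-\abs{\alp'}} \abs{z}^{m + \abs{\gmm} - \abs{\bt''}} s^{-1},
\]
and analogously $\abs*{\rd_{y}^{\alp'} \rd_{z}^{\bt''} \ueta(y, \bfz_{1})} \aleq A_{\ueta} (1+A_{\bfz})^{\abs{\alp'} + \abs{\bt''}} R_{y}^{-\abs{\alp'}} \abs{z}^{-\abs{\bt''}} R_{z_{1}}^{-d}$. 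For the Jacobian, since $z_{1} \mapsto \bfz$ is a diffeomorphism the determinant is nowhere zero, so $\abs*{\det \tfrac{\rd \bfz_{1}}{\rd z}}$ is smooth, and expanding $\det \tfrac{\rd \bfz_{1}}{\rd z}$ as a signed sum of products of $d$ entries $\rd_{z_{i}} (\bfz_{1})_{j}$ and using the entrywise bounds gives $\abs*{\rd_{y}^{\alp'} \rd_{z}^{\bt''} \abs*{\det \tfrac{\rd \bfz_{1}}{\rd z}}} \aleq_{d} (1+A_{\bfz})^{d + \abs{\alp'} + \abs{\bt''}} R_{y}^{-\abs{\alp'}} \abs{z}^{-\abs{\bt''}} s^{-d}$. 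Multiplying the three bounds by the Leibniz rule, for $s > \abs{z}/((1+A_{\bfz}) R_{z_{1}})$ one obtains
\[
	\abs*{\rd_{y}^{\alp} \rd_{z}^{\bt'} \bigl[\cdots\bigr]} \leq C_{m, \alp, \bt'} A_{\uS} A_{\ueta} (1+A_{\bfz})^{N} R_{y}^{-\abs{\alp}} \abs{z}^{m + \abs{\gmm} - \abs{\bt'}} R_{z_{1}}^{-d} s^{-1-d},
\]
with $N$ a linear function of $d, m, \abs{\alp}, \abs{\bt'}, \abs{\gmm}$ obtained by collecting all powers of $1+A_{\bfz}$, and the left-hand side vanishes for smaller $s$. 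All derivatives of $\uS^{\gmm}$, $\ueta$, $\bfz_{1}$ invoked here stay within the ranges $\abs{\alp} \leq N_{0}$, $\abs{\bt} \leq \abs{\gmm} + M_{0}$ (resp.\ $1+\abs{\gmm}+M_{0}$ for $\bfz_{1}$, the extra unit accounting for the derivative already present in the Jacobian) granted by the hypotheses, provided $\abs{\alp} \leq N_{0}$ and $\abs{\alp} + \abs{\bt} \leq M_{0}$.

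Finally I integrate in $s$, using $\int_{\abs{z}/((1+A_{\bfz})R_{z_{1}})}^{1} s^{-1-d} \, \ud s \leq \tfrac{1}{d} \bigl( (1+A_{\bfz}) R_{z_{1}} / \abs{z} \bigr)^{d}$, which absorbs the $R_{z_{1}}^{-d}$ and turns $s^{-1-d}$ into the homogeneity $\abs{z}^{-d}$; combined with the bound of the previous step this gives $\abs{\uI_{\alp, \bt'}(z+y, y)} \leq A_{\alp, \bt'} R_{y}^{-\abs{\alp}} \abs{z}^{-d+m+\abs{\gmm}-\abs{\bt'}}$ with $A_{\alp, \bt'} \leq C_{m, \alp, \bt'} A_{\uS} A_{\ueta} (1+A_{\bfz})^{2(d + \abs{\alp} + \abs{\bt'}) + m + \abs{\gmm}}$, which is \eqref{eq:sio-ker-A} after setting $\bt' = \bt + \gmm$. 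I expect the main obstacle to be exactly the treatment of the weight $s^{-1-d}$: the bracketed integrand is genuinely non-integrable down to $s=0$, and it is solely the support restriction $\supp \ueta(y, \cdot) \subseteq B_{R_{z_{1}}}(0)$ — which, through the lower bound $\abs{\bfz_{1}} \geq \abs{z}/((1+A_{\bfz})s)$, forces $s \gtrsim \abs{z}/R_{z_{1}}$ — that renders the $s$-integral finite with precisely the homogeneity $\abs{z}^{-d} R_{z_{1}}^{d}$ the statement requires; everything else is careful but routine Fa\`a di Bruno and determinant-expansion bookkeeping, whose only sensitive aspect is honest tracking of the powers of $1+A_{\bfz}$.
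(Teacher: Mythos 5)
Your proof is correct and follows essentially the same route as the paper's: the same two-sided comparison $s\abs{\bfz_{1}} \aeq_{A_{\bfz}} \abs{z}$, the same per-factor chain-rule bounds on $\uS^{\gmm}$, $\ueta$, and the Jacobian, and the same use of $\supp \ueta(y,\cdot) \subseteq B_{R_{z_{1}}}(0)$ to tame the weight $s^{-1-d}$. Your direct integration over $s \in [\abs{z}/((1+A_{\bfz})R_{z_{1}}), 1]$ is just the paper's change of variables $r = \abs{z}/s$ in disguise, so the two arguments are interchangeable.
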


\begin{proof}
We begin by estimating each factor in the definition of $\uI_{\alp, \bt'}$. By the hypothesis on $\bfz$, we have
\begin{equation}
\abs*{R_{y}^{\abs{\alp}} \abs{z}^{\abs{\bt'}} \rd_{y}^{\alp} \rd_{z}^{\bt'} \abs*{\det \frac{\rd \bfz_{1}}{\rd z}}} \aleq (1+A_{\bfz})^{d+\abs{\alp}} s^{-d}. \label{eq:sio-factor-3}
\end{equation}
for $\abs{\alp} \leq N_{0}$ and $\abs{\bt'} \leq \abs{\gmm} + M_{0}$. For the other factors, we claim that
\begin{align}
\abs*{R_{y}^{\abs{\alp}} \abs{z}^{\abs{\bt'}} \rd_{y}^{\alp} \rd_{z}^{\bt'} \ueta(y, \bfz_{1}(y, z, s))}
&\aleq A_{\ueta} (1 + A_{\bfz})^{2\abs{\alp} + 2\abs{\bt'}} R_{z_{1}}^{-d} \label{eq:sio-factor-1}\\
\abs*{R_{y}^{\abs{\alp}} \abs{z}^{\abs{\bt'}} \rd_{y}^{\alp} \rd_{z}^{\bt'} \uS^{\gmm}(y, \bfz_{1}(y, z, s), s)} &\aleq A_{\uS} (1 + A_{\bfz})^{2\abs{\alp} + 2\abs{\bt'}+m+\abs{\gmm}} \abs{z}^{m + \abs{\gmm}} s^{-1}, \label{eq:sio-factor-2}
\end{align}
as long as $\abs{\alp} \leq N_{0}$ and $\abs{\alp} + \abs{\bt'} \leq M_{0} + \abs{\gmm}$.

To establish \eqref{eq:sio-factor-1} and \eqref{eq:sio-factor-2}, we need to study compositions of the form $F(y, \bfz_{1}(y, z, s))$ (for an appropriate function $F$). First, using the hypothesis on $\rd_{s} \bfz$, observe that
\begin{equation} \label{eq:sio-z-z1-compare}
	(1+A_{\bfz})^{-1} \abs{z} \leq s \abs{\bfz_{1}(y, z, s)} \leq (1+A_{\bfz}) \abs{z}.
\end{equation}
Consider a $C^{1}$ function $F = F(y, z_{1})$. For $M_{0} + \abs{\gmm} \geq 1$, we have
\begin{align*}
	\abs{\rd_{z} F(y, \bfz_{1}(y, z, s))} &\leq \abs*{(\rd_{z_{1}} F)(y, z_{1}) |_{z_{1} = \bfz_{1}(y, z, s)}} \abs{\rd_{z} \bfz_{1}(y, z, s)} \\
	&\leq (1+A_{\bfz})^{2} \abs*{(\abs{z_{1}} \rd_{z_{1}} F)(y, z_{1}) |_{z_{1} = \bfz_{1}(y, z, s)}}.
\end{align*}
Similarly, for $N_{0} \geq 1$ and $M_{0} + \abs{\gmm} \geq 1$, we have
\begin{align*}
	\abs{\rd_{y} F(y, \bfz_{1}(y, z, s))} &\leq \abs{(\rd_{y} F)(y, z_{1}) |_{z_{1} = \bfz_{1}(y, z, s)} } + \abs*{(\rd_{z_{1}} F)(y, z_{1}) |_{z_{1} = \bfz_{1}(y, z, s)}} \abs{\rd_{y} \bfz_{1}(y, z, s)} \\
	&\leq \abs{(\rd_{y} F)(y, z_{1}) |_{z_{1} = \bfz_{1}(y, z, s)} } + A_{\bfz} (1+A_{\bfz})\abs*{(\abs{z_{1}} \rd_{z_{1}} F)(y, z_{1}) |_{z_{1} = \bfz_{1}(y, z, s)}}.
\end{align*}
Now, using the hypotheses on $\ueta$, and $\uS^{\gmm}$, \eqref{eq:sio-factor-1} and \eqref{eq:sio-factor-2} in the case $\abs{\alp} + \abs{\bt'} = 1$ follows. The general higher order case follows by a routine induction argument.

Putting together \eqref{eq:sio-factor-3}, \eqref{eq:sio-factor-1}, and \eqref{eq:sio-factor-2}, we arrive at
\begin{align*}
	\abs{\uI_{\alp, \bt'}} \aleq (1 + A_{\bfz})^{d+2(\abs{\alp}+\abs{\bt'})} \abs{z}^{m+\abs{\gmm}} R_{z_{1}}^{-d} \int_{0}^{1} \chf_{\supp_{z_{1}} \ueta(y, z_{1})}(\bfz_{1}(y, z, s)) s^{-d-1} \, \ud s
\end{align*}
To estimate the integral on RHS, we make the change of variables $s = \frac{\abs{z}}{r}$, so that
\begin{equation} \label{eq:sio-s-int}
\begin{aligned}
	\int_{0}^{1} s^{-d-1} \chf_{\supp_{z_{1}} \ueta(y, z_{1})}(\bfz_{1}(y, z, s)) \, \ud s
	= \abs{z}^{-d} \int_{\abs{z}}^{\infty} r^{d-1} \chf_{\supp_{z_{1}} \ueta(y, z_{1})}(\bfz_{1}(y, z, \tfrac{\abs{z}}{r})) \, \ud r.
\end{aligned}
\end{equation}
By \eqref{eq:sio-z-z1-compare}, it follows that $(1+A_{\bfz})^{-1} r \leq \abs{\bfz_{1}(y, z, \tfrac{\abs{z}}{r})}$.
Recalling also that $\supp \ueta(y, \cdot) \subseteq B_{R_{z_{1}}}(0)$, we have
\begin{equation*}
	\chf_{\supp_{z_{1}} \ueta(y, z_{1})}(\bfz_{1}(y, z, \tfrac{\abs{z}}{r})) \leq \chf_{[0, (1+A_{\bfz}) R_{z_{1}}]}(r).
\end{equation*}
Thus,
\begin{equation} \label{eq:sio-r-int}
\int_{\abs{z}}^{\infty} r^{d-1} \chf_{\supp_{z_{1}} \ueta(y, z_{1})}(\bfz_{1}(y, z, \tfrac{\abs{z}}{r})) \, \ud r
\leq (1+A_{\bfz})^{d} R_{z_{1}}^{d},
\end{equation}
and it vanishes for $z$ in a neighborhood of $\set{z \in \bbR^{d} : \abs{z} \geq (1+A_{\bfz}) R_{z_{1}}}$. This completes the proof. \qedhere
\end{proof}

Finally, the symbol bounds (Proposition~\ref{prop:sio-symb}) follows from Proposition~\ref{prop:sio} and the following lemma, which is of independent interest.
\begin{lemma} \label{lem:sio2symb}
Assume that $\uK(x, y) \in L^{1}_{loc}(\bbR^{d} \times \bbR^{d})$ satisfies
\begin{align*}
	\abs{\rd_{y}^{\alp} \rd_{z}^{\bt} \uK(z + y, y)} &\leq A R_{y}^{-\abs{\alp}} \abs{z}^{-d+m - \abs{\bt}} \quad \hbox{ for } \abs{\alp} \leq N, \, \abs{\bt} \leq M, \\
	\supp \mathrm{K} &\subseteq \set{(x, y) \in \bbR^{d} \times \bbR^{d} : \abs{x - y} < R_{z}},
\end{align*}
for some $M, N \in \bbZ_{\geq 0}$, $m > 0$, $A > 0$, $R_{y}, R_{z} > 0$. Then for $\abs{\alp} \leq N$ and $\abs{\bt} \leq M - m - 1$, $\ua(\xi, y) \ceq \int \uK(z+y, y) e^{-i \xi \cdot z} \, \ud z$ satisfies
\begin{equation} \label{eq:sio2symb}
	\abs{\rd_{y}^{\alp} \rd_{\xi}^{\bt} \ua(\xi, y)}
	\leq C_{m, \bt} A R_{y}^{-\abs{\alp}} (R_{z}^{-1} + \abs{\xi})^{-m-\abs{\bt}}.
\end{equation}
\end{lemma}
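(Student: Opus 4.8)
The plan is to estimate the oscillatory integral $\ua(\xi, y) = \int \uK(z+y,y) e^{-i\xi\cdot z}\,\ud z$ by the standard dyadic decomposition in $z$ together with integration by parts in $z$ to gain decay in $\xi$. First I would reduce to the case $\alp = 0$: since the bound on $\rd_y^\alp \uK(z+y,y)$ is of exactly the same shape as that on $\uK(z+y,y)$ (with an extra factor $R_y^{-\abs{\alp}}$ and the same $z$-behaviour), differentiating under the integral sign in $y$ reduces \eqref{eq:sio2symb} for general $\abs{\alp}\le N$ to the case $\alp=0$ applied to $\rd_y^\alp \uK$, at the cost of the harmless prefactor $R_y^{-\abs{\alp}}$. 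Likewise, $\rd_\xi^\bt \ua(\xi,y) = \int (-iz)^\bt \uK(z+y,y) e^{-i\xi\cdot z}\,\ud z$, so it suffices to bound integrals of the form $\int z^\bt \uK(z+y,y) e^{-i\xi\cdot z}\,\ud z$ using only the kernel bound $\abs{\rd_z^{\bt'}\uK(z+y,y)}\le A\abs{z}^{-d+m-\abs{\bt'}}$ for $\abs{\bt'}\le M$ and the support condition $\abs{z}<R_z$.

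Next I would split $z$ into dyadic shells. Fix $\xi\ne 0$ and write $\int = \sum_{2^k \le R_z} \int_{\abs{z}\sim 2^k}$ (the sum being finite by the support condition). On the shell $\abs{z}\sim 2^k$, the ``low-frequency'' regime $2^k\abs{\xi}\lesssim 1$ is handled by brute force: $\abs{z^\bt \uK} \lesssim A\,2^{k(\abs{\bt}-d+m)}$ and the volume of the shell is $\sim 2^{kd}$, giving a contribution $\lesssim A\,2^{k(m+\abs{\bt})}$; summing the geometric series over $2^k \lesssim \min(R_z,\abs{\xi}^{-1})$ yields $\lesssim A\,(R_z^{-1}+\abs{\xi})^{-m-\abs{\bt}}$ as desired. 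In the ``high-frequency'' regime $2^k\abs{\xi}\gtrsim 1$, I would integrate by parts $L$ times using $e^{-i\xi\cdot z} = \abs{\xi}^{-2}(i\xi\cdot\nb_z)e^{-i\xi\cdot z}$ applied to $z^\bt \uK(z+y,y)$ cut off to the shell by a smooth partition of unity; each integration by parts gains a factor $(2^k\abs{\xi})^{-1}$ (derivatives landing on $\uK$ cost $2^{-k}$ by the kernel bound, derivatives on $z^\bt$ or the cutoff also cost $2^{-k}$), so after $L$ steps the shell contributes $\lesssim A\,2^{k(m+\abs{\bt})}(2^k\abs{\xi})^{-L}$; choosing $L = \lceil m+\abs{\bt}\rceil+1 \le M$ (this is where the hypothesis $\abs{\bt}\le M-m-1$ is used, so that we have enough kernel derivatives available, namely $\abs{\bt}+L\le M$) makes the sum over the high-frequency shells a convergent geometric series dominated by its smallest term $2^k\sim\abs{\xi}^{-1}$, again giving $\lesssim A\,\abs{\xi}^{-m-\abs{\bt}}$.

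Combining the two regimes and summing gives \eqref{eq:sio2symb} with a constant $C_{m,\bt}$ depending only on $m$, $\bt$ and the dimension. The main obstacle — really the only point requiring care — is the bookkeeping in the integration-by-parts step: one must insert a smooth dyadic cutoff $\chi(2^{-k}z)$ so that boundary terms vanish, track that every one of the $L$ derivatives (on $\uK$, on the monomial $z^\bt$, or on $\chi$) produces a gain of exactly $2^{-k}$, and verify that the total number of $z$-derivatives falling on $\uK$, which is at most $\abs{\bt}+L$, stays within the allowed range $M$; this is precisely guaranteed by the constraint $\abs{\bt}\le M-m-1$ and $L=\lceil m+\abs{\bt}\rceil+1$. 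Everything else is the routine dyadic-shell summation already sketched.
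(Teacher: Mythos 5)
Your argument is correct and is essentially the paper's own proof: the paper likewise decomposes $\uK$ into dyadic shells via a smooth partition of unity $m_{R}(z)$, uses the trivial bound (each $\xi$-derivative bringing down a power of $z \sim R$) on shells with $R \lesssim \abs{\xi}^{-1}$, integrates by parts repeatedly in $z$ on the remaining shells, and sums the resulting geometric series, exactly as you describe. One small bookkeeping correction: in the integration-by-parts step at most $L$ (not $\abs{\bt}+L$) of the $z$-derivatives can fall on $\uK$, so the condition you actually need is $L \leq M$, which your choice $L = \lceil m+\abs{\bt}\rceil + 1$ does satisfy under the hypothesis $\abs{\bt} \leq M-m-1$ (whereas the stronger condition $\abs{\bt}+L\leq M$ you state need not hold).
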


\begin{proof}
Let $m_{R}(z)$ denote a smooth partition of unity subordinate to dyadic annuli $A_{R} = \set{z \in \bbR^{d} : R < \abs{z} < 4R}$ as in Section~\ref{subsec:prelim-notation}. We split $\uK(z + y, y) = \sum_{R \in 2^{\bbZ}} \uK_{R}(z + y, y)$, where
\begin{align*}
\uK_{R}(z+y, y) \ceq m_{R}(z) \uK(z+y, y).
\end{align*}
We note that this sum is finite thanks to the support property of $\uK$. Clearly, the following holds for each $R$:\begin{equation} \label{eq:sio-K-alp-bt}
\begin{aligned}
\abs{\rd_{y}^{\alp} \rd_{z}^{\bt} \uK_{R}(y+z, y)} &\aleq_{\bt} A R_{y}^{-\abs{\alp}} R^{-d+m-\abs{\bt}} \quad \hbox{ for } \abs{\alp} \leq N, \, \abs{\bt} \leq M.
\end{aligned}
\end{equation}

Correspondingly, $\ua(\xi, y)$ may be split into
\begin{equation*}
	\ua(\xi, y) = \sum_{R \in 2^{\bbZ}} \ua_{R}(\xi, y) \ceq \sum_{R \in 2^{\bbZ}} \int \uK_{R}(z + y, y) e^{-i \xi \cdot z} \, \ud z.
\end{equation*}
We now estimate $\rd_{y}^{\alp} \rd_{\xi}^{\bt'} \ua_{R} = \rd_{y}^{\alp} \rd_{\xi}^{\bt'} \int \uK_{R}(y+z, y) e^{i \xi \cdot z} \, \ud z$ for each $R \in 2^{\bbZ}$. Observe that:
\begin{enumerate}[label=(\roman*)]
\item Using the identity $\rd_{\xi_{j}} e^{- i \xi \cdot z} = - i z^{j} e^{-i \xi \cdot z}$ and \eqref{eq:sio-K-alp-bt},
\begin{align*}
\abs*{\rd_{y}^{\alp} \rd_{\xi}^{\bt'} \int \uK_{R}(z+y, y) e^{i \xi \cdot z} \, \ud z} &\aleq_{\bt'} A R_{y}^{-\abs{\alp}} R^{m+\abs{\bt'}}.
\end{align*}
\item Using the identity $e^{i \xi \cdot z} = (i \xi_{j})^{-1} \rd_{z^{j}} e^{i \xi \cdot z}$, integration by parts and \eqref{eq:sio-K-alp-bt},
\begin{align*}
& \abs*{\rd_{y}^{\alp} \rd_{\xi}^{\bt'} \int \uK_{R}(z+y, y) e^{i \xi \cdot z} \, \ud z}
\aleq_{\bt'} A R_{y}^{-\abs{\alp}} R^{m-\abs{\bt'}} \abs{\xi}^{-\abs{\bt'}}.
\end{align*}
\item For $R > R_{z}$,
\begin{equation*}
\int \uK_{R}(z+y, y) e^{i \xi \cdot z} \, \ud z = 0.
\end{equation*}
\end{enumerate}
For (i) and (ii), we need $\abs{\alp} \leq N$ and $\abs{\bt'} \leq M$.

We now sum up the above bounds in $R$ to estimate $\ua$. By (i) and (iii), we immediately obtain
\begin{align*}
\abs{\rd_{y}^{\alp} \rd_{\xi}^{\bt'} \ua} \aleq_{\bt'} A R_{y}^{-\abs{\alp}} R_{z}^{m+\abs{\bt'}}
\end{align*}
On the other hand, by optimizing (i) and (ii) (which requires $M > \abs{\bt'}+m$), we obtain
\begin{align*}
\abs{\rd_{y}^{\alp} \rd_{\xi}^{\bt'} \ua} &\aleq_{\bt'} A R_{y}^{-\abs{\alp}} \abs{\xi}^{-m-\abs{\bt'}},
\end{align*}
Combining the two bounds, \eqref{eq:sio2symb} follows. \qedhere
\end{proof}
Finally, Proposition~\ref{prop:sio-symb} follows from Proposition~\ref{prop:sio} and Lemma~\ref{lem:sio2symb} with appropriate choices of $A$, $N$, $M$, the same $R_{y}$, and $R_{z} = (1+A_{\bfz}) R_{z_{1}}$, where we are being loose with the power of $(1+A_{\bfz})$ in \eqref{eq:sio-symb}.

\section{From \ref{hyp:crc} to integral formulas} \label{sec:crc}
In this section, we carry out in detail the construction outlined in Section~\ref{subsec:method}. In particular, the conditions needed for our construction, including the key \emph{recovery on curves} condition, are precisely formulated here; see Section~\ref{subsubsec:crc} (simple qualitative version), Sections~\ref{subsubsec:x-q}--\ref{subsubsec:crc-x-q} (detailed quantitative version), and Section~\ref{subsubsec:Z-eta-smth} (additional quantitative assumptions for obtaining solutions with compact support) below.

\subsection{Construction of a rough integral kernel supported on curves} \label{subsec:crc-1}
Our aim in this subsection is to formulate \emph{qualitative} conditions (including recovery on curves) that lead to the construction of a rough integral kernel $K_{y_{1}}(x, y)$ with properties outlined in Step~1 of Section~\ref{subsec:method}.

\subsubsection{Recovery on curves and duality argument, qualitative versions} \label{subsubsec:crc}
Let $W$ be an open subset of $\bbR^{d} \times \bbR^{d}$, and let $\bfx : W \times [0, 1] \to \bbR^{d}$ be a smooth family of curves with $\bfx(y, y_{1}, 0) = y$ and $\bfx(y, y_{1}, 1) = y_{1}$. We state the precise (but qualitative) formulation of \ref{hyp:crc}:

\begin{enumerate}[label=$(\mathrm{RC}^{\vee})$]
\item {\bf Recovery on Curves (with endpoint).} \label{hyp:crc-full}
For each $(y, y_{1}) \in W$ and multi-index $\gmm$, and there exists an $r_{0} \times s_{0}$-matrix-valued continuous function $s \mapsto \tensor{S}{_{J}^{(\gmm, K)}}(y, y_{1}, s)$ ($s \in [0, 1]$) and an $r_{0} \times r_{0}$-matrix-valued distribution $\tensor{(b_{y_{1}})}{^{J'}_{J}}(\cdot, y) \in \calD'(U)$ such that the following holds. For every ($\bbC^{r_{0}}$-valued) $\varphi \in C^{\infty}_{c}(U)$ (\emph{without} the vanishing condition near $y_{1}$), we have
\begin{equation} \label{eq:crc-x-trunc}
	\varphi_{J}(y) = \int_{0}^{1} \sum_{\gmm} \tensor{S}{_{J}^{(\gmm, K)}}(y, y_{1}, s) (\rd_{x}^{\gmm} \calP^{\ast} \varphi)(\bfx(y, y_{1}, s)) \, \ud s + \brk{\tensor{(b_{y_{1}})}{^{J'}_{J}}(\cdot, y), \varphi_{J'}},
\end{equation}
where $\tensor{S}{_{J}^{(\gmm, K)}} = 0$ except for finitely many multi-indices $\gmm$, and
\begin{equation} \label{eq:b-y1-properties}
	\supp \tensor{(b_{y_{1}})}{^{J'}_{J}}(\cdot, y) \subseteq \set{y_{1}}.
\end{equation}
\end{enumerate}

As outlined in Step~1 of Section~\ref{subsec:method}, \ref{hyp:crc-full} leads to (in fact, is equivalent to) the existence of a (distributional) Green's function for $\calP$ supported on the curve $\bfx(y, y_{1}, [0, 1])$.
\begin{proposition}[Duality argument] \label{prop:crc-x-duality}
Let $U$, $W$, $\bfx = \bfx(y, y_{1}, s)$, $\calP$, $\tensor{S}{_{J}^{(\gmm, K)}}(y, y_{1}, s)$, and $\tensor{(b_{y_{1}})}{_{J}^{J'}}(\cdot, y)$ satisfy \ref{hyp:crc-full}. For each $(y, y_{1}) \in W$, define the $s_{0} \times r_{0}$-matrix-valued distribution $K_{y_{1}}(\cdot, y)$ (on $\bbR^{d}$) by
\begin{equation} \label{eq:K-y1-def}
	\brk{K_{y_{1}} (\cdot, y), \psi}
	= \brk{\tensor{(K_{y_{1}})}{^{K}_{J}}(\cdot, y), \psi_{K}}= \int_{0}^{1} \sum_{\gmm} \tensor{S}{_{J}^{(\gmm, K)}}(y, y_{1}, s) \rd_{x}^{\gmm} \psi_{K}(\bfx(y, y_{1}, s)) \, \ud s.
\end{equation}
Then we have, for each $(y, y_{1}) \in W$ with both $y \in U$ and $y_{1} \in U$,
\begin{equation} \label{eq:trun-ker-y1-gen}
	\calP K_{y_{1}} (x, y) = \dlt_{0}(x-y) - b_{y_{1}}(x, y),
\end{equation}
in the sense of distributions on $U$ (where $\calP$ acts on the $x$-variable). Moreover, we have
\begin{equation} \label{eq:K-y1-supp}
	\supp K_{y_{1}}(\cdot, y) \subseteq \bfx(y, y_{1}, [0, 1]).
\end{equation}
\end{proposition}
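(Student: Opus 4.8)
The proposition is a formal consequence of \ref{hyp:crc-full} together with the definition \eqref{eq:K-y1-def}, realized by the duality reinterpretation sketched in Step~1 of Section~\ref{subsec:method}. I would first dispose of the support statement \eqref{eq:K-y1-supp}: by construction, $\brk{K_{y_{1}}(\cdot,y),\psi}$ depends only on the values of $\psi_{K}$ and of finitely many of its derivatives along the compact set $\bfx(y,y_{1},[0,1])$. Hence if $\psi \in C^{\infty}_{c}(\bbR^{d};\bbC^{s_{0}})$ has support disjoint from this set, then $\psi$ vanishes in a neighborhood of $\bfx(y,y_{1},[0,1])$, so $(\rd_{x}^{\gmm} \psi_{K})(\bfx(y,y_{1},s)) = 0$ for all $s \in [0,1]$ and every multi-index $\gmm$, and therefore $\brk{K_{y_{1}}(\cdot,y),\psi} = 0$. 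Since $\bfx(y,y_{1},[0,1])$ is closed, this gives $\supp K_{y_{1}}(\cdot,y) \subseteq \bfx(y,y_{1},[0,1])$.

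For the Green's function identity \eqref{eq:trun-ker-y1-gen} I would test $\calP K_{y_{1}}(\cdot,y)$ (with $\calP$ acting in $x$) against an arbitrary $\varphi \in C^{\infty}_{c}(U;\bbC^{r_{0}})$, extended by zero to $\bbR^{d}$. Moving $\calP$ onto the test function through the adjoint and using that $\calP^{\ast}\varphi \in C^{\infty}_{c}(U;\bbC^{s_{0}})$, the definition \eqref{eq:K-y1-def} applied with $\psi = \calP^{\ast}\varphi$ yields, for each component index $J \in \set{1,\ldots,r_{0}}$,
\[
	\brk{(\calP K_{y_{1}}(\cdot,y))_{J}, \varphi} = \brk{\tensor{(K_{y_{1}})}{^{K}_{J}}(\cdot,y), (\calP^{\ast}\varphi)_{K}} = \int_{0}^{1} \sum_{\gmm} \tensor{S}{_{J}^{(\gmm,K)}}(y,y_{1},s)\,(\rd_{x}^{\gmm} \calP^{\ast}\varphi)(\bfx(y,y_{1},s))\,\ud s.
\]
By \ref{hyp:crc-full}, equation \eqref{eq:crc-x-trunc}, the right-hand side equals $\varphi_{J}(y) - \brk{\tensor{(b_{y_{1}})}{^{J'}_{J}}(\cdot,y),\varphi_{J'}}$. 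Recognizing $\varphi_{J}(y) = \brk{\dlt_{0}(\cdot-y)\tensor{\dlt}{^{J'}_{J}}, \varphi_{J'}}$ — here we use $y \in U$, so $\dlt_{0}(\cdot-y)$ is a distribution on $U$ — and noting that $\brk{\tensor{(b_{y_{1}})}{^{J'}_{J}}(\cdot,y),\varphi_{J'}}$ is a genuine distributional pairing on $U$ by $y_{1} \in U$ and $\supp b_{y_{1}}(\cdot,y) \subseteq \set{y_{1}}$, I would conclude $\brk{(\calP K_{y_{1}}(\cdot,y))_{J}, \varphi} = \brk{\dlt_{0}(\cdot-y) - b_{y_{1}}(\cdot,y), \varphi}_{J}$ for all such $\varphi$, which is exactly \eqref{eq:trun-ker-y1-gen} (with the identity matrix $I_{r_{0}\times r_{0}}$ suppressed as in Section~\ref{subsec:prelim-P}).

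The argument is essentially bookkeeping, and I do not expect a genuine obstacle here — the real work in the paper lies in \emph{producing} the functions $\tensor{S}{_{J}^{(\gmm,K)}}$ and the point distributions $b_{y_{1}}$ verifying \ref{hyp:crc-full} (Sections~\ref{sec:ode} and \ref{sec:fc2crc}), not in this reinterpretation. The only points to handle with care are: (i) matching index conventions so that the adjoint identity $\brk{\calP T,\varphi} = \brk{T,\calP^{\ast}\varphi}$ is applied with the $s_{0}$-indices contracted against $K_{y_{1}}$ and the $r_{0}$-indices against $\varphi$; (ii) invoking the finiteness of the sum over $\gmm$ in \ref{hyp:crc-full} so that all interchanges of sum and integral are trivial; and (iii) the consistency between the $L^{2}$-adjoint $\calP^{\ast}$ appearing in \ref{hyp:crc-full} and the distributional action of $\calP$ in \eqref{eq:trun-ker-y1-gen}, which holds because the distributional pairings in play are bilinear. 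In short, the proposition merely re-expresses the hypothesis \ref{hyp:crc-full} as the statement that $K_{y_{1}}(\cdot,y)$ is a distributional Green's function for $\calP$ supported on the curve, up to the point-supported error $b_{y_{1}}(\cdot,y)$ at $y_{1}$.
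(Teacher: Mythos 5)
Your proposal is correct and follows essentially the same route as the paper's own (very short) proof: the support claim is read off directly from \eqref{eq:K-y1-def}, and the Green's function identity \eqref{eq:trun-ker-y1-gen} is obtained by observing that, tested against $\varphi \in C^{\infty}_{c}(U)$ with $\psi = \calP^{\ast}\varphi$, it is precisely the recovery identity \eqref{eq:crc-x-trunc}. Your additional remarks on index bookkeeping and the finiteness of the sum over $\gmm$ are accurate but not needed beyond what the paper records.
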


\begin{proof}
The claim \eqref{eq:K-y1-supp} concerning the support of $K_{y_{1}}(\cdot, y)$ is clear from \eqref{eq:K-y1-def}; hence it only remains to verify \eqref{eq:trun-ker-y1-gen}. Indeed, note that \eqref{eq:trun-ker-y1-gen} is equivalent to
\begin{equation*}
	\brk{K_{y_{1}}(\cdot, y), \calP^{\ast} \varphi} = \varphi(y) - \brk{b_{y_{1}}(\cdot, y), \varphi} \quad \hbox{ for all } \varphi \in C^{\infty}_{c}(U),
\end{equation*}
which in turn is equivalent to \eqref{eq:crc-x-trunc}.
\qedhere
\end{proof}
\begin{example} \label{ex:div-rough-ker}
For $\calP u = (\rd_{j} + B_{j}) u^{j}$, $\bfx = \ul{\bfx}$ (straight line segments), we have \eqref{eq:div-intro-rc}, which leads to $K_{y_{1}}$ and $b_{y_{1}}$ given by \eqref{eq:div-intro-Ky1} and \eqref{eq:div-intro-by1}, respectively.
\end{example}

\begin{remark} [Recovery on curves without endpoint] \label{rem:endpoint-x-detail}
In view of the support property of $b_{y_{1}} (\cdot, y)$ in \eqref{eq:b-y1-properties}, \ref{hyp:crc-full} implies the following weaker version:
\begin{enumerate}[label=$(\mathrm{wRC})$]
\item {\bf Recovery on Curves (without endpoint).} \label{hyp:crc-x}
For each $(y, y_{1}) \in W$ and multi-index $\gmm$, there exists an $r_{0} \times s_{0}$-matrix-valued continuous function $s \mapsto \tensor{S}{_{J}^{(\gmm, K)}}(y, y_{1}, s)$ ($s \in [0, 1]$) such that the following holds. For every ($\bbC^{r_{0}}$-valued) $\varphi \in C^{\infty}_{c}(U)$ with $\varphi$ vanishing in a neighborhood of $y_{1}$, we have
\begin{equation} \label{eq:crc-x}
	\tensor{\varphi}{_{J}}(y) = \int_{0}^{1} \sum_{\gmm} \tensor{S}{_{J}^{(\gmm, K)}}(y, y_{1}, s) (\rd_{x}^{\gmm} (\calP^{\ast} \varphi)_{K}) (\bfx(y, y_{1}, s)) \, \ud s,
\end{equation}
where $\tensor{S}{_{J}^{(\gmm, K)}} = 0$ except for finitely many multi-indices $\gmm$.
\end{enumerate}
Amusingly, the reverse implication also holds under a mild additional assumption. Specifically, assume that \ref{hyp:crc-x} and the following holds:
\begin{enumerate}[label=$(\mathrm{R}\textrm{-}y_{1})$]
\item {\bf Regularity at $y_{1}$} (or {\bf endpoint regularity}){\bf .}  \label{hyp:endpoint-x}
For each $\gmm$ and $(y, y_{1}) \in W$, $\tensor{S}{_{J}^{(\gmm, K)}}(y, y_{1}, s)$ is $(m_{K}+\abs{\gmm}-1)$-times continuously differentiable in $s$ at $s = 1$.
\end{enumerate}
Then, for each $(y, y_{1}) \in W$, there exists an $r_{0} \times r_{0}$-matrix-valued distribution $b_{y_{1}}(\cdot, y) \in \calD'(U)$ such that \eqref{eq:b-y1-properties} holds and, for every $\varphi \in C^{\infty}_{c}(U)$ (without the vanishing condition near $y_{1}$), \eqref{eq:crc-x-trunc} holds. In fact, $b_{y_{1}}(\cdot, y)$ is given by
\begin{equation} \label{eq:b-y1}
\brk{\tensor{(b_{y_{1}})}{_{J}^{J'}}(\cdot, y), \varphi_{J'}} \ceq \lim_{\eps \to 0} \int_{0}^{1} \sum_{\gmm} \tensor{S}{_{J}^{(\gmm, K)}}(y, y_{1}, s)  ([\rd_{x}^{\gmm} \calP^{\ast}, h_{\eps}] \varphi)_{K} (\bfx(y, y_{1}, s)) \, \ud s,
\end{equation}
where $\bfnu(y, y_{1}) \ceq \frac{\rd_{s} \bfx(y, y_{1}, 1)}{\abs{\rd_{s} \bfx(y, y_{1}, 1)}}$, $h_{\eps}(x) \ceq \chi_{> 1}( \eps^{-1} (\bfnu(y, y_{1}) \cdot (y_{1} - x) ) )$, and $\chi_{>1}(s)$ is a smooth nonnegative and nondecreasing function that equals $1$ for $s > 1$ and $0$ for $s < \frac{1}{2}$. We omit the details, as this fact will not be used in the remainder of the paper.
\end{remark}

\subsection{Quantitative formulation of \ref{hyp:crc}} \label{subsec:crc-q}
We now give quantitative version of the assumptions on the basic objects needed for carrying out our method outlined in Section~\ref{subsec:method}.

\subsubsection{Conditions on $\bfx$, quantitative version} \label{subsubsec:x-q}
Let $L_{y}, L_{b} > 0$, $M_{\bfx}, M_{\bfx}' \in \bbZ_{\geq 0}$ and $A_{\bfx}, A_{\bfx}' > 0$ be parameters to be used below (note that $L_{b}$, $A_{\bfx}'$, and $M_{\bfx}'$ are only used in \ref{hyp:x-3}). In accordance with our multi-index notation, $(\rd_{y} + \rd_{y_{1}})^{\alp} = \prod_{j=1}^{d} (\rd_{y^{j}} + \rd_{y_{1}^{j}})^{\alp_{j}}$ and $(\rd_{y} + \rd_{x})^{\alp} = \prod_{j=1}^{d} (\rd_{y^{j}} + \rd_{x^{j}})^{\alp_{j}}$.

Fix an open set $W \subseteq \bbR^{d}_{y} \times \bbR^{d}_{y_{1}}$. We will consider a family of curves $\bfx : W \times [0, 1] \to \bbR^{d}$ satisfying (possibly a subset of) the following properties:
\begin{enumerate}[label=($\bfx$-\arabic*), series=xcondition]
\item \label{hyp:x-1} The map $\bfx(y, y_{1}, s)$ is continuous and obeys
\begin{gather*}
\bfx(y, y_{1}, 0) = y, \quad
\bfx(y, y_{1}, 1) = y_{1}, \\
	(1+A_{\bfx})^{-1} \abs{y_{1}-y} \leq \abs{\rd_{s} \bfx(y, y_{1}, s)} \leq (1+A_{\bfx}) \abs{y_{1}-y} \quad \hbox{ for every } s \in (0, 1).
\end{gather*}
For higher derivatives, we have
\begin{equation*}
	\abs*{L_{y}^{\abs{\alp}} \abs{y_{1}-y}^{\abs{\bt}} (\rd_{y} + \rd_{y_{1}})^{\alp} \rd_{y_{1}}^{\bt} \rd_{s} \bfx(y, y_{1}, s)} \leq  A_{\bfx} \abs{y_{1}-y} \quad \hbox{ for } \abs{\alp} + \abs{\bt} \leq M_{\bfx}, \, \abs{\alp} > 0 \hbox{ or } \abs{\bt} > 1.
\end{equation*}
\item \label{hyp:x-2} The map $y_{1} \mapsto \bfx$ is invertible for each fixed $y$ and $s \in (0, 1]$; we denote the inverse by $\bfy_{1}(y, x, s)$. We assume that $\frac{\rd \bfy_{1}(y, x, s)}{\rd x}$ obeys
\begin{equation*}
\abs*{\frac{\rd \bfy_{1}(y, x, s)}{\rd x}} \leq (1+A_{\bfx}) s^{-1},
\end{equation*}
where we used the operator norm in $\bbR^{d}$. For higher derivatives, we have
\begin{equation*}
	\abs*{L_{y}^{\abs{\alp}} \abs{x-y}^{\abs{\bt}} (\rd_{y} + \rd_{x})^{\alp} \rd_{x}^{\bt} \bfy_{1}(y, x, s)} \leq A_{\bfx} s^{-1} \abs{x-y} \quad \hbox{ for } \abs{\alp} + \abs{\bt} \leq M_{\bfx}, \, \abs{\alp} > 0 \hbox{ or } \abs{\bt} > 1.
\end{equation*}

\item \label{hyp:x-3} The map $\bfx(y, y_{1}, s)$ obeys
\begin{equation*}
	\abs*{L_{y}^{\abs{\alp}} L_{b}^{\abs{\bt}} (\rd_{y} + \rd_{y_{1}})^{\alp} \rd_{y_{1}}^{\bt} \rd_{s} \bfx(y, y_{1}, s)} \leq  A'_{\bfx}  L_{b} \quad \hbox{ for } \abs{\bt} > 0, \, \abs{\alp} + \abs{\bt} \leq M_{\bfx}',
\end{equation*}
for every $(y, y_{1}, s) \in W \times (0, 1]$.
\end{enumerate}

Hypotheses \ref{hyp:x-1} and \ref{hyp:x-2} are motivated by the assumptions on $\bfz$ in Section~\ref{sec:sio}; for \ref{hyp:x-3}, see Remark~\ref{rem:x-3}.

\begin{remark} [\ref{hyp:x-1}--\ref{hyp:x-3} for straight line segments and other $\bfx$] \label{rem:x-q}
As in Remark~\ref{rem:sio-ul-z}, a basic example of $\bfx$ is the \emph{straight line segments}, $\ul{\bfx}(y, y_{1}, s) \ceq y + (y_{1} - y) s$, for which \ref{hyp:x-1}--\ref{hyp:x-3} hold with $A_{\bfx} = 1$ and any $L_{y}, L_{b} > 0$.

Other interesting examples of $\bfx$ satisfying \ref{hyp:x-1}--\ref{hyp:x-3} can be obtained by keeping $\bfx(y, y_{1}, s)$ close to straight line segments for small $s$ (e.g., $s < \frac{\dlt}{\abs{y_{1} - y}}$ for some $0 < \dlt \ll 1$), but letting it curve for large $s$. The solution operator in \cite{MaoTao} adapted to degenerate cones may be constructed using such an $\bfx$.
\end{remark}

\begin{remark} [On the hypothesis \ref{hyp:x-3}] \label{rem:x-3}
Note that \ref{hyp:x-3} improves upon \ref{hyp:x-1} for $\abs{y_{1} - y} \aleq L_{b}$. Observe also that, since $(\rd_{y} + \rd_{y_{1}})^{\alp} \rd_{y_{1}}^{\bt} \bfx(y, y_{1}, 0) = 0$ for $\abs{\bt} > 0$ (indeed, $\bfx(y, y_{1}, 0) = y$), it follows from \ref{hyp:x-3} that
\begin{equation} \label{eq:x-Lb}
	\abs*{L_{y}^{\abs{\alp}} L_{b}^{\abs{\bt}} (\rd_{y} + \rd_{y_{1}})^{\alp} \rd_{y_{1}}^{\bt} \bfx(y, y_{1}, s)} \leq  s A_{\bfx}  L_{b} \quad \hbox{ for } \abs{\alp} + \abs{\bt} \leq M_{\bfx}',
\end{equation}
for every $(y, y_{1}, s) \in W \times (0, 1]$.

At a technical level, we remark that \ref{hyp:x-3} is \emph{not} needed in the proof of Theorem~\ref{thm:smth-ker}, our core analytic result. Its only use is to derive \ref{hyp:b-y1-smth} below from a graded augmented system; see Proposition~\ref{prop:ode2crc}.(2).
\end{remark}

\subsubsection{Recovery on curves condition, quantitative version} \label{subsubsec:crc-x-q}
In addition to \ref{hyp:crc-full}, we assume that the following assumptions on $\tensor{S}{_{J}^{(\gmm, K)}}$ hold for some $m_{K}' \in \bbZ_{\geq 0}$ ($K \in \set{1, \ldots, s_{0}}$), $M_{S} \in \bbZ_{\geq 0}$ and $A_{S} > 0$:
\begin{enumerate}[label=(RC-q)]
\item {\bf Recovery on curves, quantitative.} \label{hyp:crc-x-q}
\ref{hyp:crc-full} holds for $\tensor{S}{_{J}^{(\gmm, K)}}$ with the following bounds:
\begin{equation*}
\abs*{L_{y}^{\abs{\alp}} \abs{y_{1}-y}^{\abs{\bt}} (\rd_{y} + \rd_{y_{1}})^{\alp} \rd_{y_{1}}^{\bt} \tensor{S}{_{J}^{(\gmm, K)}}(y, y_{1}, s)} \leq A_{S} \abs{y_1-y}^{m_{K}+\abs{\gmm}} s^{m_{K}+\abs{\gmm}-1} \quad \hbox{ for } \abs{\alp} + \abs{\bt} \leq M_{S}
\end{equation*}
for all $(y, y_{1}, s) \in W \times (0, 1]$, where $m_{K}$ is the order of the operator $(\calP^{\ast} \varphi)_{K}$. Moreover, $\tensor{S}{_{J}^{(\gmm, K)}} = 0$ if $\abs{\gmm} > m_{K}'$.
\end{enumerate}
We also make (possibly a subset of) the following quantitative assumptions on $b_{y_{1}}$ for some $M_{g} , M_{Z}, M_{Z}'\in \bbZ_{\geq 0}$ and $A_{g}, A_{Z}, A_{Z}' > 0$:
\begin{enumerate}[label=($b_{y_{1}}$-\arabic*), series=by1condition]
\item {\bf Structure of $b_{y_{1}}$, quantitative.} \label{hyp:b-y1-q}
For every $(y, y_{1}) \in W$, there exists a distribution $b_{y_{1}}(\cdot, y)$ of the form
\begin{equation}
\tensor{(b_{y_{1}})}{^{J'}_{J}}(x, y) = \sum_{\bfA \in \calA} (Z^{\bfA})_{J}(y, y_{1}) (g_{\bfA})^{J'}(x, y_{1})
\end{equation}
for some finite index set $\calA$ (independent of $y, y_{1}$), which satisfies the relation \eqref{eq:crc-x-trunc} for every $(y, y_{1}) \in W$ with $y \in U$. Moreover, each $Z^{\bfA}$ obeys
\begin{align*}
	\abs{L_{y}^{\abs{\alp}} \abs{y- y_{1}}^{\abs{\bt}} (\rd_{y} + \rd_{y_{1}})^{\alp} \rd_{y_{1}}^{\bt} Z^{\bfA}(y, y_{1})} \leq A_{Z} \quad \hbox{ for } \abs{\alp} + \abs{\bt} \leq M_{Z},
\end{align*}
and each $g_{\bfA}$ takes the form (for $J'=1, \ldots, r_{0}$)
\begin{align*}
(g_{\bfA})^{J'}(x, y_{1}) = \sum_{\alp} c[g_{\bfA}]^{(\alp, J')}(y_{1}) \rd^{\alp} \dlt_{0}(x-y_{1}),
\end{align*}
where the coefficients $c[g_{\bfA}]^{(\alp, J')}$ are zero except if $\abs{\alp} \leq \max_{K} (m_{K} + m'_{K})-1$ and obey
\begin{equation*}
	\abs{\rd_{y_{1}}^{\bt} c[g_{\bfA}]^{(\alp, J')}} \leq A_{g} \quad \hbox{ for } \abs{\bt} \leq M_{g}.
\end{equation*}

\item {\bf Local regularity of $Z^{\bfA}$.} \label{hyp:b-y1-smth}
\ref{hyp:b-y1-q} holds with $Z^{\bfA}$ obeying the following additional bounds:
\begin{align*}
	\abs{L_{y}^{\abs{\alp}} L_{b}^{\abs{\bt}} (\rd_{y} + \rd_{y_{1}})^{\alp} \rd_{y_{1}}^{\bt} Z^{\bfA}(y, y_{1})} \leq A_{Z}  \quad \hbox{ for } \abs{\alp} + \abs{\bt} \leq M_{Z}'
\end{align*}
\end{enumerate}

Some remarks concerning these quantitative assumptions are in order.

\begin{remark} [On the hypotheses \ref{hyp:crc-x-q} and \ref{hyp:b-y1-q}] \label{rem:b-y1-q}
Although these assumptions look complicated, we will see in Section~\ref{sec:ode} that \ref{hyp:crc-x-q} and \ref{hyp:b-y1-q} may be derived directly on any curves $\bfx$ satisfying \ref{hyp:x-1}--\ref{hyp:x-2} from the existence of a graded augmented system (with appropriate bounds); see Proposition~\ref{prop:ode2crc}.(1). Furthermore, this direct derivation will provide us with more concrete expressions for $Z^{\bfA}$ and $g_{\bfA}$, as well as a bound for $\# \calA$.
\end{remark}

\begin{remark} [On the hypothesis \ref{hyp:b-y1-smth}] \label{rem:b-y1-smth}
Hypothesis \ref{hyp:b-y1-smth} improves upon the bounds for $Z^{\bfA}$ in \ref{hyp:b-y1-q} when $\abs{y_{1} - y} \leq L_{b}$. Given a graded augmented system, \ref{hyp:b-y1-smth} on $\bfx$ follows from an additional local (i.e., for $y_{1}$ close to $y$) regularity assumption \ref{hyp:x-3} for the curves $\bfx$.

At a technical level, we note that \ref{hyp:b-y1-smth} is \emph{not} used in the proof of Theorem~\ref{thm:smth-ker}, our core analytic result. It is only used in Section~\ref{subsec:cokernel} below.
\end{remark}

\begin{remark}[Recovery on curves condition without endpoint, quantitative version]
We note that \ref{hyp:b-y1-q} can be derived from \ref{hyp:crc-x-q} and a quantitative version of the endpoint regularity condition. Like Remark~\ref{rem:endpoint-x-detail}, the following statement only plays a conceptual role and will not be used elsewhere in the paper.

For simplicity, take $\bfx$ to be the straight line segments $\ul{\bfx}(y, y_{1}, s) = y + (y_{1} - y) s$ (so that \ref{hyp:x-1}--\ref{hyp:x-2} hold), and let $U$ and $\tensor{S}{_{J}^{(\gmm, K)}}(y, y_{1}, s)$ satisfy \ref{hyp:crc-x-q}. Assume also that
\begin{equation*}
	\bigcup_{y \in U, \, (y, y_{1}) \in W} \bfx(y, y_{1}, [0, 1]) \subseteq U.
\end{equation*}
Assume also that $\calP$ takes the form $\sum_{\alp' : \abs{\alp'} \leq m} \tensor{(c_{\calP})}{^{(\alp', J)}_{K}}(x) \rd^{\alp'}$ and obeys, for some $M' \geq \bbZ_{\geq 0}$ and $A'_{\calP} > 0$,
\begin{equation*}
	\abs{\rd_{x}^{\bt} \tensor{(c_{\calP})}{^{(\alp', J)}_{K}}} \leq A'_{\calP} \quad \hbox{ for } \abs{\bt} \leq m_{K}' + M',
\end{equation*}
and that, for some $A'_{S} > 0$,
\begin{align*}
\abs*{\abs{y_{1}-y}^{\abs{\bt}} (\rd_{y} + \rd_{y_{1}})^{\alp} \rd_{y_{1}}^{\bt} \rd_{s}^{\ell} \tensor{S}{_{J}^{(\gmm, K)}}(y, y_{1}, 1)} &\leq A'_{S} \quad \hbox{ for } \abs{\alp} + \abs{\bt} \leq M', \, 1 \leq \ell \leq m_{K} + \abs{\gmm}.
\end{align*}
Then \eqref{eq:crc-x-trunc} holds with $b_{y_{1}}(\cdot, y)$ given by \eqref{eq:b-y1}. Moreover, this $b_{y_{1}}(\cdot, y)$ satisfies \ref{hyp:b-y1-q} with $\calA = \set{(\alp, J) : \abs{\alp} \leq \max_{K} (m_{K}+m'_{K})-1, \, J = 1, \ldots, r_{0}}$ (where each $\alp$ is a multi-index), $c[g_{(\alp, J)}]^{(\bt, J')} = \dlt^{J'}_{J} \dlt^{\bt}_{\alp}$ and
\begin{equation*}
A_{Z} \leq A'_{S}, \quad
A_{g} = 1, \quad M_{Z} = M_{g} = M' - C_{m, m_{1}}.
\end{equation*}

The proof proceeds by noting that $b_{y_{1}}$ defined by \eqref{eq:b-y1} in fact takes the form
\begin{equation*} 
\tensor{(b_{y_{1}})}{^{J'}_{J}}(x, y) = \sum_{\alp : \abs{\alp} \leq m + m_{1} - 1} (Z^{(\alp, J')})_{J}(y, y_{1}) \rd^{\alp} \dlt_{0}(x-y_{1}),
\end{equation*}
where
\begin{align*}
& (Z^{(\alp, J')})_{J}(y, y_{1}) \ceq \brk*{\tensor{(b_{y_{1}})}{^{J'}_{J}}(x, y), \tfrac{(-1)^{\abs{\alp}}}{\alp!} (x-y_1)^{\alp}} \\
&= \sum_{\substack{\alp', \bt, \gmm : \\ \abs{\alp'+\bt} \leq m + \abs{\gmm}, \\ \alp' \leq \alp, \, \abs{\bt} \geq 1, \, \abs{\gmm} \leq m_{1}}} (-1)^{\abs{\alp+\bt}} \frac{(\alp'+\bt)! }{(\alp-\alp')! \alp'! \bt!}\frac{\rd_{s} \ul{\bfx}(y, y_{1}, 1)^{\bt}}{\abs{\rd_{s} \ul{\bfx}(y, y_{1}, 1)}^{2 \abs{\bt}}}   \\
&\relphantom{= \sum_{\substack{\alp' \leq \alp, \, \abs{\bt} \geq 1, \, \abs{\gmm} \leq m_{1}}}}
\times \rd_{s}^{\abs{\bt}} \left( \tensor{S}{_{J}^{(\gmm, K')}}(y, y_{1}, s) \tensor{c[\rd_{x}^{\gmm} \calP^{\ast}]}{_{K'}^{(\alp+\bt, J')}} (\ul{\bfx}(y, y_{1}, s)) (\ul{\bfx}(y, y_{1}, s) -y_{1})^{\alp-\alp'}\right) \Big|_{s = 1}.
\end{align*}
Again, we omit the details of the proof, as we will not use this in the remainder of the paper.
\end{remark}

\subsubsection{Conditions on the smooth averaging weight $\eta$, quantitative version} \label{subsubsec:eta-q}
Given $M_{\eta}, M_{\eta}' \in \bbZ_{\geq 0}$, $A_{\eta} > 0$, and $L_{\eta} > 0$, we will consider a smooth function $\eta : W \to \bbR$ satisfying the following properties:
\begin{enumerate}[label=($\eta$-\arabic*), series=etacondition]
\item \label{hyp:eta-1} $\int \eta(y, y_{1}) \, \ud y_{1} = 1$ for all $y$.
\item \label{hyp:eta-2} $\supp \eta \subseteq W$ and $\supp \eta(y, \cdot) \subseteq B_{L_{\eta}}(y)$.
\item \label{hyp:eta-3} We have
\begin{equation*}
	\abs{L_{y}^{\abs{\alp}} \abs{y_{1}-y}^{\abs{\bt}} (\rd_{y} + \rd_{y_{1}})^{\alp} \rd_{y_{1}}^{\bt}\eta(y, y_{1})} \leq A_{\eta} L_{\eta}^{-d} \hbox{ for } \abs{\alp} + \abs{\bt} \leq M_{\eta}.
\end{equation*}
\item \label{hyp:eta-smth}
We have
\begin{align*}
	\abs{L_{y}^{\abs{\alp}} L_{b}^{\abs{\bt}} (\rd_{y} + \rd_{y_{1}})^{\alp} \rd_{y_{1}}^{\bt} \eta(y, y_{1})} \leq A_{\eta} L_{\eta}^{-d} \quad \hbox{ for } \abs{\alp} + \abs{\bt} \leq M_{\eta}'.
\end{align*}
\end{enumerate}

\begin{remark}[On the hypotheses \ref{hyp:eta-1}--\ref{hyp:eta-3}] \label{rem:eta}
Hypotheses \ref{hyp:eta-1}--\ref{hyp:eta-3} are motivated by the assumptions on $\upeta$ in Section~\ref{sec:sio}. Note that \ref{hyp:eta-2} precludes the choice of smooth averaging weight that occurs in the conic operator in Section~\ref{subsec:ideas-div}. However, it can be easily worked around; see Examples~\ref{ex:conic-bog-sio} and \ref{ex:div-full-sol-conic}.
\end{remark}
\begin{remark}[On the hypothesis \ref{hyp:eta-smth}] \label{rem:eta-smth}
Note that \ref{hyp:eta-smth} improves upon the bounds for $\eta$ in \ref{hyp:eta-3} when $\abs{y_{1} - y} \leq L_{b}$. At a technical level, we note that \ref{hyp:eta-smth} is \emph{not} used in the proof of Theorem~\ref{thm:smth-ker}, our core analytic result. It is only used in Section~\ref{subsec:cokernel} below.
\end{remark}

\subsection{Smooth averaging} \label{subsec:crc-2}
We now carry out the construction of a smoothly averaged integral kernel $K_{\eta}$ outlined in Step~2 of Section~\ref{subsec:method}, which is at the heart of our approach.

\subsubsection{Construction of smoothly averaged kernels} \label{subsubsec:smth-kernel}
The \emph{smoothly averaged kernel} is defined by the equation
\begin{equation} \label{eq:smth-avg-ker}
	\int \tensor{(K_{\eta})}{^{K}_{J}}(x, y) \psi_{K}(x) \, \ud x
	= \int \int_{0}^{1} \sum_{\gmm} \tensor{S}{_{J}^{(\gmm, K)}}(y, y_{1}, s) \eta(y, y_{1}) \rd_{x}^{\gmm} \psi_{K}(\bfx(y, y_{1}, s)) \, \ud s \, \ud y_{1},
\end{equation}
which may be (somewhat informally) also written as $K_{\eta}(x, y) = \int K_{y_{1}} (x, y) \eta(y, y_{1}) \, \ud y_{1}$. Note that, in view of $\supp \eta \subseteq W$, the right-hand side of \eqref{eq:smth-avg-ker} is well-defined for every $y \in \bbR^{d}$ and $\varphi \in C^{\infty}_{c}(\bbR^{d})$, although it is trivial unless $y \in \set{y \in \bbR^{d} : \exists y_{1} \in \bbR^{d} \hbox{ s.t. } (y, y_{1}) \in W}$.

\begin{theorem}[Smoothly averaged integral kernels] \label{thm:smth-ker}
Let $\bfx : W \times [0, 1] \to \bbR^{d}$ and $\eta : W \to \bbR$ satisfy \ref{hyp:x-1}--\ref{hyp:x-2} and \ref{hyp:eta-1}--\ref{hyp:eta-3}, respectively, and assume that $\calP$ and $\tensor{S}{_{J}^{(\gmm, K)}}(y, y_{1}, s)$ satisfy \ref{hyp:crc-x-q}. Assume also that, either
\begin{enumerate}
\item $\br{U} \cap \br{\cup_{y \in U} \supp_{y_{1}} \eta(y, y_{1})} = \0$; or
\item $\calP$, $\tensor{S}{_{J}^{(\gmm, K)}}(y, y_{1}, s)$ and $\tensor{(b_{y_{1}})}{^{J'}_{J}}(\cdot, y)$ satisfy \ref{hyp:b-y1-q}.
\end{enumerate}
Then the integral kernel $K_{\eta} = \tensor{(K_{\eta})}{^{K}_{J}}$ defined by \eqref{eq:smth-avg-ker} satisfies
\begin{equation*}
	\calP K_{\eta}(\cdot, y) = \dlt_{0}(\cdot-y) - b_{\eta}(\cdot, y) \quad \hbox{ on } U,
\end{equation*}
where $\tensor{(b_{\eta})}{^{J'}_{J}}(x, y) = 0$ in Case~(1), and in Case~(2), $\tensor{(b_{\eta})}{^{J'}_{J}}(x, y)$ is a distribution on $\bbR^{d}_{x} \times \bbR^{d}_{y}$ defined by
\begin{align*}
\tensor{(b_{\eta})}{^{J'}_{J}}(x, y) &\ceq \sum_{\bfA \in \calA} \int (Z^{\bfA})_{J}(y, y_{1}) \eta(y, y_{1}) (g_{\bfA})^{J'}(x, y_{1}) \, \ud y_{1} \\
&= \sum_{\bfA \in \calA} \sum_{\alp} (-1)^{\abs{\alp}} \rd_{x}^{\alp} \left( c[g_{\bfA}]^{(\alp, J')}(x) (Z^{\bfA})_{J}(y, x) \eta(y, x) \right),
\end{align*}
which moreover satisfies
\begin{align*}
	\abs{L_{y}^{\abs{\alp}} \abs{x-y}^{\abs{\bt}} (\rd_{y} + \rd_{x})^{\alp} \rd_{x}^{\bt} \tensor{(b_{\eta})}{^{J'}_{J}}(x, y)} & \leq C_{\# \calA, m_{K}+m'_{K}}  A_{g} A_{Z} A_{\eta} \quad \hbox{ for all } x \neq y, \\
	\supp_{x} \tensor{(b_{\eta})}{^{J'}_{J}}(x, y) &\subseteq \supp_{y_{1}} \eta(y, y_{1}), \quad \hbox{ for all } y \in \bbR^{d},
\end{align*}
where $\abs{\alp} + \abs{\bt} \leq \min\set{M_{\bfx}, M_{Z}, M_{g}, M_{\eta}} - C_{m_{K}, m_{K}'}$.

In both cases, the integral kernel $K_{\eta}(x, y)$ is a locally integrable function on $\bbR^{d} \times \bbR^{d}$ satisfying
\begin{align*}
	\abs{\tensor{(K_{\eta})}{^{K}_{J}}(x, y)} &\leq C_{m_{K}, m_{K}'} A_{S} A_{\eta} (1+A_{\bfx})^{2(d+m_{K'})} \abs{x-y}^{-d+m_{K}} \quad \hbox{ for all } x \neq y, \\
	\supp \tensor{(K_{\eta})}{^{K}_{J}}(\cdot, y) &\subseteq \br{\bigcup_{y_{1} : (y, y_{1}) \in \supp \eta} \bfx(y, y_{1}, [0, 1])}, \\
	\supp \tensor{(K_{\eta})}{^{K}_{J}}(x, \cdot) &\subseteq \br{\set{y \in \bbR^{d} : \exists y_{1} \in \bbR^{d} \hbox{ s.t. } (y, y_{1}) \in \supp \eta, \, x \in \bfx(y, y_{1}, [0, 1])}},
\end{align*}
and the symbol $\tensor{(a_{\eta})}{^{K}_{J}}(\xi, y) = \int \tensor{(K_{\eta})}{^{K}_{J}}(y+z, y) e^{- i \xi \cdot z} \, \ud z$ obeys
\begin{align*}
	\abs{\rd_{y}^{\alp} \rd_{\xi}^{\bt} \tensor{(a_{\eta})}{^{K}_{J}}(\xi, y)} \leq C_{m_{K}, m_{K}', \alp, \bt} A_{S} A_{\eta} (1 + A_{\bfx})^{10 (d+\abs{\alp}+\abs{\bt}+m_{K}+m_{K}')} L_{y}^{-\abs{\alp}} (((1+A_{\bfx}) L_{\eta})^{-1} + \abs{\xi})^{-m-\abs{\bt}} 
\end{align*}
where $y \in \bbR^{d}$, $\abs{\alp} + \abs{\bt} \leq \min \set{M_{\bfx}, M_{S}, M_{\eta}} - C_{m_{K}, m_{K}'}$.
\end{theorem}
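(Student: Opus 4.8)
The plan is to deduce Theorem~\ref{thm:smth-ker} by combining the duality identity of Proposition~\ref{prop:crc-x-duality} with the singular-integral estimates of Section~\ref{sec:sio}, after the change of variables $z_1 = y_1 - y$, $z = x - y$. I would first establish the Green's-function identity together with the structure of the error term $b_\eta$, and then the analytic bounds on $K_\eta$ and its symbol.

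\textbf{Green's function identity and the term $b_\eta$.} Fix $y \in U$ and $\varphi \in C^\infty_c(U; \bbC^{r_0})$. Taking $\psi = \calP^\ast \varphi$ in \eqref{eq:smth-avg-ker}, applying the recovery identity \eqref{eq:crc-x-trunc} inside the $y_1$-integral, and using $\int \eta(y, y_1)\,\ud y_1 = 1$ from \ref{hyp:eta-1}, one obtains $\brk{K_\eta(\cdot, y), \calP^\ast \varphi} = \varphi(y) - \brk{b_\eta(\cdot, y), \varphi}$ with $b_\eta(\cdot, y) \ceq \int b_{y_1}(\cdot, y)\, \eta(y, y_1)\,\ud y_1$, which is exactly $\calP K_\eta(\cdot, y) = \dlt_0(\cdot - y) - b_\eta(\cdot, y)$ in $\calD'(U)$. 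In Case~(1), $\supp b_{y_1}(\cdot, y) \sbeq \set{y_1}$ together with the disjointness hypothesis forces $\brk{b_{y_1}(\cdot, y), \varphi} = 0$ for all $\varphi \in C^\infty_c(U)$, hence $b_\eta(\cdot, y)|_U = 0$. In Case~(2), substituting the explicit form of $b_{y_1}$ from \ref{hyp:b-y1-q} into the $y_1$-integral and moving the $y_1$-derivatives off $\dlt_0(x - y_1)$ yields the displayed closed form for $b_\eta$; it is a finite sum of derivatives of smooth functions supported (in $x$) in $\supp_{y_1}\eta(y, \cdot)$, and the stated derivative bounds follow by the Leibniz rule from the bounds on $Z^\bfA$ and $c[g_\bfA]^{(\alp, J')}$ in \ref{hyp:b-y1-q} and on $\eta$ in \ref{hyp:eta-3}, with the loss $C_{m_K, m_K'}$ accounting for the constraint $\abs{\alp} \leq \max_K(m_K + m_K') - 1$ and the derivatives expended.

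\textbf{Reduction to Section~\ref{sec:sio}.} The analytic core is the observation that, for each multi-index $\gmm$ with $\abs{\gmm} \leq m_K'$, the $\gmm$-summand of $\tensor{(K_\eta)}{^{K}_{J}}$ in \eqref{eq:smth-avg-ker} is precisely a kernel of the form \eqref{eq:sio-kernel} with $\bfz(y, z_1, s) \ceq \bfx(y, z_1 + y, s) - y$, $\ueta(y, z_1) \ceq \eta(y, z_1 + y)$, $\uS^\gmm(y, z_1, s) \ceq \tensor{S}{_{J}^{(\gmm, K)}}(y, z_1 + y, s)$, and $m \ceq m_K$. Under the map $(y, y_1) \mapsto (y, z_1)$ one has $(\rd_y + \rd_{y_1})^\alp \rd_{y_1}^\bt = \rd_y^\alp \rd_{z_1}^\bt$, so \ref{hyp:x-1}--\ref{hyp:x-2}, \ref{hyp:crc-x-q}, and \ref{hyp:eta-1}--\ref{hyp:eta-3} translate termwise into the hypotheses on $\bfz$, $\uS^\gmm$, and $\ueta$ in Section~\ref{subsec:sio-hyp}, with $A_\bfz = A_\bfx$, $A_\uS = A_S$, $A_\ueta = A_\eta$, $R_y = L_y$, $R_{z_1} = L_\eta$; no auxiliary cutoffs are needed because $L_y$ and $L_\eta$ are already constants. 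This is bookkeeping, but it is the step requiring the most care: every length scale, weight power, and support condition must be checked to line up (e.g., the separate first-order bounds on $\rd_s \bfx$ and $\bfy_1$ in \ref{hyp:x-1}--\ref{hyp:x-2} supply precisely the first-order bounds on $\rd_s \bfz$ and $\bfz_1$), and one passes from a single model kernel to a finite sum over $\gmm$.

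\textbf{Kernel and symbol bounds.} With the reduction in hand, Proposition~\ref{prop:sio} gives, for each $\gmm$, that the summand lies in $L^1_{loc}$, is bounded pointwise by $C\, A_S A_\eta (1 + A_\bfx)^{C(d + m_K + m_K')} \abs{z}^{-d + m_K}$, and has $z$-support in $B_{(1 + A_\bfx) L_\eta}(0)$; summing over the finitely many $\gmm$ with $\abs{\gmm} \leq m_K'$ gives the stated bound on $\tensor{(K_\eta)}{^{K}_{J}}$. The two sharper support statements are read off directly from \eqref{eq:smth-avg-ker}, whose integrand at parameter $(y_1, s)$ is supported at $x = \bfx(y, y_1, s)$ with $(y, y_1) \in \supp\eta$. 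Finally Proposition~\ref{prop:sio-symb} --- whose extra budget $M_0 \geq \max\set{N_0, m + 1}$ is absorbed into the $C_{m_K, m_K'}$ subtracted from the derivative count, and applied with $R_z = (1 + A_\bfx) L_\eta$ --- gives the symbol bound for each $\gmm$-summand, and summing over $\gmm$ yields the bound on $\tensor{(a_\eta)}{^{K}_{J}}$, being loose with the power of $(1 + A_\bfx)$ exactly as in Proposition~\ref{prop:sio-symb}. The main obstacle is the reduction step of the previous paragraph: no new idea is needed, since the hypotheses of Section~\ref{sec:crc} were written so as to match those of Section~\ref{sec:sio} after translation, but the verification and the tracking of accumulated constants and derivative losses must be carried out attentively.
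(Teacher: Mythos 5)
Your proposal is correct and follows essentially the same route as the paper's proof: the Green's function identity and the form of $b_{\eta}$ are obtained by integrating \eqref{eq:crc-x-trunc} against $\eta$ using \ref{hyp:eta-1} and \ref{hyp:b-y1-q}, and the kernel and symbol bounds are obtained by applying Propositions~\ref{prop:sio} and \ref{prop:sio-symb} after exactly the change of variables $z_{1} = y_{1} - y$, $\bfz(y, z_{1}, s) = \bfx(y, z_{1}+y, s) - y$, $\ueta(y, z_{1}) = \eta(y, z_{1}+y)$, $\uS^{\gmm} = \tensor{S}{_{J}^{(\gmm, K)}}$ that the paper uses. The paper states this reduction without spelling out the termwise translation of hypotheses; your version simply makes that bookkeeping explicit.
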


\begin{proof}
The expressions for $K_{\eta}$ and $b_{\eta}$, as well as the estimates for $b_{\eta}(x, y)$, follow from the defining relation \eqref{eq:crc-x-trunc} and hypothesis \ref{hyp:b-y1-q}. To obtain the claimed properties of $K_{\eta}$ and $a_{\eta}$, we apply Proposition~\ref{prop:sio} with $\uW = \set{(y, z_{1}) \in \bbR^{d} \times \bbR^{d} : (y, y + z_{1}) \in W}$, $z(y, z_{1}, s) = \bfx(y, z_{1} + y, s) - y$, $\uk_{\gmm}(y, z_{1}, s) = \tensor{S}{_{J}^{(\gmm, K)}}(y, z_{1} + y, s)$, and $\ueta(y, z_{1}) = \eta(y, z_{1} + y)$. Indeed, observe that \ref{hyp:x-1}--\ref{hyp:x-2}, \ref{hyp:crc-x-q} and \ref{hyp:eta-1}--\ref{hyp:eta-3} imply that the assumptions in Section~\ref{subsec:sio-hyp} are all satisfied (with appropriate parameters). \qedhere
\end{proof}

\subsubsection{Generalization of the conic operator and proof of Theorem~\ref{thm:main-summary-conic}} \label{subsubsec:conic}
From Theorem~\ref{thm:smth-ker} we immediately obtain the following result, which generalizes the construction of conic solution operators. 

\begin{theorem} [Full solution operator and Friedrich-type inequality, non-compact support] \label{thm:full-sol-conic}
Assume that Case~(1) of the hypothesis of Theorem~\ref{thm:smth-ker} holds, i.e.,
\begin{equation} \label{eq:U-nontrapping}
	\br{U} \cap \br{\bigcup_{y \in U} \supp_{y_{1}} \eta(y, y_{1})} = \0.
\end{equation}
Then the following statements hold.
\begin{enumerate}
\item {\bf Full solution operator.} For every $1 < p < \infty$ and $\abs{s} < \min\set{M_{\bfx}, M_{S}, M_{\eta}} - C_{d, m_{K}, m_{K}'}$, the operator $\tensor{(\calS_{\eta})}{^{K}_{J}}$ with integral kernel $\tensor{(K_{\eta})}{^{K}_{J}}$ defines a bounded operator $\td{W}^{s, p}(U) \to W^{s+m_{K}, p}(U)$ with
\begin{equation*}
\calP \calS_{\eta} f = f \quad \hbox{ for all } f \in \td{W}^{s, p}(U).
\end{equation*}

\item {\bf Representation formula and Poincar\'e-type inequality.} For every $1 < p < \infty$ and $\abs{s} < \min\set{M_{\bfx}, M_{S}, M_{\eta}} - C_{d, m_{K}, m_{K}'}$, we have the representation formula
\begin{equation*}
	\varphi_{J} = \tensor{(\calS_{\eta}^{\ast})}{_{J}^{K}} (\calP^{\ast} \varphi)_{K} \quad \hbox{ for all } \varphi \in \td{W}^{-s, p'}(U),
\end{equation*}
where $\calP^{\ast} \varphi$ is defined in the sense of $\calD'(\bbR^{d})$, so that $(\calP^{\ast} \varphi)_{K} \in \td{W}^{-s-m_{K}, p'}(U)$. Moreover, we have the Friedrich-type inequality
\begin{equation*}
	\nrm{\varphi}_{W^{-s, p'}(U)} \aleq \sum_{K} \nrm{(\calP^{\ast} \varphi)_{K}}_{\td{W}^{-s-m_{K}, p'}(U)} \quad \hbox{ for all } \varphi \in \td{W}^{-s, p'}(U).
\end{equation*}

\item {\bf Cokernel in $\td{W}^{-s, p'}$.}
For any open subset $V$ such that $\br{V} \subseteq U$, if $\bfZ \in \td{W}^{-s, p'}(V)$ with $\calP^{\ast} \bfZ = 0$ in $\calD'(U)$, then $\bfZ = 0$; in short,
\begin{equation*}
	\ker_{\td{W}^{-s, p'}(V)} \calP^{\ast} = \set{0}.
\end{equation*}
\end{enumerate}
Here, $\nrm{\tensor{(\calS_{\eta})}{^{K}_{J}}}_{\td{W}^{s, p}(U) \to W^{s+m_{K}, p}(U)}$, $\nrm{\tensor{(\calS_{\eta}^{\ast})}{_{J}^{K}}}_{\td{W}^{-s-m_{K}, p'}(U) \to W^{-s, p'}(U)}$ and the implicit constant in (2) are all bounded by $C_{d, m_{K}, m_{K}', s, p} A_{S} A_{\eta} (1 + A_{\bfx})^{C (d + m_{K} + m_{K'} + s)}$.
\end{theorem}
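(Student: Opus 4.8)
The plan is to deduce all three parts of Theorem~\ref{thm:full-sol-conic} from Theorem~\ref{thm:smth-ker} (applied in Case~(1), where $b_{\eta} = 0$) together with the duality structure of the $\td{W}^{s,p}$/$W^{s,p}$ scales recorded in Lemma~\ref{lem:sob-duality} and routine density arguments; the genuine analytic content has already been front-loaded into Theorem~\ref{thm:smth-ker}, so what remains is packaging. First I would settle boundedness. In Case~(1), Theorem~\ref{thm:smth-ker} furnishes a locally integrable kernel $\tensor{(K_{\eta})}{^{K}_{J}}$ whose symbol $\tensor{(a_{\eta})}{^{K}_{J}}(\xi, y)$ obeys Kohn--Nirenberg bounds of order $-m_{K}$ — note $((1+A_{\bfx}) L_{\eta})^{-1} + \abs{\xi}$ is comparable, up to $L_{\eta}$-dependent constants, to $\brk{\xi}$ — with as many $(\alpha, \beta)$-derivatives controlled as needed provided $\min\set{M_{\bfx}, M_{S}, M_{\eta}}$ is large. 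Hence, by the standard $L^{p}$-boundedness theory for pseudodifferential operators already invoked in Corollary~\ref{cor:sio-bdd} (via e.g.\ \cite{Stein}), each $\tensor{(\calS_{\eta})}{^{K}_{J}}$ is bounded $W^{s, p}(\bbR^{d}) \to W^{s+m_{K}, p}(\bbR^{d})$ for $\abs{s} < \min\set{M_{\bfx}, M_{S}, M_{\eta}} - C_{d, m_{K}, m_{K}'}$; precomposing with the inclusion $\td{W}^{s, p}(U) \hookrightarrow W^{s, p}(\bbR^{d})$ and postcomposing with the restriction $W^{s+m_{K}, p}(\bbR^{d}) \to W^{s+m_{K}, p}(U)$ yields the claimed bound, with the stated dependence of the operator norm inherited from the symbol constant in Theorem~\ref{thm:smth-ker}.

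Next I would establish $\calP \calS_{\eta} f = f$. For $f \in C^{\infty}_{c}(U)$ and $\varphi \in C^{\infty}_{c}(U; \bbC^{r_{0}})$, Fubini's theorem together with the defining relation $\calP K_{\eta}(\cdot, y) = \dlt_{0}(\cdot - y)$ on $U$ (Theorem~\ref{thm:smth-ker}, Case~(1)) gives
\[
\brk{\varphi, \calP \calS_{\eta} f} = \brk{\calP^{\ast} \varphi, \calS_{\eta} f} = \int f^{J}(y) \, \brk{\calP^{\ast} \varphi, (K_{\eta})_{J}(\cdot, y)} \, \ud y = \brk{\varphi, f},
\]
so $\calP \calS_{\eta} f = f$ for all $f \in C^{\infty}_{c}(U)$. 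Both $f \mapsto \calP \calS_{\eta} f$ and $f \mapsto f$ are continuous from $\td{W}^{s, p}(U)$ into $\calD'(U)$ (the former through Step~1 followed by the differential operator $\calP$, the latter through $\td{W}^{s, p}(U) \to W^{s, p}(U)$), and $C^{\infty}_{c}(U)$ is dense in $\td{W}^{s, p}(U)$; hence $\calP \calS_{\eta} f = f$ for all $f \in \td{W}^{s, p}(U)$, proving (1).

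For (2), by Lemma~\ref{lem:sob-duality} the adjoint $\tensor{(\calS_{\eta}^{\ast})}{_{J}^{K}}$ maps $\td{W}^{-s-m_{K}, p'}(U) \to W^{-s, p'}(U)$ boundedly with the same operator-norm bound. Taking the $\brk{\cdot, \cdot}$-transpose of $\calP \calS_{\eta} f = f$ on $C^{\infty}_{c}(U)$ gives $\brk{f, \calS_{\eta}^{\ast} \calP^{\ast} \varphi - \varphi} = 0$ for all $f \in C^{\infty}_{c}(U)$, hence $\calS_{\eta}^{\ast} \calP^{\ast} \varphi = \varphi$ for $\varphi \in C^{\infty}_{c}(U; \bbC^{r_{0}})$. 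To extend this to $\varphi \in \td{W}^{-s, p'}(U)$ one checks that $\varphi \mapsto \calP^{\ast} \varphi$ (computed in $\calD'(\bbR^{d})$) is bounded from $\td{W}^{-s, p'}(U)$ into $\td{W}^{-s-m_{1}, p'}(U) \times \cdots \times \td{W}^{-s-m_{s_{0}}, p'}(U)$: on $C^{\infty}_{c}(U)$ it is a differential operator with coefficients bounded together with their derivatives, so it maps into $\prod_{K} W^{-s-m_{K}, p'}(\bbR^{d})$ and sends $C^{\infty}_{c}(U)$ to $C^{\infty}_{c}(U)$, whence the closedness of $\td{W}^{-s-m_{K}, p'}(U) \subseteq W^{-s-m_{K}, p'}(\bbR^{d})$ and density of $C^{\infty}_{c}(U)$ in $\td{W}^{-s, p'}(U)$ give the bound. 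Both $\calS_{\eta}^{\ast} \calP^{\ast}$ and the identity being continuous on $\td{W}^{-s, p'}(U)$, the representation formula follows, and the Friedrich-type inequality is then immediate from $\nrm{\varphi}_{W^{-s, p'}(U)} = \nrm{\calS_{\eta}^{\ast} \calP^{\ast} \varphi}_{W^{-s, p'}(U)} \aleq \sum_{K} \nrm{(\calP^{\ast} \varphi)_{K}}_{\td{W}^{-s-m_{K}, p'}(U)}$. Finally, for (3): given $V$ open with $\br{V} \subseteq U$ and $\bfZ \in \td{W}^{-s, p'}(V)$ with $\calP^{\ast} \bfZ = 0$ in $\calD'(U)$, the inclusion $C^{\infty}_{c}(V) \subseteq C^{\infty}_{c}(U)$ gives $\bfZ \in \td{W}^{-s, p'}(U)$, and moreover $\supp \bfZ \subseteq \br{V}$ as a distribution on $\bbR^{d}$; thus $\calP^{\ast} \bfZ$ (in $\calD'(\bbR^{d})$) is supported in $\br{V} \subseteq U$, so the hypothesis forces $\calP^{\ast} \bfZ = 0$ in $\calD'(\bbR^{d})$, i.e.\ $(\calP^{\ast} \bfZ)_{K} = 0$ in $\td{W}^{-s-m_{K}, p'}(U)$, and the representation formula yields $\bfZ = \calS_{\eta}^{\ast}(\calP^{\ast} \bfZ) = 0$.

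The main obstacle is not a hard estimate but the careful bookkeeping with the two Sobolev scales at negative regularity: specifically, verifying that $\calP^{\ast}$ maps $\td{W}$ into $\td{W}$ (which is what lets one push the representation formula past $C^{\infty}_{c}$ by density) and locating supports correctly in the cokernel argument — this last point is precisely why the hypothesis $\br{V} \subseteq U$, rather than merely $V = U$, is needed, since for negative $s$ an element of $\td{W}^{-s, p'}(U)$ may be supported on $\rd U$ and would then be annihilated by $\calP^{\ast}$ in $\calD'(U)$ without being zero.
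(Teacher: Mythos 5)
Your proposal is correct and follows essentially the same route as the paper: all three parts are reduced to Theorem~\ref{thm:smth-ker} (Case~(1)) via standard pseudodifferential boundedness, duality through Lemma~\ref{lem:sob-duality}, and density of $C^{\infty}_{c}(U)$, with your verification that $\calP^{\ast}$ maps $\td{W}^{-s,p'}(U)$ into $\prod_{K}\td{W}^{-s-m_{K},p'}(U)$ supplying exactly the step the paper leaves to ``approximation.'' The only remark worth making is on Part~(3): you use $\br{V}\subseteq U$ to upgrade $\calP^{\ast}\bfZ=0$ from $\calD'(U)$ to $\calD'(\bbR^{d})$ (a point the paper glosses over), but since the representation formula is an identity in $W^{-s,p'}(U)$ you still need the same hypothesis once more, as the paper does, to pass from $\bfZ=0$ in $W^{-s,p'}(U)$ to $\bfZ=0$ in $\td{W}^{-s,p'}(V)$ via $\supp\bfZ\subseteq\br{V}$ --- your closing paragraph shows you are aware of this, so it is an elision rather than a gap.
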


Note carefully that even for $f \in C_c^{\infty}(U)$ vanishing near $\rd U$, the theorem does \emph{not} ensure that $\calS_{\eta} f$ vanishes near $\rd U$; correspondingly, the representation formula require $\varphi \in \td{W}^{s, p}(U)$, leading to a Friedrich-type inequality for $\calP^{\ast}$. In order to ensure that $\calS_{\eta} f$ vanishes near $\rd U$ (correspondingly, to prove a representation formula and Poincar\'e-type inequality for $\varphi$ in $W^{s, p}(U)$), we need to take into account the cokernel of $\calP$ in $W^{s, p}(U)$ into account, as we will in Section~\ref{subsec:cokernel} below.

\begin{proof}
Part~(1) is an immediate consequence of Theorem~\ref{thm:smth-ker}, Case~(1). Indeed, the bounds for the symbol $a_{\eta}$ in Theorem~\ref{thm:smth-ker} implies that
\begin{equation*}
\nrm{\tensor{(\calS_{\eta})}{^{K}_{J}}}_{W^{s, p} \to W^{s+m_{K}, p}} \leq C_{d, m_{K}, m_{K}', s, p} A_{S} A_{\eta} (1 + A_{\bfx})^{C (d + m_{K} + m_{K'} + s)}
\end{equation*}
by the standard theory of pseudodifferential operators \cite{Stein}. Restricting to inputs $f \in \td{W}^{s, p}(U)$, and composing with the surjection $W^{s+m_{K}, p} \to W^{s+m_{K}, p}(U)$, the same bound for $\nrm{\tensor{(\calS_{\eta})}{^{K}_{J}}}_{\td{W}^{s, p} \to W^{s+m_{K}, p}(U)}$ follows.

To prove Part~(2), note that we have, by duality \eqref{eq:sob-duality},
\begin{equation*}
\nrm{\tensor{(\calS_{\eta}^{\ast})}{_{J}^{K}}}_{\td{W}^{-s - m_{K}, p'}(U) \to W^{-s, p'}(U)} = \nrm{\tensor{(\calS_{\eta})}{^{K}_{J}}}_{W^{s, p} \to W^{s+m_{K}, p}} \leq C_{d, m_{K}, m_{K}', s, p} A_{S} A_{\eta} (1 + A_{\bfx})^{C (d + m_{K} + m_{K'} + s)}.
\end{equation*}
Moreover, $\calP K_{\eta}(\cdot, y) = \dlt_{0}(\cdot - y)$ for $y \in U$ is equivalent to
\begin{equation*}
	\varphi = \calS_{\eta}^{\ast} \calP^{\ast} \varphi \hbox{ for all } \varphi \in C^{\infty}_{c}(U),
\end{equation*}
which extends to all $\varphi \in \td{W}^{-s, p'}(U)$ by approximation. The Friedrich-type inequality for $\nrm{\vphi}_{W^{-s, p'}(U)}$ now follows. 

For $\bfZ \in \td{W}^{s, p}(U)$ with $\calP^{\ast} \bfZ = 0$ in $\calD'(U)$, Part~(2) only implies that $\bfZ$ corresponds to a zero element in $W^{s, p}(U)$, which still leaves the possibility that $\bfZ \neq 0$ as an element of $\td{W}^{s, p}(U)$ (i.e., it may happen that $\supp \bfZ \subseteq \rd U$; see the discussion in Section~\ref{subsec:ftn-sp}). However, since we assume in addition that $\bfZ \in \td{W}^{s, p}(V)$ for an open subset $V$ with $\br{V} \subseteq U$, $\bfZ = 0$ in $W^{s, p}(U)$ is sufficient to conclude that $\bfZ = 0$ in $\td{W}^{s, p}(V)$, which proves Part~(3). \qedhere
\end{proof}

\begin{example} \label{ex:div-full-sol-conic}
For $\calP u = (\rd_{j} + B_{j}) u^{j}$, $\bfx = \ul{\bfx}$ (straight line segments), and a suitable choice of $\eta$, Theorem~\ref{thm:full-sol-conic} specializes to the conic integral kernel $K_{\slashed{\eta}}(x, y)$ in Section~\ref{subsec:ideas-div}, Step~2. Indeed, given $\slashed{\eta} \in C^{\infty}(\bbS^{d-1})$ with $\int \slashed{\eta} \, \ud S = 1$, take $\eta(y, y_{1}) = \psi(y_{1} - y) \slashed{\eta}(\frac{y_{1}-y}{\abs{y_{1} - y}})$, where $\psi$ is as in Example~\ref{ex:conic-bog-sio}. Then \ref{hyp:eta-1}--\ref{hyp:eta-3} are clearly satisfied (with $L_{\eta} = R_{\ueta}$, which can be taken to be arbitrarily large), and $K_{\eta}(x, y) = K_{\slashed{\eta}}(x, y)$ for $\abs{x-y} \leq \frac{1}{2} R_{\ueta}$.
\end{example}

We are now ready to give a precise formulation and proof of Theorem~\ref{thm:main-summary-conic}.
\begin{proof}[Precise formulation and proof of Theorem~\ref{thm:main-summary-conic}]
To make Theorem~\ref{thm:main-summary-conic} precise, we replace ``\ref{hyp:crc}'' and ``an admissibility family of curves $\bfx$'' in the statement of the theorem by \ref{hyp:crc-x-q} and \ref{hyp:x-1}--\ref{hyp:x-2}, respectively, on an open subset $\td{U}$ that contains $\br{U}$. 

Then Parts~(2) and (3), except for the Friedrich-type inequality, follow from Theorem~\ref{thm:full-sol-conic} by simply choosing $\eta(y, y_{1}) = \eta_{1}(y_{1})$ with $\eta_{1} \in C^{\infty}_{c}(U_{1})$ satisfying $\int \eta_{1}(y_{1}) \, \ud y_{1} = 1$. 
To prove the Friedrich-type inequality, we apply Theorem~\ref{thm:main-summary-conic}.(2) with $U$ replaced by $\td{U}$ (the larger open subset that contains $\br{U}$ on which the hypotheses hold), and observe that $\nrm{\vphi}_{\td{W}^{-s, p'}(U)} \aleq \nrm{\vphi}_{W^{-s, p'}(\td{U})}$ while $\nrm{(\calP^{\ast} \vphi)_{K}}_{\td{W}^{-s-m_{K}, p'}(\td{U})} \leq \nrm{(\calP^{\ast} \vphi)_{K}}_{\td{W}^{-s-m_{K}, p'}(U)}$. Similarly, for Part~(1), i.e., the triviality of $\ker_{\td{W}^{-s, p'}(U)} \calP^{\ast}$, we apply Theorem~\ref{thm:main-summary-conic}.(3) with $U$ and $V$ replaced by $\td{U}$ (the larger open subset that contains $\br{U}$ on which the hypotheses hold) and $U$, respectively. \qedhere
\end{proof}

\subsection{Solutions with compact support} \label{subsec:cokernel}
Finally, we carry out the procedure outlined in Step~3 of Section~\ref{subsec:method} for constructing an operator that produces solutions $u$ to $\calP u = f$ with $\supp u$ compact (provided, of course, that $f$ has compact support).

\subsubsection{Local regularity conditions for $Z^{\bfA}$ and $\eta$, and a simplifying assumption} \label{subsubsec:Z-eta-smth}

Given an extra parameter $L_{b} > 0$ (with the dimension of length) and $M_{Z}', M_{\eta}' \in \bbZ_{\geq 0}$, we {\bf assume \ref{hyp:b-y1-smth} holds} for $Z^{\bfA}(y, y_{1})$. Furthermore, for simplicity, we will make the following smoothness assumption:
\begin{enumerate}[label=($C^{\infty}$)]
\item {\bf Smoothness assumption.} \label{hyp:smth} $U$, $W$, $\bfx$, $\calP$, $\tensor{S}{_{J}^{(\gmm, K)}}$ and $\tensor{(b_{y_{1}})}{^{J'}_{J}}$ satisfy \ref{hyp:x-1}--\ref{hyp:x-3}, \ref{hyp:eta-1}--\ref{hyp:eta-smth}, \ref{hyp:crc-x-q}, \ref{hyp:b-y1-q}--\ref{hyp:b-y1-smth} for arbitrary $M_{\bfx}, M_{\eta}, M_{\eta}', M_{S}, M_{Z}, M_{Z}', M_{g}$ (where the constants $A_{\bfx}$, $A'_{\bfx}$ $A_{\eta}$, $A_{S}$, $A_{Z}$, $A'_{Z}$ and $A_{g}$ may depend on $M_{\bfx}, M_{\eta}, M_{\eta}', M_{S}, M_{Z}, M_{Z}', M_{g}$). 
\end{enumerate}
This simplifying assumption allows us not to worry about the number of derivatives in the ensuing discussion. Without \ref{hyp:smth}, the proofs below may be modified in a straightforward manner to cover the appropriate finite regularity case.

\subsubsection{$(Z^{A})_{A \in \calA}$, $\ker \calP^{\ast}$ and generalization of the Bogovskii operator} \label{subsubsec:bogovskii}
We define the \emph{formal kernel} of $\calP^{\ast}$ (or equivalently, \emph{formal cokernel} of $\calP$) on $U$ to be
\begin{equation*}
\ker \calP^{\ast} \ceq \set{\bfZ \in C^{\infty}(\br{U}) : \calP^{\ast} \bfZ = 0 \hbox{ in } U}.
\end{equation*}

\begin{lemma} \label{lem:b-y1-coker}
Assume that $U$, $W$, $\bfx$, $\calP$, $S_{\gmm}$ and $b_{y_{1}}$ satisfy \ref{hyp:smth}\footnote{We point out that there is no need for any assumptions on $\eta$ for this lemma.}.
\begin{enumerate}
\item For every $\bfZ \in \ker \calP^{\ast}$ and $y, y_{1} \in U$ such that $\bfx(y, y_{1}, [0, 1]) \subseteq U$,
we have
\begin{align*}
\brk{b_{y_{1}}(\cdot, y), \bfZ} = \bfZ(y).
\end{align*}
In particular, we have
\begin{equation*}
\dim \ker \calP^{\ast} \leq \# \calA.
\end{equation*}

\item If $\dim \ker \calP^{\ast} = \# \calA$, then for every basis $\set{\bfZ^{\bfA}}_{\bfA \in \calA}$ of $\ker \calP^{\ast}$ and $y, y_{1} \in U$ such that $\bfx(y, y_{1}, [0, 1]) \subseteq U$, we have the decomposition
\begin{equation*}
	\tensor{(b_{y_{1}})}{^{J'}_{J}}(x, y) = \sum_{\bfA \in \calA} \tensor{(\bfZ^{\bfA})}{_{J}}(y) \tensor{(\zt_{\bfA})}{^{J'}}(x, y_{1}).
\end{equation*}
Here, $\zt_{\bfA}$ and $g_{\bfA}$ in \ref{hyp:b-y1-q} are related by the identity
\begin{equation*}
	\brk{\tensor{(g_{\bfA})}{^{J'}}(\cdot, y_{1}), \tensor{(\bfZ^{\bfA'})}{_{J'}}} \tensor{(\zt_{\bfA'})}{^{J}}(x, y_{1}) = \tensor{(g_{\bfA})}{^{J}}(x, y_{1}).
\end{equation*}
\end{enumerate}
\end{lemma}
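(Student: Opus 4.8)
The plan is to extract both parts from the recovery-with-endpoint identity \eqref{eq:crc-x-trunc}, by applying it to a suitable cutoff of a cokernel element and then running a short propagation argument based on the local structure of the curves. For Part~(1), the identity, I would first localize. Fix $\bfZ\in\ker\calP^{\ast}$ and $(y,y_{1})\in U\times U$ with $\Gamma\ceq\bfx(y,y_{1},[0,1])\subseteq U$; since $\Gamma$ is compact and $U$ open, choose $\chi\in C^{\infty}_{c}(U)$ with $\chi\equiv1$ on an open neighborhood $\calN\Subset U$ of $\Gamma$, and apply \eqref{eq:crc-x-trunc} to $\varphi=\chi\bfZ\in C^{\infty}_{c}(U;\bbC^{r_{0}})$. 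On $\calN$ one has $\chi\bfZ=\bfZ$, so by locality of $\calP^{\ast}$ and $\calP^{\ast}\bfZ=0$ in $U$, the smooth function $\calP^{\ast}(\chi\bfZ)$ and all its $x$-derivatives vanish on $\calN$; as $\bfx(y,y_{1},s)\in\Gamma\subseteq\calN$ for all $s$, the integral term in \eqref{eq:crc-x-trunc} is zero. Since $y\in\Gamma$ and $\supp(b_{y_{1}})(\cdot,y)\subseteq\{y_{1}\}\subseteq\Gamma$ with $\chi\equiv1$ near both $y$ and $y_{1}$, we get $(\chi\bfZ)(y)=\bfZ(y)$ and $\brk{(b_{y_{1}})(\cdot,y),\chi\bfZ}=\brk{(b_{y_{1}})(\cdot,y),\bfZ}$ (the latter depends only on the jet of $\chi\bfZ$ at $y_{1}$). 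Hence $\bfZ(y)=\brk{(b_{y_{1}})(\cdot,y),\bfZ}$.

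For the dimension bound, I would insert the structural form of $b_{y_{1}}$ from \ref{hyp:b-y1-q} to rewrite this as
\[
\bfZ_{J}(y)=\sum_{\bfA\in\calA}(Z^{\bfA})_{J}(y,y_{1})\,c_{\bfA}(\bfZ,y_{1}),\qquad c_{\bfA}(\bfZ,y_{1})\ceq\brk{(g_{\bfA})(\cdot,y_{1}),\bfZ},
\]
valid whenever $\bfx(y,y_{1},[0,1])\subseteq U$. Fixing $y_{1}\in U$, it suffices to show that $\bfZ\mapsto(c_{\bfA}(\bfZ,y_{1}))_{\bfA\in\calA}\in\bbC^{\#\calA}$ is injective on $\ker\calP^{\ast}$. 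If all $c_{\bfA}(\bfZ,y_{1})$ vanish, then $\bfZ$ vanishes at every $y$ reachable from $y_{1}$ (i.e.\ with $\bfx(y,y_{1},[0,1])\subseteq U$); by \ref{hyp:x-1} one has $\abs{\bfx(y,y_{1},s)-y_{1}}\leq(1+A_{\bfx})\abs{y-y_{1}}$, so every $y$ close enough to $y_{1}$ is reachable, whence $\bfZ$ vanishes on a punctured neighborhood of $y_{1}$ and, by continuity, on a full neighborhood. Therefore $\{y\in U:\bfZ\text{ vanishes to infinite order at }y\}$ is nonempty, open (apply the displayed identity at nearby base points), and closed; since $U$ is connected it is all of $U$, so $\bfZ\equiv0$. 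This yields $\dim\ker\calP^{\ast}\leq\#\calA$ (also contained in Proposition~\ref{prop:aug2crc}).

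For Part~(2), assume $\dim\ker\calP^{\ast}=\#\calA$ and fix a basis $\{\bfZ^{\bfA}\}_{\bfA\in\calA}$. Applying the displayed identity to each $\bfZ^{\bfA'}$ gives $(\bfZ^{\bfA'})_{J}(y)=\sum_{\bfA}(Z^{\bfA})_{J}(y,y_{1})\,\tensor{M}{_{\bfA}^{\bfA'}}(y_{1})$ with $\tensor{M}{_{\bfA}^{\bfA'}}(y_{1})\ceq\brk{(g_{\bfA})(\cdot,y_{1}),\bfZ^{\bfA'}}$. The matrix $M(y_{1})$ is invertible: a null vector $(\lambda^{\bfA'})$ would force $\sum_{\bfA'}\lambda^{\bfA'}\bfZ^{\bfA'}$ to vanish on every point reachable from $y_{1}$, hence identically by the propagation argument of Part~(1), contradicting linear independence. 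Then I would set $(\zt_{\bfA'})^{J'}(x,y_{1})\ceq(M(y_{1})^{-1})_{\bfA'}{}^{\bfA}(g_{\bfA})^{J'}(x,y_{1})$, which is precisely the stated relation $\brk{(g_{\bfA})(\cdot,y_{1}),\bfZ^{\bfA'}}(\zt_{\bfA'})(x,y_{1})=(g_{\bfA})(x,y_{1})$, and conclude, using $\sum_{\bfA'}\tensor{M}{_{\bfA}^{\bfA'}}\zt_{\bfA'}=g_{\bfA}$ together with the displayed identity,
\[
\tensor{(b_{y_{1}})}{^{J'}_{J}}(x,y)=\sum_{\bfA}(Z^{\bfA})_{J}(y,y_{1})(g_{\bfA})^{J'}(x,y_{1})=\sum_{\bfA'}(\bfZ^{\bfA'})_{J}(y)(\zt_{\bfA'})^{J'}(x,y_{1}).
\]

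The bookkeeping above — the cutoff in Part~(1), the finite-dimensional linear algebra in Part~(2) — is routine; the one point that needs care is the propagation step: upgrading the pointwise identity "$\bfZ$ is determined by finitely many functionals, on the (possibly small) set of points reachable from $y_{1}$" to the global vanishing statement. This relies only on the Lipschitz-type control $\abs{\rd_{s}\bfx}\aleq\abs{y_{1}-y}$ from \ref{hyp:x-1} (so nearby points are reachable) and on connectedness of $U$.
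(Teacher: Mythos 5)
Your proof is correct and follows essentially the same route as the paper: Part~(1) is the duality identity (you apply \eqref{eq:crc-x-trunc} to a cutoff of $\bfZ$, while the paper tests $\bfZ$ directly against \eqref{eq:trun-ker-y1-gen} — these are equivalent), and Part~(2) is the same finite-dimensional linear algebra. Your explicit open–closed propagation argument fills in the step the paper leaves implicit (why vanishing of the $\#\calA$ functionals $\brk{g_{\bfA}(\cdot,y_{1}),\bfZ}$ forces $\bfZ\equiv 0$), which silently uses connectedness of $U$ and that $W$ contains pairs $(y,y')$ with $y$ near $y'$ — assumptions the paper's own terse proof relies on as well.
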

\begin{proof}
Part~(1) follows by testing $\bfZ(x) \in C^{\infty}(\br{U})$ against \eqref{eq:trun-ker-y1-gen}, which is possible thanks to \eqref{eq:K-y1-supp} and $\bfx(y, y_{1}, [0, 1]) \subseteq U$. The claim $\dim \ker \calP^{\ast} \leq \# \calA$ then follows. If $\dim \ker \calP^{\ast} = \# \calA$, then given a basis $\set{\bfZ^{\bfA}}_{\bfA \in \calA}$ of $\ker \calP^{\ast}$, Part~(1) implies that
\begin{align*}
	\sum_{\bfA'} Z^{\bfA'}(y, y_{1}) \brk{g_{\bfA'}(\cdot, y_{1}), \bfZ^{\bfA}} = \bfZ^{\bfA}(y)
\end{align*}
for every $\bfA \in \calA$. It follows that the square matrix $\brk{g_{\bfA'}(\cdot, y_{1}), \bfZ^{\bfA}}$ is full rank, and therefore is invertible. \qedhere
\end{proof}

From Theorem~\ref{thm:smth-ker} and Lemma~\ref{lem:b-y1-coker}, we already obtain the following construction of the solution operator in an important special case, namely, when $\dim \ker \calP^{\ast} = \# \calA$. Motivated by Proposition~\ref{prop:zero-curv} and Definition~\ref{def:aug-int}, we call this the \emph{completely integrable} case. This procedure generalizes the construction of the Bogovskii operator for the divergence operator on $\bbR^{d}$.
\begin{theorem} [Full solution operator and Poincar\'e-type inequality, completely integrable case] \label{thm:full-sol-max}
Assume that $U$, $W$, $\bfx$, $\eta$, $\calP$, $S$ and $b_{y_{1}}$ satisfy \ref{hyp:smth}. Assume furthermore that
\begin{equation*}
	\eta = \eta(y_{1}),
\end{equation*}
and that $\br{U}$ is \emph{$\bfx$-star-shaped} with respect to $\supp \eta$, i.e.,
\begin{equation*}
	\bigcup_{y \in \br{U}, \, y_{1} \in \supp \eta} \bfx(y, y_{1}, [0, 1]) \subseteq \br{U}.
\end{equation*}
If the formal cokernel of $\calP$ has the maximal dimension, i.e.,
\begin{equation*}
\dim \ker \calP^{\ast} = \# \calA,
\end{equation*}
then the following holds.
\begin{enumerate}
\item {\bf Full solution operator.} The operator $\calS_{\eta}$ in Theorem~\ref{thm:smth-ker} 
defines a bounded operator $\tensor{(\calS_{\eta})}{^{K}_{J}} : \td{W}^{s, p}(U) \to \td{W}^{s+m_{K}, p}(U)$ for every $1 < p < \infty$ and $s \in \bbR$. Moreover, we have
\begin{align*}
	\calP \calS_{\eta} f = f - \sum_{\bfA \in \calA} (\zt_{\eta})_{\bfA}  \brk{\bfZ^{\bfA}, f} \quad \hbox{ for all } f \in \td{W}^{s, p}(U),
\end{align*}
where $(\zt_{\eta})_{\bfA}$ is a smooth function with $\supp (\zt_{\eta})_{\bfA} \subseteq \supp \eta$ characterized by
\begin{equation*}
	\brk{(\zt_{\eta})_{\bfA}, \varphi} = \brk{\zt_{\bfA}(x, y_{1}), \varphi(x) \eta(y_{1})} \quad \hbox{ for all } \varphi \in C^{\infty}_{c}(U),
\end{equation*}
with $\zt_{\bfA}$ and $\bfZ^{\bfA}$ as in Lemma~\ref{lem:b-y1-coker}. We also have the following support property:
\begin{equation*}
	\supp \calS_{\eta} f \subseteq \br{\bigcup_{y \in \supp f, \, y_{1} \in \supp \eta} \bfx(y, y_{1}, [0, 1])} \subseteq \br{U}.
\end{equation*}

\item {\bf Representation formula and Poincar\'e-type inequality.} For $1 < p < \infty$ and $s \in \bbR$, we have the representation formula
\begin{equation*}
	\varphi_{J} = \tensor{(\calS_{\eta}^{\ast})}{_{J}^{K}} (\calP^{\ast} \varphi)_{K} + \sum_{\bfA \in \calA} \brk{(\zt_{\eta})_{\bfA}, \varphi} \bfZ^{\bfA} \quad \hbox{ for all } \varphi \in W^{-s, p'}(U),
\end{equation*}
where $\calP^{\ast} \varphi$ is defined in the sense of $\calD'(U)$, so that $(\calP^{\ast} \varphi)_{K} \in W^{-s-m_{K}, p'}(U)$.
Moreover, we have the Poincar\'e-type inequality
\begin{equation*}
	\nrm*{\varphi - \sum_{\bfA \in \calA} \brk{(\zt_{\eta})_{\bfA}, \varphi} \bfZ^{\bfA} }_{W^{-s, p'}(U)} \aleq \sum_{K} \nrm{(\calP^{\ast} \varphi)_{K}}_{W^{-s - m_{K}, p'}(U)} \hbox{ for all } \varphi \in W^{-s, p'}(U).
\end{equation*}

\item {\bf Cokernel in $W^{-s, p'}(U)$.}
If $\bfZ \in \td{W}^{-s, p'}(U)$ with $\calP^{\ast} \bfZ = 0$ in $\calD'(U)$, then $\bfZ \in \ker \calP^{\ast}$; in short,
\begin{equation*}
	\ker_{\td{W}^{-s, p'}(U)} \calP^{\ast} = \ker \calP^{\ast}.
\end{equation*}
\end{enumerate}
Here, $\nrm{\tensor{(\calS_{\eta})}{^{K}_{J}}}_{\td{W}^{s, p}(U) \to W^{s+m_{K}, p}(U)}$, $\nrm{\tensor{(\calS_{\eta}^{\ast})}{_{J}^{K}}}_{\td{W}^{-s-m_{K}, p'}(U) \to W^{-s, p'}(U)}$ and the implicit constant in (2) are all bounded by $C_{d, m_{K}, m_{K}', s, p} A_{S} A_{\eta} (1 + A_{\bfx})^{C (d + m_{K} + m_{K'} + s)}$, with $A_{S}$, $A_{\eta}$, $A_{\bfx}$ corresponding to $M_{\bfx}, M_{\eta}, M_{S} \geq s + C_{d, m_{K}, m_{K'}}$.
\end{theorem}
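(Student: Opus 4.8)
The plan is to deduce Theorem~\ref{thm:full-sol-max} as a direct corollary of Theorem~\ref{thm:smth-ker} (Case~(2)) together with the structural information coming from Lemma~\ref{lem:b-y1-coker}. The key simplification in the completely integrable case $\dim\ker\calP^{\ast}=\#\calA$ is that the error kernel $b_{\eta}(x,y)$ factors through the finite-dimensional cokernel, so we never need to appeal to the Poincar\'e-type inequality and the non-effective Hahn--Banach argument used in the general case (Step~4 of Section~\ref{subsec:method}).

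First I would fix $\eta=\eta(y_{1})$ with $\eta\in C^{\infty}_{c}(U_{1})$, $\int\eta=1$, and note that \ref{hyp:eta-1}--\ref{hyp:eta-smth} hold (with $L_{\eta}\sim\diam\supp\eta$, and local regularity \ref{hyp:eta-smth} automatic since $\eta$ does not depend on $y$). Applying Theorem~\ref{thm:smth-ker} in Case~(2) gives the locally integrable kernel $K_{\eta}$ with the stated CZ-type bounds and symbol bounds, and the error term
\[
\tensor{(b_{\eta})}{^{J'}_{J}}(x,y)=\sum_{\bfA\in\calA}\int (Z^{\bfA})_{J}(y,y_{1})\,\eta(y_{1})\,(g_{\bfA})^{J'}(x,y_{1})\,\ud y_{1}.
\]
Now invoke Lemma~\ref{lem:b-y1-coker}.(2): since $\dim\ker\calP^{\ast}=\#\calA$, for $y,y_{1}$ with $\bfx(y,y_{1},[0,1])\subseteq U$ (which holds for all $y\in\br U$, $y_{1}\in\supp\eta$ by the $\bfx$-star-shapedness hypothesis) we have $\tensor{(b_{y_{1}})}{^{J'}_{J}}(x,y)=\sum_{\bfA}(\bfZ^{\bfA})_{J}(y)\,(\zt_{\bfA})^{J'}(x,y_{1})$ for the fixed basis $\{\bfZ^{\bfA}\}$. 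Averaging against $\eta(y_{1})$ produces exactly $b_{\eta}(x,y)=\sum_{\bfA}(\zt_{\eta})_{\bfA}(x)\,\bfZ^{\bfA}(y)$ with $(\zt_{\eta})_{\bfA}$ as defined in the statement; its smoothness and the support property $\supp(\zt_{\eta})_{\bfA}\subseteq\supp\eta$ follow since $\zt_{\bfA}(\cdot,y_{1})$ is a finite-order distribution supported at $y_{1}$ with coefficients controlled by \ref{hyp:b-y1-q}, and $\eta$ is smooth. Pairing dually (or testing $\calP K_{\eta}=\dlt_{0}-b_{\eta}$ against $f\in C^{\infty}_{c}(U)$) yields $\calP\calS_{\eta}f=f-\sum_{\bfA}(\zt_{\eta})_{\bfA}\brk{\bfZ^{\bfA},f}$, first for $f\in C^{\infty}_{c}(U)$ and then, by density and the boundedness of $\calS_{\eta}$ from the symbol bounds, for all $f\in\td{W}^{s,p}(U)$.

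For the support and boundedness claims in Part~(1): the support property $\supp\calS_{\eta}f\subseteq\br{\cup_{y\in\supp f,\,y_{1}\in\supp\eta}\bfx(y,y_{1},[0,1])}$ is immediate from the support of $K_{\eta}(\cdot,y)$ in Theorem~\ref{thm:smth-ker}, and the final inclusion $\subseteq\br U$ is the $\bfx$-star-shapedness hypothesis; this upgrade from $\calS_{\eta}:\td W^{s,p}(U)\to W^{s+m_{K},p}(U)$ to mapping into $\td W^{s+m_{K},p}(U)$ is where one uses that $\calS_{\eta}f$ is supported in a fixed compact subset of $\br U$ — more precisely, one approximates $f\in\td W^{s,p}(U)$ by $C^{\infty}_{c}(U)$ functions, for which $\calS_{\eta}f$ lies in $C^{\infty}_{c}$ with support in $\br U$ (using the optimal regularization, $\calS_{\eta}f\in W^{\infty,p}$, hence continuous, hence its support is genuinely in $\br U$ rather than merely a.e.), and passes to the limit inside $\td W^{s+m_{K},p}(U)=\br{C^{\infty}_{c}(U)}$. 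The main technical point to be careful about is precisely this: that $\supp\calS_{\eta}f\Subset\br U$ plus the regularity gain actually forces $\calS_{\eta}f$ into the \emph{tilde} space rather than just the ambient restriction space; this uses the $\bfx$-star-shaped condition crucially (and, implicitly, that \eqref{eq:main-summary-x-starshaped}-type star-shapedness encodes enough boundary regularity, cf.\ the remark following Theorem~\ref{thm:main-summary-bogovskii}).

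Parts~(2) and (3) then follow formally by duality. Taking adjoints of $\calP\calS_{\eta}=I-\sum_{\bfA}(\zt_{\eta})_{\bfA}\brk{\bfZ^{\bfA},\cdot}$ and using $\calS_{\eta}^{\ast}:\td W^{-s-m_{K},p'}(U)\to W^{-s,p'}(U)$ (with the same operator-norm bound by \eqref{eq:sob-duality}) gives $\varphi=\calS_{\eta}^{\ast}\calP^{\ast}\varphi+\sum_{\bfA}\brk{(\zt_{\eta})_{\bfA},\varphi}\bfZ^{\bfA}$, first on $C^{\infty}(\br U;\bbC^{r_{0}})$ — here one needs $\brk{\bfZ^{\bfA},\cdot}$ and $\brk{\calP K_{\eta},\varphi}$ to make sense for $\varphi$ merely in $C^{\infty}(\br U)$, which is fine since $b_{\eta}(x,y)$ and $\bfZ^{\bfA}$ are smooth and $\varphi$ need not be compactly supported — and then on $W^{-s,p'}(U)$ by density. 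The Poincar\'e-type inequality is the bound for the first term on the right, namely $\nrm{\calS_{\eta}^{\ast}\calP^{\ast}\varphi}_{W^{-s,p'}(U)}\aleq\sum_{K}\nrm{(\calP^{\ast}\varphi)_{K}}_{W^{-s-m_{K},p'}(U)}$, which is exactly the operator-norm bound for $\calS_{\eta}^{\ast}$. Finally, for Part~(3): if $\bfZ\in\td W^{-s,p'}(U)$ with $\calP^{\ast}\bfZ=0$ in $\calD'(U)$, apply the representation formula to $\bfZ$ to get $\bfZ=\sum_{\bfA}\brk{(\zt_{\eta})_{\bfA},\bfZ}\bfZ^{\bfA}\in\ker\calP^{\ast}$; conversely $\ker\calP^{\ast}\subseteq\td W^{-s,p'}(U)$ since elements of $\ker\calP^{\ast}\subseteq C^{\infty}(\br U)$ restricted to the bounded $U$ lie in every $W^{-s,p'}(U)$, and in fact one should check they lie in $\td W^{-s,p'}(U)$ — but here one really wants the statement $\ker_{\td W^{-s,p'}(U)}\calP^{\ast}=\ker\calP^{\ast}$ interpreted via the natural maps as in Section~\ref{subsec:ftn-sp}, so the content is just the forward inclusion just proved. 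The constants are tracked exactly as in the proof of Theorem~\ref{thm:full-sol-conic}, reading off the $A_{S}A_{\eta}(1+A_{\bfx})^{C(\cdots)}$ dependence from the symbol bound in Theorem~\ref{thm:smth-ker}. I expect the only genuinely delicate step to be the tilde-space mapping property in Part~(1) (ensuring $\calS_{\eta}$ truly preserves compact support up to the closure, not merely produces functions vanishing a.e.\ outside $\br U$); everything else is bookkeeping on top of Theorem~\ref{thm:smth-ker} and Lemma~\ref{lem:b-y1-coker}.
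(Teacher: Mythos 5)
Your proposal is correct and follows essentially the same route as the paper: Part~(1) is obtained by combining Theorem~\ref{thm:smth-ker}, Case~(2) with the factorization of $b_{y_{1}}$ from Lemma~\ref{lem:b-y1-coker}.(2), the $\td{W}^{s,p}(U)\to\td{W}^{s+m_{K},p}(U)$ mapping property comes from the $C^{\infty}_{c}\to C^{\infty}_{c}$ support-preservation plus completion, and Parts~(2)--(3) follow by duality exactly as you describe. Your extra care about upgrading "supported in $\br{U}$ and smooth" to membership in the tilde space is a reasonable elaboration of a point the paper also passes over quickly, and does not change the argument.
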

In particular, in Part~(1), $\calP \calS_{\eta} f = f$ if $f \in \td{W}^{s, p}(U)$ is orthogonal to $\ker \calP^{\ast}$, and in Part~(2), we have $\nrm*{\varphi}_{W^{-s, p'}(U)} \aleq \sum_{K} \nrm{(\calP^{\ast} \varphi)_{K}}_{W^{-s-m_{K}, p'}(U)}$ if $\varphi \in W^{-s, p'}(U)$ is orthogonal to $((\zt_{\eta})_{\bfA})_{\bfA \in \calA}$.

\begin{proof}
Part~(1) is an immediate consequence of Lemma~\ref{lem:b-y1-coker} and Theorem~\ref{thm:smth-ker}, Case~(2). We remark that the operator bounds on $\tensor{(\calS_{\eta})}{^{K}_{J}}$ are obtained as sketched in the proof of Theorem~\ref{thm:full-sol-conic}, but we have the additional mapping property $C^{\infty}_{c}(\bbR^{d}) \to C^{\infty}_{c}(\bbR^{d})$ in view of the support property of $K_{\eta}(\cdot, y)$; by completion, we obtain $\td{W}^{s, p}(U) \to \td{W}^{s+m_{K}, p}(U)$. We also remark that the smoothness of $\zt_{\eta}$ follows from the algebraic formulas in Lemma~\ref{lem:b-y1-coker} and the structure of $g_{\bfA}$ in \ref{hyp:b-y1-q}.  

To prove Part~(2), the key observation is that $\calS_{\eta}^{\ast} \calP^{\ast} \varphi$ is well-defined for any $\varphi \in W^{-s, p'}(U)$ thanks to the obvious mapping property $(\calP^{\ast} \varphi)_{K} \in W^{-s-m_{K}, p'}$ and the duality property \eqref{eq:sob-duality}. The desired identity and inequality are now immediate consequences of Part~(1).

Finally, Part~(3) is an immediate consequence of Part~(2): indeed, if $\bfZ \in \td{W}^{-s, p'}(U)$ with $\calP^{\ast} \bfZ = 0$, then by Part~(2), we have $\bfZ = \sum_{\bfA \in \calA} \brk{(\zt_{\eta})_{\bfA}, \bfZ} \bfZ^{\bfA} \in \ker \calP^{\ast}$.
\qedhere
\end{proof}

\begin{example} \label{ex:div-full-sol-max}
For $\calP u = (\rd_{j} + B_{j}) u^{j}$, $\bfx = \ul{\bfx}$ (straight line segments), and $\eta(y, y_{1}) = \eta_{1}(y_{1})$, $\calS_{\eta}$ in Theorem~\ref{thm:full-sol-max} specializes to the Bogovskii solution operator $\calS_{\eta_{1}}(x, y)$ in Section~\ref{subsec:ideas-div}, Step~3 (completely integrable case), with $\# \calA = 1$, $\bfZ = e^{z}$, and $\zt_{\eta} = \bfZ^{-1} \eta_{1}$.
\end{example}

\subsubsection{Soft arguments for the general case and proof of Theorem~\ref{thm:main-summary-bogovskii}} \label{subsubsec:full-sol}
We introduce the following assumption (in addition to the objects that have been already introduced), which is a slight generalization of one of the hypotheses of Theorem~\ref{thm:full-sol-max}:
\begin{enumerate}[label=($\br{U}\star$),]
\item {\bf $\br{U}$ is $\bfx$-star-shaped with respect to $\supp \eta$.} \label{hyp:U-star}
We have
\begin{equation*}
	\br{\bigcup_{(y, y_{1}) \in \supp \eta} \bfx(y, y_{1}, [0, 1])} \subseteq \br{U}.
\end{equation*}
\end{enumerate}

In view of Theorem~\ref{thm:smth-ker}, this assumption ensures that the integral kernel $K_{\eta}(\cdot, y)$ is supported in $\br{U}$ for $y \in \br{U}$.

We begin with a more detailed study of the properties of $\calB_{\eta}$ under our assumptions (in particular, \ref{hyp:b-y1-smth}).
\begin{lemma} \label{lem:B-eta}
Assume that $U$, $W$, $\bfx$, $\eta$, $\calP$, $S$ and $b_{y_{1}}$ satisfy \ref{hyp:smth}. Assume furthermore that \ref{hyp:U-star} is satisfied for $U$, $\bfx$ and $\eta$.
\begin{enumerate}
\item {\bf $\calB_{\eta}$ is smoothing.} The integral kernel $\tensor{(b_{\eta})}{^{J'}_{J}}(x, y)$ for $\calB_{\eta}$ is smooth and satisfies
\begin{align*}
	\abs{L_{y}^{\abs{\alp}} L_{b}^{\abs{\bt}} (\rd_{y} + \rd_{x})^{\alp} \rd_{x}^{\bt} \tensor{(b_{\eta})}{^{J'}_{J}}(x, y)} & \leq C_{\# \calA, m_{K}+m_{K}'}  A_{g} A_{Z} A_{\eta}, \\
	\supp_{x} \tensor{(b_{\eta})}{^{J'}_{J}}(x, y) &\subseteq \supp_{y_{1}} \eta(y, y_{1}), \quad \hbox{ for all } y \in U,
\end{align*}
where $A_{g}$, $A_{Z}$, $A_{\eta}$ correspond to $M_{g}, M_{Z}, M_{\eta}, M_{Z}', M_{\eta}' \geq \abs{\alp} + \abs{\bt} + C_{d, m+m_{1}}$.

\item {\bf Approximation of $\calB_{\eta}$ by finite rank operators.} Assume, in addition, that $U$ is a bounded open subset of $\bbR^{d}$. For every $\eps_{0} > 0$ and an open set $W_{0}$ such that
\begin{equation*}
\supp b_{\eta} \subseteq W_{0} \subseteq \br{W_{0}} \subseteq U \times U,
\end{equation*}
there exists a linear operator ${}^{(\eps_{0})} \calE_{0}$ with a smooth integral kernel ${}^{(\eps_{0})} e_{0} : U \times U \to \bbC^{r_{0} \times r_{0}}$, as well as an index set ${}^{(\eps_{0})} \td{\bfA}$ and smooth functions ${}^{(\eps_{0})} (g_{0})_{\td{\bfA}} : U \to \bbC^{r_{0}}$ and ${}^{(\eps_{0})} Z_{0}^{\td{\bfA}} : U \to \bbC^{r_{0}}$ such that, for all $f \in C^{\infty}_{c}(U)$,
\begin{align*}
	\calP \calS_{\eta} f = f - {}^{(\eps_{0})}\calE_{0} f - \sum_{\td{\bfA} \in {}^{(\eps_{0})}\td{\calA}} {}^{(\eps_{0})}(g_{0})_{\td{\bfA}}\brk{{}^{(\eps_{0})} Z_{0}^{\td{\bfA}}, f}.
\end{align*}
Moreover, it can be arranged so that kernel ${}^{(\eps_{0})} e_{0}$ satisfies
\begin{gather}
	\sup_{x \in U} \int_{U} \abs*{{}^{(\eps_{0})} e_{0}(x, y)} \, \ud y
	+ \sup_{y \in U} \int_{U} \abs*{{}^{(\eps_{0})} e_{0}(x, y)} \, \ud x \leq \eps_{0}, \label{eq:e0-eps} \\
	\abs{\rd_{x}^{\alp} \rd_{y}^{\bt} ({}^{(\eps_{0})} e_{0}(x, y))}\leq C_{d, \eps_{0}, U, \supp b_{\eta}} A_{g} A_{Z} A_{\eta}, \notag \\
	\supp {}^{(\eps_{0})} e_{0}(x, y) \subseteq W_{0}; \notag
\end{gather}
the functions ${}^{(\eps_{0})} (g_{0})_{\td{\bfA}}$, ${}^{(\eps_{0})} Z_{0}^{\td{\bfA}}$ satisfy
\begin{gather*}
	\abs*{\rd_{x}^{\alp} ({}^{(\eps_{0})}(g_{0})_{\td{\bfA}})(x)} \leq C_{d, \eps_{0}, U, \supp b_{\eta}} A_{g} A_{\eta},
	\quad \abs*{\rd_{y}^{\bt} ({}^{(\eps_{0})}Z_{0}^{\td{\bfA}})(y)} \leq C_{d, \eps_{0}, U, \supp b_{\eta}} A_{Z}, \\
	\supp \left( \sum_{\td{\bfA} \in {}^{(\eps_{0})} \td{\calA}} {}^{(\eps_{0})}(g_{0})_{\td{\bfA}}(x) {}^{(\eps_{0})} Z_{0}^{\td{\bfA}}(y) \right) \subseteq W_{0}.
\end{gather*}
and $\sharp ({}^{(\eps_{0})} \td{\calA})$ is bounded by $C_{d, \eps_{0}, U, \supp b_{\eta}}$. Here, $A_{g}$, $A_{Z}$, $A_{\eta}$ correspond to $M_{g}, M_{Z}, M_{\eta}, M_{Z}', M_{\eta}' \geq \abs{\alp} + \abs{\bt} + C_{d, m_{K}+m_{K}'}$
\end{enumerate}

\end{lemma}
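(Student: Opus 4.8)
The plan is to prove Part~(1) by direct appeal to Theorem~\ref{thm:smth-ker}, Case~(2), combined with the local regularity improvements. Recall that under \ref{hyp:b-y1-q} the kernel $\tensor{(b_{\eta})}{^{J'}_{J}}(x,y) = \sum_{\bfA} \sum_{\alp} (-1)^{\abs{\alp}} \rd_{x}^{\alp}\bigl(c[g_{\bfA}]^{(\alp,J')}(x)(Z^{\bfA})_{J}(y,x)\eta(y,x)\bigr)$, and Theorem~\ref{thm:smth-ker} already gives the estimate with the scale $\abs{x-y}$ weight. To upgrade this to the claimed bound with $L_b$ weights and smoothness \emph{up to the diagonal} (i.e.\ $b_\eta$ is smooth), the key point is that when $\abs{y_1-y}\leq L_b$ the hypotheses \ref{hyp:b-y1-smth}, \ref{hyp:eta-smth}, and \ref{hyp:x-3} supply better bounds for $Z^\bfA$, $\eta$, and $\bfx$ that remove the singular $\abs{x-y}$-weights. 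Concretely, I would differentiate the product expression using the Leibniz rule, distribute derivatives among $c[g_\bfA]$ (bounded with $M_g$ derivatives by \ref{hyp:b-y1-q}), $Z^\bfA$ (bounded with $L_b$ weights via \ref{hyp:b-y1-smth}), and $\eta$ (bounded with $L_b$ weights via \ref{hyp:eta-smth}), observing that the $\rd_x^\alp$ derivative hitting $\dlt_0(x-y_1)$ has already been integrated against $\eta(y,y_1)$ in $y_1$, so no distributional singularity survives; the support statement $\supp_x b_\eta(\cdot,y)\subseteq \supp_{y_1}\eta(y,\cdot)$ is immediate from the second formula for $b_\eta$ in Theorem~\ref{thm:smth-ker}, and $\supp_{y_1}\eta(y,\cdot)\subseteq B_{L_\eta}(y)$, which under \ref{hyp:U-star} sits inside $\br U$. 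This gives Part~(1) with the asserted constant dependence.

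For Part~(2), the plan is a standard mollification-plus-finite-rank-truncation argument. Since $U$ is bounded and $b_\eta$ is smooth with $\supp b_\eta \subseteq W_0 \Subset U\times U$, pick a slightly larger neighborhood $W_0'$ with $\supp b_\eta \subseteq W_0' \Subset W_0$, and a partition-of-unity-type cutoff $\chi$ equal to $1$ on $W_0'$ and supported in $W_0$. First approximate $b_\eta(x,y)$ uniformly (in $C^0$ and with controlled derivatives) by a finite-dimensional sum $\sum_{\td\bfA} (g_0)_{\td\bfA}(x) Z_0^{\td\bfA}(y)$ of tensor products supported in $W_0$: this can be done via, e.g., a truncated Taylor expansion in one variable (as done in Step~4 of Section~\ref{subsec:ideas-div}), or via a truncated Fourier series on a box containing $\supp b_\eta$, retaining enough terms so that the remainder $e_0(x,y) \ceq b_\eta(x,y) - \sum_{\td\bfA}(g_0)_{\td\bfA}(x)Z_0^{\td\bfA}(y)$ has $L^1\to L^1$ operator norm (i.e.\ $\sup_x\int\abs{e_0}\,\ud y + \sup_y\int\abs{e_0}\,\ud x$) less than the prescribed $\eps_0$, while keeping all the derivative bounds and support in $W_0$. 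Since $\calP\calS_\eta f = f - \calB_\eta f$ by Theorem~\ref{thm:smth-ker}, Case~(2), and $\calB_\eta f(x) = \int b_\eta(x,y) f(y)\,\ud y$, decomposing $b_\eta = e_0 + \sum_{\td\bfA}(g_0)_{\td\bfA}\otimes Z_0^{\td\bfA}$ yields exactly $\calP\calS_\eta f = f - {}^{(\eps_0)}\calE_0 f - \sum_{\td\bfA}{}^{(\eps_0)}(g_0)_{\td\bfA}\brk{{}^{(\eps_0)}Z_0^{\td\bfA}, f}$, with all claimed quantitative bounds following from Part~(1) and the approximation procedure (the constants now depending on $d$, $\eps_0$, $U$, $\supp b_\eta$ through the number of retained terms and the size of the box).

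I do not anticipate a serious obstacle here — both parts are essentially bookkeeping once Theorem~\ref{thm:smth-ker} is in hand. The step requiring the most care is the derivative-counting in Part~(1): one must track that, after the $\rd_x^\alp$ on $\dlt_0(x-y_1)$ is absorbed by integration in $y_1$, every remaining derivative $(\rd_y+\rd_x)^{\alp'}\rd_x^{\bt'}$ lands on a factor for which \ref{hyp:b-y1-smth}, \ref{hyp:eta-smth} (local, $L_b$-weighted) or \ref{hyp:x-3}/\eqref{eq:x-Lb} provide a bound, rather than only the coarser \ref{hyp:b-y1-q}/\ref{hyp:eta-3} bounds — this is precisely why those "smooth" hypotheses were introduced (cf.\ Remarks~\ref{rem:b-y1-smth}, \ref{rem:eta-smth}, \ref{rem:x-3}), and it is the one place where the improved hypotheses are genuinely needed. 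For Part~(2), the only subtlety is ensuring the $L^1\to L^1$ norm of $e_0$ can be made arbitrarily small while the pointwise derivative bounds and the cardinality $\sharp({}^{(\eps_0)}\td\calA)$ stay finite (allowed to blow up as $\eps_0\to 0$); this is automatic since $b_\eta$ is smooth and compactly supported in the interior of $U\times U$, so approximation in $C^{N}$ for any $N$ suffices and controls the $L^1$ operator norm on the bounded domain $U$.
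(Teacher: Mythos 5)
Your proposal is correct and follows essentially the same route as the paper: Part~(1) is read off directly from the explicit formula for $b_{\eta}$ in Theorem~\ref{thm:smth-ker} together with the $L_{b}$-scale hypotheses \ref{hyp:b-y1-smth} and \ref{hyp:eta-smth}, and Part~(2) is the freezing-one-variable (zeroth-order Taylor) decomposition carried out on a partition of unity into small cells, with the error controlled by the derivative bounds times the cell diameter. One caution on your alternative Fourier-series-on-a-box variant for Part~(2): the tensor-product sum must itself be supported in $W_{0}$, which is a general open subset of $U\times U$ and not a product set, so a single global expansion does not satisfy this without re-localizing by \emph{product} cutoffs $\chi_{x_{\bfG}}(x)\chi_{y_{\bfG}}(y)$ -- which is precisely why the paper builds the partition of unity out of such products and freezes the off-diagonal variable cell by cell.
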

\begin{proof}
Recall from Theorem~\ref{thm:smth-ker} that
\begin{align*}
b_{\eta}(x, y) = \sum_{\bfA \in \calA} \sum_{\alp} (-1)^{\abs{\alp}} \rd_{x}^{\alp} \left( c[g_{\bfA}]^{(\alp, J')}(x) (Z^{\bfA})_{J}(y, x) \eta(y, x) \right).
\end{align*}
Part~(1) immediately follows from this expression, \ref{hyp:b-y1-q}, \ref{hyp:b-y1-smth}, \ref{hyp:eta-3} and \ref{hyp:eta-smth} (which are a part of \ref{hyp:smth}). Part~(2) is, of course, a direct consequence of Part~(1). In what follows, we describe a construction of ${}^{(\eps_{0})} \calE_{0}$, ${}^{(\eps_{0})} (g_{0})_{\td{\bfA}}$ and ${}^{(\eps_{0})} Z_{0}^{\td{\bfA}}$.

To ease the notation, we suppress the superscript ${}^{(\eps_{0})}$ in what follows. Let $(\chi_{x_{\bfG}}(x)\chi_{y_{\bfG}}(y))_{\bfG \in \calG_{0}}$ be a finite partition of unity on $\supp b_{\eta}$ but vanishing outside of $W_{0}$ which will be fixed at the end of the construction (see also Remark~\ref{rem:B-eta} below). We have
\begin{align*}
\tensor{(b_{\eta})}{^{J'}_{J}}(x, y)
&= \sum_{\bfG \in \calG_{0}} \sum_{\bfA \in \calA} \sum_{\alp} (-1)^{\abs{\alp}} \rd_{x}^{\alp} \left( c[g_{\bfA}]^{(\alp, J')}(x) (Z^{\bfA})_{J}(y, x) \eta(y, x) \chi_{x_{\bfG}}(x) \chi_{y_{\bfG}}(y) \right).
\end{align*}
Defining $\td{\calA} \ceq \calA \times \bfG_{0}$ and
\begin{align*}
\tensor{(e_{0})}{^{J'}_{J}}(x, y) &\ceq \sum_{\td{\bfA} \in \td{\calA}} \sum_{\alp} (-1)^{\abs{\alp}} \rd_{x}^{\alp} \left( c[g_{\bfA}]^{(\alp, J')}(x) Z^{\bfA} (y, x) \left[ \eta(y, x) - \eta(y_{\bfG}, x)\right] \chi_{x_{\bfG}}(x) \chi_{y_{\bfG}}(y)\right) \\
&\peq + \sum_{\td{\bfA} \in \td{\calA}} \sum_{\alp} (-1)^{\abs{\alp}} \rd_{x}^{\alp} \left( c[g_{\bfA}]^{(\alp, J')}(x) \left[ Z^{\bfA}(y, x) - Z^{\bfA}(y, x_{\bfG}) \right] \eta(y_{\bfG}, x) \chi_{x_{\bfG}}(x) \chi_{y_{\bfG}}(y) \right), \\
\tensor{(g_{0})}{^{J'}_{\td{\bfA}}} (x) &\ceq \sum_{\alp} (-1)^{\abs{\alp}} \rd_{x}^{\alp} \left( c[g_{\bfA}]^{(\alp, J')}(x) \eta(y_{\bfG}, x) \chi_{x_{\bfG}}(x) \right), \\
Z_{0}^{\td{\bfA}}(y) &\ceq Z^{\bfA}(y, x_{\bfG}) \chi_{y_{\bfG}}(y),
\end{align*}
where $\td{\bfA} = (\bfA, \bfG)$, we obtain the desired decomposition and support properties. Moreover, in view of the presence of the differences in each sum in the definition of $e_{0}(x, y)$, as well as \ref{hyp:b-y1-smth} and \ref{hyp:eta-smth}, \eqref{eq:e0-eps} holds if we ensure that the supports of each $\chi_{x_{\bfG}}$ and $\chi_{y_{\bfG}}$ sufficiently small depending on $\eps_{0}$, $d$, $L_{y}$, $L_{b}$, $A_{Z}$ and $A_{\eta}$; at this point, we fix the choice of these functions. The remaining bounds then follow from \ref{hyp:b-y1-q}, \ref{hyp:b-y1-smth}, \ref{hyp:eta-3}, and \ref{hyp:eta-smth}. \qedhere
\end{proof}

\begin{remark} \label{rem:B-eta}
The finite partition of unity $(\chi_{x_{\bfG}}(x)\chi_{y_{\bfG}}(y))_{\bfG \in \calG_{0}}$ used in the proof always exists since $U$ is bounded (hence $\supp b_{\eta}$ is compact). The precise quantitative bounds on the functions $\chi_{x_{\bfG}}$ and $\chi_{y_{\bfG}}$, which determines the constant $C_{d, \eps_{0}, U, \supp b_{\eta}}$, would depend on the regularity assumptions on $U$ (alternatively, on $\supp \eta$ or $\supp b_{\eta}$).
\end{remark}

Combining Lemma~\ref{lem:B-eta}.(1) with a standard contradiction argument, we obtain the following Poincar\'e-type inequality for $\calP^{\ast}$ with optimal orthogonality conditions (i.e., formulated with respect to $\ker \calP^{\ast}$), but with a non-effective constant.

\begin{proposition}[Optimal Poincar\'e-type inequality with a non-effective constant] \label{prop:poincare}
Let $U$ be a connected bounded open subset of $\bbR^{d}$, and assume that there exist $W$, $\bfx$, $\eta$, $\calP$, $S$ and $b_{y_{1}}$ satisfying \ref{hyp:smth}. Assume furthermore that \ref{hyp:U-star} is satisfied for $U$, $\bfx$, and $\eta$.

\begin{enumerate}
\item {\bf Cokernel in $W^{-s, p'}(U)$.} For any $1 < p_{0}, p_{1} < \infty$ and $s_{0}, s_{1} \in \bbR$, we have
\begin{align*}
	\ker_{\td{W}^{-s_{0}, p_{0}'}(U)} \calP^{\ast} = \ker_{\td{W}^{-s_{1}, p_{1}'}(U)} \calP^{\ast}.
\end{align*}
Both are finite-dimensional and, in fact, coincide with $\ker \calP^{\ast}$.
\item {\bf Poincar\'e-type inequality.} For $1 < p < \infty$ and $s \in \bbR$, consider a family $w_{\bfA}(x) \in \td{W}^{s, p}(U)$ $(\bfA = \set{1, \ldots, \dim \ker_{W^{-s, p'}(U)} \calP^{\ast}})$ satisfying $\brk{w_{\bfA}, \bfZ^{\bfA'}} = \dlt_{\bfA}^{\bfA'}$ for some basis $\set{\bfZ^{\bfA'}}$ of $\ker_{W^{-s, p'}(U)} \calP^{\ast}$. Then there exists $C > 0$ such that
\begin{equation*}
	\nrm{\varphi}_{W^{-s, p'}(U)} \leq C \sum_{K} \nrm{(\calP^{\ast} \varphi)_{K}}_{W^{-s-m_{K}, p'}(U)} \quad \hbox{ for all } \varphi \in W^{-s, p'}(U) \hbox{ with } \brk{w_{\bfA}, \varphi} = 0,
\end{equation*}
where $\bfA = 1, \ldots, \dim \ker_{W^{-s, p'}(U)} \calP^{\ast}$.
\end{enumerate}
\end{proposition}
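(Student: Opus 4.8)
The plan is to combine the representation formula dual to the smoothly averaged operator $\calS_{\eta}$ with the standard compactness--contradiction argument indicated around \eqref{eq:poincare-intro}--\eqref{eq:poincare-eff-intro}. First I would feed the hypotheses (which include \ref{hyp:smth} and \ref{hyp:U-star}) into Theorem~\ref{thm:smth-ker}, Case~(2), to get the averaged kernel $K_{\eta}$ with $\calP \calS_{\eta} f = f - \calB_{\eta} f$ for $f \in C^{\infty}_{c}(U)$; under \ref{hyp:U-star} the support property of $K_{\eta}(\cdot, y)$ upgrades $\calS_{\eta}$ to a bounded operator $\td{W}^{s,p}(U) \to \td{W}^{s+m_{K},p}(U)$ for all $1 < p < \infty$, $s \in \bbR$, exactly as in the proof of Theorem~\ref{thm:full-sol-max}.(1). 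Dualizing through \eqref{eq:sob-duality} produces the representation formula $\varphi = \calS_{\eta}^{\ast} \calP^{\ast} \varphi + \calB_{\eta}^{\ast} \varphi$, valid first for $\varphi \in C^{\infty}(\br{U})$ (from the kernel identity $\calP K_{\eta}(\cdot, y) = \dlt_{0}(\cdot - y) - b_{\eta}(\cdot, y)$ together with the fact that, under \ref{hyp:U-star}, $K_{\eta}(\cdot, y)$ and $b_{\eta}(\cdot, y)$ are supported in $\br{U}$), and then for all $\varphi \in W^{-s, p'}(U)$ by density of $C^{\infty}(\br{U})$ and the bounds $\calS_{\eta}^{\ast} : W^{-s-m_{K}, p'}(U) \to W^{-s, p'}(U)$ and $\calB_{\eta}^{\ast} : W^{-s-\dlt, p'}(U) \to W^{-s, p'}(U)$ for any $\dlt > 0$ (the latter because, by Lemma~\ref{lem:B-eta}.(1), $\calB_{\eta}$ and hence $\calB_{\eta}^{\ast}$ has a smooth integral kernel supported in a fixed compact subset of $U \times U$). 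Taking norms then yields the effective weak inequality
\begin{equation*}
\nrm{\varphi}_{W^{-s, p'}(U)} \aleq \sum_{K} \nrm{(\calP^{\ast} \varphi)_{K}}_{W^{-s-m_{K}, p'}(U)} + \nrm{\varphi}_{W^{-s-\dlt, p'}(U)} \qquad \text{for all } \varphi \in W^{-s, p'}(U).
\end{equation*}

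From this, Part~(1) is immediate: if $\bfZ \in W^{-s, p'}(U)$ satisfies $\calP^{\ast} \bfZ = 0$ in $\calD'(U)$, the representation formula collapses to $\bfZ = \calB_{\eta}^{\ast} \bfZ$, which lies in $C^{\infty}(\br{U})$ because $\calB_{\eta}^{\ast}$ has a smooth kernel, so $\bfZ \in \ker \calP^{\ast}$; conversely, since $U$ is bounded, $\ker \calP^{\ast} \subseteq C^{\infty}(\br{U}) \subseteq W^{-s, p'}(U)$. Hence $\ker_{W^{-s, p'}(U)} \calP^{\ast} = \ker \calP^{\ast}$ for every admissible $(s, p)$, which gives the claimed $(s,p)$-independence, and $\dim \ker \calP^{\ast} \leq \# \calA < \infty$ by Lemma~\ref{lem:b-y1-coker}.(1).

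For Part~(2) I would run the usual contradiction argument. If the inequality fails, there is a sequence $\varphi_{n} \in W^{-s, p'}(U)$ with $\nrm{\varphi_{n}}_{W^{-s, p'}(U)} = 1$, $\brk{w_{\bfA}, \varphi_{n}} = 0$ for all $\bfA$, and $\sum_{K} \nrm{(\calP^{\ast} \varphi_{n})_{K}}_{W^{-s-m_{K}, p'}(U)} \to 0$. By Rellich--Kondrachov (Lemma~\ref{lem:sob-cpt}), after passing to a subsequence $\varphi_{n}$ converges in $W^{-s-\dlt, p'}(U)$; applying the weak inequality above to $\varphi_{n} - \varphi_{m}$ shows $(\varphi_{n})$ is Cauchy in $W^{-s, p'}(U)$, so $\varphi_{n} \to \varphi$ there with $\nrm{\varphi}_{W^{-s, p'}(U)} = 1$, $\calP^{\ast} \varphi = 0$ in $\calD'(U)$ and $\brk{w_{\bfA}, \varphi} = 0$. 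By Part~(1), $\varphi \in \ker \calP^{\ast}$, so $\varphi = \sum_{\bfA'} c_{\bfA'} \bfZ^{\bfA'}$, and pairing against $w_{\bfA}$ gives $c_{\bfA} = 0$ for all $\bfA$, i.e. $\varphi = 0$, contradicting $\nrm{\varphi}_{W^{-s,p'}(U)} = 1$. This forces the inequality, with a constant that is non-effective precisely because of the contradiction step.

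The substantive part is the first one: combining Theorem~\ref{thm:smth-ker} and Lemma~\ref{lem:B-eta} and carefully tracking the distinction between $W^{s,p}(U)$ and $\td{W}^{s,p}(U)$ and the support properties forced by \ref{hyp:U-star}, so that both the representation formula and the operator bounds $\calS_{\eta}^{\ast} : W^{-s-m_{K}, p'}(U) \to W^{-s, p'}(U)$, $\calB_{\eta}^{\ast} : W^{-s-\dlt, p'}(U) \to W^{-s, p'}(U)$ are valid for every $s \in \bbR$. Once the effective weak inequality and the identification of the cokernel are in hand, Parts~(1) and (2) are soft.
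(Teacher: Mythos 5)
Your proposal is correct and follows essentially the same route as the paper: the representation formula $\varphi = \calS_{\eta}^{\ast}\calP^{\ast}\varphi + \calB_{\eta}^{\ast}\varphi$ from Theorem~\ref{thm:smth-ker} and Lemma~\ref{lem:B-eta}, the resulting weak elliptic estimate, the identification $\bfZ = \calB_{\eta}^{\ast}\bfZ$ for cokernel elements, and a Rellich--Kondrachov contradiction argument. The only (immaterial) difference is in the compactness step, where you show the contradiction sequence is Cauchy in $W^{-s,p'}(U)$ by applying the weak inequality to differences, whereas the paper extracts a weak limit and uses the elliptic estimate to show it is nonzero; both are standard and equivalent.
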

By \emph{non-effective}, we mean that there is no quantitative relationship between the constant $C$ and the parameters of our construction (e.g., $A_{\bfx}$, $A_{S}$ etc.). This feature is due to the use of a compactness argument in the proof below.

\begin{proof} 
We begin by observing that, as in the proof of Theorem~\ref{thm:full-sol-max}, we have the mapping properties $\tensor{(\calS_{\eta})}{^{K}_{J}} : \td{W}^{s, p}(U) \to \td{W}^{s+m_{K}, p}(U)$, and thus $\tensor{(\calS_{\eta}^{\ast})}{_{J}^{K}} : W^{-s-m_{K}, p'}(U) \to W^{-s, p'}(U)$. Thus, for every $\varphi \in W^{-s, p'}(U)$, we have
\begin{equation*}
\varphi = \calS_{\eta}^{\ast} \calP^{\ast} \varphi + \calB_{\eta}^{\ast} \varphi,
\end{equation*}
where $\calB_{\eta}$ is a smoothing operator according to Lemma~\ref{lem:B-eta}. As a consequence, for any $\dlt > 0$, $1 < p < \infty$ and $s \in \bbR$,
\begin{equation*}
	\nrm{\calB_{\eta} \varphi}_{W^{-s, p'}(U)} \aleq_{\dlt, s, p} \nrm{\varphi}_{W^{-s-\dlt, p'}(U)}.
\end{equation*}
From this estimate, it immediately follows that for $\bfZ \in \ker_{W^{-s, p'}(U)} \calP^{\ast}$, we have $\bfZ = \calB_{\eta}^{\ast} \bfZ$, and thus the cokernels in different Sobolev spaces are the same; this implies Part~(1). Moreover, combined with Theorem~\ref{thm:smth-ker}, we obtain the elliptic estimate
\begin{equation} \label{eq:poincare:pf:elliptic}
	\nrm{\varphi}_{W^{-s, p'}(U)} \aleq_{s, p, \dlt} \sum_{K} \nrm{(\calP^{\ast} \varphi)_{K}}_{W^{-s-m_{K}, p'}(U)} + \nrm{\varphi}_{W^{-s-\dlt, p'}(U)},
\end{equation}
for every $\varphi \in W^{-s, p'}(U)$.

We are ready to start the proof of Part~(2). Suppose that the conclusion does not hold; then there exists a sequence $\varphi^{(n)} \in W^{-s, p'}(U)$ such that
\begin{equation*}
	\nrm{\varphi^{(n)}}_{W^{-s, p'}(U)} = 1, \quad \nrm{(\calP^{\ast} \varphi^{(n)})_{K}}_{W^{-s - m_{K}, p'}(U)} \leq \frac{1}{n}, \quad \brk{w_{\bfA}, \varphi^{(n)}} = 0 \hbox{ for } \bfA = 1, \ldots, \dim \ker_{W^{-s, p'}(U)} \calP^{\ast}.
\end{equation*}
By Rellich--Kondrachov (Lemma~\ref{lem:sob-cpt}), there exists $\varphi \in W^{-s, p'}(U)$ such that, after passing to a subsequence,
\begin{equation*}
	\varphi^{(n)} \weakto \varphi \hbox{ in } W^{-s, p'}(U), \quad \varphi^{(n)} \to \varphi \hbox{ in } W^{-s-\dlt, p'}(U).
\end{equation*}
By the weak $W^{-s, p'}(U)$-convergence, we have $\calP^{\ast} \varphi = 0$ (i.e., $\varphi \in \ker_{W^{-s, p'}(U)} \calP^{\ast}$) and $\brk{w_{\bfA}, \varphi} = 0$ for all $\bfA = 1, \ldots, \dim \ker_{W^{-s, p'}(U)} \calP^{\ast}$. By the properties of $w_{\bfA}$, we have $\varphi = 0$. But, by the strong $W^{s-\dlt, p}(U)$-convergence and \eqref{eq:poincare:pf:elliptic}, we have $\varphi \neq 0$, which is a contradiction. \qedhere
\end{proof}

By a standard duality argument, Proposition~\ref{prop:poincare} may be turned into an existence statement for the equation $\calP u = f$.
\begin{corollary} \label{cor:abstract-solvability}
Let $U$ be a connected bounded open subset of $\bbR^{d}$, and assume that there exist $W$, $\bfx$, $\eta$, $\calP$, $S$ and $b_{y_{1}}$ satisfying \ref{hyp:smth}. Assume furthermore that \ref{hyp:U-star} is satisfied for $U$, $\bfx$, and $\eta$. For every $f \in \td{H}^{s}(U)$ satisfying $f \perp \ker_{H^{-s}(U)} \calP^{\ast}$, there exists $u = (u^{K})_{K \in \set{1, \ldots, s_{0}}}$ with $u^{K} \in \td{H}^{s+m_{K}}(U)$ such that $\calP u = f$ and $\nrm{u^{K}}_{\td{H}^{s+m_{K}}(U)} \leq C \nrm{f}_{\td{H}^{s}(U)}$, where $C$ is the constant in Proposition~\ref{prop:poincare} with $p = 2$.
\end{corollary}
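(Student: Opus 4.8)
The plan is to derive Corollary~\ref{cor:abstract-solvability} from Proposition~\ref{prop:poincare}.(2) (with $p=2$) by the standard Hahn--Banach/duality argument that converts a coercive estimate for $\calP^{\ast}$ into a solvability statement for $\calP$. Set $X \ceq \td{H}^{s+m_{1}}(U) \times \cdots \times \td{H}^{s+m_{s_{0}}}(U)$, so that $X^{\ast} \equiv H^{-s-m_{1}}(U) \times \cdots \times H^{-s-m_{s_{0}}}(U)$ by Lemma~\ref{lem:sob-duality}. Fix $f \in \td{H}^{s}(U)$ with $f \perp \ker_{H^{-s}(U)} \calP^{\ast}$, and pick a basis $\set{\bfZ^{\bfA'}}$ of $\ker_{H^{-s}(U)} \calP^{\ast}$ together with functions $w_{\bfA} \in \td{H}^{s}(U)$ dual to it (these exist since the cokernel is finite-dimensional by Proposition~\ref{prop:poincare}.(1), and $C^{\infty}_{c}(U)$ is dense in $\td{H}^{s}(U)$, so one may take $w_{\bfA} \in C^{\infty}_{c}(U)$). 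Consider the linear functional defined on the subspace $\calP^{\ast} \bb( H^{-s}(U) \bb) \subseteq X^{\ast}$ by
\begin{equation*}
	\ell(\calP^{\ast} \varphi) \ceq \brk{\varphi, f} \quad \hbox{ for } \varphi \in H^{-s}(U).
\end{equation*}

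The first step is to check that $\ell$ is well-defined and bounded on $\calP^{\ast}(H^{-s}(U))$. If $\calP^{\ast} \varphi_{1} = \calP^{\ast} \varphi_{2}$, then $\varphi_{1} - \varphi_{2} \in \ker_{H^{-s}(U)} \calP^{\ast}$, so $\brk{\varphi_{1} - \varphi_{2}, f} = 0$ by the orthogonality hypothesis on $f$; hence $\ell$ depends only on $\calP^{\ast} \varphi$. For boundedness, given $\varphi \in H^{-s}(U)$, decompose $\varphi = \varphi_{0} + \sum_{\bfA} \brk{w_{\bfA}, \varphi} \bfZ^{\bfA}$ where $\varphi_{0} \ceq \varphi - \sum_{\bfA} \brk{w_{\bfA}, \varphi} \bfZ^{\bfA}$ satisfies $\brk{w_{\bfA}, \varphi_{0}} = 0$ and $\calP^{\ast} \varphi_{0} = \calP^{\ast} \varphi$. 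Then $\brk{\varphi, f} = \brk{\varphi_{0}, f}$ (again using $f \perp \ker_{H^{-s}(U)} \calP^{\ast}$), so by Cauchy--Schwarz and Proposition~\ref{prop:poincare}.(2) applied to $\varphi_{0}$,
\begin{equation*}
	\abs{\ell(\calP^{\ast} \varphi)} = \abs{\brk{\varphi_{0}, f}} \leq \nrm{\varphi_{0}}_{H^{-s}(U)} \nrm{f}_{\td{H}^{s}(U)} \leq C \Big( \sum_{K} \nrm{(\calP^{\ast} \varphi)_{K}}_{H^{-s-m_{K}}(U)} \Big) \nrm{f}_{\td{H}^{s}(U)},
\end{equation*}
which is exactly the estimate $\abs{\ell(\xi)} \leq C \nrm{f}_{\td{H}^{s}(U)} \nrm{\xi}_{X^{\ast}}$ on $\xi \in \calP^{\ast}(H^{-s}(U))$, with $C$ the constant from Proposition~\ref{prop:poincare}.

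The second step is to invoke Hahn--Banach to extend $\ell$ to a bounded functional on all of $X^{\ast}$ with the same norm bound, and then use reflexivity of $X$ (a product of closed subspaces of the reflexive spaces $W^{s+m_{K},2}$, hence reflexive) to represent this extension by an element $u = (u^{K}) \in X$, i.e., $u^{K} \in \td{H}^{s+m_{K}}(U)$ with $\nrm{u^{K}}_{\td{H}^{s+m_{K}}(U)} \aleq \nrm{u}_{X} \leq C \nrm{f}_{\td{H}^{s}(U)}$. Unwinding the definitions, for every $\varphi \in C^{\infty}_{c}(U)$ we have $\sum_{K} \brk{u^{K}, (\calP^{\ast} \varphi)_{K}} = \ell(\calP^{\ast} \varphi) = \brk{\varphi, f}$, which is precisely the statement that $\calP u = f$ in $\calD'(U)$. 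This completes the argument.

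I do not expect any genuine obstacle here: the content is entirely in Proposition~\ref{prop:poincare}, and what remains is the textbook Hahn--Banach duality packaging. The only points requiring minor care are (i)~verifying that the orthogonality condition $f \perp \ker_{H^{-s}(U)} \calP^{\ast}$ is exactly what makes $\ell$ well-defined on the quotient and kills the cokernel contribution in the estimate, and (ii)~ensuring the target space $X$ is reflexive so that the Hahn--Banach extension of $\ell \in (X^{\ast})^{\ast}$ is represented by a bona fide element of $X$ — both of which are immediate from Lemma~\ref{lem:sob-duality} and the reflexivity of $W^{s,2}$ noted in Section~\ref{subsec:ftn-sp}.
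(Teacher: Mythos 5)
Your proof is correct and follows essentially the same route as the paper: define the functional $\ell(\calP^{\ast}\varphi) = \brk{\varphi, f}$ on $\calP^{\ast}(H^{-s}(U))$, use the orthogonality of $f$ to the cokernel together with the Poincar\'e-type inequality of Proposition~\ref{prop:poincare}.(2) (applied to the projected $\varphi_{0}$) to get boundedness, and conclude by Hahn--Banach and the duality $(H^{-s-m_{K}}(U))^{\ast} \equiv \td{H}^{s+m_{K}}(U)$. The only difference is that you spell out the well-definedness of $\ell$ and the reflexivity point explicitly, which the paper leaves implicit.
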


\begin{proof}
By $\td{H}^{s+m_{K}}(U) = (H^{-s-m_{K}}(U))^{\ast}$ from \eqref{eq:sob-duality}, it suffices to construct bounded linear functionals $u^{K}$ on $H^{-s-m_{K}}(U)$ such that $\brk{u^{K}, (\calP^{\ast} \varphi)_{K}} = \brk{f^{J}, \varphi_{J}}$ for all $\varphi_{J} \in H^{-s}(U)$. We will use the preceding proposition and the Hahn--Banach theorem.

Let $(\bfZ^{\bfA})_{J}$ and $(w_{\bfA})^{J'}$ be as in Proposition~\ref{prop:poincare}, and let $\varphi_{J} \in H^{-s}(U)$. Since $\brk{f^{J}, (\bfZ^{\bfA})_{J}} = 0$ for all $\bfA = 1, \ldots, \dim \ker \calP^{\ast}$, we have 
\begin{align*}
\abs{\brk{f^{J}, \varphi_{J}}}
&= \abs*{\brk{f^{J}, \varphi_{J} - \sum_{\bfA} (\bfZ^{\bfA})_{J} \brk{(w_{\bfA})^{J'}, \varphi_{J'}}}}
\leq \sum_{J} \nrm{f^{J}}_{\td{H}^{s}(U)} \nrm*{\varphi_{J} - \sum_{\bfA} (\bfZ^{\bfA})_{J} \brk{(w_{\bfA})^{J'}, \varphi_{J'}}}_{H^{-s}(U)} \\
&\leq C \sum_{J, K} \nrm{f^{J}}_{\td{H}^{s}(U)} \nrm{(\calP^{\ast} \varphi)_{K}}_{H^{-s-m_{K}}(U)},
\end{align*}
where $C$ is the constant in Proposition~\ref{prop:poincare} with $p = 2$. It follows that the linear functional $\brk{u^{K}, (\calP^{\ast} \varphi)_{K}} \ceq \brk{f^{J}, \varphi_{J}}$ is well-defined and bounded on the vector subspace $M = \calP^{\ast} (H^{-s}(U; \bbC^{r_{0}}))$ of $H^{-s-m_{1}} \times \cdots \times H^{-s-m_{s_{0}}}(U)$. By the Hahn--Banach theorem, it may be extended to an element (with an abuse of notation) $u \in (H^{-s-m_{1}} \times \cdots \times H^{-s-m_{s_{0}}}(U))^{\ast} = \td{H}^{s+m_{1}} \times \cdots \times \td{H}^{s+m_{s_{0}}}(U)$ with $\nrm{u^{K}}_{\td{H}^{s+m_{K}}(U)} \leq C \nrm{f}_{\td{H}^{s}(U)}$, as desired.
\end{proof}

Finally, we upgrade Corollary~\ref{cor:abstract-solvability} to the existence of a linear operator $\calQ$ that completes $\calS_{\eta}$ to a full solution operator, which is moreover smoothing and has a prescribed support property, but with non-effective bounds on the operator norms.
\begin{theorem} [Full solution operator and representation formula] \label{thm:full-sol}
Let $U$ be a bounded open subset of $\bbR^{d}$, and assume that $W$, $\bfx$, $\eta$, $\calP$, $S$ and $b_{y_{1}}$ satisfy \ref{hyp:smth}. Assume furthermore that \ref{hyp:U-star} is satisfied for $U$, $\bfx$, and $\eta$. Consider a family $w_{\bfA}(x) \in C^{\infty}_{c}(U)$ $(\bfA = \set{1, \ldots, \dim \ker \calP^{\ast}})$ satisfying $\brk{w_{\bfA}, \bfZ^{\bfA'}} = \dlt_{\bfA}^{\bfA'}$ for some basis $\set{\bfZ^{\bfA'}}$ of $\ker \calP^{\ast}$. Consider also an open subset $V$ of $U$ satisfying, for all $\bfA \in \set{1, \ldots, \dim \ker \calP^{\ast}}$ and $y \in U$,
\begin{equation*}
	\supp w_{\bfA} \subseteq V, \quad \supp_{x} \eta(y, x) \subseteq V.
\end{equation*}
Then there exists a linear operator $\calQ$ such that
\begin{equation*}
	\calP (\calS_{\eta} - \calQ) f = f - \sum_{\bfA \in \set{1, \ldots, \dim \ker \calP^{\ast}}} w_{\bfA} \brk{\bfZ^{\bfA}, f} \quad \hbox{ for all } f \in C^{\infty}_{c}(U),
\end{equation*}
where the integral kernel $q(x, y)$ of $\calQ$ has uniformly bounded derivatives of all order and, for every $y \in U$,
\begin{equation*}
	\supp_{x} q(x, y) \subseteq V \cup \br{\bigcup_{y' \in V, \, (y', y_{1}) \in \supp \eta} \bfx(y', y_{1}, [0, 1])}.
\end{equation*}
By duality, we also have the representation formula
\begin{equation*}
	\varphi = (\calS_{\eta} - \calQ)^{\ast} \calP^{\ast} \varphi + \sum_{\bfA \in \set{1, \ldots, \dim \ker \calP^{\ast}}} \bfZ^{\bfA} \brk{w_{\bfA}, \varphi} \quad \hbox{ for all } \varphi \in C^{\infty}(\br{U}).
\end{equation*}
\end{theorem}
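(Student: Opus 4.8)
The plan is to combine the finite-rank correction already obtained in Lemma~\ref{lem:B-eta}.(2) with the solvability statement from Corollary~\ref{cor:abstract-solvability}, following the model of Step~4 in Section~\ref{subsec:ideas-div}. First I would apply Lemma~\ref{lem:B-eta}.(2) with a sufficiently small $\eps_{0}$ (to be fixed below) and an open set $W_{0}$ with $\supp b_{\eta} \subseteq W_{0} \subseteq \br{W_{0}} \subseteq V \times V$, which is possible since $\supp_{x} b_{\eta}(x, y) \subseteq \supp_{x} \eta(y, x) \subseteq V$ and $\supp_{y} b_{\eta}(x, y)$ is contained in a compact subset of $U$ (one can further shrink $V$ in the $y$-variable or intersect with $U$). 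This yields
\begin{equation*}
	\calP \calS_{\eta} f = f - \calE_{0} f - \sum_{\td{\bfA}} (g_{0})_{\td{\bfA}} \brk{Z_{0}^{\td{\bfA}}, f},
\end{equation*}
where $\calE_{0}$ has operator norm $\leq \eps_{0}$ on $L^{1}(U)$ (and, by interpolation with the smoothness bound and duality, on $\td{H}^{s}(U)$ for the relevant $s$; here \ref{hyp:smth} lets us avoid counting derivatives), and $(g_{0})_{\td{\bfA}}, Z_{0}^{\td{\bfA}}$ are smooth with supports controlled by $W_{0}$. Choosing $\eps_{0}$ small enough that $I - \calE_{0}$ is invertible with a bounded inverse (Neumann series) on the relevant spaces, and noting $(I-\calE_{0})^{-1} = I + \calE_{0}(I-\calE_{0})^{-1}$, I get
\begin{equation*}
	\calP \calS_{\eta} (I-\calE_{0})^{-1} f = f - \sum_{\td{\bfA}} (g_{0})_{\td{\bfA}} \brk{Z_{0}^{\td{\bfA}}, (I-\calE_{0})^{-\ast} f},
\end{equation*}
so the remaining error is a finite-rank operator with smooth kernel supported in $V \times V$ (up to the support spreading of $\calS_{\eta}$, which stays inside $\br{U}$ by \ref{hyp:U-star}).

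Next I would remove this finite-rank error. For each $\td{\bfA}$, decompose $(g_{0})_{\td{\bfA}} = \sum_{\bfA'} (\bfZ^{\bfA'})\brk{w_{\bfA'}, (g_{0})_{\td{\bfA}}} + \td{g}_{\td{\bfA}}$ where $\td{g}_{\td{\bfA}} \ceq (g_{0})_{\td{\bfA}} - \sum_{\bfA'}(\bfZ^{\bfA'})\brk{w_{\bfA'}, (g_{0})_{\td{\bfA}}}$ is smooth, supported in $V$, and orthogonal to $\ker \calP^{\ast}$ (by construction $\brk{\bfZ^{\bfA}, \td{g}_{\td{\bfA}}} = 0$ since the $\bfZ^{\bfA}$ and $w_{\bfA}$ form a biorthogonal system). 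By Corollary~\ref{cor:abstract-solvability} applied to each $\td{g}_{\td{\bfA}} \in \td{H}^{s}(U)$ (with $s$ large, or in $C^{\infty}_{c}$ using \ref{hyp:smth}), there exist $v_{\td{\bfA}} = (v_{\td{\bfA}}^{K})$ with $v_{\td{\bfA}}^{K} \in \td{H}^{s+m_{K}}(U)$ solving $\calP v_{\td{\bfA}} = \td{g}_{\td{\bfA}}$; moreover, since $\td{g}_{\td{\bfA}}$ is supported in $V$, one can arrange $\supp v_{\td{\bfA}}$ to lie in $\br{\bigcup_{y' \in V, (y', y_{1}) \in \supp\eta} \bfx(y', y_{1}, [0,1])}$ by instead using $\calS_{\eta}$ (restricted to $V$-supported data) plus the already-obtained correction, i.e.\ iterating the scheme; alternatively one absorbs the support statement into the definition of $V$ and its $\bfx$-image. (I would in fact solve $\calP v_{\td{\bfA}} = (g_{0})_{\td{\bfA}} - \sum_{\bfA'}(\bfZ^{\bfA'})\brk{w_{\bfA'},(g_0)_{\td\bfA}}$ using the solution operator $\calS_\eta(I-\calE_0)^{-1}$ restricted to the corrected RHS, to retain both smoothness of the kernel and the support property, following Step~4 in Section~\ref{subsec:ideas-div} verbatim.) Then set
\begin{equation*}
	\calQ f \ceq \sum_{\td{\bfA}} v_{\td{\bfA}} \brk{Z_{0}^{\td{\bfA}}, (I-\calE_{0})^{-\ast} f} - \sum_{\td{\bfA}}\Big(\calS_{\eta}(I-\calE_{0})^{-1} \hbox{ applied appropriately}\Big),
\end{equation*}
more precisely $\calQ f \ceq \calS_\eta(I-\calE_0)^{-1} f - (\calS_\eta - \calQ_0)(I-\calE_0)^{-1} f$ with $\calQ_0$ the operator built from the $v_{\td\bfA}$'s; upon bookkeeping one finds $\calP(\calS_\eta - \calQ)f = f - \sum_{\bfA} w_{\bfA}\brk{\bfZ^{\bfA}, f}$, because the contributions of the $\bfZ^{\bfA'}\brk{w_{\bfA'},(g_0)_{\td\bfA}}$ terms reassemble (using $\sum_{\td\bfA}(\text{something})Z_0^{\td\bfA}$ and the biorthogonality) into exactly $\sum_\bfA w_\bfA \brk{\bfZ^\bfA, f}$ after also using the representation formula for $w_\bfA$ — here I must double check that the prescribed $w_\bfA$ indeed appears and not some other spanning set, which forces the precise choice of how $(g_0)_{\td\bfA}$ is split.

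The kernel $q(x,y)$ of $\calQ$ is a finite sum of products of the smooth functions $v_{\td{\bfA}}^{K}(x)$ (supported in the $\bfx$-image of $V$) with the smooth functions $\big((I-\calE_{0})^{-\ast}Z_{0}^{\td{\bfA}}\big)(y)$ (supported in $V$ by the Neumann series and the support of $\calE_0$), plus the part of $\calS_\eta(I-\calE_0)^{-1}$ that is removed, whose kernel is smooth and supported as stated; hence $q$ has uniformly bounded derivatives of all orders (using \ref{hyp:smth} to get all derivatives, and boundedness of $U$) and $\supp_{x} q(x,y) \subseteq V \cup \br{\bigcup_{y' \in V, (y', y_{1}) \in \supp\eta} \bfx(y', y_{1}, [0,1])}$. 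The dual statement $\varphi = (\calS_{\eta}-\calQ)^{\ast}\calP^{\ast}\varphi + \sum_{\bfA}\bfZ^{\bfA}\brk{w_{\bfA},\varphi}$ for $\varphi \in C^{\infty}(\br U)$ follows by taking adjoints of the operator identity $\calP(\calS_\eta - \calQ) = I - \sum_\bfA w_\bfA \brk{\bfZ^\bfA, \cdot}$ and pairing against $C^\infty_c(U)$, then extending by density — note that $\varphi$ need only be in $C^\infty(\br U)$ (not compactly supported) because the operator identity holds as an identity of operators $C^\infty_c(U) \to C^\infty_c(U)$ and the formal adjoint $\calP^\ast$ acts on all of $C^\infty(\br U)$. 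The main obstacle I expect is the bookkeeping in the second paragraph: verifying that the finite-rank correction reassembles into precisely $\sum_\bfA w_\bfA \brk{\bfZ^\bfA, f}$ with the \emph{prescribed} $w_\bfA$, rather than some auxiliary functions, and simultaneously maintaining the stated support property of $q$ under the $(I-\calE_0)^{-1}$ and $\calS_\eta$ manipulations; this requires carefully ordering the corrections (first $\calE_0$, then the $\ker\calP^\ast$ projection) and, as in Step~4 of Section~\ref{subsec:ideas-div}, solving the auxiliary equations $\calP v = (\text{corrected }g_0)$ with a support-preserving solution operator rather than an abstract Hahn--Banach solution.
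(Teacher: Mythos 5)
Your overall strategy matches the paper's (Lemma~\ref{lem:B-eta}.(2), a projection relative to $\ker\calP^{\ast}$, Corollary~\ref{cor:abstract-solvability}, and a Neumann series), but there are two concrete gaps. First, your projection is written backwards. You set $\td{g}_{\td{\bfA}} \ceq (g_{0})_{\td{\bfA}} - \sum_{\bfA'}\bfZ^{\bfA'}\brk{w_{\bfA'},(g_{0})_{\td{\bfA}}}$ and claim $\brk{\bfZ^{\bfA},\td{g}_{\td{\bfA}}}=0$; but $\brk{\bfZ^{\bfA},\td{g}_{\td{\bfA}}} = \brk{\bfZ^{\bfA},(g_{0})_{\td{\bfA}}} - \sum_{\bfA'}\brk{\bfZ^{\bfA},\bfZ^{\bfA'}}\brk{w_{\bfA'},(g_{0})_{\td{\bfA}}}$, which does not vanish since only $\brk{w_{\bfA},\bfZ^{\bfA'}}=\dlt_{\bfA}^{\bfA'}$ is assumed (your $\td{g}_{\td{\bfA}}$ annihilates the $w_{\bfA}$, not $\ker\calP^{\ast}$). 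So Corollary~\ref{cor:abstract-solvability} does not apply to $\td{g}_{\td{\bfA}}$, and the leftover finite-rank defect is spanned by the $\bfZ^{\bfA'}$ (generally not supported in $V$) rather than by the prescribed $w_{\bfA}$ as the theorem requires. The correct split is $g_{\td{\bfA}} \ceq (g_{0})_{\td{\bfA}} - \sum_{\bfA}\brk{\bfZ^{\bfA},(g_{0})_{\td{\bfA}}}\,w_{\bfA}$, which is orthogonal to $\ker\calP^{\ast}$ and leaves a defect in the span of the $w_{\bfA}$; the coefficients are then identified as $\brk{\bfZ^{\bfA},f}$ by pairing the final identity against $\bfZ^{\bfA'}$.

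Second, you do not actually obtain a smooth kernel for $\calQ$. Corollary~\ref{cor:abstract-solvability} is a Hahn--Banach existence statement at one fixed Sobolev level: it produces $v_{\td{\bfA}}\in\td{H}^{s+m_{1}}\times\cdots\times\td{H}^{s+m_{s_{0}}}(V_{1})$ for one $s$, with no claim that the solution can be taken in $C^{\infty}_{c}$ (different $s$ may yield different solutions), so $\sum_{\td{\bfA}} v_{\td{\bfA}}\brk{\cdot,f}$ need not have a kernel with uniformly bounded derivatives of all orders. Your fallback of solving the auxiliary equations with $\calS_{\eta}(I-\calE_{0})^{-1}$ is circular: applying it reproduces a finite-rank error of exactly the same type, so the scheme never terminates. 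The paper's fix is to solve the auxiliary equations only \emph{approximately} with the mollified functions $u_{\td{\bfA}} = \psi_{\eps_{1}}\ast v_{\td{\bfA}}$ (which are smooth and, for $\eps_{1}$ small, still supported in $V$), producing small errors $e_{\td{\bfA}}$ that are grouped into a second error operator $\calE_{1}$ and absorbed, together with $\calE_{0}$, into a single Neumann series $(I-\calE_{0}-\calE_{1})^{-1}$ at the end. Without this mollification-plus-absorption step the smoothness assertion for $q(x,y)$ is not proved.
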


As a consequence, for any $1 < p < \infty$ and $s \in \bbR$, the full solution operator $\calS_{\eta} - \calQ$ extends to a bounded operator $\td{W}^{s, p}(U) \to \td{W}^{s+m_{1}, p} \times \cdots \td{W}^{s+m_{s_{0}}, p} (U)$. Hence, the representation formula holds for all $\varphi \in W^{-s, p'}(U)$, for any $1 < p < \infty$ and $s \in \bbR$. The operator norm of $\calQ$ (and thus that of $\calS_{\eta} - \calQ$) produced in our proof below is non-effective, but only through the application of Corollary~\ref{cor:abstract-solvability}.

\begin{proof}
Let $\eps_{0}, \eps_{1} > 0$ be small parameters to be fixed later. Our starting point is Lemma~\ref{lem:B-eta}.(2), which provides us with an approximation of $\calB_{\eta}$ by a finite rank operator $\sum_{\td{\bfA}} (g_{0})_{\td{\bfA}} \brk{Z_{0}^{\td{\bfA}}, \cdot}$ up to an error operator $\calE_{0}$ obeying, in particular, \eqref{eq:e0-eps} with $\eps_{0}$ on the right-hand side (to ease the notation, we omit the superscript ${}^{(\eps_{0})}$). Let us project each $(g_{0})_{\td{\bfA}}$ to ${}^{\perp}(\ker \calP^{\ast})$ using $w_{\bfA}$; i.e., we introduce
\begin{equation*}
	g_{\td{\bfA}} \ceq (g_{0})_{\td{\bfA}} - \sum_{\bfA} \brk{\bfZ^{\bfA}, (g_{0})_{\td{\bfA}}} w_{\bfA},
\end{equation*}
where, here and throughout the proof, the index $\bfA$ is summed over $\set{1, \ldots, \dim \ker \calP^{\ast}}$. Then we arrive at
\begin{equation} \label{eq:full-sol:pf:1}
\calP \calS_{\eta} f = f - \calE_{0} f - \sum_{\bfA} w_{\bfA} \ell^{\bfA}(f) - \sum_{\td{\bfA} \in \td{\calA}} g_{\td{\bfA}} \brk{Z^{\td{\bfA}}_{0}, f},
\end{equation}
where $\ell^{\bfA}(f)$ is a linear functional for each $\bfA$ (it may be computed explicitly, but it does not matter) and $g_{\td{\bfA}} \perp \ker \calP^{\ast}$ for each $\td{\bfA} \in \td{\calA}$.

For each such $g_{\td{\bfA}}$, we now look for $u_{\td{\bfA}} \in C^{\infty}_{c}(V)$ that solves the approximate equation
\begin{equation} \label{eq:full-sol:pf:2}
	\calP u_{\td{\bfA}} = g_{\td{\bfA}} + e_{\td{\bfA}},
\end{equation}
with $\supp u_{\td{\bfA}}, \supp e_{\td{\bfA}} \subseteq V$ and an error bound (in terms of $\eps_{1}$) for $e_{\td{\bfA}}$. For this purpose, we first fix an open subset $V_{1}$ such that
\begin{equation*}
	\bigcup_{\td{\bfA} \in \td{\calA}} \supp g_{\td{\bfA}} \subseteq V_{1} \subseteq \br{V_{1}} \subseteq V,
\end{equation*}
and apply Corollary~\ref{cor:abstract-solvability} to find a solution $v_{\td{\bfA}}$ in, say, $\td{H}^{1+m_{1}} \times \cdots \times \td{H}^{1+m_{s_{0}}}(V_{1})$ to $\calP v_{\td{\bfA}} = g_{\td{\bfA}}$. We fix $\psi_{1} \in C^{\infty}_{c}(B_{1}(0))$ such that $\int \psi_{1} = 1$ and define
\begin{equation*}
	u_{\td{\bfA}} \ceq \psi_{\eps_{1}} \ast v_{\td{\bfA}},
\end{equation*}
where $v_{\td{\bfA}}$ is viewed as an element of $H^{1+m_{1}} \times \cdots \times H^{1+m_{s_{0}}}(\bbR^{d})$ that is supported in $\br{V_{1}}$ (recall the definition of $\td{H}^{s}(V_{1})$ as a subspace of $H^{s}$) and $\psi_{\eps}(\cdot) \ceq \eps^{-d} \psi_{1}(\eps^{-1}(\cdot))$. Then \eqref{eq:full-sol:pf:2} holds with
\begin{equation*}
	e_{\td{\bfA}} = - (g_{\td{\bfA}} - \psi_{\eps_{1}} \ast g_{\td{\bfA}}) - [\psi_{\eps_{1}} \ast, \calP] v_{\td{\bfA}}.
\end{equation*}
We now verify that such $u_{\td{\bfA}}$ and $e_{\td{\bfA}}$ have desirable properties. First, as long as $\eps_{1} < \dist(\br{V_{1}}, \rd V)$, we have
\begin{equation} \label{eq:full-sol:pf:3-1}
	\supp u_{\td{\bfA}}, \supp e_{\td{\bfA}} \subseteq V.
\end{equation}
Moreover, rewriting $[\psi_{\eps_{1}} \ast, \calP] v_{\td{\bfA}} = \psi_{\eps_{1}} \ast \calP v_{\td{\bfA}} - \calP v_{\td{\bfA}} + \calP (v_{\td{\bfA}} - \psi_{\eps_{1}} \ast v_{\td{\bfA}})$ and using the property of convolution, we obtain
\begin{equation} \label{eq:full-sol:pf:3-2}
	\nrm{e_{\td{\bfA}}}_{L^{2}} \aleq \nrm{g_{\td{\bfA}} - \psi_{\eps_{1}} \ast g_{\td{\bfA}}}_{L^{2}}
	+ \nrm{\calP v_{\td{\bfA}} - \psi_{\eps_{1}} \ast \calP v_{\td{\bfA}}}_{L^{2}}
	+ \nrm{v_{\td{\bfA}} - \psi_{\eps_{1}} \ast v_{\td{\bfA}}}_{H^{m_{1}} \times \cdots \times H^{m_{s_{0}}}} \aleq \eps_{1} \nrm{g_{\td{\bfA}}}_{H^{1}},
\end{equation}
where the implicit constants depend on that in Corollary~\ref{cor:abstract-solvability} with $s_{0} = 1$ and $\nrm{\calP}_{H^{s+m_{1}} \times \cdots \times H^{s+m_{s_{0}}} \to H^{s}}$ with $s = 0, 1$. Finally, we clearly have, for every $N \geq 0$,
\begin{equation} \label{eq:full-sol:pf:3-3}
	\nrm{u_{\td{\bfA}}}_{H^{1+N+m_{1}} \times \cdots \times H^{1+N+m_{s_{0}}}} \leq C_{N} \eps_{1}^{-N} \nrm{g_{\td{\bfA}}}_{H^{1}}, \quad
	\nrm{e_{\td{\bfA}}}_{H^{1+N}} \leq C \nrm{g_{\td{\bfA}}}_{H^{1+N}} + C_{N} \eps_{1}^{-N} \nrm{g_{\td{\bfA}}}_{H^{1}}.
\end{equation}

We are now ready to conclude the proof. Introducing the operators
\begin{equation*}
	\calQ_{1} f \ceq \sum_{\td{\bfA} \in \td{\calA}} u_{\td{\bfA}} \brk{Z_{0}^{\td{\bfA}}, f}, \quad
	\calE_{1} f \ceq \sum_{\td{\bfA} \in \td{\calA}} e_{\td{\bfA}} \brk{Z_{0}^{\td{\bfA}}, f},
\end{equation*}
we arrive at
\begin{align*}
	\calP (\calS_{\eta} - \calQ_{1}) f= f - (\calE_{0} + \calE_{1}) f - \sum_{\bfA \in \set{1, \ldots, \dim \ker \calP^{\ast}}} w_{\bfA} \ell^{\bfA}(f).
\end{align*}
We define the operator $\calQ$ by
\begin{equation*}
	\calS_{\eta} - \calQ = (\calS_{\eta} - \calQ_{1})(I - \calE)^{-1}, \quad \hbox{ where } \calE \ceq \calE_{0} + \calE_{1}.
\end{equation*}
To finish the proof, it remains to verify the desired properties of $\calQ$ (including that it is well-defined). First, by Lemma~\ref{lem:B-eta}.(2) and \eqref{eq:full-sol:pf:3-3}, we see that the integral kernels $e(x, y)$ and $q_{1}(x, y)$ of $\calE$ and $\calQ_{1}$, respectively, have uniformly bounded derivatives of all order and $\supp e(\cdot, y), \, \supp q_{1}(\cdot, y) \subseteq V$ for all $y \in U$. Next, in view of Lemma~\ref{lem:B-eta}.(2) (especially \eqref{eq:e0-eps}) and \eqref{eq:full-sol:pf:3-2}, we may choose $\eps_{0}$ and $\eps_{1}$ small enough so that $\nrm{\calE}_{L^{2}(U) \to L^{2}(U)} < 1$. As a consequence, $I - \calE$ is invertible on $L^{2}(U)$. Moreover, in view of the identities
\begin{equation*}
	(I - \calE)^{-1} = I + \calE + \calE^{2} + \cdots = I + \calE (1-\calE)^{-1}, \quad
	\left[(I - \calE)^{-1}\right]^{\ast} = I + \calE^{\ast} \left[ (1-\calE)^{-1}\right]^{\ast},
\end{equation*}
it follows that the integral kernel $K(x, y)$ of $(I - \calE)^{-1} - I$ also has uniformly bounded derivatives of all order and $\supp K(\cdot, y) \subseteq V$ for all $y \in U$. Now writing
\begin{equation*}
	\calQ = - (\calS_{\eta} - \calQ_{1})\left( (I - \calE)^{-1} - I \right) + \calQ_{1},
\end{equation*}
the desired properties of the integral kernel $q(x, y)$ of $\calQ$ follow from those for $\calS_{\eta}$, $\calQ_{1}$ and $(I - \calE)^{-1} - I$. \qedhere
\end{proof}

\begin{example} \label{ex:div-full-sol}
For $\calP u = (\rd_{j} + B_{j}) u^{j}$, $\bfx = \ul{\bfx}$ (straight line segments), and $\eta(y, y_{1}) = \eta_{1}(y_{1})$, Theorem~\ref{thm:full-sol} is the precise version of the result outlined in Step~4 (non-completely integrable case).
\end{example}

\begin{remark} [Comparison with Theorem~\ref{thm:full-sol-max}] \label{rem:full-sol-w}
Other than the non-effective nature of the operator bounds, another difference between Theorems~\ref{thm:full-sol-max} and \ref{thm:full-sol} is the latter's ability to prescribe the dual family $w_{\bfA}$ to $\ker \calP^{\ast}$. This feature allow us to have more freedom in prescribing the support properties of the solution operator.
\end{remark}

We are ready to give a precise formulation and proof of Theorem~\ref{thm:main-summary-bogovskii}.
\begin{proof}[Precise formulation and proof of Theorem~\ref{thm:main-summary-bogovskii}]
Theorem~\ref{thm:main-summary-bogovskii}.(2) and (3) -- with the admissibility of $\bfx$ and \ref{hyp:crc} replaced by \ref{hyp:x-1}--\ref{hyp:x-2} and \ref{hyp:crc-x-q} on $U$ -- follows from Theorem~\ref{thm:full-sol} by simply choosing $\eta(y, y_{1}) = \eta_{1}(y_{1})$ with $\eta_{1} \in C^{\infty}_{c}(U_{1})$ that satisfies $\int \eta_{1}(y_{1}) \, \ud y_{1} = 1$. The full solution operator $\td{\calS}$ is precisely $\calS_{\eta} + \calQ$. For the properties of $\ker_{W^{-s, p'}(U)} \calP^{\ast}$, we apply Proposition~\ref{prop:poincare} (for the invariance and finite-dimensional properties), and Lemma~\ref{lem:b-y1-coker}.(1) for paths that connect $y_{1} \in V$ and $y \in U$ for the invariance of the dimension under restrictions.
\end{proof}

\section{From graded augmented system to \ref{hyp:crc}} \label{sec:ode}
In Section~\ref{sec:crc}, we identified some abstract conditions needed to construct the operator $\calS_{\eta}$ with prescribed support properties. In this section, we formulate and prove a precise version of Proposition~\ref{prop:aug2crc} (see Proposition~\ref{prop:ode2crc} below), i.e., that these conditions follow from the existence of a graded augmented system with adequate quantitative bounds. We also provide a proof of Proposition~\ref{prop:zero-curv}.

This section is structured as follows. In Section~\ref{subsec:1st-order-pde}, we record the precise formulas for the objects $\tensor{S}{_{J}^{(\gmm, K)}}$ and $\tensor{(b_{y_{1}})}{^{J'}_{J}}$ arising in \ref{hyp:crc-full} in terms of a graded augmented system. To prove Proposition~\ref{prop:aug2crc}, it remains to establish the quantitative bounds in \ref{hyp:crc-x-q}, \ref{hyp:b-y1-q} and \ref{hyp:b-y1-smth}. In Section~\ref{subsec:ode}, we formulate and prove an abstract ODE lemma for this purpose. Then in Section~\ref{subsec:ode2crc}, we prove the main result of this section (Proposition~\ref{prop:ode2crc}). Finally, in Section~\ref{subsec:conn}, we give a geometric interpretation of cokernel elements as parallel vectors with respect to a suitable connection on a vector bundle constructed from the augmented system, and prove Proposition~\ref{prop:zero-curv} as a simple byproduct.

\subsection{Structure of $S(y, y_{1}, t)$ and $b_{y_{1}}$ for a given augmented system} \label{subsec:1st-order-pde}
Let $\calP$ be an $r_{0} \times s_{0}$-matrix-valued differential operator on an open subset $U \subseteq \bbR^{d}$, with $m_{K}$ denoting the order of $(\calP^{\ast} \varphi)_{K}$ for each $K \in \set{1, \ldots, s_{0}}$. Given $m_{K}' \in \bbZ_{\geq 0}$ for each $K \in \set{1, \ldots, s_{0}}$ and an $\bbC^{r_{0}}$-valued function $(\varphi_{J})_{J \in \set{1, \ldots, r_{0}}}$ on $U$, let $(\Phi_{\bfA} = \Phi_{\bfA}(y))_{\bfA \in \calA}$ be (graded) augmented variables as in Definition~\ref{def:aug} (see also the discussion above this definition for our conventions for indices). The aim of this short subsection is to record the formulas for $S(y, y_{1}, t)$ and $b_{y_{1}}$ in \ref{hyp:crc-full} in terms of the augmented system.

Let $\tensor{(\bfB_{i})}{_{\bfA}^{\bfA''}}$ and $\tensor{(\bfC_{i})}{_{\bfA}^{(\gmm, K)}}$ be the coefficients of the system of first-order PDEs in \ref{hyp:aug3}. Let ${}^{(\bfx_{y, y_{1}})} \tensor{\bfPi}{_{\bfA}^{\bfA'}}(s, t)$ be the fundamental matrix solving the ODE
\begin{align*}
	\rd_{s} \left[ {}^{(\bfx_{y, y_{1}})} \tensor{\bfPi}{_{\bfA}^{\bfA'}}(s, t)\right] &= \dot{\bfx}_{y, y_{1}}^{i} \left( \tensor{(\bfB_{i})}{_{\bfA}^{\bfA''}} \circ \bfx_{y, y_{1}} \right)(y, y_{1}, s) \, {}^{(\bfx_{y, y_{1}})} \tensor{\bfPi}{_{\bfA''}^{\bfA'}}(s, t), \\
	 {}^{(\bfx_{y, y_{1}})}\tensor{\bfPi}{_{\bfA}^{\bfA'}}(t, t) &= \dlt_{\bfA}^{\bfA'},
\end{align*}
or equivalently,
\begin{equation} \label{eq:fund-mat-eq}
	{}^{(\bfx_{y, y_{1}})} \tensor{\bfPi}{_{\bfA}^{\bfA'}}(s, t) = \dlt_{\bfA}^{\bfA'} - \int_{s}^{t} \dot{\bfx}_{y, y_{1}}^{i}  \left( \tensor{(\bfB_{i})}{_{\bfA}^{\bfA''}} \circ \bfx_{y, y_{1}} \right) (y, y_{1}, s') \, {}^{(\bfx_{y, y_{1}})} \tensor{\bfPi}{_{\bfA''}^{\bfA'}}(s', t) \, \ud s'.
\end{equation}
Recall that, by Duhamel's principle, \ref{hyp:aug1} and \ref{hyp:aug3}, we have
\begin{equation*} \tag{\ref{eq:varphi-duhamel}}
\begin{aligned}
\varphi_{J}(y) &= - \int_{0}^{1}  {}^{(\bfx_{y, y_{1}})} \tensor{\bfPi}{_{J}^{\bfA}}(0, s) \dot{\bfx}_{y, y_{1}}^{i} \left( \tensor{(\bfC_{i})}{_{\bfA}^{(\gmm, K)}}  \circ \bfx_{y, y_{1}} \right) \left( (\rd^{\gmm} \calP^{\ast} \varphi)_{K} \circ \bfx_{y, y_{1}} \right) (y, y_{1}, s) \, \ud s  \\
&\peq + {}^{(\bfx_{y, y_{1}})} \tensor{\bfPi}{_{J}^{\bfA}}(0, 1) \Phi_{\bfA}(y_{1}).
\end{aligned}
\end{equation*}

From this expression, we immediately obtain expressions for $\tensor{S}{_{J}^{(\gmm, K)}}(y, y_{1}, s)$ and  $b_{y_{1}}(x, y)$ in \ref{hyp:crc-full}, as well as  $K_{y_{1}}(x, y)$ in \eqref{eq:K-y1-def}. We record them in the following proposition.

\begin{proposition} \label{prop:crc-ODE}
Let $(\varphi_{J})_{J \in \set{1, \ldots, r_{0}}} \mapsto (\Phi_{\bfA})_{\bfA \in \calA}$ satisfy \ref{hyp:aug1}--\ref{hyp:aug3} with smooth $c[\Phi_{\bfA}]^{(\alp, J)}$, $\tensor{(\bfB_{i})}{_{\bfA}^{\bfA'}}(y)$ and $\tensor{(\bfC_{i})}{_{\bfA}^{(\gmm, K)}}(y)$. Then \ref{hyp:crc-full} is satisfied with the following objects:
\begin{align}
	\tensor{S}{_{J}^{(\gmm, K)}}(y, y_{1}, t) &\ceq - {}^{(\bfx_{y, y_{1}})} \tensor{\bfPi}{_{J}^{\bfA}}(0, t) \dot{\bfx}_{y, y_{1}}^{i} \left(\tensor{(\bfC_{i})}{_{\bfA}^{(\gmm, K)}} \circ \bfx_{y, y_{1}}\right)(y, y_{1}, t), \label{eq:S-gmm-ODE} \\
	\tensor{(b_{y_{1}})}{^{J'}_{J}}(x, y) &\ceq \sum_{\bfA \in \calA} \tensor{(Z^{\bfA})}{_{J}}(y, y_{1})\tensor{(g_{\bfA})}{^{J'}}(x, y_{1}), \label{eq:b-y1-ODE} \\
	(Z^{\bfA})_{J}(y, y_{1}) &\ceq  {}^{(\bfx_{y, y_{1}})} \tensor{\bfPi}{_{J}^{\bfA}}(0, 1), \label{eq:Z-ODE} \\
	\brk{\tensor{(g_{\bfA})}{^{J}}(\cdot, y_{1}), \varphi_{J}} &\ceq \Phi_{\bfA}(y_{1}) \quad (\hbox{i.e., } c[g_{\bfA}]^{(\alp, J)}(y_{1}) = c[\Phi_{\bfA}]^{(\alp, J)}(y_{1})), \label{eq:g-ODE}
\end{align}
In particular, $b_{y_{1}}$ clearly satisfies \eqref{eq:b-y1-properties}. Moreover, defining the kernel $\tensor{(K_{y_{1}})}{^{K}_{J}}(x, y)$ by
\begin{equation*}
	\brk{\tensor{(K_{y_{1}})}{^{K}_{J}}(x, y), \psi_K(x)} = \sum_{\gmm} \int_{0}^{1} \tensor{S}{_{J}^{(\gmm, K)}}(y, y_{1}, s) \rd_{x}^{\gmm} \psi_{K} (\bfx(y, y_{1}, s)) \, \ud s,
\end{equation*}
we have \eqref{eq:trun-ker-y1-gen}, i.e., $\calP K_{y_{1}}(x, y) = \dlt_{0}(x-y) - b_{y_{1}}(x, y)$.
\end{proposition}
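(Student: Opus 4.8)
\emph{Approach and Step 1 (the ODE along a curve).} The plan is to integrate the augmented first-order system \eqref{eq:aug-pde} along the curves $\bfx(y, y_{1}, \cdot)$ and apply Duhamel's principle; the formulas \eqref{eq:S-gmm-ODE}--\eqref{eq:g-ODE} then emerge simply by reading off the two terms of the resulting variation-of-constants expression. Note that only \ref{hyp:aug1}--\ref{hyp:aug3} are used here; the graded structure \ref{hyp:aug4} plays no role in these structural identities (it enters only in the quantitative bounds of Proposition~\ref{prop:ode2crc}). Concretely, fix $(y, y_{1}) \in W$ and $\varphi \in C^{\infty}_{c}(U)$. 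Evaluating \eqref{eq:aug-pde} at the points $\bfx(y, y_{1}, s)$ and contracting with $\dot{\bfx}(y, y_{1}, s) = \rd_{s} \bfx(y, y_{1}, s)$ gives the linear ODE \eqref{eq:aug-ode} for $s \mapsto \Phi_{\bfA}(\bfx(y, y_{1}, s))$, whose forcing term involves $\rd^{\gmm}(\calP^{\ast}\varphi)_{K}$ only for the finitely many $\gmm$ with $\tensor{(\bfC_{i})}{_{\bfA}^{(\gmm, K)}} \neq 0$. Since $\bfB_{i}$, $\bfC_{i}$, $c[\Phi_{\bfA}]$ and $\bfx$ are smooth, the fundamental matrix ${}^{(\bfx_{y, y_{1}})} \tensor{\bfPi}{_{\bfA}^{\bfA'}}(s, t)$ defined by \eqref{eq:fund-mat-eq} exists and is $C^{1}$ in $(s, t)$, and the standard variation-of-constants formula for \eqref{eq:aug-ode} holds. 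Evaluating it at $s = 0$, and using $\bfx(y, y_{1}, 0) = y$, $\bfx(y, y_{1}, 1) = y_{1}$ together with $\Phi_{J} = \varphi_{J}$ for $J \in \set{1, \ldots, r_{0}}$ from \ref{hyp:aug1}, produces exactly \eqref{eq:varphi-duhamel}.

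\emph{Step 2 (reading off $S$, $b_{y_{1}}$, and the kernel identity).} The first term of \eqref{eq:varphi-duhamel} is $\int_{0}^{1} \tensor{S}{_{J}^{(\gmm, K)}}(y, y_{1}, s)\, (\rd^{\gmm}(\calP^{\ast}\varphi)_{K})(\bfx(y, y_{1}, s)) \, \ud s$ with $\tensor{S}{_{J}^{(\gmm, K)}}$ given by \eqref{eq:S-gmm-ODE} (the sign being absorbed into $S$); it is continuous in $s$ and nonzero for only finitely many $\gmm$, precisely as \ref{hyp:crc-full} requires. The boundary term ${}^{(\bfx_{y, y_{1}})} \tensor{\bfPi}{_{J}^{\bfA}}(0, 1)\, \Phi_{\bfA}(y_{1})$ is, by \ref{hyp:aug2}, a finite linear combination of the derivatives $\rd^{\alp}\varphi_{J'}(y_{1})$ with coefficients ${}^{(\bfx_{y, y_{1}})} \tensor{\bfPi}{_{J}^{\bfA}}(0, 1)\, c[\Phi_{\bfA}]^{(\alp, J')}(y_{1})$, hence equals the pairing $\brk{\tensor{(b_{y_{1}})}{^{J'}_{J}}(\cdot, y), \varphi_{J'}}$ with $\tensor{(b_{y_{1}})}{^{J'}_{J}}$ of the form \eqref{eq:b-y1-ODE}, where $Z^{\bfA}$ and $g_{\bfA}$ are as in \eqref{eq:Z-ODE}--\eqref{eq:g-ODE} (with the sign and coefficient conventions of \ref{hyp:b-y1-q} for $(g_{\bfA})^{J'}$ written as a combination of derivatives of $\dlt_{0}(x - y_{1})$); in particular $\supp \tensor{(b_{y_{1}})}{^{J'}_{J}}(\cdot, y) \subseteq \set{y_{1}}$, which is \eqref{eq:b-y1-properties}. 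Thus \eqref{eq:varphi-duhamel} is exactly \eqref{eq:crc-x-trunc}, so \ref{hyp:crc-full} holds. Finally, since the kernel $\tensor{(K_{y_{1}})}{^{K}_{J}}$ defined in the statement coincides with \eqref{eq:K-y1-def}, the identity $\calP K_{y_{1}}(x, y) = \dlt_{0}(x - y) - b_{y_{1}}(x, y)$ is immediate from Proposition~\ref{prop:crc-x-duality} (whose hypothesis \eqref{eq:crc-x-trunc} and conclusion \eqref{eq:trun-ker-y1-gen} are exactly what we have just verified).

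\emph{Main obstacle.} There is no deep difficulty: the content is essentially the bookkeeping of Step~2 together with the correct distributional reading of the boundary term and the matching of sign conventions in \ref{hyp:b-y1-q}. The one point that requires care is when the curve $\bfx(y, y_{1}, \cdot)$ leaves $U$: there one uses that $\calP^{\ast}\varphi$ has compact support in $U$, so its extension by zero is smooth and the integrand in \eqref{eq:crc-x-trunc} is meaningful for all $s \in [0,1]$, together with the fact that the (smooth) coefficients of the augmented system extend past $\rd U$, so that the ODE \eqref{eq:aug-ode} continues to make sense along the entire curve.
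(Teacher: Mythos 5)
Your proposal is correct and follows the same route as the paper: the paper's (implicit) proof is precisely the derivation preceding the proposition, namely restricting the augmented system \eqref{eq:aug-pde} to the curve to get \eqref{eq:aug-ode}, applying Duhamel to obtain \eqref{eq:varphi-duhamel}, and then reading off \eqref{eq:S-gmm-ODE}--\eqref{eq:g-ODE} and invoking the duality statement of Proposition~\ref{prop:crc-x-duality}. Your closing remark about extending the coefficients and curves past $\rd U$ is consistent with how the paper handles this (the quantitative hypotheses are imposed on a slightly larger open set $\td{U} \supseteq \br{U}$).
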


\subsection{Abstract ODE estimates} \label{subsec:ode}

Let $\calA$ be a finite index set of size $N$, which we label $\set{1, \ldots, N}$ without loss of generality. Consider the following $N \times N$ system of ODEs for $t \in [0, T]$ for some $T > 0$:
\begin{equation} \label{eq:ode}
	\frac{\ud}{\ud t} \psi_{\bfA}(t) = \sum_{\bfA'} \tensor{B}{_{\bfA}^{\bfA'}}(t) \psi_{\bfA'}(t) + g_{\bfA}(t),
\end{equation}
where $\bfA, \bfA' \in \set{1, \ldots, N}$. As in \ref{hyp:aug4}, we associate to each index $\bfA \in \set{1, \ldots, N}$ a \emph{degree} $d_{\bfA} \in \bbZ$, and without loss of generality, we assume that
\begin{equation} \label{eq:ode-dmax}
	\max_{\bfA \in \set{1, \ldots, N}} d_{\bfA} = 0.
\end{equation}
We assume that $B$ obeys the following vanishing condition:
\begin{equation} \label{eq:ode-B}
	\tensor{B}{_{\bfA}^{\bfA'}}(s) \equiv 0 \quad \hbox{ if } d_{\bfA} > d_{\bfA'} + 1.
\end{equation}

\begin{remark} \label{rem:ode-B}
In our applications, we take $\tensor{B}{_{\bfA}^{\bfA'}}(s) = \dot{\bfx}^{i} \left(\tensor{(\bfB_{i})}{_{\bfA}^{\bfA'}} \circ \bfx\right)(y, y_{1}, s)$ for a fixed $(y, y_{1}) \in W$. Hence, \eqref{eq:ode-B} follows from the properties of degree introduced in Definition~\ref{def:aug}.
\end{remark}

The solution to \eqref{eq:ode} with the final data $\psi_{\bfA}(t) = 0$ is given by
\begin{equation} \label{eq:ode-sol}
	\psi_{\bfA}(t) = - \sum_{\bfA'} \int_{t}^{T} \tensor{\Pi}{_{\bfA}^{\bfA'}}(t, s) g_{\bfA'}(s) \, \ud s,
\end{equation}
where $\tensor{\Pi}{_{\bfA}^{\bfA'}}(s, t)$ is the fundamental solution solving,
\begin{equation*}
	\frac{\ud}{\ud s} \Pi(s, t) = B(s) \Pi(s, t), \quad \Pi(t, t) = I,
\end{equation*}
or equivalently,
\begin{equation} \label{eq:ode-Pi}
	\tensor{\Pi}{_{\bfA}^{\bfA'}}(s, t) = \tensor{\dlt}{_{\bfA}^{\bfA'}} - \int_{s}^{t} \sum_{\bfA''} \tensor{B}{_{\bfA}^{\bfA''}}(s') \tensor{\Pi}{_{\bfA''}^{\bfA'}}(s', t) \, \ud s'.
\end{equation}
Eventually, the kernel $S_{\gmm}(y, y_{1}, s)$ in \ref{hyp:crc-x} will be constructed out of $\tensor{\Pi}{_{\bfA}^{\bfA'}}(0, s)$ for $s \in [0, T]$  (see Proposition~\ref{prop:crc-ODE} and Remark~\ref{rem:ode-B}). Our goal in this subsection is to estimate the size of $\Pi(t, s)$ in terms of the coefficients $B$ in \eqref{eq:ode}.

For this purpose, we first formulate and prove a result for an abstract ODE (or more precisely, an integral equation; see \eqref{eq:abs-ode}). We introduce the following scale of norms for $b : [0, T] \to \bbR$ and $m \in \bbZ_{\leq 0}$:
\begin{align*}
	\nnrm{b}_{0} & \ceq \nrm{b}_{L^{\infty}[0, T]} & & \hbox{ if } m = 0, \\
	\nnrm{b}_{m} &: = \int_{0}^{T} \abs{b(s)} s^{-m-1} \, \ud s & & \hbox{ if } m < 0.
\end{align*}
We introduce
\begin{align*}
	\nnrm{B}_{(\infty)} &\ceq \sum_{\bfA, \bfA' : d_{\bfA} = d_{\bfA'} + 1} \nnrm{\tensor{B}{_{\bfA}^{\bfA'}}}_{d_{\bfA} - d_{\bfA'} - 1} = \sum_{\bfA, \bfA' : d_{\bfA} = d_{\bfA'} + 1} \nrm{\tensor{B}{_{\bfA}^{\bfA'}}}_{L^{\infty}[0, T]} , \\
	\nnrm{B}_{(1)} &\ceq \sum_{\bfA, \bfA' : d_{\bfA} \leq d_{\bfA'}} \nnrm{\tensor{B}{_{\bfA}^{\bfA'}}}_{d_{\bfA} - d_{\bfA'} - 1} = \sum_{\bfA, \bfA' : d_{\bfA} \leq d_{\bfA'}} \int_{0}^{T} \abs{\tensor{B}{_{\bfA}^{\bfA'}}(s)} s^{-(d_{\bfA}-d_{\bfA'})} \, \ud s,
\end{align*}
and define
\begin{equation*}
	\nnrm{B} \ceq \sum_{\bfA, \bfA'} \nnrm{\tensor{B}{_{\bfA}^{\bfA'}}}_{d_{\bfA} - d_{\bfA'} - 1} = \nnrm{B}_{(\infty)} + \nnrm{B}_{(1)}.
\end{equation*}

The main result of this subsection is the following.
\begin{proposition}[Abstract ODE estimates] \label{prop:ode}
Consider any $T > 0$ and degrees $d_{\bfA}$ ($\bfA \in \set{1, \ldots, N}$) and an $N \times N$ matrix-valued function $B$ on $[0, T]$ satisfying \eqref{eq:ode-dmax}, \eqref{eq:ode-B} and $\nnrm{B} < + \infty$. Let $\tensor{\Psi}{_{\bfA}^{\bfA'}} = \tensor{\Psi}{_{\bfA}^{\bfA'}}(s, t)$ ($\bfA, \bfA' \in \set{1, \ldots, N}$) satisfy the equation
\begin{equation} \label{eq:abs-ode}
	\tensor{\Psi}{_{\bfA}^{\bfA'}}(s, t) = \int_{s}^{t} \tensor{G}{_{\bfA}^{\bfA'}}(s', t) \, \ud s' + \int_{s}^{t} \sum_{\bfA''} \tensor{B}{_{\bfA}^{\bfA''}}(s') \tensor{\Psi}{_{\bfA''}^{\bfA'}}(s', t) \, \ud s'
\end{equation}
for $0 < s < t < T$, where
\begin{align}
	\int_{s}^{t} (s')^{- d_{\bfA}} t^{d_{\bfA'}} \abs{\tensor{G}{_{\bfA}^{\bfA'}}(s', t)} \, \ud s'  \leq A_{0} \label{eq:abs-ode-G}
\end{align}
for some $A_{0} > 0$. Then for all $0 < s < t < T$, we have
\begin{align}
	s^{- d_{\bfA}} t^{d_{\bfA'}}\abs{\tensor{\Psi}{_{\bfA}^{\bfA'}}(s, t)} & \leq C A_{0}  & & \hbox{ if } d_{\bfA} \leq 0, \label{eq:abs-ode-infty} \\
	\int_{s}^{t} (s')^{- d_{\bfA} - 1} t^{d_{\bfA'}} \abs{\tensor{\Psi}{_{\bfA}^{\bfA'}}(s', t)} \, \ud s' & \leq C A_{0} & & \hbox{ if } d_{\bfA} < 0, \label{eq:abs-ode-1}
\end{align}
where we have the following bound for the constant $C$ in \eqref{eq:abs-ode-infty}--\eqref{eq:abs-ode-1}:
\begin{align*}
C \leq C_{N, \max_{\bfA} (- d_{\bfA})} (2+\nnrm{B}_{(\infty)})^{C_{N} [ 1 + (1+\nnrm{B}_{(\infty)})^{N} \nnrm{B}_{(1)} ]}.
\end{align*}
\end{proposition}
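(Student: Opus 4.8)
\textbf{Proof proposal for Proposition~\ref{prop:ode}.}
The plan is to treat \eqref{eq:abs-ode} as a fixed-point equation and run an iteration adapted to the grading, keeping track of weighted norms that are compatible with the vanishing structure \eqref{eq:ode-B}. First I would introduce, for each pair $\bfA, \bfA'$, the two quantities
\begin{align*}
	p_{\bfA}^{\bfA'}(t) &\ceq \sup_{0 < s < t} s^{-d_{\bfA}} t^{d_{\bfA'}} \abs{\tensor{\Psi}{_{\bfA}^{\bfA'}}(s, t)}, \\
	q_{\bfA}^{\bfA'}(t) &\ceq \int_{0}^{t} (s')^{-d_{\bfA}-1} t^{d_{\bfA'}} \abs{\tensor{\Psi}{_{\bfA}^{\bfA'}}(s', t)} \, \ud s',
\end{align*}
which are the analogues of $\nnrm{\cdot}_{0}$ and $\nnrm{\cdot}_{m}$ applied to the solution; the target bounds \eqref{eq:abs-ode-infty}--\eqref{eq:abs-ode-1} are exactly $p_{\bfA}^{\bfA'} \aleq A_{0}$ for $d_{\bfA} \leq 0$ and $q_{\bfA}^{\bfA'} \aleq A_{0}$ for $d_{\bfA} < 0$. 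The key algebraic point, which I would verify first, is that the bilinear term $\int_{s}^{t} \tensor{B}{_{\bfA}^{\bfA''}}(s') \tensor{\Psi}{_{\bfA''}^{\bfA'}}(s', t) \, \ud s'$ respects these weights: using \eqref{eq:ode-B}, the only surviving terms have $d_{\bfA} \leq d_{\bfA''} + 1$, and one splits into the case $d_{\bfA} = d_{\bfA''} + 1$ (where $\tensor{B}{_{\bfA}^{\bfA''}}$ is measured in $L^{\infty}$, i.e. via $\nnrm{B}_{(\infty)}$, and paired with $p_{\bfA''}^{\bfA'}$) and the case $d_{\bfA} \leq d_{\bfA''}$ (where $\tensor{B}{_{\bfA}^{\bfA''}}$ is measured via $\nnrm{B}_{(1)}$ and paired with $q_{\bfA''}^{\bfA'}$); the weight powers $s^{-d_{\bfA}}$ versus $(s')^{-d_{\bfA''}}$ match up precisely because $\abs{\tensor{B}{_{\bfA}^{\bfA''}}}$ carries the factor $(s')^{-(d_{\bfA}-d_{\bfA''})}$ in the definition of $\nnrm{\cdot}_{d_{\bfA}-d_{\bfA''}-1}$.

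Next I would set up the iteration. Because of \eqref{eq:ode-B}, the system is block-triangular with respect to the ordering of degrees: the equation for $\tensor{\Psi}{_{\bfA}^{\bfA'}}$ with $d_{\bfA}$ minimal involves only the inhomogeneity $G$ and terms $\tensor{\Psi}{_{\bfA''}^{\bfA'}}$ with $d_{\bfA''} \geq d_{\bfA}$, so one can induct downward on the value of $d_{\bfA}$ (equivalently, upward on $-d_{\bfA}$, from $0$ to $\max_{\bfA}(-d_{\bfA})$). At each level, the $p$- and $q$-bounds for the already-treated higher-degree components feed into the bilinear term through the $\nnrm{B}_{(1)}$-part; the only genuinely coupled piece is the $\nnrm{B}_{(\infty)}$-part connecting degree $d$ to degree $d-1$, which I would handle by a Grönwall-type argument. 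Concretely, for fixed $d_{\bfA'}$ and fixed target degree $d$, writing $P_{d}(t) \ceq \sum_{\bfA : d_{\bfA} = d} p_{\bfA}^{\bfA'}(t)$, the inequality coming from \eqref{eq:abs-ode} has the schematic form $P_{d}(t) \leq (\text{contribution of } A_{0} \text{ and lower-order data}) + \nnrm{B}_{(\infty)} \int_{0}^{t} P_{d}(s)\,\tfrac{\ud s}{s} \cdot(\text{harmless weight})$ — one must be slightly careful that the $s^{-1}$ is integrable after the weight rearrangement, which it is because we only ever lower the degree by one and $d_{\bfA} \leq 0$ keeps the exponents on the correct side. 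Grönwall then produces a factor $(2 + \nnrm{B}_{(\infty)})^{C_{N}}$ at each of the at most $N$ levels, and the accumulated $\nnrm{B}_{(1)}$-contributions, each amplified by the running $p$-bounds, yield the factor $(1+\nnrm{B}_{(\infty)})^{N}\nnrm{B}_{(1)}$ inside the exponent; combining over all levels gives the stated constant $C \leq C_{N, \max_{\bfA}(-d_{\bfA})}(2+\nnrm{B}_{(\infty)})^{C_{N}[1 + (1+\nnrm{B}_{(\infty)})^{N}\nnrm{B}_{(1)}]}$.

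The existence of a solution $\Psi$ satisfying these bounds (as opposed to just the a priori estimate) I would get for free from the same contraction: on a short time interval the map defined by the right-hand side of \eqref{eq:abs-ode} is a contraction in the weighted norm, and the a priori bound just established lets one continue to all of $[0, T]$; alternatively one simply notes $\Psi = \Pi - I$ where $\Pi$ solves \eqref{eq:ode-Pi}, whose existence is classical, and the content is purely the estimate. The main obstacle, and the step I would spend the most care on, is the bookkeeping in the Grönwall step: making sure that when the weights $s^{-d_{\bfA}}$, $(s')^{-d_{\bfA''}}$ and the $s^{-(d_{\bfA}-d_{\bfA''})}$ from $B$ are all combined, the residual power of $s'$ in the inner integral is exactly $-1$ in the borderline ($\nnrm{B}_{(\infty)}$) case and strictly $\geq -1$ (hence integrable near $0$, contributing no new singularity) in the $\nnrm{B}_{(1)}$ case — this is where \eqref{eq:ode-dmax} and the sign restriction $d_{\bfA} \leq 0$ are essential, and getting a clean induction requires organizing the estimate so that each downward step in degree is a self-contained linear Grönwall inequality rather than a simultaneous system. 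Once that normalization is pinned down, iterating $N$ times and tracking the constants is routine.
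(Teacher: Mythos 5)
There is a genuine gap, and it lies in the core mechanism of the estimate rather than in bookkeeping. First, your H\"older pairing is backwards. In the borderline case $d_{\bfA} = d_{\bfA''}+1$ you measure $\tensor{B}{_{\bfA}^{\bfA''}}$ by $\nnrm{B}_{(\infty)}$ and pair it with the sup-type quantity $p_{\bfA''}^{\bfA'}$; the resulting integral is
\begin{equation*}
s^{-d_{\bfA}} t^{d_{\bfA'}}\int_s^t \abs{\tensor{B}{_{\bfA}^{\bfA''}}}\,\abs{\tensor{\Psi}{_{\bfA''}^{\bfA'}}}\,\ud s' \;\leq\; \nnrm{B}_{(\infty)}\, p_{\bfA''}^{\bfA'}\, s^{-d_{\bfA}}\int_s^t (s')^{d_{\bfA}-1}\,\ud s',
\end{equation*}
which diverges logarithmically as $s\to 0$ exactly when $d_{\bfA}=0$ — i.e.\ for the top-degree components $\varphi_J$ that the proposition is designed to control. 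Pairing the $L^1$-type norm $\nnrm{B}_{(1)}$ with the $L^1$-type quantity $q$ is likewise not an inequality. The pairing must be $L^\infty\times L^1$: the $\nnrm{B}_{(\infty)}$ coupling has to land on the integrated quantity $q_{\bfA''}^{\bfA'}$ of the degree-$(d_{\bfA}-1)$ component (the weights match identically since $(s')^{-d_{\bfA}}=(s')^{-d_{\bfA''}-1}$), and the $\nnrm{B}_{(1)}$ coupling on the sup quantity.

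Second, the system is not block-triangular in the degree, so a single downward induction on $d_{\bfA}$ cannot close: the $\nnrm{B}_{(1)}$-controlled terms include the same-degree self-coupling $d_{\bfA''}=d_{\bfA}$ (not ``already treated''), while the $\nnrm{B}_{(\infty)}$-controlled term couples degree $d_{\bfA}$ to degree $d_{\bfA}-1$, i.e.\ \emph{downward} to components your induction has not yet reached. The correct structure is the reverse of what you describe: the downward coupling is \emph{nilpotent} — it terminates after at most $N$ steps at the minimal degree, and finite iteration (no Gr\"onwall) produces the factor $(1+\nnrm{B}_{(\infty)})^{N}$ — whereas the same-or-higher-degree coupling is the one that must be absorbed into the left-hand side, which requires $\nnrm{B}_{(1)}$ to be smaller than $\eps\sim (1+\nnrm{B}_{(\infty)})^{-N}$. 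Since $\nnrm{B}_{(1)}$ is only assumed finite, an extra device is needed: one splits $[0,t]$ into $O(\eps^{-1}\nnrm{B}_{(1)})$ subintervals on each of which the restricted $\nnrm{B}_{(1)}$-norm is below the absorption threshold and telescopes the resulting bounds; the number of subintervals is precisely what yields the exponent $C_{N}[1+(1+\nnrm{B}_{(\infty)})^{N}\nnrm{B}_{(1)}]$ in the stated constant. Your sketch contains no mechanism for large $\nnrm{B}_{(1)}$, and the Gr\"onwall inequality you write, $P_d(t)\leq\cdots+\nnrm{B}_{(\infty)}\int_0^t P_d(s)\,\ud s/s$, both targets the wrong coupling (the degree-lowering one is not a self-coupling) and has a non-integrable kernel.
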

\begin{proof}
In this proof, we suppress the dependence of constants on $N$ and $\max_{\bfA}(-d_{\bfA})$. Exceptions to this rule are the constants denoted by $C_{N}$, which depend only on $N$ but not on $\max_{\bfA}(-d_{\bfA})$. Moreover, some constants that are independent of $N$ and $\max_{\bfA}(-d_{\bfA})$ (i.e., absolute) will be pointed out in the argument.

\smallskip
\noindent  {\it Step~1.} We first work under an additional assumption
\begin{equation} \label{eq:abs-ode:pf:B1}
	\nnrm{B}_{(1)} < \eps,
\end{equation}
where $\eps < 1$ will be specified at the end of the step. Let $t \in (0, T)$ be fixed, and consider $s \in (0, t)$. Define
\begin{align*}
	\tensor{A}{_{\bfA}^{\bfA'}}(s, t) &\ceq \left[ \sup_{s' \in [s, t]} (s')^{- d_{\bfA}} t^{d_{\bfA'}}\abs{\tensor{\Psi}{_{\bfA}^{\bfA'}}(s', t)}
	+ \int_{s}^{t} (s')^{- d_{\bfA} - 1} t^{d_{\bfA'}} \abs{\tensor{\Psi}{_{\bfA}^{\bfA'}}(s', t)} \, \ud s'\right], \\
	A(s, t) &\ceq \sum_{\bfA, \bfA'} \tensor{A}{_{\bfA}^{\bfA'}}(s, t).
\end{align*}
By the assumptions on $B$, it easily follows that $\Psi(s, t)$ is continuous for $s \in (0, t]$, and that $A(s, t) < +\infty$ for $s \in (0, t]$. The goal of this step is to prove that, under \eqref{eq:abs-ode:pf:B1}, we have, for all $s \in (0, t)$,
\begin{equation*}
	A(s, t) \aleq (1+\nnrm{B}_{(\infty)})^{N} A_{0}.
\end{equation*}

Let us use \eqref{eq:abs-ode} to estimate $\Psi(s, t)$. We begin by estimating the contribution of $G$:
\begin{align}
	s^{-d_{\bfA}} t^{d_{\bfA'}} \int_{s}^{t} \abs{\tensor{G}{_{\bfA}^{\bfA'}}(s', t)} \, \ud s' &\leq A_{0} & & \hbox{ if } d_{\bfA} \leq 0, \label{eq:abs-ode:pf:G-infty} \\
	\int_{s}^{t} (s'')^{-d_{\bfA}-1} t^{d_{\bfA'}} \int_{s''}^{t} \abs{\tensor{G}{_{\bfA}^{\bfA'}}(s', t)} \, \ud s' \, \ud s'' &\leq \frac{1}{(-d_{\bfA})} A_{0} & & \hbox{ if } d_{\bfA} < 0. \label{eq:abs-ode:pf:G-1}
\end{align}
Indeed, recalling that $d_{\bfA}, d_{\bfA'} \leq 0$ by \eqref{eq:ode-dmax}, bound \eqref{eq:abs-ode:pf:G-infty} follows simply from \eqref{eq:abs-ode-G} and the obvious inequality $s^{-d_{\bfA}} \leq (s')^{-d_{\bfA}}$ for $s' \in [s, t]$. To prove \eqref{eq:abs-ode:pf:G-1}, note that its LHS is bounded (via Fubini) by
\begin{align*}
\int_{s}^{t} \left( \int_{s}^{s'} (s'')^{-d_{\bfA}-1} \, \ud s''\right) t^{d_{\bfA'}} \abs{\tensor{G}{_{\bfA}^{\bfA'}}(s', t)} \, \ud s'
\leq \frac{1}{(-d_{\bfA})} \int_{s}^{t} (s')^{-d_{\bfA}} t^{d_{\bfA'}} \abs{\tensor{G}{_{\bfA}^{\bfA'}}(s', t)} \, \ud s' \leq \frac{1}{(-d_{\bfA})} A_{0},
\end{align*}
where we used $d_{\bfA} < 0$ in the first inequality.

Next, we claim that
\begin{align}
	s^{-d_{\bfA}} t^{d_{\bfA'}} \int_{s}^{t} \abs{\tensor{B}{_{\bfA}^{\bfA''}} \tensor{\Psi}{_{\bfA''}^{\bfA'}}(s', t)} \, \ud s' &\leq \begin{cases}
	\nnrm{B}_{(\infty)} \tensor{A}{_{\bfA''}^{\bfA'}}(s, t) & \hbox{ if } d_{\bfA} \leq 0, \, d_{\bfA} = d_{\bfA''} + 1, \\
	\nnrm{B}_{(1)} A(s, t) & \hbox{ if } d_{\bfA} \leq 0, \, d_{\bfA} \leq d_{\bfA''} ,
	\end{cases} \label{eq:abs-ode:pf:BPsi-infty} \\
	\int_{s}^{t} (s'')^{-d_{\bfA}-1} t^{d_{\bfA'}} \int_{s''}^{t} \abs{\tensor{B}{_{\bfA}^{\bfA''}} \tensor{\Psi}{_{\bfA''}^{\bfA'}}(s', t)} \, \ud s' \, \ud s'' &\leq \begin{cases}
	\frac{1}{(-d_{\bfA})} \nnrm{B}_{(\infty)} \tensor{A}{_{\bfA''}^{\bfA'}}(s, t) & \hbox{ if } d_{\bfA} < 0, \, d_{\bfA} = d_{\bfA''} + 1, \\
	\frac{1}{(-d_{\bfA})} \nnrm{B}_{(1)} A(s, t) & \hbox{ if } d_{\bfA} < 0, \, d_{\bfA} \leq d_{\bfA''}.
	\end{cases} \label{eq:abs-ode:pf:BPsi-1}
\end{align}
Indeed, for \eqref{eq:abs-ode:pf:BPsi-infty} in the case $d_{\bfA} = d_{\bfA''} + 1$, we have
\begin{align*}
	s^{-d_{\bfA}} t^{d_{\bfA'}} \int_{s}^{t} \abs{\tensor{B}{_{\bfA}^{\bfA''}} \tensor{\Psi}{_{\bfA''}^{\bfA'}}(s', t)} \, \ud s'
	\leq \nnrm{B}_{(\infty)} \int_{s}^{t} (s')^{-d_{\bfA''}-1} t^{d_{\bfA'}} \abs{\tensor{\Psi}{_{\bfA''}^{\bfA'}}(s', t)} \, \ud s' \leq \nnrm{B}_{(\infty)} \tensor{A}{_{\bfA''}^{\bfA'}}(s, t),
\end{align*}
and in the case $d_{\bfA} \leq d_{\bfA''}$, we have
\begin{align*}
	s^{-d_{\bfA}} t^{d_{\bfA'}} \int_{s}^{t} \abs{\tensor{B}{_{\bfA}^{\bfA''}} \tensor{\Psi}{_{\bfA''}^{\bfA'}}(s', t)} \, \ud s'
	\leq \int_{s}^{t} (s')^{-d_{\bfA}+d_{\bfA''}} \abs{\tensor{B}{_{\bfA}^{\bfA''}}} A(s, t) \, \ud s' \leq \nnrm{B}_{(1)} A(s, t).
\end{align*}
Similarly, for \eqref{eq:abs-ode:pf:BPsi-1} in the case $d_{\bfA} = d_{\bfA''} + 1$, we have (by Fubini and $- d_{\bfA} > 0$)
\begin{align*}
\int_{s}^{t} (s'')^{-d_{\bfA}-1} t^{d_{\bfA'}} \int_{s''}^{t} \abs{\tensor{B}{_{\bfA}^{\bfA''}} \tensor{\Psi}{_{\bfA''}^{\bfA'}}(s', t)} \, \ud s' \, \ud s''
&\leq \nnrm{B}_{(\infty)} \int_{s}^{t} (s'')^{-d_{\bfA}-1} \int_{s''}^{t} t^{d_{\bfA'}} \abs{\tensor{\Psi}{_{\bfA''}^{\bfA'}}(s', t)} \, \ud s' \, \ud s''  \\
&\leq \frac{1}{(-d_{\bfA})} \nnrm{B}_{(\infty)} \int_{s}^{t} (s')^{-d_{\bfA}} t^{d_{\bfA'}} \abs{\tensor{\Psi}{_{\bfA''}^{\bfA'}}(s', t)} \, \ud s' \, \ud s''
\end{align*}
and in the case $d_{\bfA} \leq d_{\bfA''}$, we have (again by Fubini and $-d_{\bfA} > 0$)
\begin{align*}
\int_{s}^{t} (s'')^{-d_{\bfA}-1} t^{d_{\bfA'}} \int_{s''}^{t} \abs{\tensor{B}{_{\bfA}^{\bfA''}} \tensor{\Psi}{_{\bfA''}^{\bfA'}}(s', t)} \, \ud s' \, \ud s''
&\leq \int_{s}^{t} (s'')^{-d_{\bfA}-1} \int_{s''}^{t} (s')^{d_{\bfA''}} \abs{\tensor{B}{_{\bfA}^{\bfA''}}} A(s, t) \, \ud s' \, \ud s'' \\
& \leq \frac{1}{(-d_{\bfA})} \nnrm{B}_{(1)} A(s, t).
\end{align*}

Using \eqref{eq:abs-ode}, \eqref{eq:abs-ode:pf:G-1}--\eqref{eq:abs-ode:pf:BPsi-1}, we arrive at the inequality
\begin{align*}
	\tensor{A}{_{\bfA}^{\bfA'}}(s, t) \aleq A_{0} + \nnrm{B}_{(1)} A(s, t) + \sum_{\bfA'' : d_{\bfA''} = d_{\bfA} - 1} \nnrm{B}_{(\infty)} \tensor{A}{_{\bfA''}^{\bfA'}}(s, t).
\end{align*}
The term with $\nnrm{B}_{(1)}$ will eventually be absorbed into the left-hand side using \eqref{eq:abs-ode:pf:B1}, but the term with $\nnrm{B}_{(\infty)}$ needs care, since we are \emph{not} assuming smallness. Nevertheless, there is a reductive (or nilpotent) structure for this term. More precisely, it is not present when $d_{\bfA} = \min_{\bfA''} d_{\bfA''}$, and for $d_{\bfA} > \min_{\bfA''} d_{\bfA''}$, it only involves $\tensor{A}{_{\bfA''}^{\bfA'}}$ with $d_{\bfA''} = d_{\bfA} - 1$. Therefore, iterating this bound (no more than $N$ times), we arrive at
\begin{align*}
	A(s, t) \leq C_{1} (1+\nnrm{B}_{(\infty)})^{N} (A_{0} + \nnrm{B}_{(1)} A(s, t)).
\end{align*}
Taking $\eps = c C_{1}^{-1} (1+\nnrm{B}_{(\infty)})^{-N}$ for a sufficiently small absolute constant $c > 0$, we may absorb the contribution of $\nnrm{B}_{(1)} A(s, t)$ into the LHS and obtain the desired estimate for $A(s, t)$.

\smallskip \noindent {\it Step~2.} Next, we consider the general case when $\nnrm{B}_{(1)}$ is finite but possibly large. Let us split $[0, t] = [t_{m}, t_{m-1}] \cup \cdots \cup [t_{1}, t_{0}]$ with $t_{m} = 0$ and $t_{0} = t$ so that
\begin{equation} \label{eq:abs-ode:pf:B1-i}
	\nnrm{\chf_{[t_{i}, t_{i-1}]} B}_{(1)} < \eps \qquad \hbox{ for each } i = 1, \ldots, m,
\end{equation}
where $\eps$ is as in Step~1. Since $\nnrm{\cdot}_{(1)}$ consists of $L^{1}$-type norms, such a splitting with $m \leq C_{N} \eps^{-1} \nnrm{B}_{(1)}$ exists.
For $t_{i} \leq s < t_{i-1}$, define
\begin{align*}
A_{(i)}(s, t_{i-1})
&\ceq \sum_{\bfA, \bfA'} \bb[ \sup_{s' \in [s, t_{i-1}]} (s')^{- d_{\bfA}} t^{d_{\bfA'}}\abs{\tensor{\Psi}{_{\bfA}^{\bfA'}}(s', t) - \tensor{\Psi}{_{\bfA}^{\bfA'}}(t_{i-1}, t) } \\
	&\phantom{\ceq \sum_{\bfA, \bfA'} \bb[ }
	+ \int_{s}^{t_{i-1}} (s')^{- d_{\bfA} - 1} t^{d_{\bfA'}} \abs{\tensor{\Psi}{_{\bfA}^{\bfA'}}(s', t) - \tensor{\Psi}{_{\bfA}^{\bfA'}}(s', t_{i-1})} \, \ud s' \bb].
\end{align*}
For $t_{i} \leq s < t_{i-1}$, we claim that
\begin{align}
s^{-d_{\bfA}} t^{d_{\bfA'}} \abs{\tensor{\Psi}{_{\bfA}^{\bfA'}}(s, t)} &\leq A_{(i)}(s, t_{i-1}) + \sum_{i' \leq i-1} A_{(i')}(t_{i'}, t_{i'-1}), \label{eq:abs-ode:pf:cont-infty} \\
\int_{s}^{t}(s')^{-d_{\bfA}-1} t^{d_{\bfA'}} \abs{\tensor{\Psi}{_{\bfA}^{\bfA'}}(s', t)} \, \ud s' &\aleq A_{(i)}(s, t_{i-1}) + \sum_{i' \leq i-1} (i - i' + 1) A_{(i')}(t_{i'}, t_{i'-1}), \label{eq:abs-ode:pf:cont-1}
\end{align}
where we take the sum to be vacuous if $i = 1$, and the latter bound is only for $\bfA, \bfA'$ such that $d_{\bfA} < 0$. Indeed, estimate \eqref{eq:abs-ode:pf:cont-infty} follows easily by writing $\tensor{\Psi}{_{\bfA}^{\bfA'}}(s, t)$ as a telescopic sum
\begin{equation} \label{eq:abs-ode:pf:Psi-sum}
\tensor{\Psi}{_{\bfA}^{\bfA'}}(s, t) = \tensor{\Psi}{_{\bfA}^{\bfA'}}(s, t) - \tensor{\Psi}{_{\bfA}^{\bfA'}}(t_{i-1}, t) + \sum_{i' \leq i-1} (\tensor{\Psi}{_{\bfA}^{\bfA'}}(t_{i'}, t) - \tensor{\Psi}{_{\bfA}^{\bfA'}}(t_{i'-1}, t)).
\end{equation}
For the proof of \eqref{eq:abs-ode:pf:cont-1}, let us assume that $s = t_{i}$ for simplicity; the general case is a minor modification of this case. We split the integration domain into $[t_{i}, t_{i-1}] \cup \cdots \cup [t_{1}, t_{0}]$ and also the integrand using a telescopic sum akin to \eqref{eq:abs-ode:pf:Psi-sum} as follows:
\begin{align*}
	&\int_{s}^{t}(s')^{-d_{\bfA}-1} t^{d_{\bfA'}} \abs{\tensor{\Psi}{_{\bfA}^{\bfA'}}(s', t)} \, \ud s' \\
	&\leq \sum_{i' \leq i} \int_{t_{i'}}^{t_{i'-1}} (s')^{-d_{\bfA} - 1} t^{d_{\bfA'}} \left(\abs{\tensor{\Psi}{_{\bfA}^{\bfA'}}(s', t) - \tensor{\Psi}{_{\bfA}^{\bfA'}}(t_{i'-1}, t)} + \sum_{i'' \leq i'-1} \abs{\tensor{\Psi}{_{\bfA}^{\bfA'}}(t_{i''}, t) - \tensor{\Psi}{_{\bfA}^{\bfA'}}(t_{i''-1}, t)} \right) \, \ud s'.
\end{align*}
Estimating the contribution of $\Psi(s', t) - \Psi(t_{i'-1}, t)$ on $[t_{i'}, t_{i'-1}]$ by $A_{(i')}(t_{i'}, t_{i'-1})$, and the rest using \eqref{eq:abs-ode:pf:cont-infty} and the obvious integral
\begin{align*}
	\int_{t_{i'}}^{t_{i'-1}} (s')^{-d_{\bfA} -1} \, \ud s' = \frac{1}{(-d_{\bfA})} t_{i'-1}^{-d_{\bfA}},
\end{align*}
the desired estimate \eqref{eq:abs-ode:pf:cont-1} follows.

 For $s \in [t_{i+1}, t_{i})$, we rewrite \eqref{eq:abs-ode} as
\begin{align*}
	\tensor{\Psi}{_{\bfA}^{\bfA'}}(s, t) - \tensor{\Psi}{_{\bfA}^{\bfA'}}(t_{i}, t)
	&= \int_{s}^{t_{i}} \tensor{G}{_{\bfA}^{\bfA'}}(s', t) \, \ud s' + \int_{s}^{t_{i}} \sum_{\bfA''} \tensor{B}{_{\bfA}^{\bfA''}}(s') \tensor{\Psi}{_{\bfA''}^{\bfA'}}(t_{i}, t) \, \ud s' \\
	&\peq + \int_{s}^{t_{i}} \sum_{\bfA''} \tensor{B}{_{\bfA}^{\bfA''}}(s') (\tensor{\Psi}{_{\bfA''}^{\bfA'}}(s', t) - \tensor{\Psi}{_{\bfA''}^{\bfA'}}(t_{i}, t)) \, \ud s'\\
	&= \int_{s}^{t} \chf_{[t_{i+1}, t_{i}]}(s') \left( \tensor{G}{_{\bfA}^{\bfA'}}(s', t) + \sum_{\bfA''} \tensor{B}{_{\bfA}^{\bfA''}}(s') \tensor{\Psi}{_{\bfA''}^{\bfA'}}(t_{i}, t)\right)  \, \ud s' \\
	&\peq + \int_{s}^{t} \sum_{\bfA''} \chf_{[t_{i+1}, t_{i}]}(s') \tensor{B}{_{\bfA}^{\bfA''}}(s') (\tensor{\Psi}{_{\bfA''}^{\bfA'}}(s', t) - \tensor{\Psi}{_{\bfA''}^{\bfA'}}(t_{i}, t))  \, \ud s'.
\end{align*}
Using the last identity, we may trivially extend $\tensor{\Psi}{_{\bfA}^{\bfA'}}(s, t) - \tensor{\Psi}{_{\bfA}^{\bfA'}}(t_{i}, t)$ to all $s \in (0, t)$. Then, in view of $\nnrm{\chf_{[t_{i+1}, t_{i}]} B}_{(\infty)} \leq \nnrm{B}_{(\infty)}$ and \eqref{eq:abs-ode:pf:B1-i}, the argument in Step~1 is applicable to $\tensor{\Psi}{_{\bfA}^{\bfA'}}(s, t) - \tensor{\Psi}{_{\bfA}^{\bfA'}}(t_{i}, t)$. We claim that for every $\bfA, \bfA'$, we have
\begin{align*}
\int_{s}^{t} (s')^{-d_{\bfA}} t^{d_{\bfA'}} \chf_{[t_{i+1}, t_{i}]}(s')  \abs{\tensor{G}{_{\bfA}^{\bfA'}}(s', t)}  \, \ud s' &\leq A_{0}, \\
\int_{s}^{t} (s')^{-d_{\bfA}} t^{d_{\bfA'}} \chf_{[t_{i+1}, t_{i}]}(s')  \sum_{\bfA''} \abs{\tensor{B}{_{\bfA}^{\bfA''}}(s') \tensor{\Psi}{_{\bfA''}^{\bfA'}}(t_{i}, t)}  \, \ud s' &\aleq \nnrm{\chf_{[t_{i+1}, t_{i}]} B} \sum_{i' \leq i} A_{(i')}(t_{i'}, t_{i'-1}).
\end{align*}
The first bound is an immediate consequence of the hypothesis for $G$. To establish the second bound, note that
\begin{align*}
\int_{s}^{t} (s')^{-d_{\bfA}} t^{d_{\bfA'}} \chf_{[t_{i+1}, t_{i}]}(s') \abs{\tensor{B}{_{\bfA}^{\bfA''}}(s') \tensor{\Psi}{_{\bfA''}^{\bfA'}}(t_{i}, t)}  \, \ud s' &
\leq \int_{s}^{t_{i}} (s')^{-d_{\bfA}} t_{i}^{d_{\bfA''}} \abs{\tensor{B}{_{\bfA}^{\bfA''}}(s')} \, \ud s' \left( t_{i}^{-d_{\bfA''}} t^{d_{\bfA'}} \abs{\tensor{\Psi}{_{\bfA''}^{\bfA'}}(t_{i}, t)} \right) \\
&\aleq \nnrm{\chf_{[t_{i+1}, t_{i}]} B} \sum_{i' \leq i} A_{(i')}(t_{i'}, t_{i'-1}),
\end{align*}
where we used \eqref{eq:abs-ode:pf:cont-infty}. We remark that the integral can be bounded easily by dividing into cases $d_{\bfA} = d_{\bfA''} + 1$ and $d_{\bfA} \leq d_{\bfA''}$. Now, applying Step~1, we derive
\begin{equation*}
	A_{(i+1)}(s, t_{i}) \aleq (1+\nnrm{B}_{(\infty)})^{N} \bb( A_{0} + \nnrm{\chf_{[t_{i+1}, t_{i}]}B} \sum_{i' \leq i} A_{(i')} (t_{i'}, t_{i'-1}) \bb),
\end{equation*}
where the last term inside the parentheses does not exist for $i = 0$. By a simple induction argument on $i \in \set{1, \ldots, m-1}$ (observe also that $\nnrm{\chf_{[t_{i+1}, t_{i}]}B} \leq \nnrm{B}_{(\infty)} + \eps$), we obtain
\begin{align*}
\sum_{i \leq m} A_{(i)} (t_{i}, t_{i-1}) \aleq (2+\nnrm{B}_{(\infty)})^{C_{N}(1+ m)} A_{0}.
\end{align*}
Combined with \eqref{eq:abs-ode:pf:cont-infty}--\eqref{eq:abs-ode:pf:cont-1}, the desired estimates \eqref{eq:abs-ode-infty}--\eqref{eq:abs-ode-1} follow. \qedhere
\end{proof}

The following result for the fundamental matrix $\Pi$ is an immediate corollary of Proposition~\ref{prop:ode}:

\begin{corollary} \label{cor:ode-fundsol}
Consider degrees $d_{\bfA}$ ($\bfA \in \set{1, \ldots, N}$) and an $N \times N$ matrix-valued function $B$ on $[0, T]$ satisfying \eqref{eq:ode-dmax}, \eqref{eq:ode-B} and $\nnrm{B} < + \infty$. Let $\tensor{\Pi}{_{\bfA}^{\bfA'}} = \tensor{\Pi}{_{\bfA}^{\bfA'}}(s, t)$ be given by \eqref{eq:ode-Pi} for $0 < s < t < T$. Then, for $0 < s < t < T$, we have
\begin{align}
	s^{- d_{\bfA}} t^{d_{\bfA'}}\abs{\tensor{\Pi}{_{\bfA}^{\bfA'}}(s, t) - \tensor{\dlt}{_{\bfA}^{\bfA'}}}  & \leq C_{N} (1+\nnrm{B}_{(\infty)})^{N} \exp(C_{N} \nnrm{B}_{(1)}) \nnrm{B}  & & \hbox{ if } d_{\bfA} \leq 0, \\
	\int_{s}^{t} (s')^{- d_{\bfA} - 1} t^{d_{\bfA'}} \abs{\tensor{\Pi}{_{\bfA}^{\bfA'}}(s', t) - \tensor{\dlt}{_{\bfA}^{\bfA'}}} \, \ud s' & \leq C_{N} (1+\nnrm{B}_{(\infty)})^{N} \exp(C_{N} \nnrm{B}_{(1)}) \nnrm{B} & & \hbox{ if } d_{\bfA} < 0.
\end{align}
\end{corollary}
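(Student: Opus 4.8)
The plan is to apply Proposition~\ref{prop:ode} to the matrix $\tensor{\Psi}{_{\bfA}^{\bfA'}}(s,t) \ceq \tensor{\Pi}{_{\bfA}^{\bfA'}}(s,t) - \tensor{\dlt}{_{\bfA}^{\bfA'}}$. Subtracting $\tensor{\dlt}{_{\bfA}^{\bfA'}}$ from both sides of \eqref{eq:ode-Pi} and writing $\tensor{\Pi}{_{\bfA''}^{\bfA'}} = \tensor{\Psi}{_{\bfA''}^{\bfA'}} + \tensor{\dlt}{_{\bfA''}^{\bfA'}}$ inside the integral, one gets, for $0 < s < t < T$,
\[
	\tensor{\Psi}{_{\bfA}^{\bfA'}}(s,t) = \int_s^t \bigl(-\tensor{B}{_{\bfA}^{\bfA'}}(s')\bigr) \, \ud s' + \int_s^t \sum_{\bfA''} \bigl(-\tensor{B}{_{\bfA}^{\bfA''}}(s')\bigr)\, \tensor{\Psi}{_{\bfA''}^{\bfA'}}(s',t) \, \ud s',
\]
which is precisely an instance of \eqref{eq:abs-ode} with $\tensor{G}{_{\bfA}^{\bfA'}}(s',t) = -\tensor{B}{_{\bfA}^{\bfA'}}(s')$ and with $B$ replaced by $-B$. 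Since \eqref{eq:ode-dmax}, \eqref{eq:ode-B}, the finiteness of $\nnrm{B}$, and the conclusions of Proposition~\ref{prop:ode} all depend on $B$ only through $\abs{\tensor{B}{_{\bfA}^{\bfA''}}}$, this sign change is harmless.

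It then only remains to verify hypothesis \eqref{eq:abs-ode-G} with $A_0 = \nnrm{B}$. Fix $\bfA, \bfA'$. If $d_{\bfA} > d_{\bfA'} + 1$ then $\tensor{B}{_{\bfA}^{\bfA'}} \equiv 0$ by \eqref{eq:ode-B} and there is nothing to check, so by \eqref{eq:ode-dmax} it suffices to treat $d_{\bfA} = d_{\bfA'} + 1$ and $d_{\bfA} \leq d_{\bfA'}$. In the first case $d_{\bfA'} \leq -1$, hence $-d_{\bfA} = -d_{\bfA'} - 1 \geq 0$, so $(s')^{-d_{\bfA}} t^{d_{\bfA'}} \leq t^{-d_{\bfA} + d_{\bfA'}} = t^{-1}$ for $s' \in [s,t]$, and with $\int_s^t \abs{\tensor{B}{_{\bfA}^{\bfA'}}(s')}\, \ud s' \leq t\, \nrm{\tensor{B}{_{\bfA}^{\bfA'}}}_{L^{\infty}[0,T]}$ we get $\int_s^t (s')^{-d_{\bfA}} t^{d_{\bfA'}} \abs{\tensor{B}{_{\bfA}^{\bfA'}}(s')}\, \ud s' \leq \nrm{\tensor{B}{_{\bfA}^{\bfA'}}}_{L^{\infty}[0,T]} = \nnrm{\tensor{B}{_{\bfA}^{\bfA'}}}_{d_{\bfA}-d_{\bfA'}-1}$. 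In the second case $d_{\bfA'} \leq 0$ gives $t^{d_{\bfA'}} \leq (s')^{d_{\bfA'}}$ for $s' \leq t$, hence $\int_s^t (s')^{-d_{\bfA}} t^{d_{\bfA'}} \abs{\tensor{B}{_{\bfA}^{\bfA'}}(s')}\, \ud s' \leq \int_0^T (s')^{-(d_{\bfA}-d_{\bfA'})}\abs{\tensor{B}{_{\bfA}^{\bfA'}}(s')}\, \ud s' = \nnrm{\tensor{B}{_{\bfA}^{\bfA'}}}_{d_{\bfA}-d_{\bfA'}-1}$. In either case the left-hand side of \eqref{eq:abs-ode-G} is at most $\nnrm{\tensor{B}{_{\bfA}^{\bfA'}}}_{d_{\bfA}-d_{\bfA'}-1} \leq \nnrm{B}$, so \eqref{eq:abs-ode-G} holds with $A_0 = \nnrm{B}$.

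With this, Proposition~\ref{prop:ode} applies and yields, for $d_{\bfA} \leq 0$, the bound $s^{-d_{\bfA}} t^{d_{\bfA'}} \abs{\tensor{\Psi}{_{\bfA}^{\bfA'}}(s,t)} \leq C\nnrm{B}$, and for $d_{\bfA} < 0$, the bound $\int_s^t (s')^{-d_{\bfA}-1} t^{d_{\bfA'}} \abs{\tensor{\Psi}{_{\bfA}^{\bfA'}}(s',t)}\, \ud s' \leq C\nnrm{B}$, which are exactly the two inequalities of the corollary once one recalls $\tensor{\Psi}{_{\bfA}^{\bfA'}} = \tensor{\Pi}{_{\bfA}^{\bfA'}} - \tensor{\dlt}{_{\bfA}^{\bfA'}}$; the stated bound on $C$ is then read off from the corresponding bound in Proposition~\ref{prop:ode} (estimating $\max_{\bfA}(-d_{\bfA})$ and the exponent crudely in terms of $N$ and the two quantities $\nnrm{B}_{(\infty)}$, $\nnrm{B}_{(1)}$). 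There is essentially no obstacle here: the proof is a direct substitution followed by Proposition~\ref{prop:ode}, and the only place requiring a moment's care is the choice of $A_0$ together with the elementary weight estimates above --- one uses $t^{d_{\bfA'}} \leq (s')^{d_{\bfA'}}$ (valid since $d_{\bfA'} \leq 0$) in the off-diagonal case $d_{\bfA} \leq d_{\bfA'}$, while in the subdiagonal case $d_{\bfA} = d_{\bfA'}+1$ one instead uses $(s')^{-d_{\bfA}} \leq t^{-d_{\bfA}}$ to trade the singular weight for $t^{-1}$ and absorb the remaining $\int_s^t \abs{B}$ against the $L^{\infty}$ norm.
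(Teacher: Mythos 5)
Your reduction is exactly the intended one (the paper offers no written proof, calling the corollary "immediate"), and the substantive part is correct: setting $\tensor{\Psi}{_{\bfA}^{\bfA'}} \ceq \tensor{\Pi}{_{\bfA}^{\bfA'}} - \tensor{\dlt}{_{\bfA}^{\bfA'}}$ turns \eqref{eq:ode-Pi} into an instance of \eqref{eq:abs-ode} with $G = -B$ and coefficient matrix $-B$, the sign is immaterial since everything depends on $B$ only through $\abs{B}$, and your case-by-case verification of \eqref{eq:abs-ode-G} with $A_{0} = \nnrm{B}$ is correct in both the subdiagonal case $d_{\bfA} = d_{\bfA'}+1$ (trading $(s')^{-d_{\bfA}}$ for $t^{-d_{\bfA}}$ and absorbing $\int_s^t\abs{B}$ against $t\,\nrm{B}_{L^\infty}$) and the case $d_{\bfA} \leq d_{\bfA'}$ (using $t^{d_{\bfA'}} \leq (s')^{d_{\bfA'}}$).

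The one step that does not go through as written is the final parenthetical: the stated constant of the corollary cannot be "read off" from the stated constant of Proposition~\ref{prop:ode}. The proposition gives
\begin{equation*}
C \leq C_{N, \max_{\bfA}(-d_{\bfA})}\,(2+\nnrm{B}_{(\infty)})^{C_{N}[1 + (1+\nnrm{B}_{(\infty)})^{N}\nnrm{B}_{(1)}]},
\end{equation*}
whereas the corollary claims $C_{N}(1+\nnrm{B}_{(\infty)})^{N}\exp(C_{N}\nnrm{B}_{(1)})$. The former is \emph{not} dominated by the latter: fix $\nnrm{B}_{(1)} = 1$ and let $\nnrm{B}_{(\infty)} \to \infty$; then the proposition's bound grows like $(2+\nnrm{B}_{(\infty)})^{C_N(1+\nnrm{B}_{(\infty)})^{N}}$, which is super-polynomial in $\nnrm{B}_{(\infty)}$, while the corollary's bound is only polynomial. (The dependence on $\max_{\bfA}(-d_{\bfA})$ is harmless — in the proof of Proposition~\ref{prop:ode} it enters only through factors $1/(-d_{\bfA}) \leq 1$ — but the $\nnrm{B}_{(\infty)}$-dependence inside the exponent is a genuine mismatch.) This appears to be a discrepancy between the two statements in the paper itself rather than a defect of your reduction, but as a proof of the corollary \emph{as stated} your argument is incomplete at this point: you would either have to settle for the weaker constant of Proposition~\ref{prop:ode}, or give a sharper argument for the fundamental matrix (e.g., first integrating the nilpotent part $d_{\bfA} = d_{\bfA'}+1$ exactly, which terminates after at most $N$ iterations and produces the prefactor $(1+\nnrm{B}_{(\infty)})^{N}$, and then treating the remaining entries by a Gronwall-type argument in the weighted norms) to justify the exponential-only dependence on $\nnrm{B}_{(1)}$.
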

This corollary will be sufficient to verify \ref{hyp:crc-x} for many divergence-type equations Appendix~\ref{sec:appl}. However, to establish the higher derivative bounds in \ref{hyp:crc-x-q}, we will directly apply Proposition~\ref{prop:ode} to a suitably differentiated ODE system.

\subsection{From augmented system to quantitative \ref{hyp:crc}} \label{subsec:ode2crc}

We now show that graded augmented system in the sense of \ref{hyp:aug1}--\ref{hyp:aug4} leads to the quantitative version of \ref{hyp:crc}, namely, \ref{hyp:crc-x-q}, \ref{hyp:b-y1-q} and \ref{hyp:b-y1-smth}.

Let $\bfx : W \times [0, 1] \to \bbR^{d}$ satisfy assumptions \ref{hyp:x-1}, \ref{hyp:x-2}, and \ref{hyp:x-3} in Sections~\ref{subsubsec:x-q}. Given $M \in \bbZ_{\geq 0}$ and $\dlt \geq 0$, we introduce the following norm adapted to $(\bfx, W)$ ($G$ is for the underlying Geometry):
\begin{align*}
    \nrm{\bfb}_{\dot{G}^{M, \dlt}(\bfx, W)} &\ceq \sup_{(y, y_{1}) \in W} \sum_{\alp : \abs{\alp} \leq M} \int_{0}^{1} \abs{(\rd^{\alp} \bfb)(\bfx(y, y_{1}, s))} \abs{y_{1} - y}^{\dlt+\abs{\alp}}  s^{\dlt+\abs{\alp}-1} \, \ud s,\quad \delta>0, \\
    \nrm{\bfb}_{\dot{G}^{M, \dlt}_{\infty}(\bfx, W)} &\ceq \sup_{(y, y_{1}) \in W} \sum_{\alp : \abs{\alp} \leq M} \sup_{0\leq s\leq 1} \left[ \abs{(\rd^{\alp} \bfb)(\bfx(y, y_{1}, s))} \abs{y_{1} - y}^{\dlt+\abs{\alp}}  s^{\dlt+\abs{\alp}} \right] .
\end{align*}
When $\dlt = 0$, we will only use the $L^{\infty}$-based norm; hence we introduce the shorthand
\begin{align*}
    \nrm{\bfb}_{\dot{G}^{M, 0}(\bfx, W)} \ceq \nrm{\bfb}_{\dot{G}^{M, 0}_{\infty}(\bfx, W)} &\ceq \sup_{(y, y_{1}) \in W} \sum_{\alp : \abs{\alp} \leq M} \sup_{0\leq s\leq 1} \left[ \abs{(\rd^{\alp} \bfb)(\bfx(y, y_{1}, s))} \abs{y_{1} - y}^{\abs{\alp}}  s^{\abs{\alp}} \right] .
\end{align*}

Given $L_{b} > 0$, we also consider the following \emph{inhomogeneous} norms:
\begin{align*}
    \nrm{\bfb}_{G^{M, \dlt}(\bfx, W; L_{b})} &\ceq \sup_{(y, y_{1}) \in W} \sum_{\alp : \abs{\alp} \leq M} \int_{0}^{1} \abs{(\rd^{\alp} \bfb)(\bfx(y, y_{1}, s))} \max \set{L_{b}, \abs{y_{1} - y}}^{\dlt+\abs{\alp}} s^{\dlt-1} \, \ud s,\quad \delta>0, \\
    \nrm{\bfb}_{G^{M, \dlt}_{\infty}(\bfx, W; L_{b})} &\ceq \sup_{(y, y_{1}) \in W} \sum_{\alp : \abs{\alp} \leq M} \sup_{0\leq s\leq 1} \left[ \abs{(\rd^{\alp} \bfb)(\bfx(y, y_{1}, s))} \max\set{L_{b}, \abs{y_{1}-y}}^{\dlt+\abs{\alp}}  s^{\dlt} \right] .
\end{align*}
As before, we write $\nrm{\bfb}_{G^{M, 0}(\bfx, W; L_{b})} \ceq \nrm{\bfb}_{G^{M, 0}_{\infty}(\bfx, W; L_{b})}$.

\begin{remark} \label{rem:coeff-nrm-Ck}
Observe that all these norms are easily bounded by (appropriate) standard $C^{k}$-norms. Indeed, for an open set $V \subseteq \bbR^{d}$ with $\bfx(W \times [0, 1]) \subseteq V$, we have (for any $\dlt \geq 0$ and $M \in \bbZ_{\geq 0}$)
\begin{align*}
	\nrm{\bfb}_{\dot{G}^{M, \dlt}(\bfx, W)}, \nrm{\bfb}_{\dot{G}^{M, \dlt}_{\infty}(\bfx, W)} &\aleq_{M} \sup_{(y, y_{1}) \in W} \abs{y_{1} - y}^{\dlt+M} \nrm{\bfb}_{C^{M}(V)}, \\
	\nrm{\bfb}_{G^{M, \dlt}(\bfx, W; L_{b})}, \nrm{\bfb}_{G^{M, \dlt}_{\infty}(\bfx, W; L_{b})} &\aleq_{M} \sup_{(y, y_{1}) \in W} \max \set{L_{b}, \abs{y_{1} - y}}^{\dlt+M} \nrm{\bfb}_{C^{M}(V)}.
\end{align*}
\end{remark}

We are now ready to formulate the main result of this subsection. For $\bfB = (\tensor{(\bfB_{i})}{_{\bfA}^{\bfA'}})$ and $\bfC_{i} = (\tensor{(\bfC_{i})}{_{\bfA}^{(\gmm, K)}})$ as in \ref{hyp:aug3} and \ref{hyp:aug4}, we define
\begin{align*}
\nrm{\bfB}_{\dot{G}^{M}(\bfx, W)} &= \sum_{i, \bfA, \bfA'} \nrm{\tensor{(\bfB_{i})}{_{\bfA}^{\bfA'}}}_{\dot{G}^{M, d_{\bfA'} + 1 - d_{\bfA}}(\bfx, W)}, \\
\nrm{\bfB}_{G^{M}(\bfx, W; L_{b})} &= \sum_{i, \bfA, \bfA'} \nrm{\tensor{(\bfB_{i})}{_{\bfA}^{\bfA'}}}_{G^{M, d_{\bfA'} + 1 - d_{\bfA}}(\bfx, W; L_{b})}, \\
\nrm{\bfC}_{\dot{G}^{M}_{\infty}(\bfx, W)} &= \sum_{i, \bfA, K, \gmm} \nrm{\tensor{(\bfC_{i})}{_{\bfA}^{(\gmm, K)}}}_{\dot{G}^{M, d_{K} - \abs{\gmm} + 1 - d_{\bfA}}_{\infty}(\bfx, W)}.
\end{align*}
In view of Remark~\ref{rem:coeff-nrm-Ck}, the following statement holds: If $\bfB = (\tensor{(\bfB_{i})}{_{\bfA}^{\bfA'}})$ and $\bfC_{i} = (\tensor{(\bfC_{i})}{_{\bfA}^{(\gmm, K)}})$ as in \ref{hyp:aug3} and \ref{hyp:aug4} belong to $C^{M}(V)$ for some bounded open set $V \subseteq \bbR^{d}$ with $\bfx(W \times [0, 1]) \subseteq V$, then
\begin{equation} \label{eq:aug-nrm-Ck}
	\nrm{\bfB}_{\dot{G}^{M}(\bfx, W)},
	\nrm{\bfB}_{G^{M}(\bfx, W; L_{b})},
	\nrm{\bfC}_{\dot{G}^{M}_{\infty}(\bfx, W)},
	\leq C (\nrm{\bfB}_{C^{M}(V)} + \nrm{\bfC}_{C^{M}(V)}),
\end{equation}
where $C$ depends on $L_{b}$, $\diam V$, $d$, $\# \calA$, $s_{0}$, $\max_{\bfA} \abs{d_{\bfA}}$, $\max_{K} \abs{d_{K}}$, $m_{K}'$, and $M$.

\begin{proposition} [From graded augmented system to \ref{hyp:crc-x-q}, \ref{hyp:b-y1-q}, and \ref{hyp:b-y1-smth}] \label{prop:ode2crc}
Given an $\bbC^{r_{0}}$-valued function $\varphi$ on $U_{0}$, consider augmented variables $(\Phi_{\bfA})_{\bfA \in \calA}$ satisfying \ref{hyp:aug1}, \ref{hyp:aug2}, \ref{hyp:aug3}, and \ref{hyp:aug4} (with $N_{0} = \max_{\bfA \in \calA} \abs{d_{\bfA}} + 1$). Let $\bfx : W \times [0, 1] \to U$ satisfy \ref{hyp:x-1} and \ref{hyp:x-2}. Then the following holds.
\begin{enumerate}
\item If, for some $M, M_{g} \in \bbZ_{\geq 0}$ sufficiently large, we have
\begin{equation*}
\nrm{\bfB}_{\dot{G}^{M}(\bfx, W)} + \nrm{\bfC}_{\dot{G}^{M}_{\infty}(\bfx, W)} < + \infty, \quad \sup_{\bfA, \alp, J}\nrm{c[\Phi_{\bfA}]^{(\alp, J)}}_{C^{M_{g}}(U)} < + \infty,
\end{equation*}
then \ref{hyp:crc-x-q} and \ref{hyp:b-y1-q} hold with $M_{S}, M_{Z} \leq \min \set{M_{\bfx}, M} - C_{d}$ and
\begin{align*}
A_{S} &\leq C\left( d, A_{\bfx}, M_{\bfx}, M, \nrm{\bfB}_{\dot{G}^{M}(\bfx, W)}, \nrm{\bfC}_{\dot{G}^{M}_{\infty}(\bfx, W)} \right), \\
A_{Z} &\leq C\left( d, A_{\bfx}, M_{\bfx}, M, \nrm{\bfB}_{\dot{G}^{M}(\bfx, W)} \right) \sup_{(y, y_{1}) \in W} \abs{y - y_{1}}^{N_{0} - 1}, \\
A_{g} &\leq \sup_{\bfA, \alp, J} \nrm{c[\Phi_{\bfA}]^{(\alp, J)}}_{C^{M_{g}}(U)}.
\end{align*}

\item Assume in addition that \ref{hyp:x-3} holds for some $0 \leq M_{\bfx}'  \leq M_{\bfx}$. If, for some $M' \geq 0$ and $L_{b} > 0$,
\begin{equation*}
\nrm{\bfB}_{G^{M}(\bfx, W; L_{b})} < + \infty,
\end{equation*}
then \ref{hyp:b-y1-smth} holds with the same $L_{b}$, $M_{Z}' \leq \min \set{M_{\bfx}', M'} - C_{d}$ and
\begin{equation*}
A_{Z} \leq C\left( d, A_{\bfx}, M_{\bfx}', M', \nrm{\bfB}_{G^{M}(\bfx, W; L_{b})} \right) \sup_{(y, y_{1}) \in W} \max \set{L_{b}, \abs{y - y_{1}}}^{N_{0} - 1}.
\end{equation*}

\end{enumerate}
\end{proposition}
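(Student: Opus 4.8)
The plan is to reduce everything to the abstract ODE estimate (Proposition~\ref{prop:ode}, or its Corollary~\ref{cor:ode-fundsol}) applied to the fundamental matrix $\tensor{\bfPi}{_{\bfA}^{\bfA'}}$ of the augmented system, together with the explicit formulas for $\tensor{S}{_{J}^{(\gmm, K)}}$ and $(b_{y_{1}})$ recorded in Proposition~\ref{prop:crc-ODE}. Throughout, we fix $(y,y_{1}) \in W$ and work with the one-parameter ODE obtained by restricting \eqref{eq:aug-pde} to the curve $\bfx(y,y_{1},\cdot)$: that is, $B(s) = \dot{\bfx}^{i}(\tensor{(\bfB_{i})}{_{\bfA}^{\bfA'}} \circ \bfx)(y,y_{1},s)$ with degrees $d_{\bfA}$, and the graded structure \ref{hyp:aug4} is exactly the vanishing hypothesis \eqref{eq:ode-B}. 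The first task is bookkeeping: I would verify that the norm $\nnrm{B}$ appearing in Proposition~\ref{prop:ode} is controlled by $\nrm{\bfB}_{\dot{G}^{0}(\bfx,W)}$ (using $\abs{\dot{\bfx}} \aeq \abs{y_{1}-y}$ from \ref{hyp:x-1} and matching the weight $s^{d_{\bfA}-d_{\bfA'}-1}\abs{y_1-y}^{\cdots}$ in $\nnrm{\cdot}$ to the $\dot{G}^{\cdot,d_{\bfA'}+1-d_{\bfA}}$-weight), and likewise that the inhomogeneous norm $\nrm{\bfB}_{G^{0}(\bfx,W;L_{b})}$ controls the analogous quantity when one wants bounds valid down to scale $L_b$ (this is what powers Part~(2)).

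Next I would derive \ref{hyp:crc-x-q}. By \eqref{eq:S-gmm-ODE}, $\tensor{S}{_{J}^{(\gmm,K)}}(y,y_1,s) = -\,{}^{(\bfx)}\tensor{\bfPi}{_{J}^{\bfA}}(0,s)\,\dot{\bfx}^{i}(\tensor{(\bfC_i)}{_{\bfA}^{(\gmm,K)}}\circ\bfx)(y,y_1,s)$. Corollary~\ref{cor:ode-fundsol} gives $\abs{\tensor{\bfPi}{_{J}^{\bfA}}(0,s)} \aleq s^{d_{\bfA}-d_J}(\cdots) = s^{d_{\bfA}}(\cdots)$ (since $d_J=0$), and \ref{hyp:aug4} plus the definition of $\nrm{\bfC}_{\dot{G}^{\cdot}_{\infty}}$ gives $\abs{\dot{\bfx}^i(\tensor{(\bfC_i)}{_{\bfA}^{(\gmm,K)}}\circ\bfx)} \aleq \abs{y_1-y}^{m_K+\abs{\gmm}}s^{m_K+\abs{\gmm}-1-d_{\bfA}}$ (using $d_K = -m_K$, $d_{J}=0$). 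Multiplying, the powers of $s^{d_{\bfA}}$ cancel and one lands exactly on the bound $A_S\abs{y_1-y}^{m_K+\abs{\gmm}}s^{m_K+\abs{\gmm}-1}$ demanded in \ref{hyp:crc-x-q}. The vanishing $\tensor{S}{_{J}^{(\gmm,K)}}=0$ for $\abs{\gmm}>m_K'$ is inherited directly from the corresponding vanishing of $\tensor{(\bfC_i)}{_{\bfA}^{(\gmm,K)}}$ in \ref{hyp:aug3}. For \ref{hyp:b-y1-q}, I use \eqref{eq:b-y1-ODE}--\eqref{eq:g-ODE}: $(Z^{\bfA})_J(y,y_1) = {}^{(\bfx)}\tensor{\bfPi}{_{J}^{\bfA}}(0,1)$, whose size is $\aleq \abs{y_1-y}^{-d_{\bfA}} \leq \sup_W\abs{y-y_1}^{N_0-1}$ times the ODE constant (here I use $-d_{\bfA}\le N_0-1$, and that for $d_{\bfA}=0$ one has $\tensor{\bfPi}{_{J}^{\bfA}}(0,1)=\dlt_J^{\bfA}+O(\nnrm{B})$), and $c[g_{\bfA}]^{(\alp,J)} = c[\Phi_{\bfA}]^{(\alp,J)}$ so $A_g$ is just the $C^{M_g}$-norm of those coefficients; the constraint $\abs{\alp}\le\max_K(m_K+m_K')-1$ follows from the degree count in \ref{hyp:aug4} combined with \ref{hyp:aug3}.

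The genuinely laborious step — and the main obstacle — is the \emph{higher-derivative} bounds, i.e.\ getting the $(\rd_y+\rd_{y_1})^{\alp}\rd_{y_1}^{\bt}$ derivatives (up to $\abs{\alp}+\abs{\bt}\le M_S$, resp.\ $M_Z$) with the stated weights. The clean way is not to differentiate $\tensor{\bfPi}{_{\bfA}^{\bfA'}}$ directly but to observe that $\tensor{\Psi}{_{\bfA}^{\bfA'}} \ceq \rd_{\mu}^{(\cdot)}\big(\tensor{\bfPi}{_{\bfA}^{\bfA'}} - \dlt_{\bfA}^{\bfA'}\big)$, for any parameter-derivative $\rd_\mu$ among $(\rd_y+\rd_{y_1})$, $\rd_{y_1}$, again solves an integral equation of the form \eqref{eq:abs-ode}: differentiating \eqref{eq:fund-mat-eq} produces an equation for the derivative with the \emph{same} coefficient $B$ and an inhomogeneity $G$ built from (lower-order derivatives of $\tensor{\bfPi}{}$) times (derivatives of $\dot{\bfx}^i(\bfB_i\circ\bfx)$), the latter controlled by $\nrm{\bfB}_{\dot{G}^{M}(\bfx,W)}$ and \ref{hyp:x-1}--\ref{hyp:x-2}. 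One checks the weight hypothesis \eqref{eq:abs-ode-G} on $G$ holds with $A_0 \aleq$ (the desired constant), so Proposition~\ref{prop:ode} applies and closes an induction on $\abs{\alp}+\abs{\bt}$. Each differentiation costs one derivative of $B$ and $\bfx$ (hence the loss $M_S,M_Z \le \min\{M_{\bfx},M\}-C_d$, the $C_d$ absorbing the chain-rule combinatorics via $\bfy_1$ and the Faà~di~Bruno expansion from \ref{hyp:x-2}); the degree weights propagate because every term in the differentiated equation respects the grading. Part~(2) is then the same argument run with $G^{M}(\bfx,W;L_b)$ and \ref{hyp:x-3} in place of $\dot{G}^{M}(\bfx,W)$ and \ref{hyp:x-1}: the inhomogeneous norms are designed so that the ODE estimate yields bounds uniform down to scale $L_b$, which is precisely the improvement \ref{hyp:b-y1-smth} asks for over \ref{hyp:b-y1-q}. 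I expect the write-up of the induction — tracking exactly which multi-indices of $\bfx$, $\bfy_1$, $\bfB$, $\bfC$ appear after $k$ differentiations and confirming each factor's weight — to be the bulk of the work; the ODE input itself is a black box once $\nnrm{B}$ and $A_0$ are identified.
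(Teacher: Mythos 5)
Your proposal is correct and follows essentially the same route as the paper: it combines the explicit formulas of Proposition~\ref{prop:crc-ODE} with the weighted ODE estimate of Proposition~\ref{prop:ode}, and handles the higher parameter-derivatives by differentiating the integral equation \eqref{eq:fund-mat-eq} for the fundamental matrix, recognizing the result as an instance of \eqref{eq:abs-ode} with the same $B$ and an inhomogeneity controlled by the inductive hypothesis, exactly as in the paper's induction on $\abs{\alp}+\abs{\bt}$ (the paper organizes the bookkeeping via the renormalization $\tilde t = \abs{y_1-y}\,t$, and Part~(2) by replacing $\abs{y_1-y}$ with $L_b$ and \ref{hyp:x-1} with \ref{hyp:x-3}, just as you describe). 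The only blemish is a sign slip in your stated pointwise bound for $\tensor{\bfPi}{_{J}^{\bfA}}(0,s)$ (it should scale like $s^{-d_{\bfA}}$, not $s^{d_{\bfA}}$, for the cancellation against the $\bfC$-weight to produce the \ref{hyp:crc-x-q} bound), which does not affect the validity of the argument.
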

\begin{proof}
In view of Proposition~\ref{prop:crc-ODE}, we may express $S$ and $Z$ in terms of the coefficients $\bfB_{i}$ and $\bfC_{i}$ in \eqref{eq:aug-pde}; we review this process as follows. From \eqref{eq:aug-pde}, we obtain the following ODE system along each curve $\bfx(y, y_{1}, t)$:
\begin{equation*}\tag{\ref{eq:aug-ode}}
\frac{\ud}{\ud s} \left( \Phi_{\bfA} \circ \bfx \right) = \dot{\bfx}^{i} \left( \tensor{(\bfB_{i})}{_{\bfA}^{\bfA'}} \circ \bfx \right) \left( \Phi_{\bfA'}  \circ \bfx \right) + \dot{\bfx}^{i} \left( \tensor{(\bfC_{i})}{_{\bfA}^{(\gmm, K)}}\circ \bfx\right) \left( \rd^{\gmm} (\calP^{\ast} \varphi)_{K} \circ \bfx \right).
\end{equation*}
In what follows, we use the shorthand $\tensor{\bfPi}{_{\bfA}^{\bfA'}}(y, y_{1}; s, t) \ceq {}^{(\bfx_{y, y_{1}})} \tensor{\bfPi}{_{\bfA}^{\bfA'}}(s, t)$ for the fundamental matrix. By \eqref{eq:fund-mat-eq}, it solves
\begin{equation} \label{eq:ode2crc-fund-mat}
	\tensor{\bfPi}{_{\bfA}^{\bfA'}}(y, y_{1}; s, t) = \dlt_{\bfA}^{\bfA'} - \int_{s}^{t} B(y, y_{1}, s')  \tensor{\bfPi}{_{\bfA''}^{\bfA'}}(y, y_{1}; s', t) \, \ud s',
\end{equation}
where $B(y, y_{1}, s') \ceq \dot{\bfx}^{i}  \left( \tensor{(\bfB_{i})}{_{\bfA}^{\bfA''}} \circ \bfx \right) (y, y_{1}, s')$.

We now make the following claims:
\begin{enumerate}[label=(\roman*)]
\item Under the hypothesis of part (1), we have
\begin{equation} \label{eq:ode2crc-1}
    |L_{y}^{\abs{\alp}} \abs{y - y_{1}}^{\abs{\bt}} (\rd_{y} + \rd_{y_{1}})^{\alp} \rd_{y_{1}}^{\bt}  \tensor{\bfPi}{_{J}^{\bfA}}(y, y_{1}; s, t)|\lesssim |y_1-y|^{-d_{\bfA}}t^{-d_{\bfA}}
\end{equation}
for $\abs{\alp} + \abs{\bt} \leq \min\set{M_{\bfx}, M} - C_{d}$, where the implicit constant depends on $d$, $A_{\bfx}$, $M_{\bfx}$, $M$, and $\nrm{\bfB}_{\dot{G}^{M}(\bfx, W)}$.
\item Under the hypothesis of part (2), we have
\begin{equation} \label{eq:ode2crc-2}
    |L_{y}^{\abs{\alp}} L_{b}^{\abs{\bt}} (\rd_{y} + \rd_{y_{1}})^{\alp} \rd_{y_{1}}^{\bt} \tensor{\bfPi}{_{J}^{\bfA}}(y, y_{1}; s, t)|\lesssim L_{b}^{-d_{\bfA}} t^{-d_{\bfA}} \quad \hbox{ for } \abs{y_{1} - y} \leq L_{b}
\end{equation}
and for $\abs{\alp} + \abs{\bt} \leq \min\set{M_{\bfx}', M'} - C_{d}$, where the implicit constant depends on $d$, $A_{\bfx}$, $M_{\bfx}'$, $M'$, and $\nrm{\bfB}_{G^{M'}(\bfx, W; L_{b})}$.

\end{enumerate}

Assuming these claims, let us first complete the proof. Applying Proposition~\ref{prop:crc-ODE} to the present case, we may express $\tensor{S}{_{J}^{(\gmm, K)}}$ and $Z^{\bfA}$ ($\bfA \in \calA$) as follows:
\begin{align*}
	\tensor{S}{_{J}^{(\gmm, K)}}(y, y_{1}, t)  &= -\tensor{\bfPi}{_{J}^{\bfA}}(y, y_{1}; 0, t) \, \dot{\bfx}^{i} \left( \tensor{(\bfC_{i})}{_{\bfA}^{(\gmm, K)}} \circ \bfx \right) (y, y_{1}, t), \\
	(Z^{\bfA})(y, y_{1}) &= \tensor{\bfPi}{_{\varphi}^{\bfA}}(y, y_{1}; 0, 1), \\
	c[g_{\bfA}]^{(\alp, K)}(y_{1}) &= c[\Phi_{\bfA}]^{(\alp, K)}(y_{1}).
\end{align*}
In view of these expressions, as well as \ref{hyp:x-1} (to estimate $\dot{\bfx}^{i} = \rd_{s} \bfx^{i}$), the assertions in part (1) concerning $A_{S}$, $A_{Z}$, $M_{S}$, and $M_{Z}$ follow from the above claims. The assertion concerning $A_{g}$ is obvious from the last identity. Finally, under the hypothesis of part (2), which is stronger than that of part (1), we may combine \eqref{eq:ode2crc-1} and \eqref{eq:ode2crc-2} to verify \ref{hyp:b-y1-smth} with the above bounds on $M_{Z}'$ and $A_{Z}$. 

It remains to verify the claims. For (i), we set up an induction on $M_{\bfPi} \in \bbZ_{\geq 0}$ with the following induction hypothesis for $0 \leq \abs{\alp} + \abs{\bt} \leq M_{\bfPi}$:
\begin{equation}\label{eq:ode2crc-induction-1}
    |L_{y}^{\abs{\alp}} \abs{y - y_{1}}^{\abs{\bt}} (\rd_{y} + \rd_{y_{1}})^{\alp} \rd_{y_{1}}^{\bt} \tensor{\bfPi}{_{\bfA}^{\bfA'}}(y, y_{1}; s, t)| \leq C |y_1-y|^{d_{\bfA}-d_{\bfA'}+1}s^{d_{\bfA}}t^{-d_{\bfA'}},
\end{equation}
\begin{equation}\label{eq:ode2crc-induction-2}
     \int_s^t (s')^{-d_{\bfA}-1}t^{d_{\bfA'}}|L_{y}^{\abs{\alp}} \abs{y - y_{1}}^{\abs{\bt}} (\rd_{y} + \rd_{y_{1}})^{\alp} \rd_{y_{1}}^{\bt} \tensor{\bfPi}{_{\bfA}^{\bfA'}}(y, y_{1}; s', t)|\,\ud s'\leq C |y_1-y|^{d_{\bfA}-d_{\bfA'}+1},\quad d_{\bfA}<0.
\end{equation}
where
\begin{equation*}
	 C = C\left(d, \alp, \bt, A_{\bfx}, M_{\bfx}, M, \nrm{\bfB}_{\dot{G}^{M}(\bfx, W)}\right).
\end{equation*}

To carry out the induction, we introduce the renormalized parameter $\tilde{t}=|y_1-y|t$ and write
\begin{equation*}
    \td{\bfPi}(y, y_{1}; \tilde{s}, \tilde{t} )=\bfPi(y, y_{1}; |y_1-y|^{-1}\tilde{s},|y_1-y|^{-1}\tilde{t}).
\end{equation*}
Then we have
\begin{equation}\label{eq:ode2crc-rescale}
    \td{\bfPi}(y, y_{1}; \tilde{s},\tilde{t})=I-\int_{\tilde{s}}^{\tilde{t}}\td{B}(y, y_{1}, s') \td{\bfPi}(y, y_{1}; \tilde{s}',\tilde{t})\, \ud \tilde{s}',
\end{equation}
where $\td{B}(y, y_{1}, \tilde{s}) \ceq |y_1-y|^{-1}B(y_1,y,|y_1-y|^{-1}\tilde{s})$.
Thanks to the renormalization, we obtain $\abs{y_{1} - y}^{-1}$ in $\td{B}$, which offsets the factor $\abs{y_{1} - y}$ in \ref{hyp:x-1} for $\dot{\bfx} = \rd_{s} \bfx$. Indeed, a direct computation using \ref{hyp:x-1} and the definition of $\nrm{\cdot}_{\dot{G}^{M, \dlt}(\bfx, W)}$ shows
\begin{equation} \label{eq:ode2crc-B}
	\nnrm{L_{y}^{\abs{\alp}} \abs{y - y_{1}}^{\abs{\bt}}( (\rd_{y} + \rd_{y_{1}})^{\alp} \rd_{y_{1}}^{\bt} B){\td{\,\,\,}}(y, y_{1}, \cdot)} \aleq 1 + \nrm{\bfB}_{\dot{G}^{M}(\bfx, W)},
\end{equation}
for $|\alp|+|\bt|\leq \min\set{M_{\bfx}, M} - C_{d}$, where the implicit constant depends on $d$, $\alp$, $\bt$, $A_{\bfx}$. We carefully note that the variable change $t \mapsto \tilde{t}$ on the left-hand side is done after taking $(\rd_{y} + \rd_{y_{1}})^{\alp} \rd_{y_{1}}^{\bt}$.

The base case $M_{\bfPi} = 0$ follows from Proposition~\ref{prop:ode} applied to the rescaled ODE system \eqref{eq:ode2crc-rescale}.

Now we assume \eqref{eq:ode2crc-induction-1} \eqref{eq:ode2crc-induction-2} holds for all $|\alp|+|\bt|\leq M_{\bfPi}-1$, we would like to show \eqref{eq:ode2crc-induction-1}  \eqref{eq:ode2crc-induction-2} for $|\alp|+|\bt|=M_{\bfPi}$. We differentiate \eqref{eq:ode2crc-fund-mat}:
\begin{equation*}
\begin{aligned}
       & L_{y}^{\abs{\alp}} \abs{y - y_{1}}^{\abs{\bt}} (\rd_{y} + \rd_{y_{1}})^{\alp} \rd_{y_{1}}^{\bt} \bfPi(y, y_{1}; s, t)\\
        &=-L_{y}^{\abs{\alp}} \abs{y - y_{1}}^{\abs{\bt}}\sum\limits_{\alp=\alp'+\alp'',\beta=\beta'+\beta''}\binom{\alp}{\alp'}\binom{\bt}{\bt'}\int_s^t (\rd_{y}+\rd_{y_1})^{\alp'}\rd_{y_1}^{\bt'}B(y,y_1,s')(\rd_{y}+\rd_{y_1})^{\alp''}\rd_{y_1}^{\bt''}\bfPi(y,y_1;s',t)\,\ud s'\\
        &= -\int_s^t G(s',t)\,\ud s'-\int_s^t L_{y}^{\abs{\alp}} \abs{y - y_{1}}^{\abs{\bt}} B(y,y_1,s')(\rd_{y} + \rd_{y_{1}})^{\alp} \rd_{y_{1}}^{\bt} \bfPi(y, y_{1}; s', t)\,\ud s'
\end{aligned}
\end{equation*}
where
\begin{equation*}
    G(s,t)=L_{y}^{\abs{\alp}} \abs{y - y_{1}}^{\abs{\bt}}\sum\limits_{\alp=\alp'+\alp'',\beta=\beta'+\beta'',|\alp'|+|\beta'|>0}\binom{\alp}{\alp'}\binom{\bt}{\bt'} (\rd_{y}+\rd_{y_1})^{\alp'}\rd_{y_1}^{\bt'}B(y,y_1,s)(\rd_{y}+\rd_{y_1})^{\alp''}\rd_{y_1}^{\bt''}\bfPi(y,y_1;s,t).
\end{equation*}
At this point, we pass to the renormalized variable $\tilde{t} = \abs{y - y_{1}} t$ (and similarly use $\tilde{s}$, $\tilde{s}'$). By our induction hypothesis and \eqref{eq:ode2crc-B},
\begin{equation*}
    \int_{\tilde{s}}^{\tilde{t}} (\tilde{s}')^{-d_{\bfA}}\tilde{t}^{d_{\bfA'}}\tensor{\td{G}}{_{\bfA}^{\bfA'}}(\tilde{s}',\tilde{t})\,\ud \tilde{s}'\leq C.
\end{equation*}
Thus, applying Proposition~\ref{prop:ode}, we prove the induction hypothesis \eqref{eq:ode2crc-induction-1}  \eqref{eq:ode2crc-induction-2} for $|\alp|+|\bt|=M_{\bfPi}$ as long as $M_{\bfPi} \leq \min\set{M_{\bfx}, M} - C_{d}$. Then from the case $\bfA = J$ of \eqref{eq:ode2crc-induction-1}, we obtain \eqref{eq:ode2crc-1}.

Estimate \eqref{eq:ode2crc-2} is proved in the same way where $\abs{y_{1} - y}$ is replaced by $L_{b}$. More precisely, under the restriction $\abs{y_{1} - y} \leq L_{b}$, we use \ref{hyp:x-3} instead of \ref{hyp:x-1} and renormalized parameter is defined as $\tilde{t} = L_{b} t$. We omit the details. \qedhere

\end{proof}

\subsection{Connection and holonomy} \label{subsec:conn}
The graded augmented system as in Definition~\ref{def:aug} gives a connection on the trivial bundle $U\times \bbC^{\# \calA}$:
\begin{equation*}
 	D_{i} \Phi_{\bfA} \ud x^{i} \ceq\left(\partial_i \Phi_{\bfA}-\tensor{(\bfB_i)}{_{\bfA}^{\bfA'}}(x) \Phi_{\bfA'}(x) \right) \ud x^i.
\end{equation*}
A parallel section $\Phi_{\bfA}$ of this connection gives an element in $\ker\calP^*$. Given a point $y \in U$, let $\bfx(t):[0,1]\to U$ be a closed curve with $\bfx(0) = y$. The parallel transport along this curve induces a linear map on the fiber over $y$, $\alp_{\bfx}:\bbC^{\# \calA}\to \bbC^{\# \calA}$, called a \emph{holonomy map based at $y$}. If $D_{i} \Phi_{\bfA}=0$ for all $i$ for some $\Phi_{\bfA}$ on $U$, then one must have $\alp_{\bfx} \Phi_{\bfA}(y) =\Phi_{\bfA}(y)$. On the other hand, if there is a point $y \in U$ at which $\phi_{\bfA} \in \bbC^{\# \calA}$ is invariant under all holonomy maps $\alp_{\bfx}$ based at $y$, then the section $\Phi_{\bfA}$ defined by solving ODEs along (arbitrary) paths with $\Phi_{\bfA}(y) = \phi_{\bfA}$ (for all $\bfA \in \calA$) is well-defined and parallel, i.e., $\Phi_{\bfA} \in \ker \calP^{*}$. Therefore we conclude the following.
\begin{proposition}
    Let $U$ be a connected subset of $\bbR^{d}$, and $\calP$ an $r_{0} \times s_{0}$-matrix valued differential operator with $C^{\infty}(\br{U})$ coefficients. Let $(\Phi_{\bfA})_{\bfA \in \calA}$ be graded augmented variables as in Definition~\ref{def:aug}, with $C^{\infty}(\br{U})$ coefficients $\bfB_{i}, \bfC_{i}$ in \ref{hyp:aug3}. Then the  cokernel $\ker \calP^{*}$ may be identified with the space of vectors $\phi_{\bfA}$ at a point $y$ invariant under all holonomy maps $\alp_{\bfx}$ associated with $D_{i} \ceq \rd_{i} - \bfB_{i}$ based at $y$.
\end{proposition}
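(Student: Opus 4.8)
The plan is to unwind the statement into the standard dictionary between parallel sections of a connection on a simply-behaved (here trivial) bundle and cokernel elements, which the paragraph preceding the proposition already sketches; the job is to make that sketch into a clean two-part equality of vector spaces. First I would fix the connection $D_i \ceq \rd_i - \bfB_i$ on $U \times \bbC^{\#\calA}$ and set up the two linear maps that will be mutual inverses. Going one way: given $\bfZ \in \ker\calP^{\ast}$, Proposition~\ref{prop:aug2crc} (or equivalently the Duhamel formula \eqref{eq:aug2crc-Z} together with \eqref{eq:varphi-duhamel} applied with $\calP^{\ast}\varphi = 0$) shows that the associated augmented vector $\Phi_{\bfA}$ built from $\bfZ$ via \ref{hyp:aug2} is a parallel section, i.e.\ $D_i\Phi_{\bfA}=0$; evaluating at the fixed point $y$ gives a vector $\phi_{\bfA} \ceq \Phi_{\bfA}(y)$, and parallelness immediately forces $\alp_{\bfx}\phi_{\bfA}=\phi_{\bfA}$ for every closed loop $\bfx$ based at $y$ (parallel transport of a parallel section returns itself). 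Going the other way: given $\phi_{\bfA}$ at $y$ invariant under all holonomy maps, I would solve the ODE \eqref{eq:aug-ode} (with source term zero) along arbitrary paths from $y$ to define $\Phi_{\bfA}$ on all of $U$; holonomy-invariance is exactly the obstruction-vanishing needed for this to be path-independent, hence well-defined, and the resulting $\Phi_{\bfA}$ is parallel by construction. Then reading off $\bfZ_J \ceq \Phi_J$ (the first $r_0$ components, using $\calA \supseteq \{1,\dots,r_0\}$ from \ref{hyp:aug1}) and using \ref{hyp:aug3} with the source term absent gives $\rd_i \Phi_{\bfA} = \tensor{(\bfB_i)}{_{\bfA}^{\bfA'}}\Phi_{\bfA'}$, which restricted to the $\varphi$-components and combined with the defining relations of the augmented system yields $\calP^{\ast}\bfZ = 0$; smoothness of $\bfZ$ follows from smoothness of the ODE coefficients $\bfB_i, \bfC_i \in C^{\infty}(\br U)$.

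The key steps, in order, are: (i) record that $D_i\Phi = 0 \iff \Phi$ is parallel $\iff$ $\Phi(y)$ is holonomy-invariant for connectedness of $U$ — this is the abstract fact about parallel transport, needing only that any two points are joined by a path and that the ODE for parallel transport has unique solutions (guaranteed by smoothness of $\bfB_i$); (ii) the forward map $\ker\calP^{\ast} \to \{\text{holonomy-invariant vectors at } y\}$, $\bfZ \mapsto \Phi_{\bfA}(y)$, using that the augmentation of a cokernel element is parallel (cite Proposition~\ref{prop:aug2crc}/formula \eqref{eq:aug2crc-Z}); (iii) the backward map, solving ODEs out of $y$ and checking via \ref{hyp:aug3} (source-free) that the $\varphi$-components form a cokernel element; (iv) checking the two maps are inverse to each other, which is immediate since both directions are governed by the same ODE and $\Phi_J = \varphi_J$ by \ref{hyp:aug1}. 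I would phrase the final identification as a linear isomorphism $\ker\calP^{\ast} \xrightarrow{\ \sim\ } \{\phi \in \bbC^{\#\calA} : \alp_{\bfx}\phi = \phi \text{ for all loops } \bfx \text{ based at } y\}$.

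The only genuine subtlety — the ``hard part'', though it is really just a bookkeeping point — is step (iii): one must verify that a holonomy-invariant $\phi$ at $y$ propagates to a \emph{globally} well-defined parallel section. This uses connectedness of $U$ plus the fact that if two paths from $y$ to $x$ give different endpoint values, concatenating one with the reverse of the other produces a loop at $y$ whose holonomy map moves $\phi$, contradicting invariance. (If $U$ is not simply connected this is exactly why one cannot replace ``invariant under all holonomy maps'' by ``invariant under the curvature'', which is the content of Proposition~\ref{prop:zero-curv}; here the statement is phrased precisely so that no simple-connectedness or zero-curvature hypothesis is needed.) A second minor point is confirming that the section so produced lies in $C^{\infty}(\br U)$ rather than merely $C^{\infty}(U)$; this follows because the ODE coefficients are assumed $C^{\infty}(\br U)$ and $U$ is (bounded with, in the intended applications) controlled geometry, so solutions extend smoothly to $\br U$ — one can also simply note that $\calP^{\ast}\bfZ = 0$ is an elliptic (overdetermined) equation under \ref{hyp:fc} and invoke interior regularity, but the ODE argument is cleaner and self-contained. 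Everything else is routine substitution into the already-established formulas \eqref{eq:aug-ode}, \eqref{eq:varphi-duhamel}, and \eqref{eq:aug2crc-Z}.
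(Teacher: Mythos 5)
Your proposal is correct and follows essentially the same route as the paper: the paper's "proof" is precisely the paragraph preceding the proposition, which sketches the same two-way dictionary (parallel section $\Leftrightarrow$ holonomy-invariant vector at a base point, using connectedness for well-definedness of the section obtained by solving the ODE along arbitrary paths). Your write-up merely fleshes out the details the paper leaves implicit, including the correct observation that holonomy-invariance (rather than zero curvature) is exactly what makes the path-independence argument work without simple-connectedness.
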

The infinitesimal behavior of the holonomy is controlled by the curvature of the connection:
\begin{equation*}
    \tensor{\bfR}{_{ij\bfA}^{\bfA'}}\ud x^i\wedge \ud x^j 
    = -\frac{1}{2} \left( \partial_i\tensor{(\bfB_j)}{_{\bfA}^{\bfA'}} - \partial_j \tensor{(\bfB_i)}{_{\bfA}^{\bfA'}} + \tensor{(\bfB_i)}{_{\bfA}^{\bfA''}}\tensor{(\bfB_j)}{_{\bfA''}^{\bfA'}} - \tensor{(\bfB_j)}{_{\bfA}^{\bfA''}}\tensor{(\bfB_i)}{_{\bfA''}^{\bfA'}} \right) \ud x^i\wedge  \ud x^j
\end{equation*}
The Ambrose--Singer theorem \cite{ambrosesinger} states that the Lie algebra of the holonomy group is spanned by the curvature forms $\set{\tensor{\bfR}{_{ij\bfA}^{\bfA'}}}_{i, j = 1, \ldots, d}$. Hence, if $ \tensor{\bfR}{_{ij\bfA}^{\bfA'}}(y)$ has trivial kernel for some $i, j$ and $y \in U$ (which is the generic case), then $\ker\calP^*=\{0\}$. On the other hand, Proposition~\ref{prop:zero-curv} now admits a simple proof:

\begin{proof}[Proof of Proposition~\ref{prop:zero-curv}]
Note that \eqref{eq:zero-curv} is nothing but the condition that $\tensor{\bfR}{_{ij \bfA}^{\bfA'}} = 0$ vanishes in $U$. If $\dim \ker \calP^{\ast} = \# \calA$, then the holonomy group must be trivial, which implies that the curvature vanishes. Conversely, if the curvature vanishes identically and $U$ is simply connected, then the holonomy group is trivial, and $\dim \ker \calP^* =\# \calA$ (this also follows directly from the Frobenius theorem, as in \cite[\S3.1]{ReshBook}). \qedhere 
\end{proof}

\section{\ref{hyp:fc} implies \ref{hyp:crc}} \label{sec:fc2crc}
In this section, we give a precise formulation and proof of Theorem~\ref{thm:alg-cond-intro}. A basic ingredient is Hilbert's Nullstellensatz (see, e.g., \cite[p.~85]{AtiMac}), which gives a correspondence between the set of common zeros of a family of polynomials and the ideal they generate. We recall its statement below:
\begin{proposition}[Hilbert's Nullstellensatz]\label{prop:nullstellensatz}
    Suppose $f_1(x_1,\cdots, x_d),\cdots, f_N(x_1,\cdots, x_d)\in \bbC[x_1,\cdots, x_d]$ are polynomials with the set of common zeros
    \begin{equation*}
        Z=\{(z_1,\cdots,z_d)\in \bbC^d:f_j(z_1,\cdots, z_d)=0,\,j=1,2,\cdots, N\}.
    \end{equation*}
    Then for any polynomial $h(x_1.\cdots, x_d)\in\bbC[x_1,\cdots,x_d]$, if $h$ vanishes on $Z$, then there exist $M>0$ and polynomials $g_j(x_1,\cdots,x_d)\in \bbC[x_1,\cdots, x_d]$, $j=1,2,\cdots, N$ such that
    \begin{equation*}
        h(x_1,\cdots,x_d)^{M}=\sum_j g_j(x_1,\cdots, x_d) f_j(x_1,\cdots, x_d).
    \end{equation*}
\end{proposition}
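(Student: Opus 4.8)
The plan is to deduce Proposition~\ref{prop:nullstellensatz} (the \emph{strong} Nullstellensatz) from its \emph{weak} form --- the statement that a proper ideal of $\bbC[x_1, \ldots, x_d]$ has a common zero in $\bbC^d$ --- by means of the classical \emph{Rabinowitsch trick}, and then to establish the weak form by reducing it to \emph{Zariski's lemma} on finitely generated field extensions. (We may assume $h \not\equiv 0$; otherwise $h^1 = 0$ already lies in the ideal and there is nothing to prove.)

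\textbf{Step 1: Rabinowitsch trick.} Assume the weak Nullstellensatz, and let $h$ vanish on $Z = \{ f_1 = \cdots = f_N = 0 \} \sbeq \bbC^d$. Introduce one extra variable $x_{d+1}$ and form the ideal $J \ceq (f_1, \ldots, f_N, \, 1 - x_{d+1} h) \sbeq \bbC[x_1, \ldots, x_{d+1}]$. Any common zero $(z_1, \ldots, z_{d+1})$ of $J$ would have $(z_1, \ldots, z_d) \in Z$, hence $h(z_1, \ldots, z_d) = 0$, and then $1 - z_{d+1} h(z_1, \ldots, z_d) = 1 \neq 0$, a contradiction; so $J$ has no common zero, and by the weak Nullstellensatz $J = (1)$. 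Thus there are $g_0, \ldots, g_N \in \bbC[x_1, \ldots, x_{d+1}]$ with
\begin{equation*}
    1 = \sum_{j=1}^{N} g_j(x_1, \ldots, x_{d+1}) f_j(x_1, \ldots, x_d) + g_0(x_1, \ldots, x_{d+1}) \bigl(1 - x_{d+1} h(x_1, \ldots, x_d)\bigr).
\end{equation*}
Apply the ring homomorphism $\bbC[x_1, \ldots, x_{d+1}] \to \bbC(x_1, \ldots, x_d)$ sending $x_i \mapsto x_i$ for $i \leq d$ and $x_{d+1} \mapsto 1/h$ (well-defined since $h \neq 0$ in the field): the last term vanishes, and multiplying by $h^M$ with $M$ at least the $x_{d+1}$-degree of every $g_j$ clears all denominators, yielding $h^M = \sum_j \widetilde{g}_j f_j$ with $\widetilde{g}_j \in \bbC[x_1, \ldots, x_d]$, which is precisely the asserted identity.

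\textbf{Step 2: weak Nullstellensatz from Zariski's lemma.} Recall Zariski's lemma: if $A$ is a field that is finitely generated as an algebra over a field $k$, then $A$ is a finite algebraic extension of $k$. Granting it, let $I \subsetneq \bbC[x_1, \ldots, x_d]$ be proper; by Zorn's lemma it lies in a maximal ideal $\frkm$, and $A \ceq \bbC[x_1, \ldots, x_d]/\frkm$ is a field finitely generated as a $\bbC$-algebra (by the images of $x_1, \ldots, x_d$). By Zariski's lemma $A$ is a finite extension of $\bbC$, hence $A = \bbC$ as $\bbC$ is algebraically closed. So the image $a_i \in \bbC$ of $x_i$ gives $x_i - a_i \in \frkm$ for all $i$; since $(x_1 - a_1, \ldots, x_d - a_d)$ is itself maximal, $\frkm = (x_1 - a_1, \ldots, x_d - a_d)$, and $(a_1, \ldots, a_d)$ is a common zero of $\frkm$, hence of $I$. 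This is the weak Nullstellensatz.

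\textbf{Step 3: Zariski's lemma, and the main obstacle.} I would prove Zariski's lemma by the Artin--Tate argument. Write $A = k[a_1, \ldots, a_n]$; if $A$ were not algebraic over $k$, then after reordering, $a_1, \ldots, a_r$ ($r \geq 1$) are algebraically independent over $k$ and $A$ is algebraic --- hence, being finitely generated, a finite module --- over the subfield $F \ceq k(a_1, \ldots, a_r) \sbeq A$. The Artin--Tate lemma applied to $k \sbeq F \sbeq A$ then shows $F$ is a finitely generated $k$-algebra; but $k(a_1, \ldots, a_r)$ with $r \geq 1$ is not, since if it equalled $k[p_1/q_1, \ldots, p_m/q_m]$ one could (by a Euclid-type argument there being infinitely many pairwise non-associate irreducibles in the UFD $k[a_1, \ldots, a_r]$) choose an irreducible $\pi$ not dividing $q_1 \cdots q_m$, and then $1/\pi \in k[p_1/q_1, \ldots, p_m/q_m]$ forces $\pi \mid (q_1 \cdots q_m)^e$ for some $e$, a contradiction. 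Hence $A$ is algebraic over $k$, and being finitely generated it is a finite extension. The Artin--Tate lemma itself is a short Noetherian-module bookkeeping argument: expand the $k$-algebra generators of $A$ and their pairwise products in terms of finitely many $F$-module generators, let $B_0$ be the $k$-subalgebra generated by the coefficients appearing; $B_0$ is a finitely generated $k$-algebra hence Noetherian, $A$ is a finite $B_0$-module, so the submodule $F$ is a finite $B_0$-module and thus a finitely generated $k$-algebra. I expect Step 3 --- specifically the Artin--Tate lemma together with the non-finite-generation of $k(t)$ --- to be the only genuine content; Steps 1 and 2 are formal once it is in hand, the single point requiring a little care being the substitution $x_{d+1} \mapsto 1/h$, best phrased as evaluation in $\bbC[x_1, \ldots, x_d][1/h] \sbeq \bbC(x_1, \ldots, x_d)$ followed by clearing the finitely many denominators.
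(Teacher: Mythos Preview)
Your proof is correct and follows the standard Rabinowitsch--Zariski--Artin--Tate route. However, the paper does not actually prove Proposition~\ref{prop:nullstellensatz}: it simply recalls the statement and cites Atiyah--MacDonald \cite[p.~85]{AtiMac} as a reference, treating Hilbert's Nullstellensatz as a known black-box ingredient to be invoked in the proof of Theorem~\ref{thm:fc2crc}. So there is no ``paper's own proof'' to compare against --- you have supplied a complete argument where the authors intentionally supplied none.
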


Our main result -- which is the precise version of Theorem~\ref{thm:alg-cond-intro}.(1) -- is the following. \begin{theorem} \label{thm:fc2crc}
Let $U$ be an open subset of $\bbR^{d}$. Suppose that $\calP$ has smooth coefficients. Assume that the $s_{0} \times r_{0}$-matrix-valued principal symbol $p^{\ast}(x, \xi)$ of $\calP^{\ast}$ (where $s_{0} \geq r_{0}$) satisfies
\begin{enumerate}[label=\ref{hyp:fc}]
\item $p^{\ast}(x, \xi)$ is injective for all $x \in U$ and $\xi \in \bbC^{d} \setminus \set{0}$.
\end{enumerate}
Then there exist augmented variables $(\Phi_{\bfA})_{\bfA \in \calA}$ on $U$ satisfying \ref{hyp:aug1}--\ref{hyp:aug4} with index set $\calA = \set{((\alp, J) : 1 \leq J \leq r_{0}, \, \abs{\alp} \leq N_{0} - 1}$ (which is \emph{maximal} in the sense of Definition~\ref{def:aug-max}), degree $d_{(\alp, J)} = -\abs{\alp}$, and coefficients $\bfB_{i},\bfC_{i} \in C^{\infty}(U)$, which have the following properties:
\begin{enumerate}
\item We have $\tensor{(\bfC_{i})}{_{(\alp, J)}^{(\gmm, K)}} = 0$ unless $m_{K} + \abs{\gmm} = \abs{\alp} + 1$.

\item If $\calP$ has \emph{constant coefficients}, then $\bfB_{i},\bfC_{i}$ are constant. In addition, if $\calP$ is also \emph{homogeneous} in the sense that $\calP = \calP_{\prin}$ (see Definition~\ref{def:prin-symb}), then
\begin{equation} \label{eq:fc2rc-const-hom-Bi}
\tensor{(\bfB_{i})}{_{(\alp, J)}^{(\alp', J')}} = \begin{cases} 1 & \hbox{ if } J = J', \, \alp' = \alp + \bfe_{i} \\
0 & \hbox{ otherwise}. \end{cases}
\end{equation}

\item In general, on any compact set $X \Subset U$, we have
\begin{equation}\label{eq:FC-RC-bfB}
\begin{aligned}
    \|\tensor{(\bfB_{i})}{_{(\alp, J)}^{(\alp', J')}}\|_{C^k(X)}\lesssim_{\|c[p^*]\|_{C^k(X)}} \left\{\begin{array}{ll}
    1, & d_{(\alp, J)}=d_{(\alp', J')}+1, \\
    \sum\limits_{\alp,J,K}\|\tensor{c[\calP^{\ast}]}{_{K}^{(\alp,J)}}\|_{C^{k+|\alp|-m_K+d_{\bfA'}-d_{\bfA}+1}(X)}, & d_{(\alp, J)}<d_{(\alp', J')}+1,
    \end{array}\right.
\end{aligned}
\end{equation}
and
\begin{equation}\label{eq:FC-RC-bfC}
    \|\tensor{(\bfC_{i})}{_{(\alp', J')}^{(\gmm, K)}}\|_{C^k(X)}\lesssim_{\|c[p^{\ast}]\|_{C^k(X)}} \sum_{|\alp|=m_K}\|\tensor{c[\calP^{\ast}]}{_{K}^{(\alp,J)}}\|_{C^k(X)},
\end{equation}
where $\|c[p^*]\|_{C^k(X)} \ceq \sum_{|\alp|=m_K}\|\tensor{c[\calP^{\ast}]}{_{K}^{(\alp,J)}}\|_{C^k(X)}$.
\end{enumerate}
\end{theorem}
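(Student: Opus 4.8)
The plan is to construct the coefficients $\bfB_i, \bfC_i$ directly from the defining ODE-type relations for the maximal augmented system, namely the requirement that $\rd_i \rd^\alp \varphi_J$ be expressible via \eqref{eq:aug-pde}. The first step is entirely formal: for $\abs{\alp} \leq N_0 - 2$ we simply set $\rd_i(\rd^\alp \varphi_J) = \rd^{\alp + \bfe_i}\varphi_J = \Phi_{(\alp+\bfe_i, J)}$, which gives the ``trivial'' part of $\bfB_i$ recorded in \eqref{eq:fc2rc-const-hom-Bi}, with $\bfC_i = 0$ in this range; this automatically respects the graded structure \ref{hyp:aug4} since degrees drop by exactly one. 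The content of the theorem is concentrated in the top-degree variables $\abs{\alp} = N_0 - 1$, where $\rd_i \rd^\alp \varphi_J$ is a derivative of order $N_0$ and must be re-expressed, \emph{modulo lower-order terms and modulo $\calP^\ast \varphi$}, back in terms of derivatives of $\varphi$ of order $\leq N_0 - 1$. This is where \ref{hyp:fc} enters.

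The heart of the argument is the following algebraic claim, to be proved via Hilbert's Nullstellensatz (Proposition~\ref{prop:nullstellensatz}): if $p^\ast(x,\xi)$ is injective for all $\xi \in \bbC^d \setminus \set{0}$ and all $x$, then for every $j$ and every multi-index $\alp$ with $\abs{\alp} = N_0 - 1$, the monomial $\xi_j \xi^\alp$ (of degree $N_0$) lies, for $N_0$ chosen large enough, in the ideal generated over $\bbC[\xi]$ by the entries of $p^\ast(x,\xi)$ together with lower-degree monomials $\xi^{\alp'}$, $\abs{\alp'} \leq N_0 - 1$. Concretely: injectivity of $p^\ast(x,\xi)$ for all nonzero complex $\xi$ means the $r_0 \times r_0$ minors of $p^\ast(x,\xi)$ have no common zero in $\bbC^d \setminus \set{0}$, hence (after homogenizing / adjoining the irrelevant ideal) by the Nullstellensatz some power of each $\xi_\ell$ — and therefore, by multiplying out, every sufficiently high-degree monomial — is a polynomial combination of those minors. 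Tracking degrees via Cramer's rule lets one rewrite $\xi_j \xi^\alp$ as $\sum_{\gmm,K} q^{(\gmm,K)}_{j,\alp}(\xi)\, (p^\ast)_K(\xi) + (\text{terms of degree} \leq N_0 - 1)$ where $q$ is a polynomial of controlled degree. Reading $\xi \mapsto -i\rd$ and noting that $(p^\ast)_K(\xi)$ corresponds to the principal part of $(\calP^\ast\varphi)_K$, one reads off $\tensor{(\bfC_i)}{_{(\alp,J)}^{(\gmm,K)}}$ from the coefficients of $q$ (these only involve the top-order coefficients $c[p^\ast]$, giving \eqref{eq:FC-RC-bfC} and property (1): $m_K + \abs{\gmm} = \abs\alp + 1$ by homogeneity), and $\tensor{(\bfB_i)}{_{(\alp,J)}^{(\alp',J')}}$ from the lower-degree remainder plus the contribution of the \emph{lower-order} part of $\calP^\ast$ acting on the principal-symbol substitution (this is the source of the second line of \eqref{eq:FC-RC-bfB}, where extra derivatives of $c[\calP^\ast]$ appear). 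The constant-coefficient and homogeneous cases are immediate specializations: if $c[p^\ast]$ is constant so are $q$ and the remainder, and if moreover $\calP = \calP_{\prin}$ there is no lower-order contribution, so $\bfB_i$ for top-degree variables need not even be present except through \eqref{eq:fc2rc-const-hom-Bi}.

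The remaining steps are bookkeeping. One must (a) verify that $N_0$ can be chosen uniformly in $x$ — this is fine because the minors of $p^\ast(x,\xi)$ have common zero only at $\xi = 0$ for \emph{every} $x$, and the Nullstellensatz degree bound can be taken independent of the (continuously varying, but the combinatorial type is fixed) coefficients, or more carefully one fixes $N_0$ from a single generic point and checks the identity persists; (b) confirm \ref{hyp:aug4} — that $\bfC_i$ vanishes unless $d_{(\alp,J)} \leq -m_K - \abs\gmm + 1$, which is exactly property (1) with equality, and that $\bfB_i$ vanishes when $d_{(\alp,J)} > d_{(\alp',J')} + 1$, which holds by the degree count in the remainder polynomial; and (c) propagate the $C^k$ estimates \eqref{eq:FC-RC-bfB}–\eqref{eq:FC-RC-bfC} through the Cramer's-rule formulas and the Leibniz rule, which is routine once the algebraic identity is in hand. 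The last two bullets of the theorem (\ref{hyp:crc} follows, Theorems~\ref{thm:main-summary-conic}–\ref{thm:main-summary-bogovskii} apply) are then immediate from Proposition~\ref{prop:aug2crc}.

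\textbf{Main obstacle.} The genuinely hard part is the algebraic claim: extracting from \ref{hyp:fc} a \emph{uniform-in-$x$} Nullstellensatz-type identity expressing high-degree monomials in terms of the symbol entries, with \emph{explicit degree control} so that the resulting $\bfB_i,\bfC_i$ respect the grading and the order constraints of Definition~\ref{def:aug}. Getting the bookkeeping of degrees right — ensuring $m_K + \abs\gmm = \abs\alp + 1$ exactly, and that no monomial of degree $> N_0 - 1$ survives in the remainder — is the delicate point; the dependence on lower-order coefficients of $\calP^\ast$ in \eqref{eq:FC-RC-bfB} is precisely the residue of this analysis and must be tracked carefully.
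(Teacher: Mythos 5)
Your overall architecture matches the paper's proof: apply Hilbert's Nullstellensatz to the $r_{0}\times r_{0}$ minors of $p^{\ast}(x_{0},\xi)$ (whose common zero set is $\set{0}$ by \ref{hyp:fc}) to place $\xi_{\ell}^{N_{\ell}}$ in the ideal they generate, use the adjugate identity to upgrade this to the matrix identity $\xi^{\alp} I_{r_{0}} = g_{\alp}(x,\xi)\, p^{\ast}(x,\xi)$ for $|\alp|=N_{0}$, take the trivial relations $\rd_{i}\rd^{\alp}\varphi_{J}=\rd^{\alp+\bfe_{i}}\varphi_{J}$ below top order, and at top order read off $\bfC_{i}$ from $g_{\alp+\bfe_{i}}$ and $\bfB_{i}$ from the lower-order part of $\calP^{\ast}$. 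Your degree bookkeeping, the verification of \ref{hyp:aug4}, and the constant-coefficient and homogeneous specializations are all as in the paper (and allowing a lower-degree remainder in the symbolic identity is harmless, since those terms are absorbed into $\bfB_{i}$).

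The gap is in your step (a). What is needed is not merely that $N_{0}$ can be chosen uniformly in $x$, but that the polynomial matrices $g_{\alp}(x,\xi)$ can be chosen to depend \emph{smoothly} on $x$: this is exactly what makes $\bfB_{i},\bfC_{i}\in C^{\infty}(U)$ and is the input for the $C^{k}$ estimates in part (3). The pointwise Nullstellensatz produces, at each $x$, \emph{some} solution of the linear system $\xi^{\alp} I = g_{\alp}\, p^{\ast}(x,\xi)$ in the unknown $g_{\alp}$, but an arbitrary selection of solutions need not even be continuous in $x$ (the rank of the underlying linear map can jump), and neither of your two suggestions closes this: a uniform degree bound from an effective Nullstellensatz controls $N_{0}$ but says nothing about regularity of $g_{\alp}$ in $x$, while ``one checks the identity persists'' is not an argument — the identity at $x_{0}$ certainly fails verbatim at nearby $x$. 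The paper resolves this with a Neumann-series perturbation: write $\xi^{\alp} I = g_{\alp}(x_{0},\xi)p^{\ast}(x,\xi) + g_{\alp}(x_{0},\xi)\bigl(p^{\ast}(x_{0},\xi)-p^{\ast}(x,\xi)\bigr)$, observe that the error is again homogeneous of degree $N_{0}$ in $\xi$ with small, smoothly varying coefficients, substitute the $x_{0}$-identity repeatedly, and sum the resulting geometric series for $x$ near $x_{0}$; since the equation is linear in $g_{\alp}$, these local smooth solutions glue by a partition of unity. You need to supply this (or an equivalent smooth-selection) construction; with it in place, the rest of your outline goes through.
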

\begin{proof}
\noindent{\bf Step 1:} For any $x_0\in U$, there exist $N_0>0$ and polynomial valued $r_0\times s_0$ matrices $g_{\alp}(\xi)$ for $|\alp|=N_0$ such that
\begin{equation}\label{eq:Nullstellensatz-x0}
    \xi^{\alp} I_{r_0}= g_{\alp}(\xi) p^*(x_0,\xi),\quad |\alp|=N_0.
\end{equation}
Moreover, $\tensor{(p^{\ast})}{_{K}^{J}}(\xi)$ is homogeneous of degree $m_K$ and $\tensor{(g_{\alp})}{_{J}^{K}}(\xi)$ is homogeneous of degree $N_0-m_K$.

The claim follows from applying Hilbert's Nullstellensatz (Proposition~\ref{prop:nullstellensatz}) to $\det(M_j(\xi))$ where $M_j(\xi)$ goes through all the $r_0\times r_0$ minors of $p^*(x_0,\xi)$. By assumption, the common zeros of $\det(M_j(\xi))$ in $\bbC^d$ is contained in $\{0\}$. Since $\xi_{\ell}$ vanishes at $0$, there exist $N_1,\cdots, N_d$ and polynomials $h_{\ell}^{j}(\xi)$ such that
\begin{equation*}
    \xi_\ell^{N_\ell}=\sum_{\ell} h^{j}_{\ell}(\xi) \det (M_j(\xi)).
\end{equation*}
We notice that $\det(M_j(\xi))I_{r_0}$ can be written as the product of its adjugate matrix ${\rm adj}(M_j(\xi))$ and itself:
\begin{equation*}
    \det(M_j(\xi))I_{r_0}={\rm adj}(M_j(\xi)) M_j(\xi).
\end{equation*}
Moreover, $M_j(\xi)$ is the product of a constant $0-1$ matrix and $p^*(x_0,\xi)$. Therefore we conclude that there are polynomial-valued $r_0\times s_0$ matrices $\td{g}^{\ell}(\xi)$ so that
\begin{equation*}
    \xi_{\ell}^{N_{\ell}} I_{r_0}= \td{g}^{\ell}(\xi) p^*(x_0,\xi).
\end{equation*}
Taking $N_0>N_1+\cdots+N_d$, we conclude \eqref{eq:Nullstellensatz-x0}. We may assume $\tensor{(g_{\alp})}{_{J}^{K}}(\xi)$ is homogeneous of degree $N_0-m_K$ since terms of other degrees do not contribute.

\noindent{\bf{Step 2:}} There exist $\xi$-polynomial-valued $r_0\times s_0$ matrices $g_{\alp}(x,\xi)$ for $|\alp|=N_0$ such that
\begin{equation}\label{eq:Nullstellensatz-x}
    \xi^{\alp} I_{r_0}= g_{\alp}(x,\xi) p^*(x,\xi),\quad |\alp|=N_0.
\end{equation}
The matrix $g_{\alp}(x,\xi)$ depends smoothly on the coefficients of $p^*(x,\xi)$ and some smooth cutoff functions.
Moreover, $\tensor{(p^{\ast})}{_{K}^{J}}(\xi)$ is homogeneous of degree $m_K$ and $\tensor{(g_{\alp})}{_{J}^{K}}(\xi)$ is homogeneous of degree $N_0-m_K$.

When $\calP$ has constant coefficients, then it suffices to take $g_{\alp}(x, \xi) = g_{\alp}(\xi)$ (from Step~1); hence, it suffices to only consider the case when $\calP$ does not have constant coefficients. After a partition of unity, it suffices to prove the claim in a small neighborhood of a given point $x_0$. By Step 1, we have
\begin{equation}\label{eq:Nullstellensatz-x0-x0}
    \xi^{\alp} I_{r_0}= g_{\alp}(x_0,\xi) p^*(x_0,\xi),\quad |\alp|=N_0.
\end{equation}
Thus
\begin{equation*}
    \xi^{\alp} I_{r_0}= g_{\alp}(x_0,\xi) p^*(x,\xi)+g_{\alp}(x_0,\xi)(p^*(x_0,\xi)-p^*(x,\xi)).
\end{equation*}
Since each entry of $g_{\alp}(x_0,\xi)(p^*(x_0,\xi)-p^*(x,\xi))$ is a homogeneous polynomial of degree $N_0$ in $\xi$, we apply \eqref{eq:Nullstellensatz-x0-x0} again:
\begin{equation*}
    \xi^{\alp} I_{r_0}= g_{\alp}(x_0,\xi) p^*(x,\xi)+\sum_{\beta}\eps^{\alp
    }_{\beta}(x)g^{\beta}(x_0,\xi)p^*(x_0,\xi)
\end{equation*}
where $\eps_{\beta}^{\alp}(x)$ are $r_0\times r_0$ matrices depending smoothly on coefficients of $p^*(x,\xi)$ such that $\eps(x,\xi)\to 0$ as $x\to x_0$. This can be done repeatedly and we get
\begin{equation*}
\begin{aligned}
   \xi^{\alp} I_{r_0}&= g_{\alp}(x_0,\xi) p^*(x,\xi)+\sum_{\beta}\eps^{\bt}_{\alp}(x) g_{\beta}(x_0,\xi)p^*(x,\xi)+\sum_{\beta,\gamma} \eps^{\bt}_{\alp}(x) \eps^{\gmm}_{\bt}(x) g_{\gmm}(x_0,\xi)p^*(x,\xi)
+\cdots\\
&= \left(g_{\alp}(x_0,\xi)
+ \sum_{\beta}\eps^{\bt}_{\alp}(x) g_{\beta}(x_0,\xi)
+ \sum_{\beta,\gamma} \eps^{\bt}_{\alp}(x) \eps^{\gmm}_{\bt}(x) g_{\gmm}(x_0,\xi)
+\cdots\right) p^*(x,\xi).
\end{aligned}
\end{equation*}
For $x$ in a small neighborhood of $x_0$, the Neumann series converges and we conclude \eqref{eq:Nullstellensatz-x}.

\smallskip \noindent{\it Step 3.}
We claim that taking $\Phi_{\bfA}= \partial^{\alp}\varphi_J$ to be the jet of $\varphi_J$ up to order $N_0-1$ indexed by $\bfA = (\alp, J)$ in the set $\calA \ceq \set{(\alp, J) : 1 \leq J \leq r_{0}, \,  |\alp| \leq N_0-1}$ gives an augmented system satisfying \ref{hyp:aug1}-\ref{hyp:aug4}. First, \ref{hyp:aug1} and \ref{hyp:aug2} are obvious from the definition. For \ref{hyp:aug3}, we take the trivial relation $\rd_{i} \partial^{\alp} \varphi_{J}(y) = \rd^{\alp+\bfe_{i}} \varphi_{J}(y)$ to be the equation if $\abs{\alp} < N_{0} - 1$, i.e.,
\begin{equation*}
\tensor{(\bfB_{i})}{_{(\alp, J)}^{(\alp', J')}} = \begin{cases} 1 &\hbox{if } J = J', \, \alp' = \alp + \bfe_{i}, \\ 0 & \hbox{otherwise}, \end{cases}
\quad \hbox{ and } \quad
\tensor{(\bfC_{i})}{_{(\alp, J)}^{(\gmm, K)}} = 0 \quad \hbox{ for } \abs{\alp} < N_{0} - 1.
\end{equation*}
It remains to check that, for $\abs{\alp} = N_{0} - 1$, there exist $\tensor{(\bfB_{i})}{_{(\alp, J)}^{(\alp', J')}}(y)$ and $\tensor{(\bfC_{i})}{_{(\alp, J)}^{(\gamma, K)}}(y)$ such that
\begin{equation}\label{eq:FC-RC-Aug}
    \rd_{i} \partial^{\alp}\varphi_J(y)= \tensor{(\bfB_{i})}{_{(\alp, J)}^{(\alp', J')}}(y) \rd^{\alp'} \varphi_{J'}(y)+\tensor{(\bfC_{i})}{_{(\alp, J)}^{(\gamma, K)}}(y) \partial^{\gamma}(\calP^*\varphi)_K(y).
\end{equation}
This follows from \eqref{eq:Nullstellensatz-x} and putting $(\calP_{\prin}^{\ast} - \calP^{\ast}) \varphi$ into the first term on the right-hand side. Indeed, $\bfC_{i}$ is determined by rewriting \eqref{eq:Nullstellensatz-x} in the form
\begin{equation} \label{eq:fc2crc-Ci-top}
\xi_{i} \xi^{\alp} \tensor{(I_{r_{0}})}{_{J}^{J'}} = \tensor{(\bfC_{i})}{_{(\alp, J)}^{(\gmm, K)}}(y) \xi^{\gmm} \tensor{(p^{\ast})}{_{K}^{J'}}(y, \xi) \quad \hbox{ for } \abs{\alp} = N_{0} - 1,
\end{equation}
(i.e., $\tensor{(g_{\alp+\bfe_{i}})}{_{J}^{K}}(y, \xi) = \tensor{(\bfC_{i})}{_{(\alp, J)}^{(\gmm, K)}}(y) \xi^{\gmm}$) and $\bfB_{i}$ is determined by
\begin{equation} \label{eq:fc2crc-Bi-top}
\tensor{(\bfB_{i})}{_{(\alp, J)}^{(\alp', J')}}(y) \rd^{\alp'} \varphi_{J'}(y) = \tensor{(\bfC_{i})}{_{(\alp, J)}^{(\gamma, K)}}(y) \tensor{(\calP^{\ast}_{\prin}\partial^{\gamma}-\partial^{\gamma}\calP^{\ast})}{_{K}^{J'}}\varphi_{J'}(y) \quad \hbox{ for } \abs{\alp} = N_{0} - 1.
\end{equation}
It remains to check \ref{hyp:aug4}. We define the degree of $\partial^{\alp}\varphi_J$ to be $d_{(\alp, J)} \ceq -|\alp|$. The condition \ref{hyp:aug4} for terms with degree $d_{(\alp, J)}>-N_0+1$ is obvious. It suffices to check $\tensor{(\bfB_{i})}{_{(\alp, J)}^{(\alp', J')}}$ and $\tensor{(\bfC_{i})}{_{(\alp, J)}^{(\gamma,K)}}(y)$ which corresponds to $d_{(\alp, J)}=-N_0+1$ in \ref{hyp:aug4}. Since $d_{(\alp', J')}\geq -N_0+1$, the condition for $\bfB$ is automatic. Moreover, $m_K+|\gamma|\leq N_0$, so $d_{(\alp, J)}\leq -m_K-|\gamma|+1$ and the condition for $\bfC$ follows.

It remains to verify (1), (2), and (3). The first statement follows from \eqref{eq:fc2crc-Ci-top}. For (2) and (3), the only nontrivial case to consider is $\abs{\alp} = N_{0} - 1$. When $\calP$ has constant coefficients, \eqref{eq:Nullstellensatz-x} holds with a polynomial $g_{\alp}(\xi)$. It follows that each coefficient $\tensor{(\bfC_{i})}{_{(\alp, J)}^{(\gmm, K)}}$ in \eqref{eq:fc2crc-Ci-top} is constant.  The claim about $\bfB_{i}$ (both when $\calP$ is homogeneous or not) then follows from \eqref{eq:fc2crc-Bi-top}. In general, estimate $\eqref{eq:FC-RC-bfC}$ holds because $g_{\alp}(x,\xi)$ depends smoothly on the coefficients $\tensor{c[\calP^*]}{_{K}^{(\alp,J)}}$ for $|\alp|=m_K$. Estimate \eqref{eq:FC-RC-bfB} follows from inspecting the lower order terms in \eqref{eq:FC-RC-Aug}.
\end{proof}

We may now give a precise formulation and proof of Theorem~\ref{thm:alg-cond-intro}.(2) as well.
\begin{proposition} \label{prop:const-coeff-fc}
Let $U$ be an connected open subset of $\bbR^{d}$, and $\calP$ an $r_{0} \times s_{0}$-matrix-valued operator on $U$ such that $\calP_{\prin}$ has constant coefficients. Then the following are equivalent:
\begin{enumerate}
\item $p^{\ast}(\xi)$ satisfies \ref{hyp:fc}.
\item There exists a maximal graded augmented system $\set{\Phi_{\bfA}}_{\bfA \in \calA}$ for $\calP$ on $U$.
\item $\calP_{\prin}$ satisfies \ref{hyp:crc-x-q} for $\ul{\bfx}(y, y_{1}, s) \ceq y + s(y_{1} - y)$ on any convex open subset $U'$ of $U$.
\item $\dim \ker \calP_{\prin}^{\ast}(U) < + \infty$.
\end{enumerate}
Moreover, if any (thus all) of the above conditions holds, then $\calP$ satisfies \ref{hyp:crc-x-q} on any $\bfx(y, y_{1}, s)$ satisfying \ref{hyp:x-1}, \ref{hyp:x-2}, and \ref{hyp:x-3}.
\end{proposition}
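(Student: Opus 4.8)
The plan is to prove the equivalence by the cycle $(1)\Rightarrow(2)$, $(1)\Rightarrow(3)$, $(2)\Rightarrow(4)$, $(4)\Rightarrow(1)$, $(3)\Rightarrow(1)$, and then read off the ``moreover'' statement from Theorem~\ref{thm:fc2crc} and Proposition~\ref{prop:ode2crc}. The implication $(1)\Rightarrow(2)$ is immediate: the hypothesis of Theorem~\ref{thm:fc2crc} on $p^{\ast}$ is exactly \ref{hyp:fc}, and it directly produces a maximal graded augmented system for $\calP$. For $(1)\Rightarrow(3)$, I would apply Theorem~\ref{thm:fc2crc} to the homogeneous constant-coefficient operator $\calP_{\prin}$ (whose adjoint has principal symbol $p^{\ast}$, which satisfies \ref{hyp:fc}). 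By part~(2) of that theorem the resulting maximal graded augmented system has $\bfB_{i}$ of the pure shift form \eqref{eq:fc2rc-const-hom-Bi} and constant $\bfC_{i}$; a short bookkeeping of degrees — using that the nonzero $\bfB_{i}$-entries have $d_{\bfA'}+1-d_{\bfA}=0$ and the nonzero $\bfC_{i}$-entries have $m_{K}+\abs{\gmm}=\abs{\alp}+1$ (Theorem~\ref{thm:fc2crc}.(1)), hence weight exponent $d_{K}-\abs{\gmm}+1-d_{\bfA}=0$ — shows that the norms $\nrm{\bfB}_{\dot{G}^{M}(\ul{\bfx},W)}$ and $\nrm{\bfC}_{\dot{G}^{M}_{\infty}(\ul{\bfx},W)}$ are $O(1)$ uniformly in $W$, since each equals an $L^{\infty}$-type norm of a constant. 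As also $c[\Phi_{\bfA}]^{(\alp,J)}\in\set{0,1}$, Proposition~\ref{prop:ode2crc}.(1) applies with $\bfx=\ul{\bfx}(y,y_{1},s)=y+s(y_{1}-y)$ on $W=U'\times U'$ and gives \ref{hyp:crc-x-q} for $\calP_{\prin}$ on any convex open $U'\subseteq U$ (bounded or not).

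For $(2)\Rightarrow(4)$: a maximal graded augmented system for $\calP$ is, by the stability of maximal graded augmented systems under lower-order perturbations, also one for $\calP_{\prin}$. Since $\calP_{\prin}$ has constant coefficients, evaluating the top-degree relation in \ref{hyp:aug3} on exponentials $\varphi=e^{i\xi\cdot x}\bfe_{J}$ turns it into a polynomial identity in $\xi$ with coefficients independent of $x$, so we may take the coefficients $\bfB_{i},\bfC_{i}$ to be constants (freeze $x$ at a point); then $\bfB_{i},\bfC_{i}$ and their derivatives are trivially uniformly bounded on $U$, and Proposition~\ref{prop:aug2crc} yields $\dim\ker\calP_{\prin}^{\ast}(U)\leq\#\calA<+\infty$, which is (4).

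The two implications into (1) are contrapositives built from complex-exponential solutions. Assume \ref{hyp:fc} fails, so $p^{\ast}(\xi_{0})v=0$ for some $\xi_{0}\in\bbC^{d}\setminus\set{0}$ and $0\neq v\in\bbC^{r_{0}}$. By the row-wise homogeneity of $p^{\ast}$ (the $K$-th row has degree $m_{K}$), $p^{\ast}(\lmb\xi_{0})v=0$ for all $\lmb\in\bbC$, so $\bfZ_{\lmb}(x)\ceq e^{i\lmb\xi_{0}\cdot x}v\in\ker\calP_{\prin}^{\ast}(U)$. Since $\xi_{0}\neq0$, the exponents $\lmb\xi_{0}$ are pairwise distinct and the $\bfZ_{\lmb}|_{U}$ are linearly independent on the connected open set $U$, so $\dim\ker\calP_{\prin}^{\ast}(U)=+\infty$: this contradicts (4). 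For $(3)\Rightarrow(1)$, choose $y_{1}\in U$ and $y$ near $y_{1}$ with $[y,y_{1}]\subseteq U$ and $\xi_{0}\cdot(y-y_{1})\neq0$ (possible since $\xi_{0}\neq0$). From \ref{hyp:crc-x-q} and Proposition~\ref{prop:crc-x-duality} we obtain a distribution $b_{y_{1}}(\cdot,y)$ supported in $\set{y_{1}}$ — hence of some finite order $N$ — with $\bfZ(y)=\brk{b_{y_{1}}(\cdot,y),\bfZ}$ for every $\bfZ\in\ker\calP_{\prin}^{\ast}(U)$. Taking $\bfZ=\bfZ_{\lmb}$ and using $\rd^{\alp}\bfZ_{\lmb}(y_{1})=(i\lmb\xi_{0})^{\alp}e^{i\lmb\xi_{0}\cdot y_{1}}v$, the right side is $e^{i\lmb\xi_{0}\cdot y_{1}}$ times a polynomial in $\lmb$ of degree $\leq N$, while the left side is $e^{i\lmb\xi_{0}\cdot y}v$; comparing a nonzero component of $v$ forces $e^{i\lmb\xi_{0}\cdot(y-y_{1})}$ to be a polynomial in $\lmb$, which is impossible since $\xi_{0}\cdot(y-y_{1})\neq0$. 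Together with $(1)\Rightarrow(2)\Rightarrow(4)\Rightarrow(1)$ and $(1)\Rightarrow(3)$, this closes the equivalence.

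Finally, for the ``moreover'' statement, assuming (1), Theorem~\ref{thm:fc2crc} provides a maximal graded augmented system for $\calP$ with smooth coefficients $\bfB_{i},\bfC_{i}$ obeying \eqref{eq:FC-RC-bfB}--\eqref{eq:FC-RC-bfC}; by Remark~\ref{rem:coeff-nrm-Ck} (see \eqref{eq:aug-nrm-Ck}) the associated $\dot{G}^{M}$- and $G^{M}$-norms adapted to any family $\bfx$ satisfying \ref{hyp:x-1}--\ref{hyp:x-3} are then finite, so Proposition~\ref{prop:ode2crc}.(1)--(2) yields \ref{hyp:crc-x-q} (as well as \ref{hyp:b-y1-q} and \ref{hyp:b-y1-smth}) for $\calP$ on $\bfx$. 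The main obstacles I expect are the two bookkeeping points flagged above: first, that a maximal graded augmented system for a constant-coefficient operator can be taken to have constant coefficients, which is what makes Proposition~\ref{prop:aug2crc} usable on a possibly unbounded $U$ in $(2)\Rightarrow(4)$; and second, the degree computation in $(1)\Rightarrow(3)$ showing that the weighted norms of the shift-type system from Theorem~\ref{thm:fc2crc}.(2) are controlled independently of the size of $W$, which is precisely what allows (3) to hold on unbounded convex $U'$.
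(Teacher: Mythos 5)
Your proposal is correct, and the key implications ($(1)\Rightarrow(2)$ via Theorem~\ref{thm:fc2crc}, the passage from an augmented system to \ref{hyp:crc-x-q} via Proposition~\ref{prop:ode2crc}, and $(4)\Rightarrow(1)$ by the contrapositive with complex exponentials $e^{i\lmb\xi_{0}\cdot x}v$) coincide with the paper's. Where you differ is in the shape of the implication graph and in two of the arguments. The paper runs the single cycle $(1)\Rightarrow(2)\Rightarrow(3)\Rightarrow(4)\Rightarrow(1)$: it gets $(2)\Rightarrow(3)$ from the stability of maximal systems under lower-order perturbations together with Proposition~\ref{prop:ode2crc}, and $(3)\Rightarrow(4)$ from Lemma~\ref{lem:b-y1-coker}.(1) applied on a convex $U'\subseteq U$ with $\bfx=\ul{\bfx}$, plus the observation that restriction $\ker\calP_{\prin}^{\ast}(U)\to\ker\calP_{\prin}^{\ast}(U')$ is injective. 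You instead prove $(1)\Rightarrow(3)$ directly by applying Theorem~\ref{thm:fc2crc} to the homogeneous operator $\calP_{\prin}$ and exploiting the explicit shift form \eqref{eq:fc2rc-const-hom-Bi} (your degree bookkeeping showing the weighted $\dot{G}$-norms reduce to sup-norms of constants is exactly right, and it is what makes the claim uniform over possibly unbounded convex $U'$); you prove $(2)\Rightarrow(4)$ by freezing the coefficients of the perturbed maximal system at a point (legitimate, since the relation in \ref{hyp:aug3} for $\calP_{\prin}$ is an identity of constant-coefficient differential operators, hence a symbol identity that can be evaluated at any fixed $x_{0}$) and invoking Proposition~\ref{prop:aug2crc}; and you supply a direct $(3)\Rightarrow(1)$ that is not in the paper, pitting the finite-order point distribution $b_{y_{1}}$ against the exponential growth of $\bfZ_{\lmb}$ in $\lmb$. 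Your route buys a self-contained $(3)\Rightarrow(1)$ that does not pass through the cokernel-dimension statement, at the cost of some redundancy; the paper's route is shorter but leans on the injectivity of restriction to $U'$, which it leaves as ``obvious.'' Both are sound, and your treatment of the ``moreover'' clause (Theorem~\ref{thm:fc2crc} plus \eqref{eq:aug-nrm-Ck} plus Proposition~\ref{prop:ode2crc}.(1)--(2)) matches the paper's.
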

\begin{proof}
The implication $(1) \imp (2)$ is exactly Theorem~\ref{thm:fc2crc}. Moreover, $(2) \imp (3)$, as well as the last assertion, follows from  Proposition~\ref{prop:ode2crc} and Definition~\ref{def:aug-max}, and $(3) \imp (4)$ is a consequence of Lemma~\ref{lem:b-y1-coker} applied on the convex open subset $U'$ with $\bfx = \ul{\bfx}$, and the obvious observation that $\ker \calP^{\ast}(U)$ is a subspace of $\ker \calP^{\ast}(U')$ (by restriction). It remains to prove $(4) \imp (1)$, or equivalently, its contrapositive. Indeed, if $p^{\ast}$ does \emph{not} satisfy \ref{hyp:fc}, then there exists a constant vector $\set{\varphi_{J}}_{J \in \set{1, \ldots, r_{0}}} \neq 0$ and $\xi \in \bbC^{d} \setminus \set{0}$ such that $p^{\ast}(\xi) \varphi = 0$, which means that $e^{i \lmb \xi x} \varphi \in \ker \calP_{\prin}^{\ast}$ for every $\lmb \in \bbC$. Thus $\ker \calP_{\prin}^{\ast}$ is infinite dimensional.
\end{proof}

\appendix
\section{Examples of $\calP$ from geometry and physics} \label{sec:appl}
In Section~\ref{subsec:ideas-div} (see also Examples~\ref{ex:div-full-sol-conic}, \ref{ex:div-full-sol-max}, and \ref{ex:div-full-sol}), we have demonstrated how our method in Section~\ref{sec:crc} applies to the operator $\calP u = (\rd_{j} + B_{j}) u^{j}$. In this appendix, we write down graded augmented systems that are \emph{completely integrable} on a constant sectional curvature background. For each example, we also give explicit computations of (i)~the rough integral kernel $K_{y_{1}}(x, y)$, as well as the corresponding point distribution $b_{y_{1}}(x, y)$, on a geodesic segment in a constant sectional curvature background; and (ii)~the integral kernels for the conic and Bogovskii-type operators on $\bbR^{d}$.

\subsection{Preliminaries}
Before reading any of the following subsections, we advise the reader to go over the preliminaries below.

\subsubsection{Notation and conventions for this appendix} \label{subsubsec:appl-ex-prelim}
In this appendix, it is conceptually (and algebraically) advantageous to work with the notation and conventions of differential geometry. We work on a $d$-dimensional smooth manifold $\calM$ equipped with a Riemannian metric $\bfg$. We denote by $\nb$ and $\ud V$ the corresponding Levi-Civita connection and the Riemannian volume form, respectively. Given tensor fields $g$ and $\psi$ on (an open subset of) $\calM$ of types $(r, s)$ and $(s, r)$, respectively, the duality pairing with respect to $\ud V$ is defined as
\begin{equation} \label{eq:duality-pair-geom}
	\brk{g, \psi} = \int g \cdot \psi \, \ud V,
\end{equation}
where $g \cdot \psi$ is the natural pointwise contraction of an $(r, s)$-tensor field $g = \tensor{g}{^{j_{1} \cdots j_{r}}_{k_{1} \cdots k_{s}}}$ and an $(s, r)$-tensor field $\psi = \tensor{\psi}{_{j_{1} \cdots j_{r}}^{k_{1} \cdots k_{s}}}$. We also use the standard convention of lowering and raising indices using $\bfg_{ij}$ and $\bfg^{ij} = (\bfg^{-1})^{ij}$. We write $\abs{\cdot}_{\bfg}$ for the induced norm on tensors, and $\tr_{\bfg} \bfh = \bfg_{ij} \bfh^{ij}$.

In this appendix, unlike the main body of the paper, {\bf the formal adjoint $\calP^{\ast}$ of a differential operator $\calP$ acting on tensor fields on (an open subset of) $\calM$ is defined with respect to $\ud V$}. This convention leads to simpler formulas. Since $\ud V = \sqrt{\det \bfg} \ud x$ in local coordinates, the two different definitions are related by the formula
\begin{equation} \label{eq:adj-difff}
	\calP^{\ast_{\ud V}} \varphi = \frac{1}{\sqrt{\det \bfg}} \calP^{\ast_{\ud x}} (\sqrt{\det \bfg} \varphi)
\end{equation}
In particular, observe that \emph{the principal symbols are identical}.

\subsubsection{Preliminaries for explicit computations in the constant curvature case} \label{subsubsec:const-curv-prelim}
In Sections~\ref{subsubsec:d-div-explicit}, \ref{subsubsec:symm-div-explicit}, \ref{subsubsec:symm-div-tf-explicit}, we obtain explicit covariant formulas for the rough integral kernel $K_{y_{1}}(x, y)$, as well as the corresponding point distribution $b_{y_{1}}(x, y)$, on a geodesic segment in a constant sectional curvature background (i.e., space form). These furnish a starting point for the construction of various smoothly averaged integral kernels, such as the conic and Bogovskii-type operators on $\bbR^{d}$ (cf.~Examples~\ref{ex:div-full-sol-conic} and \ref{ex:div-full-sol-max}).

Under the convention $\tensor{\bfR}{_{i j}^{k}_{\ell}} u^{\ell} = (\nb_{i} \nb_{j} - \nb_{j} \nb_{i})u^{k}$ for the Riemann curvature tensor, recall that the Riemann, Ricci, and scalar curvature tensors on a Riemannian manifold $(\calM, \bfg)$ with constant sectional curvature $\kpp$ are given by
\begin{equation*}
	\bfR_{i j k \ell} = \kpp (\bfg_{ik} \bfg_{j \ell} - \bfg_{i\ell} \bfg_{jk}), \quad
	\Ric = (d-1) \kpp \bfg, \quad
	R = d (d-1) \kpp,
\end{equation*}
where our convention is $\tensor{\bfR}{_{i j}^{k}_{\ell}} u^{\ell} = (\nb_{i} \nb_{j} - \nb_{j} \nb_{i})u^{k}$.

We also introduce the \emph{generalized sine function} that is defined for $\kpp \in \bbR$ as
\begin{equation*}
s_{\kpp}(t) = \begin{cases}
t & \kpp = 0, \\
\frac{\sinh (\sqrt{-\kpp} t)}{\sqrt{-\kpp}} & \kpp < 0, \\
\frac{\sin (\sqrt{\kpp} t)}{\sqrt{\kpp}} &  \kpp > 0.
\end{cases}
\end{equation*}
The generalized cosine function is defined as
\begin{equation*}
c_{\kpp}(t) = s_{\kpp}'(t) = \begin{cases}
1 & \kpp = 0, \\
\cosh (\sqrt{-\kpp} t) & \kpp < 0, \\
\cos (\sqrt{\kpp} t) &  \kpp > 0.
\end{cases}
\end{equation*}
Fix a point $y \in \calM$, and consider the polar coordinate system $(r, \omg)$ (where $\omg = (\omg^{1}, \ldots \omg^{d-1}$) is a parametrization of $\bbS^{d-1}$) centered at $y$ (i.e., $r$ is the geodesic distance from $y$). The metric can be written as
\begin{equation*}
    \bfg= \ud r^2 + s_{\kpp}(r)^2 \bfg_{\bbS^{d-1}}.
\end{equation*}
Using ordinary capital latin letters $A, B, \ldots$ to denote the angular variables $\omg^{A}$ (so $A, B, \ldots \in \set{1, \ldots, d-1}$), the Christoffel symbols take the form
\begin{equation*}
	\Gmm^{A}_{r B}
	= \Gmm^{A}_{B r} = \frac{s_{\kpp}'(r)}{s_{\kpp}(r)} \dlt^{A}_{B}, \quad
	\Gmm^{r}_{A B} = - s_{\kpp}'(r) s_{\kpp}(r) (\bfg_{\bbS^{d-1}})_{AB}, \quad
	\Gmm^{A}_{B C} = {}^{(\bfg_{\bbS^{d-1}})} \Gmm^{A}_{B C},
\end{equation*}
where all the other components vanish. The normalized angular forms and vector fields are defined as $d\td{\omg}^{A}=s_{\kappa}(r)d\omg^A$ and $\partial_{\td{\omg}}^A=s_{\kappa}(t)^{-1}\partial_{\omega^A}$.

When performing explicit computations in the constant curvature case, it is convenient to work with \emph{$(r, s)$-tensor-valued distributions} on an open subset $U$ of $(\calM, \bfg)$, which have the advantage of being covariant. They are defined to be continuous linear functionals on the space of smooth and compactly supported $(s, r)$-tensor-valued densities of the form $\tensor{\psi}{_{j_{1} \cdots j_{r}}^{k_{1} \cdots k_{s}}} \ud V$. A smooth $(r, s)$-tensor field $\tensor{g}{^{j_{1} \cdots j_{r}}_{k_{1} \cdots k_{s}}}$ is identified with an $(r, s)$-tensor-valued distribution via the pairing $\tensor{\psi}{_{j_{1} \cdots j_{r}}^{k_{1} \cdots k_{s}}} \ud V \mapsto \brk{g, \psi}$ as in \eqref{eq:duality-pair-geom}.

\subsection{Double divergence operator (or linearized scalar curvature operator)} \label{subsec:d-div}
We consider
\begin{equation} \label{eq:d-div-eq}
	\calP \bfh = \nb_{j} \nb_{k} \bfh^{j k} - \left( \mathrm{Ric}_{jk} - \frac{1}{d-1} R \mathbf{g}_{jk}  \right) \bfh^{jk} \qquad \hbox{where } \bfh^{jk} =  \bfh^{k j},
\end{equation}
for $d \geq 1$. The operator $\calP$ is obtained by making a simple change of variables to the linearization of the scalar curvature operator. In fact, the \emph{linearized scalar curvature operator} is directly given by
\begin{equation*}
\mathrm{D} R(\bfg) \dot{\bfg} = - \nb^{\ell} \nb_{\ell} \tr_{\bfg} \dot{\bfg} + \nb^{j} \nb^{k} \dot{\bfg}_{jk} - \Ric^{jk} \dot{\bfg}_{jk},
\end{equation*}
where $\tr_{\bfg} \dot{\bfg} = \bfg^{jk} \dot{\bfg}_{jk}$. We introduce\footnote{We carefully note that $\dot{\bfg}^{jk}$, according to our conventions in Section~\ref{subsubsec:appl-ex-prelim}, is the metric dual of $\dot{\bfg}_{jk}$, \emph{not} the first order variation of $(\bfg^{-1})^{jk}$. These two objects differ by a sign. \label{fn:g-upper}}
\begin{equation*}
	\bfh^{jk} = \dot{\bfg}^{jk} - \bfg^{jk} \tr_{\bfg} \dot{\bfg}.
\end{equation*}
Then since $\tr_{\bfg} \bfh = (1-d) \tr_{\bfg} \dot{\bfg}$, this change of variables is invertible and we have
\begin{equation*}
	\dot{\bfg}^{jk} = \bfh^{jk} - \frac{1}{d-1} \bfg^{jk} \tr_{\bfg} \bfh.
\end{equation*}
Under this change of variables, we have
\begin{equation*}
\mathrm{D} R(\bfg) \dot{\bfg} = \nb_{j} \nb_{k} \bfh^{jk} - \left(\Ric_{jk} - \frac{1}{d-1} R \bfg_{jk} \right) \bfh^{jk}
\end{equation*}
where the RHS is equal to $\calP (\bfh)$ defined above.

We first compute the formal adjoint operator $\calP^*$ (with respect to $\ud V$). For a smooth compactly supported symmetric $2$-tensor $\bfh$ on $U$ and $\varphi\in  C_c^\infty(U)$, we have
\begin{align*}
	\int_{U} \calP(\bfh) \varphi \, \ud x
	&= \int_{U} \left(\nb_{j} \nb_{k} \bfh^{jk}-\left(\Ric_{jk}-\frac{1}{d-1}R\bfg_{jk}\bfh^{jk}\right)\right) \varphi \, \ud x \\
	&=
	 \int_{U}  \bfh^{jk}  \left(\nb_{j} \nb_{k} -\left(\Ric_{jk}-\frac{1}{d-1}R\bfg_{jk}\right)\right)\varphi\, \ud x.
\end{align*}
So the formal adjoint is
\begin{equation} \label{eq:d-div-dual}
	(\calP^{\ast} \varphi)_{jk} =\nb_{j} \rd_{k} \varphi - \left(\Ric_{jk} - \frac{1}{d-1} R \bfg_{jk} \right)\varphi.
\end{equation}
Its principal symbol is
\begin{equation} \label{eq:d-div-dual-symb}
	(p^{\ast})_{jk}(\xi) = - \xi_{j} \xi_{k},
\end{equation}
which clearly satisfies \ref{hyp:fc} for all $d \geq 1$.

\subsubsection{Covariant graded augmented system and $\ker \calP^{\ast}$} \label{subsubsec:d-div-key}
Given a function $\varphi$, define
\begin{equation} \label{eq:d-div-aug}
	\bfomg_{j} = \rd_{j} \varphi.
\end{equation}
Then
\begin{equation}\label{eq:d-div-system}
\left\{
   \begin{aligned}
	\nb_{i} \varphi &= \bfomg_{i}, \\
	\nb_{i} \bfomg_{j} &= \left(\Ric_{jk} - \frac{1}{d-1} R \bfg_{jk} \right)\varphi + (\calP^{\ast} \varphi)_{ij}.
\end{aligned}
\right.
\end{equation}
As we will see in Section~\ref{subsubsec:d-div-explicit}, this system of covariant first-order PDEs \eqref{eq:d-div-system} is useful for performing explicit computations on space forms.

We note that \eqref{eq:d-div-system} immediately leads to graded augmented variables in the sense of Definition~\ref{def:aug}. Indeed, if we specialize \eqref{eq:d-div-system} to the Euclidean space in rectangular coordinates (so that $\calP = \calP_{\prin}$ and $\calP^{\ast} = \calP_{\prin}^{\ast_{\ud x}}$), then we see that  $\Phi_{\varphi} \ceq \varphi$, $\Phi_{\bfomg_{j}} \ceq \bfomg_{j}$ define augmented variables for $\calP_{\prin}$ that satisfy \ref{hyp:aug1}--\ref{hyp:aug4}, where $\calA = \set{\varphi, \bfomg_{1}, \ldots, \bfomg_{d}}$ (in particular, $\# \calA = d+1$), $d_{\varphi} = 0$, $d_{\bfomg_{j}} = -1$, $m_{ij} = 2$, and $m_{ij}' = 0$ for all $i, j \in \set{1, \ldots, d}$. These augmented variables also satisfy \ref{hyp:aug1}--\ref{hyp:aug4} for any lower order perturbations of $\calP_{\prin}$ (viewed as an operator on an open subset of $\bbR^{d}$, the Euclidean space in rectangular coordinates).

In view of \eqref{eq:d-div-system}, we see that $\dim \ker \calP^*\leq \# \calA = d+1$. This bound is optimal, and the maximal dimension is reached (i.e., the augmented system is completely integrable) on space forms:
\begin{itemize}
    \item For $\bbR^d$, $\calP^*\varphi=\partial_j\partial_k\varphi$ and $\ker \calP^*=\mathrm{span}\,\set{1, x^{1}, \ldots, x^{d}}$.
    \item For the sphere $\bbS^d$, $\calP^*\varphi=\nb_{j} \rd_{k} \varphi+\bfg_{jk}\varphi$. If we embed the sphere into $\bbR^{d+1}$ by $\{(x^0)^2+\cdots+(x^d)^2=1\}\subseteq \bbR^d$, then $\ker \calP^*$ is spanned by the restrictions on $\bbS^{d}$ of the ambient coordinates $x^0,x^1,\cdots, x^d$, which are linearly independent. Indeed, for each $\mu \in \set{0, 1, \ldots, d}$, that $\calP^{\ast} x^{\mu} = 0$ can be seen by a direct computation using the fact that the second fundamental form $\Pi(X, Y)$ of the embedding $\bbS^{d} \hookrightarrow \bbR^{d+1}$, which proceeds as follows:
    \begin{equation*}
     (\calP^*x^{\mu})(X,Y)=X(Y x^{\mu})-(\overline{\nb}_{X}Y)^{\parallel} x^{\mu}+\bfg(X,Y) x^{\mu}=(\Pi(X,Y)+\bfg(X,Y))x^{\mu}=0,
    \end{equation*}
    where $\overline{\nb}$ is the covariant derivative in the ambient coordinates, $X, Y \in T \bbS^{d}$, and $(\overline{\nb}_{X}Y)^{\parallel}$ is the tangential part of $\overline{\nb}_{X}Y$. The expressions $X (Y x^{\mu})$ and $(\overline{\nb}_{X}Y)^{\parallel}$ are computed using any extension of $Y$ as a smooth vector field; as is well-known, the identity holds independently of this choice. The linear independence claim is clear. Since $\dim \calP^{\ast} \leq d + 1$, $\set{x^{0}, x^{1}, \ldots, x^{d}}$ indeed also spans $\calP^{\ast}$.

    \item For the hyperbolic space $\bbH^d$, $\calP^*\varphi=\nb_{j} \rd_{k} \varphi-\bfg_{jk}\varphi$. We embed the hyperbolic space into the Minkowski space $\bbR^{1,d}$ with metric $-(\ud x^0)^2+(\ud x^1)^2+\cdots+(\ud x^d)^2$ by $\{-(x^0)^2+(x^1)^2+\cdots+(x^d)^2=-1\}$. Then $\ker \calP^*$ is again spanned by the restrictions to $\bbH^{d}$ of the ambient coordinates $x^0,x^1,\cdots,x^d$. This statement can be verified in a similar manner to the case of $\bbS^{d}$. In particular, for $\mu \in \set{0, 1, \ldots, d}$ and any $X,Y\in T\bbH^d$, we have
    \begin{equation*}
        (\calP^*x^{\mu})(X,Y)=X(Y x^{\mu})-(\overline{\nb}_{X}Y)^{\parallel} x^\mu-\bfg(X,Y) x^{\mu}=(\Pi(X,Y)-\bfg(X,Y))x^\mu=0,
    \end{equation*}
 where we used that the second fundamental form $\Pi(X, Y)$ of the embedding $\bbH^{d} \hookrightarrow \bbR^{1, d}$ equals $\bfg$.
\end{itemize}

\subsubsection{Explicit computations in the constant curvature case} \label{subsubsec:d-div-explicit}
Let $\kpp$ be the constant sectional curvature of $(\calM, \bfg)$. Fix $y, y_{1} \in \calM$ and a geodesic segment $\bfx$ from $y$ to $y_{1}$. We may write $\bfx(t) = (t, \omg_{0})$ in polar coordinates at $y$ for some $\omg_{0} \in \bbS^{d-1}$. From \eqref{eq:d-div-system} and the formulas in Section~\ref{subsubsec:const-curv-prelim}, we have
\begin{equation*}
    \rd_{r} \varphi=\bfomg_{r},\quad \rd_{r} \bfomg_{r}=-\kpp \varphi+(\calP^*\varphi)_{rr}.
\end{equation*}
So
\begin{equation*}
    \frac{\ud^2}{\ud t^2}\varphi(\bfx(t))=\left(-\kpp\varphi+(\calP^*\varphi)_{rr}\right) .
\end{equation*}
The solution is given by
\begin{equation*}
    \varphi(\bfx(0))=\int_{0}^{d(y_{1}, y)} s_{\kpp}(s)\dot{\bfx}^{i} \dot{\bfx}^{j} (\calP^*\varphi)_{i j} \circ \bfx(s) \, \ud s + \varphi \circ \bfx(d(y_{1}, y)) c_{\kappa}(d(y_1,y)) - \dot{\bfx}^{j} (\rd_{j} \varphi) \circ \bfx (d(y_{1}, y)) s_{\kappa}(d(y_1,y)).
\end{equation*}
Thus,
\begin{equation*}
\begin{split}
    \brk{K_{y_{1}}(\cdot, y), \psi} &= \int_{0}^{d(y_{1}, y)} s_{\kpp}(s)\dot{\bfx}^{i} \dot{\bfx}^{j} \psi_{i j} \circ \bfx(s) \, \ud s, \\
	\brk{b_{y_{1}}(\cdot, y), \varphi} &= \varphi (y_{1}) c_{\kappa}(d(y_1,y)) - \dot{\bfx}^{j}(y, y_{1}, d(y_{1}, y)) (\rd_{j} \varphi) (y_{1}) s_{\kappa}(d(y_1,y)),
\end{split}
\end{equation*}
which are tensor-valued distributions in the sense of Section~\ref{subsubsec:appl-ex-prelim}.

\subsubsection{Explicit formulas on flat spaces}
For the readers' convenience, we record the explicit formulas for the Bogovskii-type and conic solution operators on $\bbR^d$. We average over straight line segments $\bfx(y,y_1,s)=y+s(y_1-y)$ for the Bogovski-type solution operator and over straight rays $\bfx(y,\omg,s)=y+s\omg$ for the conic solution operator.
\begin{enumerate}
    \item Let $\eta\in C_c^{\infty}(\bbR^d)$ with $\int_{\bbR^d}\eta =1$. The Bogovskii-type solution operator for $\calP \bfh =\nb_j\nb_k \bfh^{jk}$ where $\bfh^{ij}=\bfh^{ji}$ on $\bbR^d$ with flat metric is given by
    \begin{equation*}
        K_{\eta}^{ij}(z+y,y)=\left(\int_{|z|}^{\infty}\eta\left(r\frac{z}{|z|}+y\right)r^{d-1} \ud r\right)\frac{z^iz^j}{|z|^d}
    \end{equation*}
    with
    \begin{equation*}
        b_{\eta}(x,y)=(d+1)\eta(x)+(x-y)^i\partial_i\eta(x).
    \end{equation*}
    \item Let $\slashed{\eta} \in C^{\infty}(\bbS^{d-1})$ with $\int_{\bbS^{d-1}}\slashed{\eta} =1$. The conic solution operator for $\calP \bfh =\nb_j\nb_k \bfh^{jk}$ where $\bfh^{ij}=\bfh^{ji}$ on $\bbR^d$  with flat metric is given by
    \begin{equation*}
        K_{\slashed{\eta}}^{ij}(z+y,y)=\frac{z^iz^j}{|z|^d}\slashed{\eta}\left(\frac{z}{|z|}\right).
    \end{equation*}
\end{enumerate}

\subsection{Trace-free double divergence operator} \label{subsec:d-div-tf}
We consider
\begin{equation} \label{eq:d-div-tf-eq}
	\calP \bfh = \nb_{j} \nb_{k} \bfh^{j k} \qquad \hbox{where } \bfh^{jk} =  \bfh^{k j} \hbox{ and } \tr_{\bfg} \bfh = 0,
\end{equation}
for $d \geq 2$.  We will soon show that the adjoint operator $\calP^*$ is the traceless part of the Hessian of a scalar function.  Geometrically, a function in the kernel of $\calP^*$ has level sets that locally give rise to a warped product decomposition of the manifold on which they are defined\footnote{See for instance \url{https://www.math.ucla.edu/~petersen/233.1.10s/BLWformulas.pdf}.}.  Also, both $\calP$ and $\calP^*$ arise naturally in the study of the 2D Euler equations in vorticity form and the surface quasi-geostrophic equation.  For example, on $\bbR^2$ the kernel of the adjoint $\calP^*$ is spanned by $1, x, y$ and $ x^2 + y^2$.  Each of these functions can be integrated against a solution to 2D Euler or SQG to define the mean, impulse and angular momentum of a solution, all of which are conserved by the corresponding evolution.  The operator $\calP$ itself then arises naturally in the construction of weak solutions to these equations \cite{isett2021direct,isett2024proof}.  

We now compute the formal adjoint operator $\calP^*$ (with respect to $\ud V$). For a smooth compactly supported trace-free symmetric $2$-tensor $\bfh$ on $U$ and $\varphi\in  C_c^\infty(U)$, we have
\begin{align*}
	\int_{U} \calP(\bfh) \varphi \, \ud x
	&= \int_{U} \nb_{j} \nb_{k} \bfh^{jk} \varphi \, \ud x \\
	&=
	 \int_{U}  \bfh^{jk}  \left(\nb_{j} \nb_{k} - \frac{1}{d} \bfg_{jk} \nb^{\ell} \nb_{\ell} \right)\varphi\, \ud x.
\end{align*}
So the formal adjoint is
\begin{equation} \label{eq:d-div-tf-dual}
	(\calP^{\ast} \varphi)_{jk} = \nb_{j} \rd_{k} \varphi - \frac{1}{d} \bfg_{jk} \nb^{\ell} \rd_{\ell} \varphi.
\end{equation}
Its principal symbol is
\begin{equation} \label{eq:d-div-tf-dual-symb}
	(p^{\ast})_{jk}(x, \xi) = - \xi_{j} \xi_{k} + \frac{1}{d} \xi_{\ell} \xi_{m} \bfg^{\ell m}(x) \bfg_{jk}(x).
\end{equation}

\subsubsection{Covariant graded augmented system and $\ker \calP^{\ast}$} \label{subsubsec:d-div-tf-key}
Given a function $\varphi$, define
\begin{equation} \label{eq:d-div-tf-aug}
	\bfomg_{j} = \rd_{j} \varphi, \quad w = \frac{1}{d} \nb^{\ell} \bfomg_{\ell}.
\end{equation}
Then
\begin{equation}\label{eq:d-div-tf-system}
\left\{
   \begin{aligned}
	\nb_{i} \varphi &= \bfomg_{i}, \\
	\nb_{i} \bfomg_{j} &= (\calP^{\ast} \varphi)_{ij} + w \bfg_{ij}, \\
	\nb_{i} w &= \frac{1}{d-1} \nb^{\ell} (\calP^{\ast} \varphi)_{i \ell} - \frac{1}{d-1} \tensor{\Ric}{_{i}^{j}} \bfomg_{j}.
\end{aligned}
\right.
\end{equation}
All of these identities are obvious except for the last one, which follows from:
\begin{equation*}
	\nb_{i} w
	= \frac{1}{d} \nb^{\ell} \nb_{i} \bfomg_{\ell} - \frac{1}{d} \tensor{\bfR}{_{i}^{\ell}^{j}_{\ell}} \bfomg_{j}
	= \frac{1}{d} \nb^{\ell} (\calP^{\ast} \varphi)_{i \ell} + \frac{1}{d} \nb_{i} w - \frac{1}{d} \tensor{\Ric}{_{i}^{j}} \bfomg_{j}.
\end{equation*}

We note that \eqref{eq:d-div-tf-system} immediately leads to graded augmented variables in the sense of Definition~\ref{def:aug}. Indeed, if we specialize \eqref{eq:d-div-tf-system} to the Euclidean space in rectangular coordinates (so that $\calP = \calP_{\prin}$ and $\calP^{\ast} = \calP_{\prin}^{\ast_{\ud x}}$), then we see that  $\Phi_{\varphi} \ceq \varphi$, $\Phi_{\bfomg_{j}} \ceq \bfomg_{j}$, $\Phi_{w} \ceq w$ define augmented variables for $\calP_{\prin}$ that satisfy \ref{hyp:aug1}--\ref{hyp:aug4}, where $\calA = \set{\varphi, \bfomg_{1}, \ldots, \bfomg_{d}, w}$ (in particular, $\# \calA = d+2$), $d_{\varphi} = 0$, $d_{\bfomg_{j}} = -1$, $d_{w} = -2$, $m_{ij} = 2$, and $m_{ij}' = 1$ for all $i, j \in \set{1, \ldots, d}$. These augmented variables also satisfy \ref{hyp:aug1}--\ref{hyp:aug4} for any lower order perturbations of $\calP_{\prin}$ (viewed as an operator on an open subset of $\bbR^{d}$, the Euclidean space in rectangular coordinates).

In view of \eqref{eq:d-div-tf-system}, we see that $\dim \ker \calP^*\leq \# \calA = d+2$. The maximal dimension is reached (i.e., the augmented system is completely integrable) on space forms:
\begin{itemize}
    \item For $\bbR^{d}$, $\calP^{\ast}\varphi=\partial_j\partial_k\varphi -\frac{1}{d}\delta_{jk}\Delta\varphi$ and $\ker \calP^{\ast} = \mathrm{span} \, \set{1, x^{1}, \ldots, x^{d}, \abs{x}^{2}}$.
    \item For the sphere $\bbS^{d}$, $\calP^*\varphi=\nb_j\partial_{k}\varphi-\frac{1}{d}\bfg_{jk}\Delta_{\bfg} \varphi$. If we embed the sphere into $\bbR^{d+1}$ by $\{(x^0)^2+\cdots+(x^d)^2=1\}\subseteq \bbR^d$ as in \S\ref{subsubsec:d-div-key}, then $\ker \calP^*$ is spanned by the ambient coordinates $x^0,x^1,\cdots, x^d$ and constant $1$, which are linearly independent. This is because our computation in \S\ref{subsubsec:d-div-key} shows that $\nb_{j}\partial_k x^{\mu}=-\bfg_{jk}x^{\mu}$.
    \item For the hyperbolic space $\bbH^{d}$, $\calP^{\ast}\varphi=\nb_{j}\partial_k \varphi -\frac{1}{d}\bfg_{jk}\Delta_{\bfg}\varphi$. We embed the hyperbolic space into the Minkowski space $\bbR^{1,d}$ with metric $-(\ud x^0)^2+(\ud x^1)^2+\cdots+(\ud x^d)^2$ by $\{-(x^0)^2+(x^1)^2+\cdots+(x^d)^2=-1\}$ as in \S\ref{subsubsec:d-div-key}. Then $\ker \calP^{\ast}$ is again spanned by the ambient coordinates $x^0,x^1,\cdots, x^d$ and constant $1$, which are linearly independent. This is because our computation in \S\ref{subsubsec:d-div-key} shows that $\nb_{j}\partial_k x^{\mu}=\bfg_{jk}x^{\mu}$.
\end{itemize}
We remark that one can also give a derivation of the kernel of $\calP^*$ that is independent of \S\ref{subsubsec:d-div-key} as follows.  Motivated by the fact that functions in the kernel of $\calP^*$ induce warped product decompositions, we first look for a solution that is radial in geodesic normal coordinates.  Integrating the system of ODE's in \S\ref{subsubsec:d-div-tf-explicit} below gives a two-dimensional space of solutions spanned by constants and $\phi_\kpp(r) \ceq \int s_\kpp(r) \ud r$, where $\kpp$ is the constant sectional curvature of the manifold.  To find $d$ other linearly independent solutions, the next idea is to consider infinitesimal translations of the base point for the polar coordinates.  Namely, we consider the functions $\calL_K \phi_\kpp$ where $\calL_K$ denotes the Lie derivative with respect to a Killing vector field $K$.  Because Lie derivatives with respect to Killing vector fields commute with contraction with the metric and with covariant derivatives (see, e.g., \cite[Lemma 7.1.3]{CK1993}), we find that $\calL_K \phi_\kpp$ also has vanishing traceless Hessian.  Choosing a set of Killing vector fields whose flow maps translate the base point in $d$ linearly independent directions gives the desired $d$ remaining linearly independent solutions.

\begin{itemize}
    \item On $\bbR^d$, $\phi_0(r) = r^2/2$ and this operation is the observation that $x^i =  \partial_i \frac{|x|^2}{2}$.
    \item On $\bbS^d$, $\phi_{1}(r) = x^0$ and this operation is the observation that $x^i = (x^i \partial_0 - x^0 \partial_i)x^0$.
    \item On $\bbH^d$, $\phi_{-1}(r) = x^0$ and this operation is the observation that $ x^i = (x^i \partial_0 + x^0 \partial_i)x^0$.
\end{itemize}

\subsubsection{Explicit computations in the constant curvature case} \label{subsubsec:d-div-tf-explicit}
Let $\kpp$ be the constant sectional curvature of $(\calM, \bfg)$. Fix $y, y_{1} \in \calM$ and a geodesic segment $\bfx$ from $y$ to $y_{1}$. We may write $\bfx(t) = (t, \omg_{0})$ in polar coordinates at $y$ for some $\omg_{0} \in \bbS^{d-1}$. From \eqref{eq:d-div-tf-system} and the formulas in Section~\ref{subsubsec:const-curv-prelim}, we have
\begin{equation*}
    \frac{\ud}{\ud t} \varphi(\bfx(t)) = \bfomg_{r}(\bfx(t)), \quad
      \frac{\ud}{\ud t} \bfomg_{r}(\bfx(t)) = (\calP^{\ast} \varphi)_{rr} + w, \quad
           \frac{\ud}{\ud t} w(\bfx(t)) = \frac{1}{d-1}\nb^{\ell} (\calP^{\ast} \varphi)_{r \ell} - \kpp \bfomg_{r}, \quad
\end{equation*}
The solution is given by
\begin{equation*}
\begin{split}
    \varphi(\bfx(0)) &=\varphi\circ\bfx(d(y_1,y))-s_{\kappa}(d(y_1,y))(\bfomg_r\circ\bfx)(d(y_1,y))-\kappa^{-1}(c_{\kappa}(d(y_1,y))-1)(w\circ\bfx)(d(y_1,y)) \\
    &+ \int_0^{d(y_1,y)}s_{\kappa}(s)\dot{\bfx}^i\dot{\bfx}^j(\calP^*\varphi)_{ij}\circ\bfx(s) \ud s+\frac{1}{(d-1)\kappa}\int_0^{d(y_1,y)}(c_{\kpp}(s)-1) \dot{\bfx}^i\nabla^j(\calP^*\varphi)_{ij}\circ\bfx(s)  \ud s
\end{split}
\end{equation*}
Thus
\begin{equation*}
    \begin{split}
        \langle K_{y_1}(\cdot,y),\psi\rangle = & \int_0^{d(y_1,y)}s_{\kappa}(s)\dot{\bfx}^i\dot{\bfx}^j\psi_{ij}\circ\bfx(s) \ud s+\frac{1}{(d-1)\kappa}\int_0^{d(y_1,y)}(c_{\kpp}(s)-1) \dot{\bfx}^i\nabla^j \psi_{ij}\circ\bfx(s)  \ud s,\\
        \langle b_{y_1}(\cdot, y),\varphi\rangle = &\varphi(y_1) -s_{\kappa}(d(y_1,y))\dot{\bfx}^j(y,y_1,d(y_1,y))(\rd_j\varphi)(y_1)-(d\kappa)^{-1}(c_{\kappa}(d(y_1,y))-1)(\Delta\varphi)(y_1).
    \end{split}
\end{equation*}

\subsubsection{Explicit formulas on flat spaces}
For the readers' convenience, we record the explicit formulas for the Bogovskii-type and conic solution operators on $\bbR^d$. We average over straight line segments $\bfx(y,y_1,s)=y+s(y_1-y)$ for the Bogovski-type solution operator and over straight half-lines $\bfx(y,\omg,s)=y+s\omg$ for the conic solution operator.

To state our results, we introduce
\begin{equation*}
       (\calT^*f)_{ij}\ceq\frac{1}{2}(f_{ij}+f_{ji})-\frac{1}{d}(\tr f)\delta_{ij}.
\end{equation*}
\begin{enumerate}
    \item Let $\eta\in C_c^{\infty}(\bbR^d)$ with $\int_{\bbR^d}\eta =1$. The Bogovskii-type solution operator for $\calP \bfh =\nb_j\nb_k \bfh^{jk}$ where $\bfh^{ij}=\bfh^{ji}$ and $\tr_{\bfg}\bfh=0$ on $\bbR^d$ with flat metric is given by
    \begin{equation*}
    \begin{split}
        K_{\eta}^{ij}(z+y,y)=&\mathcal{T}^*\left(\left(\int_{|z|}^{\infty}\eta\left(r\frac{z}{|z|}+y\right)r^{d-1} \ud r\right)\frac{z^iz^j}{|z|^d}\right)\\
        &+\frac{1}{2(d-1)} \mathcal{T}^*\left(\partial_{z^j}\left(\left(\int_{|z|}^{\infty}\eta\left(r\frac{z}{|z|}+y\right)r^{d-1} \ud r\right)\frac{z^i}{|z|^{d-2}}\right)\right)
    \end{split}
    \end{equation*}
    with
    \begin{equation*}
        b_{\eta}(x,y)=\eta(x)+\partial_j((x-y)^j\eta(x))+\frac{1}{2d}\Delta(|x-y|^2\eta(x)).
    \end{equation*}
    \item Let $\slashed{\eta}\in C^{\infty}(\bbS^{d-1})$ with $\int_{\bbS^{d-1}}\slashed{\eta} =1$. The conic solution operator for $\calP \bfh =\nb_j\nb_k \bfh^{jk}$ where $\bfh^{ij}=\bfh^{ji}$ and $\tr_{\bfg}\bfh = 0$ on $\bbR^d$  with flat metric is given by
    \begin{equation*}
        K_{\slashed{\eta}}^{ij}(z+y,y)=\mathcal{T}^*\left(\frac{z^iz^j}{|z|^d}\slashed{\eta}\left(\frac{z}{|z|}\right)\right)+\frac{1}{2(d-1)}\mathcal{T}^*\left(\partial_{z^j}\left(\frac{z^i}{|z|^{d-2}}\slashed{\eta}\left(\frac{z}{|z|}\right)\right)\right).
    \end{equation*}
\end{enumerate}

\subsection{Symmetric divergence operator (or adjoint Killing operator)}\label{subsec:symm-div}

We consider the \emph{symmetric divergence operator}
\begin{equation} \label{eq:symm-div-eq}
	\calP \bfh = \nb_{j} \bfh^{j k} \qquad \hbox{where } \bfh^{jk} =  \bfh^{k j},
\end{equation}
for $d \geq 1$.

We first compute the formal adjoint operator $\calP^{\ast}$ (with respect to $\ud V$). For a smooth compactly supported symmetric $2$-tensor $\bfh$ and $1$-form $\bfomg$ on $U$, we have
\begin{align*}
	\int_{U} (\calP \bfh)^{k} \bfomg_{k} \, \ud V
	= \int_{U} (\nb_{j} \bfh^{jk}) \bfomg_{k} \, \ud V
	= - \int_{U} \bfh^{jk} \nb_{j} \bfomg_{k} \, \ud V
	= - \int_{U} \bfh^{jk} \frac{1}{2} \left(\nb_{j} \bfomg_{k} + \nb_{k} \bfomg_{j} \right) \, \ud V.
	\end{align*}
The formal $L^{2}$-adjoint is
\begin{equation} \label{eq:symm-div-dual}
(\calP^{\ast} \bfomg)_{jk} = -\frac{1}{2} \left(\nb_{j} \bfomg_{k} + \nb_{k} \bfomg_{j} \right).
\end{equation}
Observe that $\calP^{\ast} \bfomg = 0$ is precisely the condition that the vector field $\bfomg_{\sharp}$ is a Killing vector field of $(\calM, \bfg)$; for this reason, we will call $\calP^{\ast}$ the \emph{Killing operator}. Its principal symbol is
\begin{equation} \label{eq:symm-div-dual-symb}
\tensor{(p^{\ast})}{_{jk}^{\ell}}(\xi) = - \frac{i}{2} (\xi_{j} \dlt_{k}^{\ell} + \xi_{k} \dlt_{j}^{\ell}).
\end{equation}

\subsubsection{Covariant graded augmented system and $\ker \calP^{\ast}$} \label{subsubsec:symm-div-key}
Given a $1$-form $\bfomg_{j}$, define
\begin{equation} \label{eq:symm-div-aug}
\bfeta_{jk} = \tfrac{1}{2}(\ud \bfomg)_{jk} = \tfrac{1}{2} (\nb_{j} \bfomg_{k} - \nb_{k} \bfomg_{j}).
\end{equation}
Then
\begin{equation} \label{eq:symm-div-system}
\left\{
\begin{aligned}
	\nb_{i} \bfomg_{j} &= \bfeta_{i j} - (\calP^{\ast} \bfomg)_{i j}, \\
	\nb_{i} \bfeta_{j k}
	&=
	-\tensor{\bfR}{_{ j k i}^{\ell}} \bfomg_{\ell}
	+ \nb_{k}(\calP^{\ast} \bfomg)_{i j} - \nb_{j}(\calP^{\ast} \bfomg)_{ki}.
\end{aligned}
\right.
\end{equation}
We postpone the proof of \eqref{eq:symm-div-system} and discuss its consequences first.

We note that \eqref{eq:symm-div-system} immediately leads to graded augmented variables in the sense of Definition~\ref{def:aug}. Indeed, if we specialize \eqref{eq:symm-div-system} to the Euclidean space in rectangular coordinates (so that $\calP = \calP_{\prin}$ and $\calP^{\ast} = \calP_{\prin}^{\ast_{\ud x}}$), then we see that  $\Phi_{\bfomg_{j}} \ceq \bfomg_{j}$, $\Phi_{\bfeta_{jk}} \ceq \bfeta_{jk}$ define augmented variables for $\calP_{\prin}$ that satisfy \ref{hyp:aug1}--\ref{hyp:aug4}, where $\calA = \set{\bfomg_{1}, \ldots, \bfomg_{d}, \bfeta_{12}, \ldots, \bfeta_{(d-1) d}} $ (in particular, $\# \calA = d + \frac{d(d-1)}{2} = \frac{d(d+1)}{2}$), $d_{\bfomg_{j k}} = 0$, $d_{\bfeta_{j}} = -1$, $m_{ij} = 1$, and $m_{ij}' = 1$ for all $i, j \in \set{1, \ldots, d}$. These augmented variables also satisfy \ref{hyp:aug1}--\ref{hyp:aug4} for any lower order perturbations of $\calP_{\prin}$ (viewed as an operator on an open subset of $\bbR^{d}$, the Euclidean space in rectangular coordinates).

As is well-known, $\calP^{\ast}$ has a finite dimensional kernel with $\dim \ker \calP^{\ast} \leq \# \calA = \frac{d(d+1)}{2}$. The maximal dimension is reached (i.e., the augmented system is completely integrable) on space forms:
\begin{itemize}
\item For $\bbR^{d}$, $\ker \calP^{\ast}$ consists of the metric duals of the Killing vector fields, which are
\begin{equation*}
\mathrm{span}\,\left( \set{\bfe_{J}}_{J=1, \ldots, d} \cup \set{x_{K} \bfe_{J} - x_{J} \bfe_{K}}_{1\leq J < K \leq d}\right).
\end{equation*}
Geometrically, these correspond to translation and rotation vector fields on $\bbR^{d}$.

\item For the sphere $\bbS^{d}$, $\ker \calP^{\ast}$ consists of the metric duals of
\begin{equation*}
\ker \calP^{\ast} =
    \mathrm{span}\,\left(  \set{x_{K} \bfe_{J} - x_{J} \bfe_{K}}_{0\leq J < K \leq d}\right).
\end{equation*}
Geometrically, these correspond to the rotation vector fields on the ambient Euclidean space $\bbR^{d+1}$.

\item For the hyperbolic space $\bbH^{d}$, $\ker \calP^{\ast}$ consists of the metric duals of
\begin{equation*}
 \ker \calP^{\ast} = \mathrm{span}\,\left( \set{x^{0} \bfe_{J} + x^{J} \bfe_{0}}_{1\leq  J \leq d} \cup \set{x_{K} \bfe_{J} - x_{J} \bfe_{K}}_{1\leq J < K \leq d}\right).
\end{equation*}
Geometrically, these correspond to the Lorentz boost and rotation vector fields on the ambient Minkowski space $\bbR^{1, d}$.
\end{itemize}

Finally, we give a proof of \eqref{eq:symm-div-system}, which is a covariant generalization of \cite{Resh}. Indeed, the first identity is obvious. To prove the second identity, we begin by computing $\nb_{i}(\calP^{\ast} \bfomg)_{jk}$ and cycling the indices $i, j, k$:
\begin{align*}
- 2 \nb_{i}(\calP^{\ast} \bfomg)_{jk} &=\nb_{i} \nb_{j} \bfomg_{k} + \nb_{i} \nb_{k} \bfomg_{j},  \\
- 2 \nb_{j}(\calP^{\ast} \bfomg)_{ki} &=\nb_{j} \nb_{k} \bfomg_{i} + \nb_{j} \nb_{i} \bfomg_{k}  =\nb_{k} \nb_{j} \bfomg_{i} + \nb_{i} \nb_{j} \bfomg_{k}   - \tensor{\bfR}{_{j k}^{\ell}_{i}} \bfomg_{\ell}  + \tensor{\bfR}{_{i j}^{\ell}_{k}} \bfomg_{\ell}, \\
- 2 \nb_{k}(\calP^{\ast} \bfomg)_{i j} &=\nb_{k} \nb_{i} \bfomg_{j} + \nb_{k} \nb_{j} \bfomg_{i}  = \nb_{i} \nb_{k} \bfomg_{j} + \nb_{k} \nb_{j} \bfomg_{i}- \tensor{\bfR}{_{k i}^{\ell}_{j}} \bfomg_{\ell}.
\end{align*}
Then we subtract the second equation from the third equation. We obtain
\begin{align*}
	\nb_{i} \bfeta_{j k}
	&=
	 \tfrac{1}{2} (\tensor{\bfR}{_{j k}^{\ell}_{i}} -\tensor{\bfR}{_{i j }^{\ell}_{k}}  -\tensor{\bfR}{_{k i}^{\ell}_{j}} )\bfomg_{\ell}
	+ \nb_{k}(\calP^{\ast} \bfomg)_{i j} - \nb_{j}(\calP^{\ast} \bfomg)_{ki} \\
	&= \tensor{\bfR}{_{jk}^{\ell}_{i}} \bfomg_{\ell}
	+ \nb_{k}(\calP^{\ast} \bfomg)_{i j} - \nb_{j}(\calP^{\ast} \bfomg)_{ki},
\end{align*}
where we used the first Bianchi identity for the last equality.

\subsubsection{Explicit computations in the constant curvature case} \label{subsubsec:symm-div-explicit}
Let $\kpp$ be the constant sectional curvature of $(\calM, \bfg)$. Fix $y, y_{1} \in \calM$ and a geodesic segment $\bfx$ from $y$ to $y_{1}$. We may write $\bfx(t) = (t, \omg_{0})$ in polar coordinates at $y$ for some $\omg_{0} \in \bbS^{d-1}$. From \eqref{eq:symm-div-system} and the formulas in Section~\ref{subsubsec:const-curv-prelim}, we have
\begin{align*}
   \frac{\ud}{\ud t} \bfomg_{r}(\bfx(t))&=-(\calP^*\bfomg)_{r r}, \\
   \frac{\ud}{\ud t} \bfomg_{A}(\bfx(t))&=\Gamma_{r A}^{B} \bfomg_{B}+\bfeta_{r A}-(\calP^*\bfomg)_{r A}, \\
   \frac{\ud}{\ud t} \bfeta_{r A}(\bfx(t))&=\Gamma_{r A}^{B} \bfeta_{r B} - \kpp \bfomg_{A}+ \nb_{A}(\calP^{\ast} \bfomg)_{rr} - \nb_{r}(\calP^{\ast} \bfomg)_{rA}.
\end{align*}

For the first equation we can solve directly:
\begin{equation*}
    \bfomg_r(\bfx(t))=\bfomg_r(y_1)+\int_t^{d(y_1,y)}(\calP^*\bfomg)_{rr}(\bfx(s)) \, \ud s.
\end{equation*}
The equation for $\bfomg_{A}$ can be reduced:
\begin{equation*}
    \frac{\ud^2}{\ud t^2}(s_{\kappa}(t)^{-1}\bfomg_{A}(\bfx(t))=-\kappa s_{\kappa}(t)^{-1}\bfomg_A +s_{\kappa}(t)^{-1}\nb_A(\calP^*\bfomg)_{rr}-2s_{\kappa}(t)^{-1}\nb_r(\calP^*\bfomg)_{rA}.
\end{equation*}
Therefore,
\begin{equation*}
    \begin{aligned}
    s_{\kappa}(t)^{-1}\bfomg_A(\bfx(t))&=\int_t^{d(y_1,y)} s_{\kappa}(t-s) s_{\kappa}(s)^{-1}(\nb_r(\calP^*\bfomg)_{rA}-\nb_A(\calP^*\bfomg)_{rr})(\bfx(s)) \ud s\\
    &+\int_t^{d(y_1,y)}c_{\kappa}(t-s) s_{\kappa}(s)^{-1}(\calP^*\bfomg)_{rA}\ud s\\&+\bfomg_A(y_1)c_{\kappa}(t-d(y_1,y))/s_\kappa(d(y_1,y))+\bfeta_{rA}(y_1)s_\kappa(t-d(y_1,y))/s_\kappa(d(y_1,y)).
    \end{aligned}
\end{equation*}
Thus
\begin{equation*}
\begin{split}
    \langle K_{y_1}(\cdot,y),\psi \rangle &= \left(\int_0^{d(y_1,y)} \dot{\bfx}^{i}\dot{\bfx}^j\psi_{ij}(\bfx(s)) \, \ud s\right) \ud r+\left(\int_0^{d(y_1,y)}c_{\kappa}(s) \dot{\bfx}^j\psi_{j\td{A}}(\bfx(s)) \ud s\right) \, \ud \td{\omg}^{A}\\
    &- \left(\int_0^{d(y_1,y)}s_{\kappa}(s) \dot{\bfx}^{i}\dot{\bfx}^{j}(\nb_i\psi_{j\td{A}}-\nb_{\td{A}}\psi_{ij})(\bfx(s))\, \ud s\right) \, \ud \td{\omg}^{A}
\end{split}
\end{equation*}
(we recall that $\ud \td{\omega}^A = s_\kappa(r) \, \ud \omega^A$ is the normalized angular $1$-form and $\psi_{j\td{A}}=s_{\kappa}(r)^{-1}\psi_{jA}$ since $\partial_{\td{\omega}^A}=s_{\kappa}^{-1}\partial_{\omega^A}$)
and
\begin{equation*}
    \langle b_{y_1}(\cdot,y),\varphi\rangle = \varphi_r(y_1) \, \ud r +\left(\varphi_{\td{A}}(y_1)c_{\kappa}(d(y_1,y))-\frac{1}{2}\dot{\bfx}^j(d(y_1,y))(\nb_j\varphi_{\td{A}}-\nb_{\td{A}}\varphi_j)(y_1)s_{\kappa}(d(y_1,y))\right) \, \ud \td{\omg}^A.
\end{equation*}

\subsubsection{Explicit formulas on flat spaces}
For the readers' convenience, we record the explicit formulas for the Bogovskii-type and conic solution operators on $\bbR^d$. We average over straight line segments $\bfx(y,y_1,s)=y+s(y_1-y)$ for the Bogovski-type solution operator and over straight half-lines $\bfx(y,\omg,s)=y+s\omg$ for the conic solution operator.
\begin{enumerate}
    \item Let $\eta\in C_c^{\infty}(\bbR^d)$ with $\int_{\bbR^d}\eta =1$. The Bogovskii-type solution operator for $\calP \bfh =\nb_j \bfh^{jk}$ where $\bfh^{ij}=\bfh^{ji}$ on $\bbR^d$ with flat metric is given by
    \begin{equation*}
    \begin{split}
        (K_{\eta})_k^{ij}(z+y,y)=&\frac{1}{2}\left(\int_{|z|}^{\infty}\eta\left(r\frac{z}{|z|}+y\right)r^{d-1} \, \ud r\right)\frac{z^i\bfdlt_k^j+z^j\bfdlt_k^i}{|z|^d}\\
        &+\frac{1}{2}\partial_{z^m}\left(\left(\int_{|z|}^{\infty}\eta\left(r\frac{z}{|z|}+y\right)r^{d-1} \, \ud r\right)\frac{z^m(z^i\bfdlt_k^j+z^j\bfdlt_k^i)}{|z|^d}\right)\\
        &-\partial_{z^k}\left(\left(\int_{|z|}^{\infty}\eta\left(r\frac{z}{|z|}+y\right)r^{d-1}\, \ud r\right)\frac{z^iz^j}{|z|^d}\right)
    \end{split}
    \end{equation*}
    with
    \begin{equation*}
        (b_{\eta})_k^j (x,y)=\frac{d+1}{2}\eta(x)\bfdlt^j_k+\frac{1}{2}(x-y)^{\ell}\partial_{\ell}\eta(x)\bfdlt^j_k -\frac{1}{2}(x-y)^j\partial_k\eta(x).
    \end{equation*}
    \item Let $\slashed{\eta}\in C^{\infty}(\bbS^{d-1})$ with $\int_{\bbS^{d-1}}\slashed{\eta} =1$. The conic solution operator for $\calP \bfh =\nb_j \bfh^{jk}$ where $\bfh^{ij}=\bfh^{ji}$ on $\bbR^d$ with flat metric is given by
    \begin{equation*}
        (K_{\slashed{\eta}})_k^{ij}(z+y,y)=\frac{1}{2}\frac{z^i\bfdlt^j_k+z^j\bfdlt^i_k}{|z|^d}\slashed{\eta}\left(\frac{z}{|z|}\right)+\frac{1}{2}\partial_{z^m}\left(\frac{z^m(z^i\bfdlt^j_k+z^j\bfdlt^i_k)}{|z|^d}\slashed{\eta}\left(\frac{z}{|z|}\right)\right)-\partial_{z^k}\left(\frac{z^iz^j}{|z|^d}\slashed{\eta}\left(\frac{z}{|z|}\right)\right).
    \end{equation*}
\end{enumerate}

\subsection{Trace-free symmetric divergence operator (or adjoint conformal Killing operator)} \label{subsec:symm-div-tf}
We consider the \emph{trace-free symmetric divergence operator}
\begin{equation} \label{eq:symm-div-tf-eq}
	\calP \bfh = \nb_{j} \bfh^{j k} \qquad \hbox{where } \bfh^{jk} =  \bfh^{k j} \hbox{ and } \tr_{\bfg} \bfh = 0,
\end{equation}
for $d \geq 3$.

We first compute the formal adjoint $\calP^{\ast}$ (with respect to $\ud V$). For a smooth compactly supported trace-free symmetric $2$-tensor $\bfh$ and $1$-form $\bfomg$ on $U$, we have
\begin{align*}
	\int_{U} (\calP \bfh)^{k} \bfomg_{k} \, \ud V
	&= \int_{U} (\nb_{j} \bfh^{jk}) \bfomg_{k} \, \ud V
	= - \int_{U} \bfh^{jk} \nb_{j} \bfomg_{k} \, \ud V  \\
	&= - \int_{U} \bfh^{jk} \left[ \frac{1}{2} \left(\nb_{j} \bfomg_{k} + \nb_{k} \bfomg_{j}\right) - \frac{1}{d} \nb^{\ell} \bfomg_{\ell} \bfg_{jk} \right] \, \ud V.
	\end{align*}
The formal $L^{2}$-adjoint is
\begin{equation} \label{eq:symm-div-tf-dual}
(\calP^{\ast} \bfomg)_{j k} = -\frac{1}{2} \left(\nb_{j} \bfomg_{k} + \nb_{k} \bfomg_{j}\right) + \frac{1}{d} \nb^{\ell} \bfomg_{\ell} \bfg_{jk}.
\end{equation}
Observe that $\calP^{\ast} \bfomg = 0$ is precisely the condition that the vector field $\bfomg_{\sharp}$ is a conformal Killing vector field of $(\calM, \bfg)$ (i.e., the infinitesimal generator of a one-parameter family of conformal isometries); for this reason, we will call $\calP^{\ast}$ the \emph{conformal Killing operator}. Its principal symbol is
\begin{equation} \label{eq:symm-div-tf-dual-symb}
\tensor{(p^{\ast})}{_{jk}^{\ell}}(x, \xi) = - \frac{i}{2} (\xi_{j} \dlt_{k}^{\ell} + \xi_{k} \dlt_{j}^{\ell}) + \frac{i}{d} \xi_{m} \bfg_{j k}(x)(\bfg^{-1})^{\ell m}(x)
\end{equation}

\subsubsection{Covariant graded augmented system and $\ker \calP^{\ast}$}\label{subsubsec:symm-div-tf-key}
Given a $1$-form $\bfomg_{j}$, define
\begin{equation} \label{eq:symm-div-tf-aug}
\bfeta_{jk} = \tfrac{1}{2}(\ud \bfomg)_{jk} = \tfrac{1}{2} (\nb_{j} \bfomg_{k} - \nb_{k} \bfomg_{j}), \quad w = \frac{1}{d} \nb^{\ell} \bfomg_{\ell}, \quad \bfzt_{j} = \rd_{j} w.
\end{equation}
Note that $\tfrac{1}{2} (\nb_{j} \bfomg_{k} + \nb_{k} \bfomg_{j}) = - (\calP^{\ast} \bfomg)_{jk} + w \bfg_{jk}$. Then
\begin{equation} \label{eq:symm-div-tf-system}
\left\{
\begin{aligned}
	\nb_{i} \bfomg_{j} &= \bfeta_{ij} + w \bfg_{ij} - (\calP^{\ast} \bfomg)_{ij}, \\
	\nb_{i} \bfeta_{j k}
	&=
	 -\tensor{\bfR}{_{j k i}^{\ell}}  \bfomg_{\ell}
	 + \bfzt_{j} \bfg_{i k} - \bfzt_{k} \bfg_{i j}
	+ \nb_{k}(\calP^{\ast} \bfomg)_{i j} - \nb_{j}(\calP^{\ast} \bfomg)_{ki}, \\
	\nb_{i} w &= \bfzt_{i}, \\
	\nb_{i} \bfzt_{j}
	&= - \frac{1}{d-2} \nb^{m} \left( \tensor{\Ric}{_{i j}}
- \frac{1}{2(d-1)}  R \bfg_{ij}\right) \bfomg_{m}  \\
&\peq - \frac{1}{d-2} \left[ \tensor{\Ric}{_{i}^{m}} \bfeta_{j m} + \tensor{\Ric}{_{j}^{m}} \bfeta_{i m}
+ 2 \left(\tensor{\Ric}{_{i j}} - \frac{1}{2(d-1)} R \bfg_{i j}\right)  w \right] \\
&\peq + \frac{1}{d-2} \calC (\calP^{\ast} \bfomg)_{i j}.
\end{aligned}
\right.
\end{equation}
where
\begin{align*}
\calC (\calP^{\ast} \bfomg)_{i j}
&= - \nb^{\ell} \nb_{i} (\calP^{\ast} \bfomg)_{\ell j} - \nb^{\ell} \nb_{j} (\calP^{\ast} \bfomg)_{\ell i} + \nb^{\ell} \nb_{\ell} (\calP^{\ast} \bfomg)_{i j} + \frac{1}{d-1} \nb^{\ell} \nb^{m} (\calP^{\ast} \bfomg)_{\ell m} \bfg_{ij} \\
&\peq + \tensor{\Ric}{_{i}^{\ell}} (\calP^{\ast} \bfomg)_{j \ell} + \tensor{\Ric}{_{j}^{\ell}} (\calP^{\ast} \bfomg)_{i \ell}
 - \frac{1}{d-1} \tensor{\Ric}{^{\ell m}} (\calP^{\ast} \bfomg)_{\ell m} \bfg_{i j}.
\end{align*}
We postpone the proof of \eqref{eq:symm-div-tf-system} and discuss its consequences first. We note that \eqref{eq:symm-div-tf-system} immediately leads to graded augmented variables in the sense of Definition~\ref{def:aug}. Indeed, if we specialize \eqref{eq:symm-div-tf-system} to the Euclidean space in rectangular coordinates (so that $\calP = \calP_{\prin}$ and $\calP^{\ast} = \calP_{\prin}^{\ast_{\ud x}}$), then we see that  $\Phi_{\bfomg_{j}} \ceq \bfomg_{j}$, $\Phi_{\bfeta_{jk}} \ceq \bfeta_{jk}$, $\Phi_{w} \ceq w$ and $\Phi_{\bfzt_{j}} \ceq \bfzt_{j}$ define augmented variables for $\calP_{\prin}$ that satisfy \ref{hyp:aug1}--\ref{hyp:aug4}, where $\calA = \set{\bfomg_{1}, \ldots, \bfomg_{d}, \bfeta_{12}, \ldots, \bfeta_{(d-1) d}, w, \bfzt_{1}, \ldots, \bfzt_{d}} $ (in particular, $\# \calA = d + \frac{d(d-1)}{2} + 1 + d = \frac{(d+1)(d+2)}{2}$), $d_{\bfomg_{j}} = 0$, $d_{\bfeta_{jk}} = d_{w} = -1$, $d_{\bfzt_{j}} = -2$, $m_{ij} = 1$, and $m_{ij}' = 2$ for all $i, j \in \set{1, \ldots, d}$. These augmented variables also satisfy \ref{hyp:aug1}--\ref{hyp:aug4} for any lower order perturbations of $\calP_{\prin}$ (viewed as an operator on an open subset of $\bbR^{d}$, the Euclidean space in rectangular coordinates).

The operator $\calP^{\ast}$ has a finite dimensional kernel with $\dim \ker \calP^{\ast} \leq \# \calA = \frac{(d+1)(d+2)}{2}$. This bound is optimal, and the maximal dimension is reached (i.e., the augmented system is completely integrable) on space forms:
\begin{itemize}
\item For $\bbR^{d}$, $\ker \calP^{\ast}$ consists of the metric duals of the conformal Killing vector fields, which are
\begin{equation}\label{eq:conformal-killing-euc}
\mathrm{span}\,\left( \set{\bfe_{J}}_{J=1, \ldots, d} \cup \set{x_{K} \bfe_{J} - x_{J} \bfe_{K}}_{1\leq J < K \leq d} \cup \set{x^{j} \bfe_{j}} \cup \set{ x_{J} x^{j} \bfe_{j} - \abs{x}^{2} \bfe_{J}}_{1 \leq J \leq d }\right).
\end{equation}

\item For $\bbS^d$, the stereographic projection
\begin{equation*}
    y^j=\frac{x^j}{1-x^0}, \quad x^0=\frac{\sum (y^j)^2-1}{\sum (y^j)^2+1},\quad x^k=\frac{2y^k}{\sum (y^j)^2+1},\,k=1,\cdots,d
\end{equation*}
gives $\ud s^2=\frac{4((dy^1)^2+\cdots+(dy^d)^2)}{1+\sum (y^j)^2}$, so the conformal Killing vector fields are given by \eqref{eq:conformal-killing-euc} in the coordinate $y^j$.

\item For $\bbH^d$, in the upper half space model, the metric on $\bbH^d$ is $\frac{(d x^0)^2+(dx^1)^2+\cdots+(dx^d)^2}{(x^0)^2}$, which is confromal to the Euclidean metric, so the conformal Killing vector fields are also given by \eqref{eq:conformal-killing-euc}.

\end{itemize}

Finally, we give a proof of \eqref{eq:symm-div-tf-system}, which is a covariant generalization of \cite{Resh}. The first identity is obvious, whereas the second identity follows from Section~\ref{subsubsec:symm-div-key}. The third identity is again a restatement of the definition of $\bfzt_{i}$. It remains to establish the last identity.

To simplify the computation, we introduce the notation
\begin{equation*}
\bfA_{ijk\ell} \ceq - \nb^{2}_{i j} (\calP^{\ast} \bfomg)_{k \ell}
= \frac{1}{2} \left(\nb^{3}_{ijk} \bfomg_{\ell} + \nb^{3}_{i j \ell} \bfomg_{k}\right) - \frac{1}{d} \nb^{2}_{ij} \nb^{m} \bfomg_{m} \bfg_{k \ell},
\end{equation*}
where $\nb^{2}_{ij} = \nb_{i} \nb_{j}$ and $\nb^{3}_{ijk} = \nb_{i} \nb_{j} \nb_{k}$.
Indeed,
\begin{align*}
\bfA_{ijk\ell} &= \frac{1}{2} \left(\nb^{3}_{ijk} \bfomg_{\ell} + \nb^{3}_{i j \ell} \bfomg_{k}\right) - \frac{1}{d} \nb^{2}_{ij} \nb^{m} \bfomg_{m} \bfg_{k \ell} \\
&= \frac{1}{2} \left(\nb^{3}_{ik j} \bfomg_{\ell} + \nb^{3}_{j \ell i } \bfomg_{k}\right) - \frac{1}{d} \nb^{2}_{ij} \nb^{m} \bfomg_{m} \bfg_{k \ell} \\
&\peq - \frac{1}{2} \nb_{i} (\tensor{\bfR}{_{jk}^{m}_{\ell}} \bfomg_{m}) - \frac{1}{2} (\tensor{\bfR}{_{i j}^{m}_{\ell}} \nb_{m} \bfomg_{k} +\tensor{\bfR}{_{i j}^{m}_{k}} \nb_{\ell} \bfomg_{m}) - \frac{1}{2} \nb_{j}(\tensor{\bfR}{_{i \ell}^{m}_{k}} \bfomg_{m}),
\end{align*}
so after contracting $i$ and $k$ using the inverse metric, we have
\begin{align*}
\tensor{\bfA}{^{k}_{jk\ell}}
&= \frac{1}{2} \left(\lap \nb_{j} \bfomg_{\ell} + \nb^{2}_{j \ell} \nb^{k} \bfomg_{k}\right) - \frac{1}{d} \nb^{2}_{\ell j} \nb^{m} \bfomg_{m} \\
&\peq - \frac{1}{2} \nb^{k} (\tensor{\bfR}{_{jk}^{m}_{\ell}} \bfomg_{m}) - \frac{1}{2} (\tensor{\bfR}{^{k}_{j}^{m}_{\ell}} \nb_{m} \bfomg_{k} +\tensor{\bfR}{^{k}_{j}^{m}_{k}} \nb_{\ell} \bfomg_{m}) - \frac{1}{2} \nb_{j}(\tensor{\bfR}{^{k}_{\ell}^{m}_{k}} \bfomg_{m})
\\
&= \frac{1}{2} \lap \nb_{j} \bfomg_{\ell} + \frac{d-2}{2d}\nb^{2}_{j \ell} \nb^{k} \bfomg_{k} \\
&\peq - \frac{1}{2} \nb^{k} \tensor{\bfR}{_{jk}^{m}_{\ell}} \bfomg_{m} - \frac{1}{2} (-\tensor{\bfR}{_{j}^{k}_{\ell}^{m}} \nb_{k} \bfomg_{m} + \tensor{\bfR}{_{j}^{k}_{\ell}^{m}} \nb_{m} \bfomg_{k} - \tensor{\Ric}{_{j}^{m}} \nb_{\ell} \bfomg_{m}) \\
&\peq + \frac{1}{2} \nb_{j} \tensor{\Ric}{_{\ell}^{m}} \bfomg_{m} + \frac{1}{2} \tensor{\Ric}{_{\ell}^{m}} \nb_{j} \bfomg_{m}
\\
&= \frac{1}{2} \lap \nb_{j} \bfomg_{\ell} + \frac{d-2}{2d}\nb^{2}_{j \ell} \nb^{k} \bfomg_{k} \\
&\peq + \frac{1}{2} (\nb^{m} \tensor{\Ric}{_{j\ell}} - \nb_{\ell} \tensor{\Ric}{_{j}^{m}} + \nb_{j} \tensor{\Ric}{_{\ell}^{m}}) \bfomg_{m}
+ \tensor{\bfR}{_{j}^{k}_{\ell}^{m}} \bfeta_{k m} + \frac{1}{2} (\tensor{\Ric}{_{j}^{m}} \nb_{\ell} \bfomg_{m} + \tensor{\Ric}{_{\ell}^{m}} \nb_{j} \bfomg_{m}).
\end{align*}
where, for the last equality, we used the second Bianchi identity to write
\begin{align*}
-\nb^{k} \tensor{\bfR}{_{jk}^{m}_{\ell}}
= \nb^{m} \tensor{\bfR}{_{jk \ell}^{k}} + \nb_{\ell} \tensor{\bfR}{_{jk}^{k m}}
= \nb^{m} \tensor{\Ric}{_{j\ell}} - \nb_{\ell} \tensor{\Ric}{_{j}^{m}} .
\end{align*}
Thus
\begin{align*}
\tensor{\bfA}{^{k}_{jk\ell}}  + \tensor{\bfA}{^{k}_{\ell k j}}
&= \frac{1}{2} \lap (\nb_{j} \bfomg_{\ell} + \nb_{\ell} \bfomg_{j}) + \frac{d-2}{d}\nb^{2}_{j \ell} \nb^{k} \bfomg_{k} \\
&\peq + \nb^{m} \tensor{\Ric}{_{j\ell}} \bfomg_{m}
+ \tensor{\Ric}{_{j}^{m}} \nb_{\ell} \bfomg_{m} + \tensor{\Ric}{_{\ell}^{m}} \nb_{j} \bfomg_{m}.
\end{align*}
Contracting also $j$ and $\ell$ using the inverse metric, we have
\begin{align*}
\tensor{\bfA}{^{k \ell}_{k\ell}}
&= \frac{d-1}{d} \lap \nb^{k} \bfomg_{k} + \frac{1}{2} \nb^{m} R \bfomg_{m}
+ \tensor{\Ric}{^{\ell m}} \nb_{\ell} \bfomg_{m}.
\end{align*}
Since
\begin{align*}
\tensor{\bfA}{^{\ell}_{\ell j k}} = \frac{1}{2} \left(\lap \nb_{j} \bfomg_{k} + \lap \nb_{k} \bfomg_{j}\right) - \frac{1}{d} \lap \nb^{m} \bfomg_{m} \bfg_{jk},
\end{align*}
we have
\begin{align*}
\tensor{\bfA}{^{\ell}_{\ell j k}} &+ \frac{1}{d-1} \tensor{\bfA}{^{\ell m}_{\ell m}} \bfg_{jk}
=\tensor{\bfA}{^{\ell}_{\ell j k}} + \frac{1}{d} \lap \nb^{m} \bfomg_{m} \bfg_{jk} + \frac{1}{2(d-1)} \nb^{m} R \bfomg_{m} \bfg_{jk}
+ \frac{1}{d-1}\tensor{\Ric}{^{\ell m}} \nb_{\ell} \bfomg_{m} \bfg_{jk} \\
&= \frac{1}{2} \lap \left(\nb_{j} \bfomg_{k} + \nb_{k} \bfomg_{j}\right)
+ \frac{1}{2(d-1)} \nb^{m} R \bfomg_{m} \bfg_{jk}
+ \frac{1}{2(d-1)}\tensor{\Ric}{^{\ell m}} (\nb_{\ell} \bfomg_{m} + \nb_{m} \bfomg_{\ell})\bfg_{jk}.
\end{align*}
Therefore,
\begin{align*}
&\tensor{\bfA}{^{k}_{jk\ell}}  + \tensor{\bfA}{^{k}_{\ell k j}} - \tensor{\bfA}{^{k}_{k j \ell}} - \frac{1}{d-1} \tensor{\bfA}{^{k m}_{k m}} \bfg_{j \ell} \\
&= \frac{d-2}{d}\nb^{2}_{j \ell} \nb^{k} \bfomg_{k}
+ \nb^{m} \tensor{\Ric}{_{j\ell}} \bfomg_{m}
- \frac{1}{2(d-1)} \nb^{m} R \bfomg_{m} \bfg_{j\ell}\\
&\peq + \tensor{\Ric}{_{j}^{m}} \nb_{\ell} \bfomg_{m} + \tensor{\Ric}{_{\ell}^{m}} \nb_{j} \bfomg_{m}
- \frac{1}{2(d-1)}\tensor{\Ric}{^{k m}} (\nb_{k} \bfomg_{m} + \nb_{m} \bfomg_{k}) \bfg_{j\ell}.
\end{align*}
Note also that
\begin{align*}
\tensor{\Ric}{_{j}^{m}} \nb_{\ell} \bfomg_{m} + \tensor{\Ric}{_{\ell}^{m}} \nb_{j} \bfomg_{m}
&= \tensor{\Ric}{_{j}^{m}} \bfeta_{\ell m} + \tensor{\Ric}{_{\ell}^{m}} \bfeta_{j m}
+ 2 \tensor{\Ric}{_{j \ell}} w \\
&\peq - \tensor{\Ric}{_{j}^{m}} (\calP^{\ast} \bfomg)_{\ell m} - \tensor{\Ric}{_{\ell}^{m}} (\calP^{\ast} \bfomg)_{j m}, \\
- \frac{1}{2(d-1)}\tensor{\Ric}{^{k m}} (\nb_{k} \bfomg_{m} + \nb_{m} \bfomg_{k})
&= - \frac{1}{(d-1)} R w + \frac{1}{d-1} \tensor{\Ric}{^{k m}} (\calP^{\ast} \bfomg)_{k m}.
\end{align*}
We arrive at
\begin{align*}
\nb_{j} \bfzt_{\ell}
&=  \frac{1}{d}\nb^{2}_{j \ell} \nb^{k} \bfomg_{k} \\
&= - \frac{1}{d-2} \left(\nb^{m} \tensor{\Ric}{_{j \ell}}
- \frac{1}{2(d-1)} \nb^{m} R \bfg_{j \ell}\right) \bfomg_{m}  \\
&\peq - \frac{1}{d-2} \left[ \tensor{\Ric}{_{j}^{m}} \bfeta_{\ell m} + \tensor{\Ric}{_{\ell}^{m}} \bfeta_{j m}
+ 2 \left(\tensor{\Ric}{_{j \ell}} - \frac{1}{2(d-1)} R \bfg_{j \ell}\right)  w \right] \\
&\peq + \frac{1}{d-2} \calC_{j \ell}
\end{align*}
where
\begin{align*}
\calC_{j \ell}
&= \tensor{\bfA}{^{k}_{jk\ell}}  + \tensor{\bfA}{^{k}_{\ell k j}} - \tensor{\bfA}{^{k}_{k j \ell}} - \frac{1}{d-1} \tensor{\bfA}{^{k m}_{k m}} \bfg_{j \ell} \\
&\peq + \tensor{\Ric}{_{j}^{m}} (\calP^{\ast} \bfomg)_{\ell m} + \tensor{\Ric}{_{\ell}^{m}} (\calP^{\ast} \bfomg)_{j m}
 - \frac{1}{d-1} \tensor{\Ric}{^{k m}} (\calP^{\ast} \bfomg)_{k m} \bfg_{j \ell},
\end{align*}
which completes the proof.

\subsubsection{Explicit computations in the constant curvature case} \label{subsubsec:symm-div-tf-explicit}

Let $\kpp$ be the constant sectional curvature of $(\calM, \bfg)$. Fix $y, y_{1} \in \calM$ and a geodesic segment $\bfx$ from $y$ to $y_{1}$. We may write $\bfx(t) = (t, \omg_{0})$ in polar coordinates at $y$ for some $\omg_{0} \in \bbS^{d-1}$. From \eqref{eq:symm-div-tf-system} and the formulas in Section~\ref{subsubsec:const-curv-prelim}, we have
\begin{equation*}
    \begin{aligned}
        \frac{\ud}{\ud t } \bfomg_r(\bfx(t)) &= w -(\calP^*\bfomg)_{rr},\\
         \frac{\ud}{\ud t } w(\bfx(t)) &= \bfzt_r,\\
          \frac{\ud}{\ud t } \bfzt_r(\bfx(t)) &= -\kappa w+\frac{1}{d-2}\calC(\calP^*\bfomg)_{rr}
    \end{aligned}
\end{equation*}
and
\begin{equation*}
    \begin{aligned}
         \frac{\ud}{\ud t } \bfomg_A(\bfx(t)) &= \Gamma_{rA}^{B}\bfomg_{B}+\bfeta_{rA}-(\calP^*\bfomg)_{rA},\\
          \frac{\ud}{\ud t } \bfeta_{rA}(\bfx(t)) &= \Gamma_{rA}^{B}\bfeta_{rB}-\kappa \bfomg_A-\bfzt_A+\nb_A(\calP^*\bfomg)_{rr}-\nb_r(\calP^*\bfomg)_{rA},\\
 \frac{\ud}{\ud t } \bfzt_{A}(\bfx(t)) &= \Gamma_{rA}^{B}\bfzt_A+ \frac{1}{d-2}\calC(\calP^*\bfomg)_{rA}.
    \end{aligned}
\end{equation*}
From the first three equations we get
\begin{equation*}
\begin{aligned}
    w(\bfx(t)) &= w(y_1)c_{\kappa}(t-d(y_1,y))+\bfzt_r(y_1)s_{\kappa}(t-d(y_1,y)) \\
    &-\frac{1}{d-2} \int_{t}^{d(y_1,y)}s_{\kappa}(t-s) \calC(\calP^*\bfomg)_{rr}(\bfx(s)) \, \ud s
\end{aligned}
\end{equation*}
and
\begin{equation*}
    \begin{aligned}
        \bfomg_r(\bfx(0)) &= \bfomg_r(y_1)- w(y_1) s_{\kappa}(d(y_1,y)) +\frac{1}{\kappa}\bfzt_r(y_1)(1-c_{\kappa}(d(y_1,y))\\
        &+\int_{0}^{d(y_1,y)}(\calP^*\bfomg)_{rr}(\bfx(s)) \, \ud s-\frac{1}{(d-2)\kappa}\int_0^{d(y_1,y)}(1-c_{\kappa}(s))\calC(\calP^*\bfomg)_{rr}(\bfx(s))\, \ud s.
    \end{aligned}
\end{equation*}
From the last three equations, we get
\begin{equation*}
    s_{\kappa}(t)^{-1}\bfzt_A(\bfx(t)) = s_{\kappa}(d(y_1,y))^{-1}\bfzt_A(y_1)-\frac{1}{d-2} \int_t^{d(y_1,y)}s_{\kappa}(s)^{-1}\calC(\calP^*\bfomg)_{rA}(\bfx(s)) \, \ud s
\end{equation*}
and
\begin{equation*}
   \frac{\ud^2}{\ud t^2} \left(s_{\kpp}(t)^{-1}\bfomg_{A}(\bfx(t)) \right) = -\kappa  s_{\kpp}(t)^{-1}\bfomg_{rA} +s_{\kpp}(t)^{-1} ( - \bfzt_A+ \nb_A(\calP^*\bfomg)_{rr}-2\nb_{r}(\calP^*\bfomg)_{rA}).
\end{equation*}
Therefore
\begin{equation*}
\begin{aligned}
    s_{\kpp}(t)^{-1}\bfomg_{A}(\bfx(t)) &= \int_{t}^{d(y_1,y)} s_{\kappa}(t-s)s_{\kappa}(s)^{-1}(\nb_r(\calP^*\bfomg)_{rA}-\nb_A(\calP^*\bfomg)_{rr})(\bfx(s)) \, \ud s\\
    &+\int_t^{d(y_1,y)} c_{\kappa}(t-s) s_{\kappa}(s)^{-1}(\calP^*\bfomg)_{rA} \ud s\\
    &+\frac{1}{(d-2)\kappa}\int_{t}^{d(y_1,y)} (1-c_{\kappa}(t-s))s_{\kpp} (s)^{-1}\calC(\calP^*\bfomg)_{rA}(\bfx(s))\, \ud s\\
    &-\kappa^{-1}(1-c_{\kpp}(t-d(y_1,y))s_{\kappa}(d(y_1,y))^{-1}\bfzt_A(y_1)\\
    &+\bfomg_A(y_1)c_{\kappa}(t-d(y_1,y))s_{\kappa}(d(y_1,y))^{-1}+\bfeta_{rA}(y_1)s_{\kappa}(t-d(y_1,y))s_{\kappa}(d(y_1,y))^{-1}.
\end{aligned}
\end{equation*}
Therefore,
\begin{equation*}
    \begin{aligned}
        \langle K_{y_1}(\cdot,y), \psi \rangle &=\left(\int_{0}^{d(y_1,y)}\dot{\bfx}^i\dot{\bfx}^j\psi_{ij}(\bfx(s)) \ud s-\frac{1}{(d-2)\kappa}\int_0^{d(y_1,y)}(1-c_{\kappa}(s))\dot{\bfx}^i\dot{\bfx}^j(\calC \psi)_{ij}(\bfx(s))\ud s\right) \, \ud r \\
        &
       +\left(\int_0^{d(y_1,y)} c_{\kappa}(s)\dot{\bfx}^{j} \psi_{j\td{A}}(\bfx(s)) \ud s+\frac{1}{(d-2)\kappa}\int_0^{d(y_1,y)}(1-c_{\kappa}(s))\dot{\bfx}^j(\calC \psi)_{j\td{A}}(\bfx(s))\ud s\right)  \, \ud\td{\omg}^{A} \\&+\left(\int_{0}^{d(y_1,y)} s_{\kappa}(s)\dot{\bfx}^i\dot{\bfx}^j(\nb_{\td{A}}(\calP^*\bfomg)_{ij}-\nb_i(\calP^*\bfomg)_{j\td{A}})(\bfx(s)) \ud s\right) \, \ud \td{\omg}^A
    \end{aligned}
\end{equation*}
and
\begin{equation*}
\begin{aligned}
    \langle b_{y_1}(\cdot,y), \bfomg \rangle &=\left(\bfomg_r(y_1)- w(y_1) s_{\kappa}(d(y_1,y)) +\frac{1}{\kappa}\bfzt_r (y_1)(1-c_{\kappa}(d(y_1,y))\right) \, \ud r\\
    &+\left(-\kappa^{-1}(1-c_{\kpp}(d(y_1,y))\bfzt_{\td{A}}(y_1)+\bfomg_{\td{A}}(y_1)c_{\kappa}(d(y_1,y))-\bfeta_{r\td{A}}(y_1)s_{\kappa}(d(y_1,y))\right) \, \ud \td{\omg}^A.
\end{aligned}
\end{equation*}

\subsubsection{Explicit formulas on flat spaces}
For the readers' convenience, we record the explicit formulas for the Bogovskii-type and conic solution operators on $\bbR^d$. We average over straight line segments $\bfx(y,y_1,s)=y+s(y_1-y)$ for the Bogovski-type solution operator and over straight half-lines $\bfx(y,\omg,s)=y+s\omg$ for the conic solution operator.

To state our results, we introduce
\begin{equation*}
        (\calC^*f)_{ij}\ceq-\partial^\ell\partial_i f_{\ell j}-\partial^{\ell}\partial_if_{j\ell}+\partial^{\ell}\partial_{\ell}f_{ij}+\frac{1}{d-1}\partial_i\partial_j\tr f,\quad (\calT^*f)_{ij}\ceq\frac{1}{2}(f_{ij}+f_{ji})-\frac{1}{d}(\tr f)\delta_{ij}.
\end{equation*}
\begin{enumerate}
    \item Let $\eta\in C_c^{\infty}(\bbR^d)$ with $\int_{\bbR^d}\eta =1$. The Bogovskii-type solution operator for $\calP \bfh =\nb_j \bfh^{jk}$ where $\bfh^{ij}=\bfh^{ji}$ and $\tr_{\bfg}\bfh =0$ on $\bbR^d$ with flat metric is given by
    \begin{equation*}
    \begin{split}
        (K_{\eta})_k^{ij}(z+y,y)=&\calT^*\left(\int_{|z|}^{\infty} \eta\left(y+r\frac{z}{|z|}\right)r^{d-1}\frac{z^i}{|z|^d}\bfdlt_{k}^{j}\, \, \ud r\right)\\
        &+\frac{1}{2(d-2)}\calT^*\calC^*\left(\int_{|z|}^{\infty} \eta\left(y+r\frac{z}{|z|}\right)r^{d-1}\frac{z^{i}}{|z|^{d-2}}\bfdlt_{k}^{j}\, \, \ud r\right)\\
        &-\frac{1}{(d-2)}\calT^*\calC^*\left(\int_{|z|}^{\infty} \eta\left(y+r\frac{z}{|z|}\right)r^{d-1}\frac{z^{i}z^{j}z_{k}}{|z|^{d}}\, \, \ud r\right)\\
        &-(\calT^*\bfdlt_{\ell}^{j}\nabla_k-\calT^*\bfdlt_{k}^{j}\nabla_\ell)\left(\int_{|z|}^{\infty}\eta\left(y+r\frac{z}{|z|}\right)r^{d-1}\frac{z^{i}z^\ell}{|z|^d}\, \, \ud r\right)
    \end{split}
    \end{equation*}
    with
    \begin{equation*}
    \begin{split}
        (b_{\eta})_k^j (x,y)=&\eta(x)\bfdlt^j_k+\frac{1}{2}\partial_{\ell}(\eta(x)(x-y)^{\ell})\bfdlt^{j}_{k}-\frac{1}{2}\partial_k(\eta(x)(x-y)^j)+\frac{1}{d}\partial^j(\eta(x)(x-y)_k)\\
    &-\frac{1}{2d}\partial^j\partial_k(\eta(x)|x-y|^2)+\frac{1}{d}\partial^j\partial_{\ell}(\eta(x)(x-y)^{\ell}(x-y)_k).
    \end{split}
    \end{equation*}
    \item Let $\slashed{\eta}\in C^{\infty}(\bbS^{d-1})$ with $\int_{\bbS^{d-1}}\slashed{\eta} =1$. The conic solution operator for $\calP \bfh =\nb_j \bfh^{jk}$ where $\bfh^{ij}=\bfh^{ji}$ and $\tr_{\bfg}\bfh =0$ on $\bbR^d$ with flat metric is given by
    \begin{equation*}
    \begin{split}
        (K_{\slashed{\eta}})_k^{ij}(z+y,y)=&\calT^*\left(\bfdlt_{k}^{j}\frac{z^i}{|z|^d}\slashed{\eta}\left(\frac{z}{|z|}\right)\right)+\frac{1}{2(d-2)}\calT^*\calC^* \left(\left(\bfdlt_{k}^{j}\frac{z^{i}}{|z|^{d-2}}-2\frac{z^iz^jz_k}{|z|^d}\right)\slashed{\eta}\left(\frac{z}{|z|}\right)\right)\\
    &-(\calT^*\bfdlt_{\ell}^{i}\nb_{k}-\calT^*\bfdlt_{k}^{i}\nb_\ell )\left(\frac{z^{\ell}z^j}{|z|^d}\slashed{\eta}\left(\frac{z}{|z|}\right)\right).
    \end{split}
    \end{equation*}
\end{enumerate}

\subsection{Linearized Einstein constraint equation} \label{subsec:einstein}
We consider the linearized Einstein constraint equation, first by itself and second under the constant mean curvature condition.

\subsubsection{Linearized Einstein constraint operator}
The vacuum \emph{Einstein constraint equation} on $(\calM, \bfg)$ is a nonlinear underdetermined system of PDEs for $\bfg$ and a symmetric $2$-tensor $\bfk$ of the form
\begin{equation} \label{eq:einstein-constraint}
\left\{
\begin{aligned}
	R_{\bfg} + (\tr_{\bfg} \bfk)^{2} - \abs{\bfk}_{\bfg}^{2} &= 0, \\
	\nb^{i} \bfk_{ij} - \rd_{j} \tr_{\bfg} \bfk &= 0.
\end{aligned}
\right.
\end{equation}
The linearization of the operator on the left-hand side \eqref{eq:einstein-constraint} takes the form
\begin{equation} \label{eq:einstein-constraint-lin}
	\begin{pmatrix}
	\begin{array}{c} \mathrm{D} R(\bfg) \dot{\bfg} + 2 (\tr_{\bfg} \bfk) \bfk^{ij} \dot{\bfg}_{ij} - 2 \bfk^{ii'} \bfk^{jj'} \bfg_{i'j'} \dot{\bfg}_{ij} + 2(\tr_{\bfg}\bfk)\bfg^{ij}\dot{\bfk}_{ij} -2\bfg^{ii'}\bfg^{jj'}\bfk_{i'j'}\dot{\bfk}_{ij} \\
	\end{array} \\
	\nb^{i} \left( \dot{\bfk}_{ij} - \bfg_{ij} \tr_{\bfg} \dot{\bfk}\right) - \tensor{{\dot{\bfGmm}}}{^{\ell}_{{ ii'}}}\bfg^{ii'}\bfk_{\ell j} - \tensor{{\dot{\bfGmm}}}{^{\ell}_{{i'j}}} \bfg^{ii'}\bfk_{i\ell}-\partial_j(\bfk^{i\ell}\dot{\bfg}_{i\ell})
	\end{pmatrix}.
\end{equation}
As in Section~\ref{subsec:d-div}, the following change of variables simplify the principal terms (recall footnote~\ref{fn:g-upper} in Section~\ref{subsec:d-div} for our convention for $\dot{\bfg}^{ij}$):
\begin{equation} \label{eq:}
	\bfh^{ij} = \dot{\bfg}^{ij} - \bfg^{ij} \tr_{\bfg} \dot{\bfg}, \quad
	\bfpi^{ij} = \dot{\bfk}^{ij} - \bfg^{ij} \tr_{\bfg} \dot{\bfk}.
\end{equation}
In terms of $(\bfh, \bfpi)$, we may rewrite \eqref{eq:einstein-constraint-lin} as
\begin{align*}
	\calP(\bfh, \bfpi)
	&\ceq \left( \nabla_{i} \nabla_{j} \bfh^{ij}+(\bfC^{(1)})_{i j} \bfpi^{i j}+ (\bfR^{(1)})_{i j} \bfh^{i j} , \nabla_{i} \bfpi^{i j} + \nb_{i} (\tensor{(\bfC^{(2)})}{^{i j}_{k \ell}}\bfh^{k \ell}) +\tensor{(\bfR^{(2)})}{^{j}_{k \ell}} \bfh^{k \ell}\right),
\end{align*}
for some tensor fields $\bfC^{(1)}$, $\bfC^{(2)}$, $\bfR^{(1)}$, $\bfR^{(2)}$ determined by $(\bfg, \bfk)$, which are all zero if $(\bfg, \bfk) = (\bfdlt, 0)$. The adjoint $\calP^*$ is then given by
\begin{align*}
    \calP^{\ast} (\varphi, \bfomg)
    &= \left( \calP^*(\varphi,\bfomg)_{[1] ij}, \calP^*(\varphi,\bfomg)_{[2] ij} \right) \\
    &\ceq \left( \nabla_{i} \partial_{j} \varphi+(\bfR^{(1)})_{i j} \varphi+ \tensor{(\bfR^{(2)})}{^{k}_{ij}}\bfomg_{k} - \tensor{(\bfC^{(2)})}{^{k \ell}_{ij}} \nb_{k} \bfomg_{\ell}, -\frac{1}{2}(\nabla_{i}\bfomg_{j}+\nabla_{j}\bfomg_{i})+(\bfC^{(1)})_{i j} \varphi \right).
\end{align*}
As is well-known, the kernel of $\calP^{\ast}$ consist of Killing Initial Data sets (KIDs) \cite{ChrDel}.

The principal symbol of $\calP^{\ast}$ is given by
\begin{equation*}
    p^{\ast}(x,\xi) = \begin{pmatrix}
    p_{\rm ddiv}^{\ast}(x,\xi)& 0 \\
    0&p_{\rm sdiv}^{\ast}(x,\xi)
    \end{pmatrix}
\end{equation*}
where $p_{\rm ddiv}^{\ast}(x,\xi)$ is given by \eqref{eq:d-div-dual-symb} and $p_{\rm sdiv}^{\ast}(x,\xi)$ is given by \eqref{eq:symm-div-dual-symb}.
By the computations in Sections~\ref{subsubsec:d-div-key} and \ref{subsubsec:symm-div-key}, we can write down an augmented system with the augmented variables
\begin{equation*}
	\bfalp_{i} \ceq \rd_{i} \varphi, \quad
	\bfeta_{ij} \ceq \frac{1}{2} (\ud \bfomg)_{ij} = \frac{1}{2} (\nb_{i} \bfomg_{j} - \nb_{j} \bfomg_{i}).
\end{equation*}
Since the augmented system is stable under lower order perturbations, the same ODE system also works here with extra lower order terms.

Proceeding as in Sections~\ref{subsubsec:d-div-key} and \ref{subsubsec:symm-div-key}, we obtain the following system:
\begin{equation} \label{eq:einstein-system}
\left\{
\begin{aligned}
        \nabla_i\varphi&=\bfalp_i,\\
        \nabla_i\bfalp_j&=  (\tilde{\bfR}^{(1)})_{ij} \varphi + \tensor{(\tilde{\bfR}^{(2)})}{^{k}_{ij}} \bfomg_{k} + \tensor{(\tilde{\bfC}^{(1)})}{^{k \ell}_{ij}}\bfeta_{k \ell}
        + \calP^*_{[1]}(\varphi,\bfomg)_{ij} + \tensor{(\tilde{\bfC}^{(4)})}{^{k \ell}_{i j}} \calP^*_{[2]}(\varphi,\bfomg)_{k \ell},\\
        \nabla_i\bfomg_j&= \bfeta_{ij}+ (\tilde{\bfC}^{(2)})_{ij} \varphi-\calP^*_{[2]}(\varphi,\bfomg)_{ij},\\
        \nabla_i\bfeta_{jk}&= (\tilde{\bfR}^{(3)})_{ijk} \varphi + \tensor{(\tilde{\bfR}^{(4)})}{^{\ell}_{ijk}} \bfomg_{\ell} +\tensor{(\tilde{\bfC}^{(3)})}{^{\ell}_{ijk}} \bfalp_{\ell}+\nabla_k\calP^*_{[2]}(\varphi, \bfomg)_{ij}-\nabla_j\calP^*_{[2]}(\varphi, \bfomg)_{ki}.
\end{aligned}
\right.
\end{equation}
As before, \eqref{eq:einstein-system} immediately leads to graded augmented variables in the sense of Definition~\ref{def:aug}. Indeed, if we specialize \eqref{eq:einstein-system} to the Euclidean space in rectangular coordinates (so that $\calP = \calP_{\prin}$ and $\calP^{\ast} = \calP_{\prin}^{\ast_{\ud x}}$), then we see that  $\Phi_{\varphi} = \varphi$, $\Phi_{\bfalp_{i}} = \bfalp_{i}$, $\Phi_{\bfomg_{i}} \ceq \bfomg_{i}$, and $\Phi_{\bfeta_{ij}} \ceq \bfeta_{ij}$ define augmented variables for $\calP_{\prin}$ that satisfy \ref{hyp:aug1}--\ref{hyp:aug4}, where $\calA = \set{\varphi, \bfalp_{1}, \ldots, \bfalp_{d}, \bfomg_{1}, \ldots, \bfomg_{d}, \bfeta_{12}, \ldots, \bfeta_{(d-1) d}} $ (in particular, $\# \calA = 1 + d + d + \frac{d(d-1)}{2}= \frac{(d+1)(d+2)}{2}$), $d_{\varphi} = d_{\bfomg_{i}} = 0$, $d_{\bfalp_{i}} = d_{\bfeta_{ij}} = -1$, $m_{[1] ij} = 2$, and $m_{[1] ij}' = 0$, $m_{[2] ij} = 1$, and $m_{[2] ij}' = 1$. These augmented variables also satisfy \ref{hyp:aug1}--\ref{hyp:aug4} for any lower order perturbations of $\calP_{\prin}$ (viewed as an operator on an open subset of $\bbR^{d}$, the Euclidean space in rectangular coordinates).

\subsubsection{Linearized Einstein constraint operator under constant mean curvature gauge}
The method of this paper is also applicable to the linearized Einstein vacuum constraint equation under constant mean curvature gauge $\tr_{\bfg}\bfk=c$.

Introduce the new variables
\begin{equation*}
    \bfh^{ij}= \dot{\bfg}^{ij}-\bfg^{ij}\tr_{\bfg}\dot{\bfg},\quad \wh{\bfpi}^{ij} = \dot{\bfk}^{ij}- \frac{1}{d}\bfg_{ij}\tr_{\bfg}\dot{\bfk},\quad
 \rho=\tr_{\bfg}\dot{\bfk}.
\end{equation*}
The Linearized Einstein constraint operator under constant mean curvature gauge can be written as
\begin{equation*}
\begin{aligned}
    \calP (\bfh,\wh{\bfpi},\rho) =& \left(\nabla_i\nabla_j \bfh^{ij}+(\bfC^{(1)})_{ij}\wh{\bfpi}^{ij}+(\bfR^{(1)})_{ij}\bfh^{ij}+C^{(2)}\rho,\right.\\
    &\left. \nabla_i \wh{\bfpi}^{ij}+\tensor{(\bfR^{(2)})}{^{j}_{k\ell}}\bfh^{k\ell}+\nb_i(\tensor{(\bfC^{(3)})}{^{ij}_{k \ell}}\bfh^{k \ell})-\frac{d-1}{d}\nb^j\rho,\rho\right),
\end{aligned}
\end{equation*}
for some tensor fields $\bfC^{(1)}, C^{(2)}, \bfC^{(3)}, \bfR^{(1)}, \bfR^{(2)}$ determined by $(\bfg,\bfk)$.

We may use the last component $\rho$ to eliminate $\rho$ in the equations and only consider (by an abuse of notation)
\begin{equation*}
    \wh{\calP} (\bfh,\wh{\bfpi})= (\nabla_i\nabla_j \bfh^{ij}+\tensor{(\bfC^{(1)})}{_{ij}}\wh{\bfpi}^{ij}+(\bfR^{(1)})_{ij}\bfh^{ij}, \nabla_i \wh{\bfpi}^{ij}+\tensor{(\bfR^{(2)})}{^{j}_{k\ell}}\bfh^{k \ell}+\nb_i(\tensor{(\bfC^{(2)})}{^{ij}_{k \ell}}\bfh^{k \ell})).
\end{equation*}
The adjoint is given by
\begin{equation*}
\begin{split}
     &\wh{\calP}^*(\varphi,\bfomg)=(\calP^*(\varphi,\bfomg)_{[1]ij}, \calP^*(\varphi,\bfomg)_{[2]ij})\\
     &=\left(\nabla_i\partial_j\varphi+(\bfR^{(1)})_{ij}\varphi+\tensor{(\bfR^{(2)})}{^{k}_{ij}}\bfomg_{k}- \tensor{(\bfC^{(2)})}{^{k\ell}_{ij}}\nb_{k}\bfomg_{\ell},-\frac{1}{2}(\nabla_i\bfomg_j+\nabla_j\bfomg_i)+\frac{1}{d}\nb^{\ell}\bfomg_{\ell} \bfg_{ij}+\tensor{(\bfC^{(1)})}{_{ij}}\varphi\right).
\end{split}
\end{equation*}
As before, the principal symbol of $\wh{\calP}^{\ast}$ is given by
\begin{equation*}
    p^{\ast}(x,\xi) = \begin{pmatrix}
        p_{\rm ddiv}^{\ast}(x,\xi) & 0 \\
        0 & p^{\ast}_{\rm tsdiv}(x,\xi)
    \end{pmatrix}
\end{equation*}
where $p_{\rm ddiv}^{\ast}(x,\xi)$ is given by \eqref{eq:d-div-dual-symb} and $p^{\ast}_{\rm tsdiv}(x,\xi)$ is given by \eqref{eq:symm-div-tf-dual-symb}.

Let $\bfalp_i=\nb_i\varphi$, $\bfeta_{ij}=\frac{1}{2}(\nb_j\bfomg_j-\nb_j\bfomg_i)$, $w=\frac{1}{d}\nb^{\ell}\bfomg_{\ell}$ and $\bfzt_j=\partial_j w$, we have
\begin{equation}\label{eq:einstein-max-aug}
    \begin{aligned}
    \nabla_i\varphi&=\bfalp_i,\\
        \nabla_i\bfalp_j&=   (\tilde{\bfR}^{(1)})_{ij}\varphi+\tensor{(\tilde{\bfR}^{(2)})}{^k_{ij}}\bfomg_k+\tensor{(\tilde{\bfC}^{(1)})}{^{k\ell}_{ij}}(\bfeta_{k\ell}+w\bfg_{k\ell}-(\wh{\calP}^*(\varphi,\bfomg)_{[2]k\ell})+(\wh{\calP}^*(\varphi,\bfomg))_{[1]ij},\\
        \nabla_i\bfomg_j&=\bfeta_{ij}+w\bfg_{ij}+(\tilde{\bfC}^{(2)})_{ij}\varphi-(\wh{\calP}^*(\varphi,\bfomg))_{[2]ij},\\
        \nabla_i\bfeta_{jk}&=\tensor{(\tilde{\bfR}^{(3)})}{_{ijk}}\varphi+\tensor{(\tilde{\bfR}^{(4)})}{^{\ell}_{ijk}}\bfomg_{\ell}+\tensor{(\tilde{\bfC}^{(3)})}{^{\ell}_{ijk}}\bfalp_{\ell}+\bfzt_j \bfg_{ik}-\bfzt_k \bfg_{ij}+\nabla_k(\wh{\calP}^*\bfomg)_{[2]ij}-\nabla_j(\wh{\calP}^*\bfomg)_{[2]ki},\\
        \nb_i w&=\bfzt_j,\\
        \nb_i\bfzt_j&=\tensor{(\tilde{\bfA}^{(1)})}{_{ij}}\varphi+\tensor{(\tilde{\bfR}^{(5)})}{^k_{ij}}\bfalp_k+\tensor{(\tilde{\bfA}^{(2)})}{^k_{ij}}\bfomg_k+\tensor{(\tilde{\bfR}^{(6)})}{^{k\ell}_{ij}}\bfeta_{k\ell}+\tensor{(\tilde{\bfR}^{(7)})}{_{ij}}w\\
        &\peq +\tensor{(\tilde{\bfC}^{(4)})}{^{k\ell}_{ij}}(\wh{\calP}^*(\varphi,\bfomg))_{[1]k \ell}+\tensor{(\tilde{\bfR}^{(8)})}{^{k \ell}_{ij}}(\wh{\calP}^*(\varphi,\bfomg))_{[2]k \ell}+\frac{1}{d-1}\calC(\wh{\calP}^*\bfomg)_{[2]ij}.
    \end{aligned}
\end{equation}
As before, \eqref{eq:einstein-max-aug} gives graded augmented variables in the sense of Definition~\ref{def:aug}. Indeed, if we specialize \eqref{eq:einstein-max-aug} to the Euclidean space in rectangular coordinates (so that $\wh{\calP} = \wh{\calP}_{\prin}$ and $\wh{\calP}^{\ast} = \wh{\calP}_{\prin}^{\ast_{\ud x}}$), then we see that  $\Phi_{\varphi} = \varphi$, $\Phi_{\bfalp_{i}} = \bfalp_{i}$, $\Phi_{\bfomg_{i}} \ceq \bfomg_{i}$, $\Phi_{\bfeta_{ij}} \ceq \bfeta_{ij}$, $\Phi_{w}=w$, and $\Phi_{\bfzt_j}=\bfzt_j$ define augmented variables for $\wh{\calP}_{\prin}$ that satisfy \ref{hyp:aug1}--\ref{hyp:aug4}, where
$$\calA=\{\varphi,\bfalp_1,\cdots,\bfalp_d,\bfomg_1,\cdots,\bfomg_d,\bfeta_{12},\cdots, \bfeta_{(d-1)d},w,\bfzt_1,\cdots,\bfzt_d\}$$
and $\#\calA=1+d+d+\frac{d(d-1)}{2}+1+d=\frac{(d+1)(d+4)}{2}$. We have $d_{\varphi}=d_{\bfomg_i}=0$, $d_{\alp_i}=d_{\bfeta_{ij}}=d_{w}=-1$, $d_{\bfzt_j}=-2$, $m_{[1]ij}=2$, and $m_{[1]ij}'=0$, $m_{[2]ij}=1$, and $m_{[2]ij}'=2$.  When $\bfk=0$, the equations decouple to the linearized scalar curvature equation and symmetric divergence equation in the maximal gauge, studied in Section~\ref{subsec:d-div} and Section~\ref{subsec:symm-div-tf}, respectively.

\bibliographystyle{amsplain}
\bibliography{div-eq}

\end{document}